%% file: 2main.tex
\documentclass[abstract=true, DIV=12]{scrartcl}

\usepackage{preamble}

\newcommand{\IconjHC}{Conjecture~B}

\newcommand{\Iseccat}{§2.1}
\newcommand{\Isecdga}{§2.2}
\newcommand{\Iobscofibcdgas}{Observation~2.2.7}
\newcommand{\Ilemmacofibrantflat}{Lemma~2.2.11}
\newcommand{\Ilemmabarcomplex}{Lemma~2.2.14}
\newcommand{\Ilemmalocalization}{Lemma~2.3.3}

\newcommand{\IsecTHH}{§3.2}
\newcommand{\Ilemmafibassembly}{Lemma~3.4.5}

\newcommand{\Iseceq}{§4.2}
\newcommand{\Iobspullbackeq}{Observation~4.2.3}
\newcommand{\IpropBMeq}{Proposition~4.2.7}
\newcommand{\Ilemmaloopssuspadjmodel}{Lemma~4.2.12}
\newcommand{\Ilemmarestrictcofibration}{Lemma~4.3.7}
\newcommand{\Ilemmaassemblymodel}{Lemma~4.5.4}
\newcommand{\Ilemmaevalmodel}{Lemma~4.5.5}
\newcommand{\IconHHtransfer}{Construction~4.6.1}
\newcommand{\IlemmaHomqiso}{Lemma~4.6.2}
\newcommand{\Ithmmain}{Theorem~4.6.3}

\title{A rational model for the fiberwise~THH~transfer~II: \texorpdfstring{$\Ainf$}{A∞}-algebras}
\author{Florian Naef \and Robin Stoll}
\date{April 27, 2026}

\begin{document}

\maketitle

\begin{abstract}
  In Part~I, we proved that a rational model for the fiberwise THH transfer of a map $f$ of fibrations over a base space is given by the Hochschild homology transfer of a cdga model of $f$.
  In this paper, we provide an explicit description of this Hochschild homology transfer in terms of $\Ainf$-algebras, generalizing work of Bouc.
  Using a result of Lind--Malkiewich, we deduce a rational model for the Becker--Gottlieb transfer.
  We furthermore use our results for the following applications to manifold topology.
  
  Firstly, we consider the rational characteristic classes constructed by Berglund for fibrations with fiber a Poincaré complex (which generalize classes found by Berglund--Madsen); they are defined via the Lie graph complex, and we prove that the classes corresponding to non-trivalent graphs with exactly one loop vanish when evaluated on fiber bundles with fiber a compact simply connected topological manifold.
  
  Secondly, we provide a rational model for the space of fiberwise THH-simple structures, which is a step towards obtaining rational models for the classifying spaces of diffeomorphisms and homeomorphisms of a compact simply connected manifold in the rational concordance stable range.
\end{abstract}

\setcounter{tocdepth}{\subsectiontocdepth}
\tableofcontents

\input{2introduction.tex}

\include{2preliminaries}
\include{2A_infty}
\include{2transfer}
\include{2model}
\include{2applications}

\section*{References}

\printbibliography[keyword=this,heading=subbibliography,title={This series}]
\printbibliography[notkeyword=this,heading=subbibliography,title={Other}]

\end{document}

%% file: 2introduction.tex
\section{Introduction}

The topological Hochschild homology (THH) of a space $X$ is, by definition, the topological Hochschild homology of the full subcategory of compact objects in the stable $\infty$-category of parametrized spectra over $X$ (i.e.\ functors from $X$ considered as an $\infty$-groupoid to spectra).
Pulling back parametrized spectra along a fibration $f \colon X \to Y$ whose fibers are equivalent to finite CW-complexes (or, more generally, are finitely dominated) preserves compact objects, and hence induces a map
\[ f^* \colon \THH(Y) \longto \THH(X) \]
called the \emph{THH transfer}.
Observing that THH of a space $X$ is equivalent to the suspension spectrum $\SS[\L X]$ of its free loop space, this is also called the \emph{free loop transfer}.
It turns out to be related to many other constructions; for example, Lind--Malkiewich \cite{LM} proved that one can recover the Becker--Gottlieb transfer $\SS[Y] \to \SS[X]$ from the THH transfer.
We will return to this point below.

Given a further fibration $Y \to B$, we can form the THH transfer of $f$ fiberwise over $B$ to obtain the \emph{fiberwise THH transfer}: a map $f^* \colon \THH_B(Y) \to \THH_B(X)$ of parametrized spectra over $B$; here $\THH_B(E)$ denotes fiberwise THH of a space $E$ over $B$.
In the special case that $Y = B$, we obtain a map
\[ \chi  \colon  \SS_B  \eq  \THH_B(B)  \longto  \THH_B(X) \]
from the parametrized sphere spectrum; this is called the \emph{fiberwise THH Euler characteristic} of $X$ over $B$.
Via the Dennis trace, it is related to the fiberwise \emph{A-theory} Euler characteristic (i.e.\ the analogous construction for algebraic K-theory instead of THH), which plays a central role in the topological Riemann--Roch theorem of Dwyer--Weiss--Williams \cite{DWW}.
Following a suggestion of Manuel Krannich, this relationship in fact served as one of our main motivations, and we will heavily exploit it in our applications below.
Lastly let us mention that, by work of Naef--Safronov \cite{NS}, the THH Euler characteristic measures the failure of homotopy invariance of the Goresky--Hingston coproduct in string topology.

\subsection{Main result}

In \cite{I}, we proved that the fiberwise THH transfer of a map $f$ is rationally modeled by the classical Hochschild homology transfer of a cdga model of $f$.
By definition (see below), the Hochschild homology transfer $\HH(S) \to \HH(R)$ of a map of cdgas $R \to S$ is a zig-zag involving quasi-isomorphisms pointing the opposite way.
The main result of this paper is that it can be strictified to a direct map by passing to $\Ainf$-algebras.
(Recall that an $\Ainf$-algebra, a notion originally introduced by Stasheff \cite{Sta}, is an object with a multiplication $\mu_2$ that is homotopy associative up to specified higher coherences $\mu_m$.)
To this end, we first prove that the Hochschild homology transfer of a map $R \to S$ of cdgas is homotopic to the Hochschild homology transfer associated to any unital $\Ainf$-algebra over $R$ quasi-isomorphic to $S$; see \cref{sec:transfer_natural}.
We then prove the following result, which provides an explicit formula for this transfer when the $\Ainf$-algebra is dualizable as an $R$-module.

\begin{introtheorem}[see \cref{thm:Ainf_transfer}] \label{intro:thm:main_general}
  Let $\k \to R$ be a map of cdgas over $\QQ$, and $S$ an $\Ainf$-algebra over $R$.
  If $R$ is cofibrant as a $\k$-module and $S$ is cofibrant and dualizable as an $R$-module, then the Hochschild homology transfer $\HH_\k(S) \to \HH_\k(R)$ agrees, in the homotopy category of $\k$-modules, with the composite map
  \[ \HH_\k(S)  \xlongto{\upsilon_*}  \HH_\k \bigl( \End_R(S) \bigr)  \xlongto{\tr^c_R}  \HH_\k(R) \]
  where $\End_R(S)$ is the $R$-algebra of endomorphisms of the $R$-module $S$, the $\Ainf$-algebra map $\upsilon \colon S \to \End_R(S)$ corresponds to the action of $S$ on itself, and $\tr^c_R$ is the \emph{generalized trace}, an explicit map defined in terms of a derived coevaluation $c \colon \k \to S \dertensor_R \Hom(S, R)$ (see \cref{def:gtr}).
\end{introtheorem}

Here $\HH_\k(S)$ denotes the Hochschild complex of the $\Ainf$-algebra $S$ over $\k$, a construction originally due to Getzler--Jones \cite{GJ}.
In fact we prove a version of \cref{intro:thm:main_general} for the Hochschild homology transfer $\transfer{M} \colon \HH_\k(S) \to \HH_\k(R)$ induced by an $S$-module $M$ that is dualizable over $R$; the version we stated above is obtained by choosing $M$ to be $S$ itself.
This is a generalization of a formula for $\transfer{M}$ due to Bouc \cite{Bou} (also see Loday \cite[§1.2]{Lod}), which applies in the simpler case that $S$ and $R$ are ordinary algebras (i.e.\ they have no differential).
In particular our formula is new even just for differential graded algebras, and the authors think it significantly clarifies the work of Bouc.
The advantage of passing to $\Ainf$-algebras is that, by the Homotopy Transfer Theorem, they can much more often be chosen to be dualizable as modules than differential graded algebras.
In fact it follows from work of Berglund \cite{Ber} that this is always the case in the situation relevant to modeling the fiberwise THH transfer (see \cref{sec:eq_models_exist}).
Lastly let us mention that our formula simplifies considerably when $R = \k$, see further below.

\paragraph{The Hochschild homology transfer.}
We briefly recall its definition (see e.g.\ Keller \cite[§5]{Kel21}).
Given a map $\phi \colon R \to S$ of $\k$-algebras, an $S$-module $M$ induces a transfer map $\transfer{M}$ on Hochschild homology when it is perfect (i.e.\ derived dualizable) as an $R$-module.
In that case, there is a canonical derived coevaluation map $\coev \colon S \to M \dertensor_R \dual{M}_R$ in the homotopy category of $S$-bimodules, where $\dual{M}_R$ denotes the derived $R$-dual of $M$.
The transfer $\transfer{M}$ is then defined to be the composite
\[ \HH_\k(S)  \xlongto{\coev_*}  \HH_\k(S, M \dertensor_R \dual{M}_R)  \eq  \HH_\k(R, \dual{M}_R \dertensor_S M)  \xlongto{\ev_*}  \HH_\k(R) \]
where the middle equivalence can be visualized using the picture
\[
\begin{tikzcd}[column sep = 10]
  M \vphantom{\dual{M}_R} \rar[start anchor = north, end anchor = north, bend left = 90, dash]{\dertensor_S} & \lar[start anchor = south, end anchor = south, bend left = 90, dash]{\dertensor_R} \dual{M}_R
\end{tikzcd}
\qquad = \qquad
\begin{tikzcd}[column sep = 10]
  \dual{M}_R \rar[start anchor = north, end anchor = north, bend left = 90, dash]{\dertensor_R} & \lar[start anchor = south, end anchor = south, bend left = 90, dash]{\dertensor_S} M \vphantom{\dual{M}_R}
\end{tikzcd}
\]
and we recall that the Hochschild complex of an $S$-bimodule is obtained by derived tensoring the left and right $S$-module structures together.

The derived coevaluation $\coev \colon S \to M \dertensor_R \dual{M}_R$ can usually not be represented by an actual direct map of $S$-bimodules.
However, it is always possible to choose a direct map representing its restriction along $\k \to S$, which is the input required by \cref{intro:thm:main_general}.

\subsubsection{A rational model for the fiberwise THH transfer}

Combining \cref{intro:thm:main_general} with the main result of \cite{I}, we obtain an explicit rational model for the fiberwise THH transfer in terms of $\Ainf$-algebras.
Moreover, as mentioned above, the formula of \cref{intro:thm:main_general} simplifies significantly when $R$ is the base cdga $\k$.
This has the following consequence for the fiberwise THH Euler characteristic.

\begin{introcorollary}[see \cref{cor:main}] \label{intro:cor:main}
  Given a fibration $p \colon E \to B$ between nilpotent spaces of finite rational type such that the fiber of $p$ is simply connected and finitely dominated, let $\k$ be a cdga that models $B$ and $R$ a unital $\Ainf$-algebra in $\k$-modules that models $E$.
  If $R$ is cofibrant and dualizable as a $\k$-module, then the fiberwise THH Euler characteristic $\chi \colon \SS_B \to \THH_B(E)$ is modeled by the map of $\k$-modules $\tr \colon \HH_\k(R) \to \k$ given by
  \[ x_0 \otimes x_1 \otimes \dots \otimes x_n  \longmapsto  \sum_{i = 0}^n \pm \tr_\k \bigl( \mu_{n+2}(x_i, x_{i+1}, \dots, x_n, x_0, \dots, x_{i - 1}, \blank) \bigr) \]
  where $\mu_m \colon R^{\tensor_\k m} \to R$ denotes the structure maps of the $\Ainf$-algebra $R$, and $\tr_\k(\alpha)$ the $\k$-linear trace of an endomorphism $\alpha \colon R \to R$.
\end{introcorollary}

Recall that by work of Sullivan \cite{Sul}, there is a contravariant equivalence between the rational homotopy category of nilpotent spaces of finite rational type, and the homotopy category of connected cdgas over $\QQ$ of finite type.
A cdga $\k$ is a \emph{(Sullivan) model} for a space $B$ if they correspond to each other under this equivalence.
In this situation, Braunack--Mayer \cite{Bra} furthermore showed that the rational homotopy category of nilpotent, bounded below parametrized spectra over $B$ of finite rational type is equivalent to the homotopy category of $\k$-modules of finite type (previous results in this direction were obtained by Félix--Murillo--Tanré \cite{FMT}).
This is the sense in which the map of $\k$-modules $\HH_\k(R) \to \k$ models the fiberwise THH transfer.
That $R$ models $E$ means that it is quasi-isomorphic, as a unital $\Ainf$-algebra in $\k$-modules, to a Sullivan model $\k \to R$ of $p$.

Combining \cref{intro:cor:main} with an explicit construction, due to Berglund \cite{Ber}, of $\Ainf$-models as required, we also obtain a model for the fiberwise THH Euler characteristic in terms of a dg Lie model of the fibration, see \cref{sec:Lie_models}.
The resulting description is related to maps defined by Alekseev--Torossian \cite{AT} and Enomoto--Satoh \cite{ES}, see \cref{rem:ES}.

\paragraph{Equivariant models.}
The condition of \cref{intro:cor:main} that the spaces involved are nilpotent can be inconvenient; for example it is usually not fulfilled in the case of the universal fibration $F / \aut(F) \to \B \aut(F)$ with fiber $F$.
However, if $E \to B$ is a fibration of connected spaces with simply connected fiber, one can always find a (discrete) group $G$ and a map $B \to \B G$ such that the (homotopy) fibers of $E$ and $B$ over $\B G$ are nilpotent; for example one can choose $G$ to be $\pi_1(B)$.
In that case the map $E \to B$ can be modeled by a map of cdgas with an action of $G$; see \cite{BS,GHT} for elaborations on this idea.
Following this approach, we proved the main result of \cite{I}, that the fiberwise THH transfer is modeled by the Hochschild homology transfer, in the $G$-equivariant setting.
\Cref{intro:thm:main_general} of this paper also lifts to an equivariant statement, and we thus obtain an equivariant version of \cref{intro:cor:main} as well.

It follows from forthcoming work of Berglund \cite{Ber} (which builds on work of Berglund--Zeman \cite{BZ}) that any map $E \to B$ with simply connected finitely dominated fibers admits an equivariant $\Ainf$-model as required by \cref{intro:cor:main} (as long as $B$ is connected and its universal covering is of finite rational type; see \cref{sec:eq_models_exist}).
In the case of the universal fibration $F / \aut(F) \to \B \aut(F)$, this model is completely explicit and amenable to computations.
Combining this with work of Berglund--Stoll \cite{BS24}, one obtains similarly explicit equivariant models for the classifying space of block diffeomorphisms of a manifold of dimension $\ge 6$ (with spherical boundary).

\paragraph{Further directions.}
By definition, the formula of \cref{intro:cor:main} is invariant under cyclic permutations of the input.
In particular, it uniquely factors through a map $\tr \colon \HC_\k(R) \to \k$ from the Connes complex of $R$ over $\k$ (which computes its cyclic homology).
It seems likely that this is homotopic to the cyclic homology transfer $\HC_\k(R) \to \HC_\k(\k)$ of Keller \cite{Kel98}, composed with the projection $\HC_\k(\k) \to \k$.
Combined with \cite[\IconjHC{}]{I}, this would yield the following explicit rational model for the fiberwise transfer of rational negative cyclic homology $\HCm(\blank; \QQ)$, i.e.\ the homotopy fixed points of the $\Sphere 1$-action on $\THH(\blank) \tensor \H\QQ$.

\begin{introconjecture} \label{intro:conj:HC}
  In the situation of \cref{intro:cor:main}, the fiberwise Euler characteristic of rational negative cyclic homology $\SS_B \to \HCm_B(E; \QQ)$ is modeled by the map $\tr \colon \HC_\k(R) \to \k$.
\end{introconjecture}

By work of Goodwillie \cite{Goo}, rational negative cyclic homology is closely related to algebraic K-theory.
More precisely, his result implies that the following map, induced by the Dennis trace,
\[ \fib \bigl( \A(X) \to \A(*) \bigr)  \longto  \fib \bigl( \HCm(X; \QQ) \to \HCm(*; \QQ) \bigr) \]
is a rational equivalence when $X$ is nilpotent.
In particular \cref{intro:conj:HC} would provide a rational model for a close approximation of the fiberwise A-theory Euler characteristic.
See \cref{intro:sec:THH-simple} below for more background on this.

In a different direction, it should be straightforward to generalize our methods to obtain a rational model for the fiberwise Reidemeister trace of a fiberwise self-map $g$, i.e.\ the analog of the THH Euler characteristic for THH twisted by $g$.
This is an obstruction for $g$ to be fiberwise homotopic to a map without fixed points; in fact Klein--Williams \cite{KW} showed that it is a complete obstruction under certain assumptions on the base and fiber (also see Ponto \cite{Pon}).
Using such a generalization of our model, one could thus obtain explicit rational obstructions for removing fixed points of fiberwise maps.

\subsection{Applications}

Beyond their inherent appeal, our results are also useful for many applications.
We will now highlight and explain the ones we carry out in this paper; they all fall within the realms of algebraic topology and rely on \cref{intro:cor:main}.
However, we would like to point out that the Hochschild homology transfer has applications in other areas as well; for example it is used in work of Koenig--Liu--Zhou \cite{KLZ} concerning the Auslander--Reiten conjecture.
Thus our \cref{intro:thm:main_general} might be useful in those contexts as well.

\subsubsection{Graph characteristic classes of loop order one}

The \emph{Lie graph complex} with $n$ hairs $\mathrm{GC}^d(n)$ is an explicit cochain complex that is spanned by graphs with $n$ numbered univalent vertices (\emph{hairs}) and all other vertices labeled by the cyclic Lie co-operad; its differential is given by expanding a non-univalent vertex into two vertices connected by an edge (the parameter $d$ keeps track of certain degree shifts and signs).
In forthcoming work, Berglund \cite{Ber} constructs, for a fibration $E \to B$ with fiber an oriented simply connected Poincaré duality complex of dimension $d$, a natural map in the derived category of cochain complexes
\[ \kappa_d \colon \Schur {\mathrm{GC}^d} {\Cochains * (E; \QQ)}  \longto  \Cochains * (B; \QQ) \]
where the domain is the complex of Lie graphs with any number of hairs, which are additionally labeled by cochains of $E$.
More precisely, the expression $\Schur {\mathrm{GC}^d} {V}$ denotes the Schur functor of $\mathrm{GC}^d$ evaluated at $V$, i.e.\ the cochain complex $\bigoplus_{n \ge 0} \mathrm{GC}^d(n) \tensor_{\Sigma_n} V^{\tensor n}$ (the symmetric group $\Sigma_n$ acts on $\mathrm{GC}^d(n)$ by permuting the numbering of the hairs).
The map $\kappa_d$ in particular yields a rational characteristic class for fibrations of the required type for each homology class of $\mathrm{GC}^d(0)$; this recovers a construction of Matsuyuki \cite{Mat}, which in turn generalizes work of Kontsevich \cite{Kon} for the case $B = *$.
The map $\kappa_d$ moreover recovers classes found by Berglund--Madsen \cite{BM} for certain highly connected even-dimensional manifolds; more on this below.
(A generalization of Berglund's construction to integral cohomology is part of forthcoming work of Barkan--Steinebrunner \cite{BSt}; also see Bianchi \cite{Bia}.)

Adapting the ideas of Berglund, we construct, for a fibration $E \to B$ with fiber a simply connected finitely dominated space, a map in the derived category of cochain complexes
\[ \kappa_\wheel \colon \Schur {\mathrm{GC}^\wheel_1} {\Cochains * (E; \QQ)}  \longto  \Cochains * (B; \QQ) \]
where $\mathrm{GC}^\wheel_1$ is a complex of connected hairy Lie graphs that have exactly one loop and are \emph{directed}: every edge has a specified direction, and every vertex has exactly one outgoing edge.
(Where Berglund uses the modular operads of Getzler--Kapranov \cite{GK}, we use the wheeled operads of Markl--Merkulov--Shadrin \cite{MMS}; see \cref{sec:graph_classes}.)
Forgetting the directions yields (for any $d$) a map $\mathrm{GC}^\wheel_1 \to \mathrm{GC}^d_1$ to the one-loop part of the hairy Lie graph complex; it is surjective on homology and via this map our construction generalizes the one-loop part of Berglund's to finitely dominated spaces.
Following a suggestion of Manuel Krannich, we use our \cref{intro:cor:main} and a result of Dwyer--Weiss--Williams \cite{DWW} to prove the following.

\begin{introtheorem}[see \cref{cor:kappa_vanishing}] \label{intro:thm:graph_classes}
  Let $p \colon E \to B$ be a fiber bundle with fiber a compact simply connected topological manifold.
  Assume that $B$ is connected and that its universal covering is of finite rational type.
  Then the map induced on homology by $\kappa_\wheel$ vanishes on
  \[ \Schur {\Coho {> 0} (\mathrm{GC}^\wheel_1)} {\Coho * (E; \QQ)}  \subseteq  \Coho * \bigl( \Schur {\mathrm{GC}^\wheel_1} {\Cochains * (E; \QQ)} \bigr) \]
  where $\Coho {> 0}$ denotes homology classes of positive degree.
\end{introtheorem}

It is expected that, for a compact simply connected manifold $M$, the universal covering of $\B \Homeo_\bdry(M)$ (and in fact this classifying space itself) is always of finite type, so that \cref{intro:thm:graph_classes} would apply to the corresponding universal fiber bundle (though note that in general it does not require the boundary bundle to be trivial).
This is known when $M$ is a smooth manifold of even dimension $\ge 6$ by work of Bustamante--Krannich--Kupers \cite[Theorem~6.5]{BKK}, and when $M$ is a smooth $2$-connected manifold of dimension $\neq 4, 5, 7$ with spherical boundary by work of Kupers \cite[Corollary~5.15]{Kup}.

By the discussion before its statement, \cref{intro:thm:graph_classes} is equivalent to the analogous statement for $\mathrm{GC}^d_1$ and the map $\kappa_d$.
It applies to all ``interesting'' homology classes of $\mathrm{GC}^\wheel_1$ (and thus also of $\mathrm{GC}^d_1$).
In fact one can show that $\Coho 0 (\mathrm{GC}^\wheel_1(n)) \iso \QQ$ for all $n \ge 1$ (corresponding to trivalent graphs), and explicitly identify the higher homology groups as exterior powers of the standard representation of $\Sigma_n$, see \cref{prop:wheeled_Cinf_homology}.
(Forgetting the group action, this recovers a result of Markl--Merkulov--Shadrin \cite[Theorem~7.1.1]{MMS}; see also Conant--Kassabov--Vogtmann \cite[§6.3]{CKV} for the analogous result for $\mathrm{GC}^d_1$.)

Let us also remark that, by gluing graphs along their hairs, one can assemble many complicated classes of $\mathrm{GC}^d$ from simpler ones, see e.g.\ Conant--Hatcher--Kassabov--Vogtmann \cite{CHKV}.
(Another way to say this is that $\mathrm{GC}^d$ is a modular operad in the sense of Getzler--Kapranov \cite{GK}.)
Using this, our result implies that for fiber bundles the map $\kappa_d$ of Berglund vanishes on classes that can be assembled in a way that involves at least one class of $\mathrm{GC}^d_1$ of positive degree.
This applies for example to the Morita and the Eisenstein classes, which account for most known non-trivial classes with zero or one hairs (see \cite{CHKV}).

\paragraph{The classes of Berglund--Madsen.}
We now provide more context for \cref{intro:thm:graph_classes}.
In seminal work, Berglund--Madsen \cite{BM} proved that, for certain highly connected $2n$-dimensional manifolds denoted by $W_{g,1}$, the stable rational cohomology of the classifying space $\B \aut_\bdry(W_{g,1})$ of self-equivalences (relative to the boundary) is given by $\mathrm{GC}^{2n}(0)$ (also see Stoll \cite{Sto} for a version of this result for certain odd-dimensional manifolds).
They furthermore proved that the stable rational cohomology of the classifying space of block diffeomorphisms of $W_{g,1}$ is given by $\Schur {\mathrm{GC}^{2n}} {\Pi}$, i.e.\ the Lie graph complex with hairs decorated by $\Pi \defeq \pi_*(\B \mathrm{O}) \tensor \QQ$, corresponding to (formal) Pontrjagin classes.
All of these classes are recovered by the construction $\kappa_d$ of Berglund, and hence our result applies to them as well.

While completely combinatorial, the homology of the hairy Lie graph complex is hard to compute.
By computer calculations in low degrees its structure appears to be complicated (see e.g.\ Brun--Willwacher \cite[§4.6]{BW24}), and by work of Borinsky--Vogtmann \cite{BV} on the asymptotic Euler characteristic of the hairless part, it is known that its homology is very large.
On the other hand, by work of Galatius--Randal-Williams \cite{GR14}, the stable rational cohomology of the classifying space $\B \Diff_\bdry(W_{g,1})$ of diffeomorphisms of $W_{g,1}$ is a simple-to-understand polynomial algebra and in particular much smaller.
Hence there must be many relations between the classes of Berglund--Madsen when evaluated on (smooth) fiber bundles.
Identifying these relations explicitly is a problem of significant interest.
Our \cref{intro:thm:graph_classes} provides the first result in this direction by resolving it for the classes corresponding to graphs with exactly one loop.

\paragraph{The proof.}
Adapting the work of Berglund \cite{Ber}, our map $\kappa_\wheel$ is obtained by choosing a cdga model $\k$ for $B$ and a dualizable $\Cinf$-model $R$ over $\k$ for $E$ (recall that a $\Cinf$-algebra is a strictly commutative $\Ainf$-algebra); using the $\Cinf$-structure maps of $R$ and the $\k$-linear trace of endomorphisms of $R$, we construct a map $\Schur {\mathrm{GC}^\wheel_1} {R} \to \k$.
We then observe that $\Schur {\mathrm{GC}^\wheel_1} {R}$ is quasi-isomorphic to $\HC(\Cinf(R))$, the Connes complex of the free $\Cinf$-algebra on $R$, in such a way that the following diagram commutes
\[
\begin{tikzcd}
  \HH \bigl( \Cinf(R) \bigr) \dar \ar{rr} & & \HH(R) \dar{\tr} \\
  \HC \bigl( \Cinf(R) \bigr) \rar{\eq} & \Schur {\mathrm{GC}^\wheel_1} {R} \rar & \k
\end{tikzcd}
\]
where $\tr$ is the map of \cref{intro:cor:main}.
The left-hand vertical map is (almost) surjective by comparison with the case of free commutative algebras.
Using this, \cref{intro:thm:graph_classes} can be deduced from the following result.

\begin{introtheorem}[see \cref{thm:trace_vanishes}] \label{intro:thm:trace_vanishes}
  Given a fiber bundle $p \colon E \to B$ between nilpotent spaces of finite rational type such that the fiber of $p$ is a compact simply connected topological manifold, let $\k$ be a cdga model for $B$ and $R$ a unital $\Cinf$-algebra in $\k$-modules that models $E$.
  If $R$ is cofibrant and dualizable as a $\k$-module, then the following composite is trivial in the derived category of $\k$-modules
  \[ \rHH_\k(R)  \longto  \HH_\k(R)  \xlongto{\tr}  \k \]
  where $\rHH_\k(R)$ denotes the kernel of the projection $\HH_\k(R) \to R$.
\end{introtheorem}

Just as for \cref{intro:cor:main}, we in fact prove a version of this theorem for equivariant rational models for fiberwise nilpotent spaces over $\B G$.
It is deduced by combining \cref{intro:cor:main} with a result of Dwyer--Weiss--Williams \cite{DWW}, who proved that, for topological fiber bundles, the fiberwise A-theory Euler characteristic (and hence the one for THH) lifts along the fiberwise assembly map.
The condition that $R$ is a $\Cinf$-algebra (instead of just an $\Ainf$-algebra) is required for the map $\HH_\k(R) \to R$ to be well-defined.

\subsubsection{The Becker--Gottlieb transfer}

The \emph{Becker--Gottlieb transfer} of a fibration $f \colon X \to Y$ with finitely dominated fibers is a wrong-way map $f^! \colon \SS[Y] \to \SS[X]$ of the suspension spectra.
It was originally defined by Becker--Gottlieb \cite{BG76} for fibrations whose base is finite-dimensional, and extended by Clapp \cite{Cla} to the general case.
As mentioned in the beginning of the introduction, Lind--Malkiewich \cite{LM} proved that it can be recovered from the THH transfer.
Combining this with (a slight generalization of) \cref{intro:cor:main} yields the following rational model for the Becker--Gottlieb transfer.

\begin{introcorollary}[see \cref{cor:BG}] \label{intro:cor:BG}
  Given a fibration $f \colon X \to Y$ between simply connected spaces of finite rational type such that the fiber of $f$ is simply connected and finitely dominated, let $\k$ be a cdga model for $Y$ and $R$ a unital $\Ainf$-algebra in $\k$-modules that models $X$.
  If $R$ is cofibrant and dualizable as a $\k$-module, then the Becker--Gottlieb transfer $f^! \colon \SS[Y] \to \SS[X]$ is modeled by the map of cochain complexes
  \[ R \longto \k, \qquad  r  \longmapsto  \tr_\k \bigl( \mu^R_2(r, \blank) \bigr) \]
  where $\mu^R_2 \colon R \tensor_\k R \to R$ denotes the multiplication of the $\Ainf$-algebra $R$, and $\tr_\k(\phi)$ the $\k$-linear trace of an endomorphism $\phi \colon R \to R$.
\end{introcorollary}

A parametrized version of the result of Lind--Malkiewich would allow us to prove a parametrized version of \cref{intro:cor:BG} (including for fiberwise nilpotent spaces over $\B G$).
It seems likely that the proof of Lind--Malkiewich (or the proof of Carmeli--Cnossen--Ramzi--Yanovski \cite[§6.2]{CCRY}) generalizes to the parametrized situation, but this has not appeared in the literature so far.

\subsubsection{The space of THH-simple structures} \label{intro:sec:THH-simple}

The space $\Simp^{\A}(X)$ of \emph{simple structures} on a finitely dominated space $X$ is the space of lifts of its A-theory Euler characteristic $\chi \colon \SS \to \A(X)$ along the assembly map $\SS[X] \tensor \A(*) \to \A(X)$ (see e.g.\ \cite[Lecture~13]{LurAKM}).
Given a fibration $E \to B$ with fiber $X$, we can perform this construction fiberwise to obtain a space $\Simp[B]^{\A}(E)$ over $B$, whose fiber over a point $b$ is the space of simple structures on $E_b \eq X$.
We can perform the same construction using THH instead of A-theory, to obtain a space $\Simptr[B](E)$ of \emph{fiberwise THH-simple structures}.
Using \cref{intro:cor:main}, we construct a rational model for $\Simptr[B](E)$.
As we explain below, this is a step towards obtaining a rational model for the classifying space of homeomorphisms (or diffeomorphisms) of a compact simply connected manifold in the concordance stable range.

\begin{introtheorem}[see \cref{thm:trace-simple}] \label{intro:thm:trace-simple}
  Given a fibration $f \colon E \to B$ between nilpotent spaces of finite rational type such that the fiber of $f$ is $2$-connected and finitely dominated, let $\k$ be a cdga model for $B$ and $R$ a $1$-connected unital $\Cinf$-algebra in $\k$-modules that models $E$.
  If $\k \to R$ is a cofibration and $R$ is dualizable as a $\k$-module, then the space $\Simptr[B](E)$ is modeled by the cdga
  \[ \left( \SA_\k \rnHH_\k(R) [-1], d \defeq d_1 + d_2 \right) \]
  where $d_1$ is induced by the differentials of $\k$ and $\rnHH_\k(R)$, and $d_2$ is the unique derivation that restricts to the map $\tr \colon \rnHH_\k(R) \to \k$ of \cref{intro:cor:main}.
  Here $\nHH_\k(R) \subseteq \HH_\k(R)$ denotes the normalized Hochschild complex, and $\rnHH_\k(R)$ the kernel of the projection $\nHH_\k(R) \to R$.
\end{introtheorem}

As for \cref{intro:cor:main}, we in fact prove a version of this theorem for equivariant rational models for fiberwise nilpotent spaces over $\B G$.
The condition that the fiber of $f$ is $2$-connected is required to guarantee that $\Simptr[B](E)$ is connected.
On the algebraic side, we need to pass to the normalized Hochschild complex and assume that $R$ is $1$-connected to obtain a cdga that is concentrated in degrees $\ge 0$.
The inclusion $\nHH_\k(R) \subseteq \HH_\k(R)$ is a quasi-isomorphism, so this does not change anything homotopically.

\paragraph{Relation to classifying spaces of manifolds.}
A further variant $\Simp^{\horbits {\A}{\Cyclic 2}}$ of the space of simple structures is obtained by instead considering the homotopy orbits $\horbits {\A(X)} {\Cyclic 2}$ (for a certain involution) and the corresponding assembly map.
It is a reformulation of work of Burghelea--Lashof \cite{BL} and Waldhausen--Jahren--Rognes \cite{WJR} that, for a compact topological manifold $M$, there is a map
\[ \B \Homeo_\bdry(M)  \longto  \Simp[\widetilde B]^{\horbits {\A}{\Cyclic 2}}(\widetilde E) \]
to the space of fiberwise $\horbits {\A}{\Cyclic 2}$-simple structures on the universal fibration $\widetilde E \to \widetilde B$ over the classifying space of block homeomorphisms of $M$, and that this map is rationally an equivalence in the rational concordance stable range (see also Weiss--Williams \cite{WWIII} for an integral version).
A similar result holds for smooth manifolds and the classifying space $\B \Diff_\bdry(M)$ when one furthermore lifts along the unit map $X \tensor \horbits \SS {\Cyclic 2} \to X \tensor \horbits {\A(*)} {\Cyclic 2}$.

A rational model for the universal covering of the classifying space of block diffeomorphisms (which is rationally equivalent to the classifying space of block homeomorphisms) was constructed by Berglund--Madsen \cite{BM}, and this has been upgraded to an equivariant model for the whole classifying space by Berglund--Stoll \cite{BS24}.
Thus \cref{intro:thm:trace-simple} can be considered as a step towards obtaining a rational model for $\B \Homeo_\bdry(M)$ in the concordance stable range when $M$ is compact and simply connected; since the unit map $\SS \to \THH(*)$ is an equivalence, it is similarly a step towards a rational model for $\B \Diff_\bdry(M)$.

The next step could be to prove \cref{intro:conj:HC}, which would allow to obtain an analog of \cref{intro:thm:trace-simple} for lifts of the Euler characteristic valued in rational negative cyclic homology.
By work of Goodwillie \cite{Goo}, this is closely related to A-theory rationally, so that it would only remain to incorporate the involution of the latter.
This should be tractable rationally, see Bustamente--Farrell--Jiang \cite{BFJ}.

\paragraph{Relation to manifold calculus.}
Using our version of \cref{intro:cor:main} for dg Lie models (see \cref{sec:Lie_models}), one can also obtain a dg Lie model for $\Simptr[B](E)$, see \cref{rem:Simptr_Lie}.
It is closely related to a dg Lie algebra constructed by Willwacher \cite{W23} as a model for the monoid of self-equivalences of the rationalized right $\mathrm{E}_d$-module $\mathcal{F}_M$ of configurations in a $d$-dimensional parallelized manifold $M$, which is the manifold calculus approximation of $\Diff_\bdry(M)$.
His dg Lie algebra is defined as a certain graph complex; its part of loop order $\leq 1$ is essentially the same as the model we obtain for $\Simptr[B](E)$, with Hochschild homology replaced by dihedral homology.

\paragraph{Relation to string topology.}
The notion of a THH-simple space also naturally appears (seemingly independently of the above interpretation as an approximation to a simple space) in string topology and explains the non-homotopy-invariance of the Goresky--Hingston loop coproduct.
Namely, Naef--Safronov \cite{NS} showed that, given a finitely dominated space $X$ equipped with a THH-simple structure, one can construct a loop coproduct that agrees with the Goresky--Hingston coproduct when $X$ is a closed oriented manifold.
In particular, the action of $\aut(X)$ on the loop coproduct factors through the action of $\aut(X)$ on $\Simptr(X)$, which is modeled by \cref{intro:thm:trace-simple}.
Using different methods, Naef--Willwacher \cite{NW} found that the action of $\aut(X)$ on the loop coproduct is given by a formula as in \cref{intro:cor:main} and is generally non-trivial.

\subsection*{Acknowledgments}

First and foremost, we would like to thank Nils Prigge, who was involved in the beginning of this project but declined to be an author.
Secondly, we would like to thank Manuel Krannich for suggesting to us that a rational model for the fiberwise A-theory transfer might allow to prove vanishing of graph characteristic classes, which started this project.
Moreover we would like to thank Alexander Berglund for providing us with a draft of \cite{Ber} and related discussions, and Fabian Hebestreit, Cary Malkiewich, Samuel Muñoz-Echániz, Thomas Nikolaus, Matteo Poletto, Oscar Randal-Williams, Maxime Ramzi, George Raptis, Wolfgang Steimle, Jan Steinebrunner, Ferdinand Wagner, and Kelly Wang for helpful conversations.

While working on this project, Robin Stoll was supported by a postdoctoral scholarship of the Knut and Alice Wallenberg foundation.

%% file: 2preliminaries.tex
\section{Preliminaries}

In this section, we fix our notation and recall basic constructions and facts we will need throughout the main body of the paper.

\subsection{Model categories, \texorpdfstring{$\infty$}{infinity}-categories, animas, and spectra}

This subsection is an abbreviated version of \cite[\Iseccat]{I}.
We will assume familiarity with model categories and the language of higher category theory as covered in \cite[§1 and Appendix A]{LurHTT}.
In particular the term \emph{$\infty$-category} will mean quasi-category, though we work model independently throughout.

\begin{definition}
  Let $\cat M$ be a model category.
  We write $\Underlying(\cat M) \defeq \loc {\cat M} {\We{\cat M}}$ for the \emph{underlying $\infty$-category of $\cat M$}, i.e.\ the $\infty$-categorical localization of $\cat M$ at the class $\We{\cat M}$ of the weak equivalences (see \cite[§1.3.4]{LurHA}).
\end{definition}

Recall that the homotopy category of $\Underlying(\cat M)$ is naturally equivalent to the homotopy category of $\cat M$ (see e.g.\ \kerodon{01MW}).

\subsubsection*{Animas}

\begin{notation}
  We write $\An$ for the $\infty$-category of $\infty$-groupoids and call its objects \emph{animas}.
\end{notation}

Recall that, for an anima $X$, the unstraightening construction yields an equivalence of $\infty$-categories $\Fun(X, \An) \eq \overcat{\An}{X}$ (see \cite[§3.3.2]{LurHTT}).

\begin{definition}
  A map of animas $f \colon X \to Y$ is a \emph{rational equivalence} if it induces an isomorphism on rational homology.
  An anima $L$ is \emph{rational} if the map of animas $f^* \colon \Map(Y, L) \to \Map_{\cat C}(X, L)$ is an equivalence for every rational equivalence $f \colon X \to Y$.
  We write $\An^\QQ \subset \An$ for the full subcategory spanned by the rational animas.
  We denote by $(\blank)_\QQ \colon \An \to \An^\QQ$ the \emph{rationalization functor}, i.e.\ the left adjoint of the inclusion.
\end{definition}

Note that the inclusion $\An^\QQ \subset \An$ indeed admits a left adjoint (see e.g.\ \cite[\Ilemmalocalization]{I}).
We will now recall that the rationalization functor is particularly well-behaved for nilpotent animas.
We begin by recalling the notion of a nilpotent action on a group; the definition we use is taken from Bousfield--Kan \cite[Ch.~II, 4.1]{BK}.

\begin{definition}
  Let $K$ be a group that acts on a group $G$, i.e.\ it comes equipped with a group homomorphism $K \to \Aut_\Grp(G)$.
  We say that this action is \emph{nilpotent} if $G$ has a finite normal series of subgroups (i.e.\ each $A_i$ is normal in $A_{i+1}$)
  \[ 1 = A_0 \subseteq A_1 \subseteq \dots \subseteq A_n = G \]
  such that each $A_i$ is fixed setwise by $K$ and each quotient $\quot {A_{i+1}} {A_i}$ is abelian with trivial $K$-action.
\end{definition}

\begin{definition}
  A connected anima $X$ is \emph{nilpotent} if for some $x \in X$ the action of $\pi_1(X, x)$ on $\pi_n(X, x)$ is nilpotent for all $n \ge 1$.
\end{definition}

On nilpotent animas, a model for the rationalization functor is given by the Bousfield--Kan $\QQ$-completion, see \cite[Ch.~V, Proposition~4.2]{BK}.
The $\QQ$-completion of a nilpotent anima is again nilpotent by \cite[Ch.~II, Lemma~4.8]{BK}; in particular the rationalization of a nilpotent anima is again nilpotent.
Moreover recall that for a nilpotent anima $X$, there are canonical isomorphisms $\pi_n(\rat X) \iso \pi_n(X) \tensor \QQ$ and $\Ho n (\rat X; \ZZ) \iso \Ho n (X; \QQ)$ for $n \ge 1$ (see e.g.\ \cite[Ch.~V, Proposition~3.1]{BK}); for $n = 1$, the expression $\pi_1(X) \tensor \QQ$ denotes the Malcev completion of the nilpotent group $\pi_1(X)$.

\subsubsection*{Spectra}

We will assume familiarity with stable $\infty$-categories and the $\infty$-category of spectra, see e.g.\ \cite[§1]{LurHA}.
We fix the following notation.

\begin{notation}
  We write $\Sp$ for the $\infty$-category of spectra, $\Loopsinf \colon \Sp \to \An$ for the infinite loop functor, $\SS[\blank] = \Suspinf \colon \An \to \Sp$ for its left adjoint, and $\SS \defeq \SS[*]$.
\end{notation}

\begin{definition}
  Let $X$ be an anima.
  A \emph{parametrized spectrum over $X$}, or \emph{$X$-spectrum} for short, is a functor $X \to \Sp$.
  We write $\Sp[X] \defeq \Fun(X, \Sp)$ for the $\infty$-category of $X$-spectra, and $\SS_X \defeq \const_\SS \in \Sp[X]$.
  We furthermore write
  \[ \Loopsinf[X] \colon \Sp[X] \xlongto{(\Loopsinf)_*} \Fun(X, \An) \eq \Anover{X}  \qquad \text{and} \qquad  \Suspinf[X] \colon \Anover{X} \eq \Fun(X, \An) \xlongto{(\Suspinf)_*} \Sp[X] \]
  and sometimes use the alternative notation $\SS_X[\blank] \defeq \Suspinf[X](\blank)$.
\end{definition}

\begin{notation}
  Let $X$ be an anima, $x \colon * \to X$ a map, $E$ an $X$-spectrum, and $n \in \ZZ$.
  Then we write $\pi_n(E, x) \defeq \pi_n(x^* E)$ for the fiberwise homotopy groups of $E$ at the point $x$.
\end{notation}

\begin{definition}
  Let $X$ be an anima.
  An $X$-spectrum $E$ is \emph{rational} if $\pi_n(E, x)$ is uniquely divisible (i.e.\ a rational vector space) for all $n \in \ZZ$ and $x \in X$.
  We denote by $\Sp[X]^\QQ \subseteq \Sp[X]$ the full subcategory spanned by the rational $X$-spectra, and write $\rat {(\blank)} \colon \Sp[X] \to \Sp[X]^\QQ$ for the left adjoint of the inclusion.
  A map of $X$-spectra $f \colon E \to E'$ is a \emph{rational equivalence} if $\rat f$ is an equivalence.
\end{definition}

Note that the functor $\rat {(\blank)}$ exists and is given by $\blank \tensor_X X^*(\H \QQ)$ (see e.g.\ \cite[\Iseccat]{I}).
Furthermore recall that, for an $X$-spectrum $P$ and $x \in X$, there is a natural isomorphism $\pi_*(\rat P, x) \iso \pi_*(P, x) \tensor \QQ$.
In particular a map of $X$-spectra $f \colon E \to E'$ is a rational equivalence if and only if $\pi_*(f, x) \tensor \QQ$ is an isomorphism for all $x \in X$.

As in the case of animas, the rational homotopy theory of parametrized spectra is better behaved on a certain class of nilpotent objects.
The following definition is taken from \cite[§2.2]{Bra}.

\begin{definition} \label{def:nilpotent_spectrum}
  Let $X$ be an anima.
  Then an $X$-spectrum $E$ is \emph{nilpotent} if the action of $\pi_1(X, x)$ on $\pi_n(E, x)$ is nilpotent for each $n \in \ZZ$ and $x \in X$.
  We denote by $\Sp[X]_\nil \subseteq \Sp[X]$ the full subcategory spanned by the nilpotent $X$-spectra.
\end{definition}

\subsection{Differential graded algebra}

In this subsection, which is almost identical to \cite[\Isecdga]{I}, we fix our conventions for homological algebra, and recall elementary properties of the homotopy theory of differential graded algebras and their modules.

\begin{convention}
  The base field is $\QQ$.
  We use cohomological grading conventions, i.e.\ differentials have degree $1$ and we write degrees as superscripts.
\end{convention}

\begin{notation}
  We write $\Ch$ for the category of unbounded cochain complexes, equipped with its usual symmetric monoidal structure.
  We denote by $\shift[n]$ a cohomological shift by $-n$, i.e.\ for a cochain complex $C$, we set $(\shift[n] C)^p \defeq C^{p+ n}$.
  We write $\shift C \defeq \shift[1] C$.
\end{notation}

Since we will mainly require homological shifts by $1$ (i.e.\ cohomological shifts by $-1$), we chose this convention for $\shift[n]$ to keep our notation light.

\subsubsection*{Algebras}

\begin{definition}
  A \emph{dga} is a monoid in $\Ch$.
  For a dga $R$, we denote by $\opmod{R}$ the same underlying cochain complex with the opposite multiplication, i.e.\ we define $\opmod{a} \cdot \opmod{b} \defeq (-1)^{\deg a \deg b} \opmod{(b \cdot a)}$.
\end{definition}

\begin{definition}
  A \emph{cdga} is a commutative monoid in $\Ch$.
  We denote by $\CDGA$ the category of non-negatively graded cdgas, equipped with the projective model structure, i.e.\ the weak equivalences are the quasi-isomorphisms and the fibrations are the degreewise surjections.\footnote{This projective model structure exists by \cite[Theorem~4.3]{BG}.}
  A \emph{cofibration of cdgas} will refer to a map of non-negatively graded cdgas that is a cofibration in $\CDGA$; a \emph{cofibrant cdga} will refer to a non-negatively graded cdga that is cofibrant in $\CDGA$.
  We will sometimes implicitly consider a $\QQ$-algebra to be a dga concentrated in degree $0$.
\end{definition}

\begin{definition}
  A non-negatively graded cdga $R$ is \emph{homologically connected} if $\Coho 0 (R) \iso \QQ$; it is \emph{connected} if $R^0 \iso \QQ$.
\end{definition}

\begin{definition}
  A map of non-negatively graded cdgas $R \to S$ is \emph{quasi-free} if the underlying graded algebra of $S$ is free over $R$, i.e.\ if there is an isomorphism $S \iso R \tensor \SA V$ of graded algebras under $R$ for some graded vector space $V$.
  A quasi-free map of cdgas is \emph{semifree} if the isomorphism can be chosen such that $V$ admits a basis $(v_\alpha)_{\alpha \in I}$ for some well-ordered set $I$ such that $d(v_\alpha) \in R \tensor \SA V_{< \alpha}$, where $V_{< \alpha} \subset V$ is the subspace spanned by those $v_\beta$ with $\beta < \alpha$.
  A semifree map of cdgas is \emph{minimal} if there exists such a basis such that $\deg{v_\alpha} \le \deg{v_\beta}$ whenever $\alpha \le \beta$.
  A cdga $R$ is \emph{quasi-free}, \emph{semifree}, or \emph{minimal} when the map $\QQ \to R$ is so.
  A \emph{minimal model} of a cdga $R$ is a quasi-isomorphism of cdgas $M \to R$ such that $M$ is minimal.
\end{definition}

Note that a semifree map of cdgas is a cofibration (see e.g.\ \cite[Remark~3.4]{Bra}).
Recall that a homologically connected cdga always admits a minimal model (see e.g.\ \cite[Proposition~7.7]{BG}), which is automatically connected.

\subsubsection*{Modules}

\begin{definition}
  Let $R$ be a dga.
  An \emph{$R$-module} is a left module over $R$ in $\Ch$.
  We denote by $\Mod{R}$ the category of $R$-modules, equipped with the projective model structure, i.e.\ the weak equivalences are the quasi-isomorphisms and the fibrations are the degreewise surjections.\footnote{This projective model structure exists by \cite[Theorem~3.3]{BMR}.}
  When $R$ is commutative, we equip $\Mod{R}$ with the closed symmetric monoidal structure given by $\tensor_R$.
\end{definition}

Note that for us a $\QQ$-module is by definition a cochain complex.
By \cite[Theorem~3.3]{BMR}, the model category $\Mod{R}$ is monoidal if $R$ is commutative.
In particular it is enriched over itself, and thus the internal hom-functor $\Hom_R(\blank, N) \colon \opcat{\Mod{R}} \to \Mod{R}$ is right Quillen, and so is $\Hom_R(M, \blank) \colon \Mod{R} \to \Mod{R}$ when $M$ is cofibrant (see e.g.\ \cite[§16.4]{MP}).
Furthermore recall that, if $R \to S$ is a cofibration of cdgas, then $S$ is cofibrant as an $R$-module (e.g.\ by \cite[\Iobscofibcdgas]{I}).

\begin{notation}
  For a cdga $R$ and an $R$-module $M$, we write $\dual M_R$ for $\Hom_R(M, R)$ as an $R$-module.
  Note that, if $R \to S$ is a map of cdgas, then $\dual S_R$ canonically lifts to an $S$-module.
\end{notation}

\begin{definition}
  Let $R$ be a dga.
  An $R$-module $M$ is \emph{quasi-free} if its underlying graded module over the underlying graded algebra of $R$ is free, i.e.\ if there is an isomorphism of graded $R$-modules $M \iso R \tensor V$ for some graded vector space $V$.
  A quasi-free $R$-module is \emph{semifree} if the isomorphism can be chosen such that $V$ admits a basis $(v_\alpha)_{\alpha \in I}$ for some well-ordered set $I$ such that $d(v_\alpha) \in R \tensor V_{< \alpha}$, where $V_{< \alpha} \subset V$ is the subspace spanned by those $v_\beta$ with $\beta < \alpha$.
  A semifree $R$-module is \emph{minimal} if there exists such a basis such that $\deg{v_\alpha} \le \deg{v_\beta}$ whenever $\alpha \le \beta$.
  A \emph{minimal model} of an $R$-module $N$ is a quasi-isomorphism of $R$-modules $M \to N$ such that $M$ is minimal.
\end{definition}

For a non-negatively graded cdga $R$, recall from \cite[§3.2]{Bra} that semifree $R$-modules are cofibrant, that an $R$-module whose homology is bounded below admits a minimal model, and that a minimal model is unique up to isomorphism if it exists.

\subsubsection*{Hochschild homology}

We now recall the bar construction, and the complexes defining Hochschild homology and cyclic homology of an algebra.
For more background, see for example \cite[Chapters~1 and 2]{Lod}.

\begin{definition} \label{def:bar}
  Let $\k$ be a cdga and $\k \to R$ a map of dgas.
  The \emph{two-sided bar construction} $\B_\k(R, R, R)$ of $R$ as a $\k$-algebra is the $(R \tensor_\k \opmod{R})$-module obtained as the total complex of
  \[ \cdots  \xlongto{d_3}  R \tensor_\k R^{\tensor_\k 2} \tensor_\k R  \xlongto{d_2}  R \tensor_\k R \tensor_\k R  \xlongto{d_1}  R \tensor_\k R \]
  where $d_n \defeq \sum_{i = 0}^{n} (-1)^i {\id^{\tensor i}} \tensor \mu_R \tensor \id^{\tensor n - i}$ with $\mu_R$ the multiplication of $R$.
  The augmentation $d_0 = \mu_R \colon R \tensor_\k R \to R$ yields a canonical map $\epsilon \colon \B_\k(R, R, R) \to R$ of $(R \tensor_\k \opmod{R})$-modules.
\end{definition}

Recall that the map $\epsilon$ is a quasi-isomorphism, and that the $(R \tensor_\k \opmod{R})$-module $\B_\k(R, R, R)$ is cofibrant when $R$ is cofibrant as a $\k$-module (see e.g.\ \cite[\Ilemmabarcomplex]{I}).

\begin{definition} \label{def:HH}
  Let $\k$ be a cdga and $\k \to R$ a map of dgas.
  For an $(R \tensor_\k \opmod{R})$-module $M$, we write
  \[ \HH_\k(R, M)  \defeq  \B_\k(R, R, R) \tensor_{R \tensor_\k \opmod{R}} M \]
  for the \emph{Hochschild complex} of $R$ with coefficients in $M$.
  Note that there is a canonical isomorphism $\HH_\k(R, M) \iso \bigoplus_{n \ge 0} M \tensor_\k (\shift R)^{\tensor_\k n}$ of graded $\k$-modules.
  We write $\HH_\k(R) \defeq \HH_\k(R, R)$ and define the \emph{Connes complex} of $R$ to be the quotient
  \[ \HC_\k(R)  \defeq  \shift[-1] \bigoplus_{n \ge 0} (\shift R)^{\tensor_\k n+1}_{\Cyclic {n+1}}  \longtwoheadleftarrow  \shift[-1] \bigoplus_{n \ge 0} (\shift R)^{\tensor_\k n+1}  \iso  \shift[-1] \HH_\k(R, \shift R)  \iso  \HH_\k(R) \]
  where the cyclic group $\Cyclic{n+1}$ acts by cyclic permutations.
\end{definition}

For later use, we also need a version of Hochschild homology of algebras in bimodules, as follows.

\begin{notation} \label{not:cyctensor}
  Given a cdga $\k$ and a map of dgas $\k \to R$, note that the category of $(R \tensor_\k \opmod{R})$-modules has a (generally non-symmetric) monoidal structure $M \tensor_R N$, using the right $R$-module structure of $M$ and the left $R$-module structure of $N$.
  For an $(R \tensor_\k \opmod{R})$-module $M$, we write $M^{\tensor_R n}$ for the $n$-fold tensor product of $M$ with itself in this monoidal structure, and $M^{\cyctensor_R n}$ for the $\k$-module $M^{\tensor_R n} \tensor_{R \tensor_\k \opmod{R}} R$.
  Note that there is a canonical isomorphism $\cyclic{n} \colon M^{\cyctensor_R n} \to M^{\cyctensor_R n}$ sending $m_1 \tensor \dots \tensor m_n$ to $\pm m_n \tensor m_1 \tensor \dots \tensor m_{n-1}$ with the sign determined by the Koszul rule.
\end{notation}

\begin{definition} \label{def:HH_bimodule}
  Let $\k$ be a cdga, let $\k \to R$ be a map of dgas, and let $A$ be a non-unital monoid in the category of $(R \tensor_\k \opmod{R})$-modules with respect to $\tensor_R$.
  Its \emph{Hochschild homology} is the $\k$-module $\HH_R(A)$ defined to be the total complex of
  \begin{gather*}
    \cdots  \xlongto{d_4}  A^{\cyctensor_R 3}  \xlongto{d_3}  A^{\cyctensor_R 2}  \xlongto{d_2}  A^{\cyctensor_R 1}
    \shortintertext{where}
    d_n \defeq (\mu_A \tensor \id^{\tensor n-2}) \after \cyclic{n} + \sum_{i = 1}^{n-1} (-1)^i {\id^{\tensor i-1}} \tensor \mu_A \tensor \id^{\tensor n-i-1}
  \end{gather*}
  with $\mu_A$ the multiplication of $A$.
\end{definition}

Note that this recovers the previous definition of Hochschild homology when $R = \k$.
The following lemma shows that $\HH_\k(R)$ can be recovered from the Hochschild homology of the bimodule $\B_\k(R, R, R)$ equipped with a specific multiplication.

\begin{lemma} \label{lemma:inc_qiso}
  Let $\k$ be a cdga, let $\k \to R$ be a map of dgas, and let $\epsilon \colon B \to R$ be a quasi-isomorphism of $(R \tensor_\k \opmod{R})$-modules such that $B$ is cofibrant.
  Equip $B$ with the associative multiplication $B \tensor_R B \to B$ given by $b \cdot b' \defeq b \epsilon(b')$.
  Then the canonical inclusion
  \[ \inc  \colon  B^{\cyctensor_R 1}  \longto  \HH_R ( B ) \]
  is a quasi-isomorphism.
\end{lemma}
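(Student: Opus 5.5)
The plan is to unwind the differential of $\HH_R(B)$ for this particular multiplication, recognise it as the alternating sum of face maps, and then run the spectral sequence of the filtration by tensor length.

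\emph{Step 1: the differential.} Since $\mu_B(b \tensor b') = b\,\epsilon(b')$, the summand $\id^{\tensor i-1} \tensor \mu_B \tensor \id^{\tensor n-i-1}$ of $d_n$ applies $\epsilon$ to the $(i+1)$-st tensor factor of $B^{\cyctensor_R n}$. Here the resulting factor $\epsilon(b_{i+1}) \in R$ is absorbed by $\tensor_R$. The cyclic summand $(\mu_B \tensor \id^{\tensor n-2}) \after \cyclic{n}$ produces $\pm b_n \epsilon(b_1) \tensor b_2 \tensor \dots$, which after moving $\epsilon(b_1)$ across the cyclic tensor product is (up to the Koszul sign) $\epsilon$ applied to the first factor. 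Hence
\[ d_n = \sum_{j=1}^{n} \pm \partial_j, \qquad \partial_j \defeq \epsilon \text{ applied to the } j\text{-th factor} \colon B^{\cyctensor_R n} \longto B^{\cyctensor_R n-1}. \]
Under the identification $R \tensor_R N \iso N$, the maps $\partial_j$ form a semi-simplicial object $X_{n-1} = B^{\cyctensor_R n}$. The complex $\HH_R(B)$ is its total complex, and $\inc$ is the inclusion of the $0$-th column.

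\emph{Step 2: each face map is a quasi-isomorphism.} Because $B$ is cofibrant, it is in particular cofibrant (hence flat) as a left and as a right $R$-module, and also as an $(R \tensor_\k \opmod{R})$-module. From this I would deduce the following: the functors $B \tensor_R \blank$, $\blank \tensor_R B$ and $\blank \tensor_{R \tensor_\k \opmod R} R$, applied to the relevant tensor powers of $B$, preserve quasi-isomorphisms. To apply this to $\partial_j$, I first rotate cyclically so that the factor $j$ is in the first position. The map $\partial_j$ is then $\epsilon \tensor \id$ on $B \tensor_R (B^{\tensor_R n-1})$ followed by $\blank \tensor_{R\tensor\opmod R} R$, and both are quasi-isomorphisms. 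This step needs a small lemma: $B^{\tensor_R m}$ is again cofibrant as a bimodule, or at least $\tensor_{R \tensor_\k \opmod R} R$ preserves quasi-isomorphisms between the modules that occur. I would prove it by the usual cell-attachment induction, reducing to $B = (R \tensor_\k \opmod R) \tensor_\k V$ with $V$ cofibrant over $\k$.

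\emph{Step 3: spectral sequence.} I would filter $\HH_R(B)$ by tensor length $n$. This is an increasing, exhaustive filtration that is bounded below (it starts at $n = 1$), so the associated spectral sequence converges. The $E_1$ page consists of the groups $\Coho *(B^{\cyctensor_R n})$, with $d_1$ induced by $\sum_j \pm \partial_j$.

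The key claim is that on homology all the $\partial_j$ become equal, say to an isomorphism $\phi_n$, once we use the cyclic rotation, which is the identity on $\Coho*$ after composing with the $\partial$'s. Concretely, I would check that both $\partial_j$ and $\partial_{j'}$ become the same map after composing with a common quasi-isomorphism $B^{\cyctensor_R n-1} \to B^{\cyctensor_R 1}$ that applies $\epsilon$ to all but one factor. Granting this claim, the signs make $d_1$ alternate between $\phi_n$ and $0$, so $E_1$ is exact except in column $1$, and $E_2 = E_\infty = \Coho*(B)$ concentrated in that column. Hence $\inc$ is a quasi-isomorphism.

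The main obstacle is precisely this sign and cyclic-rotation bookkeeping in Step 3, namely showing that the faces agree on homology with the correct alternating signs, including the Koszul sign coming from $\cyclic{n}$. If that bookkeeping becomes unwieldy, I would instead build an explicit contracting homotopy of the columns $n \ge 2$ after base change along $\epsilon$, and transport it back using Step 2.
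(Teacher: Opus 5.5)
Your route (filter by tensor length, compute $E_1$ after passing to cyclic tensor powers) differs from the paper's. The step you defer as ``sign and cyclic-rotation bookkeeping'' is the only substantive step, and as proposed it would not go through.

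First, Step~1 is off by a rotation. The cyclic summand of $d_n$ sends $b_1\tensor\dots\tensor b_n$ to $\pm b_n\epsilon(b_1)\tensor b_2\tensor\dots\tensor b_{n-1}$, which is $\pm\cyclic{n-1}\after\partial_1$, and $\cyclic{n-1}$ is not the identity on $B^{\cyctensor_R n-1}$. Let $\Phi_m\colon B^{\cyctensor_R m}\to B^{\cyctensor_R 1}$ apply $\epsilon$ to all factors but the first. The non-cyclic faces satisfy $\Phi_{n-1}\after\partial_j=\Phi_n$ on the nose, but the cyclic face gives $\Phi_n\after\cyclic{n}$, and this is a different chain map. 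For instance, take $\k=\QQ$, $B=\B_\QQ(R,R,R)$, $n=2$, and $b_1=a\tensor b$, $b_2=c\tensor d$ in bar degree $0$: the two maps give $bcda$ and $dabc$ in $R$, which agree only modulo $[R,R]$. So checking that the faces ``become the same map'' cannot succeed.

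What you need is that $\cyclic{n}$ acts as the identity on $\Coho*(B^{\cyctensor_R n})$, and this is a homotopical fact. The missing argument is as follows. The bimodule maps $\id\tensor\epsilon^{\tensor n-1}$ and $\epsilon^{\tensor n-1}\tensor\id\colon B^{\tensor_R n}\to B$ both become $\epsilon^{\tensor n}$ after composing with the quasi-isomorphism $\epsilon$. Since $B^{\tensor_R n}$ is cofibrant and every object is fibrant, they are homotopic via an $(R\tensor_\k\opmod R)$-linear chain homotopy. Applying $\blank\tensor_{R\tensor_\k\opmod R}R$ then gives $\Phi_n\simeq\Phi_n\after\cyclic{n}$. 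With this, all faces induce the same isomorphism on $E_1$ and your spectral sequence argument goes through. The signs are then harmless: over $\QQ$ a sum of an odd number of signs is invertible, and $d_1^2=0$ forces the coefficients in the even columns to vanish.

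Your fallback fails at the level you state it. After taking cyclic tensor powers, $B^{\cyctensor_R n}\to R\tensor_{R\tensor_\k\opmod R}R$ is not a quasi-isomorphism, because the target is the underived coinvariants, so a contraction built for $R$ cannot be transported back. Performed \emph{before} applying $\blank\tensor_{R\tensor_\k\opmod R}R$, it is exactly the paper's proof. There one writes $\HH_R(B)\iso U\tensor_{R\tensor_\k\opmod R}R$, with $U$ the total complex of the bimodules $B^{\tensor_R n}$. The maps $\epsilon^{\tensor n}$ give a strict map from $U$ to the complex with $R$ in every column and differentials alternately $0$ and $\id_R$, so every face becomes literally $\id_R$. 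Hence $B\to U$ is a quasi-isomorphism of cofibrant bimodules, and only then is the coinvariants functor applied. This needs no spectral sequence and no identification of faces on homology: the homotopy you would construct by hand is absorbed into the fact that $\blank\tensor_{R\tensor_\k\opmod R}R$ preserves quasi-isomorphisms between cofibrant bimodules.

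Finally, your claim that a cofibrant bimodule is ``in particular'' flat over $R$ on each side, and that $B^{\tensor_R m}$ is again cofibrant, tacitly needs $R$ to be cofibrant as a $\k$-module; otherwise the free bimodule $R\tensor_\k R$ need not be flat over $R$. The paper's assertions that the $\epsilon^{\tensor n}$ are quasi-isomorphisms and that $U$ is cofibrant rest on the same point, which is harmless since it holds in all applications.
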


\begin{proof}
  First note that $\HH_R(B) \iso U \tensor_{R \tensor_\k \opmod{R}} R$, where $U$ is the total complex of
  \[ \cdots  \xlongto{d_4}  B^{\tensor_R 3}  \xlongto{d_3}  B^{\tensor_R 2}  \xlongto{d_2}  B \]
  where $d_n  \defeq  \sum_{i = 0}^{n-1} (-1)^i \id^{\tensor i} \tensor \epsilon \tensor \id^{\tensor n-1-i}$.
  The maps $\epsilon^{\tensor n}$ assemble into a map $\pi$ from $U$ to the total complex $V$ of
  \[ \dots  \xlongto{d_4}  R  \xlongto{d_3}  R  \xlongto{d_2}  R \]
  where $d_n \defeq \id_R$ if $n$ is odd and $d_n \defeq 0$ if $n$ is even.
  Since $B$ is a cofibrant $(R \tensor_\k \opmod{R})$-module, the maps $\epsilon^{\tensor n}$ are quasi-isomorphisms, and hence $\pi$ is one as well.
  The projection $\pr \colon V \to R$ onto degree $1$ is clearly a quasi-isomorphism, and the composite of the inclusion $B \to U$ with $\pr \after \pi \colon U \to R$ is equal to $\epsilon$ and thus also a quasi-isomorphism.
  Hence the inclusion $B \to U$ is a quasi-isomorphism.
  Both $B$ and $U$ are cofibrant $(R \tensor \opmod{R})$-modules, and hence the induced map
  \[ B^{\cyctensor_R 1}  =  B \tensor_{R \tensor_\k \opmod{R}} R  \longto  U \tensor_{R \tensor_\k \opmod{R}} R \iso  \HH_R \bigl( B \bigr) \]
  is a quasi-isomorphism by \cite[\Ilemmacofibrantflat]{I}.
\end{proof}

\subsection{Equivariant rational models} \label{sec:eq}

By work of Sullivan \cite{Sul} (also see Bousfield--Gugenheim \cite{BG}), the homotopy category of nilpotent animas of finite rational type is equivalent to the homotopy category of homologically connected cdgas of finite rational type.
We now recall how to rationally model animas that are not necessarily nilpotent by considering them as \emph{fiberwise} nilpotent animas over $\B G$ for some discrete group $G$.
This can be achieved for any anima $X$, for example using the canonical map $X \to \B \pi_1(X)$ (however note that there exist maps of animas that cannot be realized as a map of fiberwise nilpotent animas over $\B G$ for any fixed $G$).
Since fiberwise nilpotent animas over $\B G$ are equivalently nilpotent animas with an action of $G$, they can be modeled by cdgas with an action of $G$.
We begin by recalling the relevant definitions.
Also see \cite{BS,GHT,BZ} for further elaborations on these ideas.

\begin{definition}
  Let $G$ be a group.
  A \emph{$G$-equivariant cdga} is a cdga equipped with a left $G$-action.
  We denote by $\CDGAeq{G} \defeq \Fun(\B G, \CDGA)$ the category of non-negatively graded $G$-equivariant cdgas, equipped with the injective model structure, i.e.\ the weak equivalences are the quasi-isomorphisms and the cofibrations are the cofibrations of the underlying cdgas.\footnote{This injective model structure exists by \cite[Proposition~A.2.8.2]{LurHTT}.}
\end{definition}

\begin{definition}
  A cdga $A$ is of \emph{finite homotopical type} if it admits a minimal model that is finite-dimensional in each degree.
  For a group $G$, we write $\CDGAeq[\ge 1, \fht]{G} \subset \CDGAeq{G}$ for the full subcategory spanned by the $G$-equivariant cdgas whose underlying cdga is homologically connected and of finite homotopical type.
\end{definition}

\begin{definition}
  An anima $X$ is \emph{of finite rational type} if $\Ho n (X; \QQ)$ is finite-dimensional for all $n \ge 0$.
  For an anima $B$, we write $\Anover[\fnil,\ft]{B} \subseteq \Anover{B}$ for the full subcategory spanned by the animas over $B$ such that each fiber is of finite rational type and nilpotent; we furthermore write $\Anover[\QQ,\fnil,\ft]{B} \subseteq \Anover[\fnil,\ft]{B}$ for the full subcategory spanned by those animas over $B$ such that each fiber is additionally rational.
\end{definition}

With notation as above, recall that there is an equivalence of $\infty$-categories
\[ \Real^G  \colon  \opcat{ \Underlying \bigl( \CDGAeq{G} \bigr)_{\ge 1, \fht} }  \xlongto{\eq}  \Anover[\QQ,\fnil,\ft]{\B G} \]
(see e.g.\ \cite[\Iseceq]{I}).

\begin{definition}
  Let $G$ be a group and $\cat I$ a category.
  We say that a diagram $A \colon \cat I \to \opcat{(\CDGAeq[\ge 1, \fht]{G})}$ \emph{models} a diagram $X \colon \cat I \to \Anover[\fnil,\ft]{\B G}$ if it comes equipped with an equivalence $\Real^G \after u \after A \eq \rat X$ of functors $\cat I \to \Anover[\QQ,\fnil,\ft]{\B G}$, where $u \colon \opcat{(\CDGAeq[\ge 1, \fht]{G})} \to \opcat{\Underlying(\CDGAeq{G})_{\ge 1, \fht}}$ is the canonical functor.
\end{definition}

\subsubsection*{Equivariant models for parametrized spectra}

Given a $G$-equivariant cdga $R$ that models an anima $X$ over $\B G$, we now recall from \cite[\Iseceq]{I} how to model $X$-spectra using $G$-equivariant $R$-modules.
This is a generalization of work of Braunack--Mayer \cite{Bra} for the case $G = 1$.

\begin{definition}
  Let $G$ be a group and $R$ a $G$-equivariant cdga.
  A \emph{$G$-equivariant $R$-module} is a module $M$ over the underlying cdga of $R$ together with a left $G$-action on the underlying cochain complex of $M$ such that the module structure map $R \tensor M \to M$ is $G$-equivariant (where $G$ acts diagonally on the tensor product).
  A \emph{map of $G$-equivariant $R$-modules} is a map of the underlying $R$-modules that is $G$-equivariant.
  We denote by $\Modeq{G}{R}$ the category of $G$-equivariant $R$-modules, equipped with the injective model structure, i.e.\ the weak equivalences are the quasi-isomorphisms and the cofibrations are the cofibrations of $R$-modules.%
  \footnote{This injective model structure exists by \cite[Theorem~2.30]{Bar}, noting that $\Modeq{G}{R}$ is equivalent to the category of left sections of the left Quillen presheaf $\B G \to \twoCat$ given by sending the unique object to $\Mod{R}$ and a morphism $g \in G$ to scalar extension along acting by $g$.}
\end{definition}

Equivalently, a $G$-equivariant cdga is a commutative monoid in the category of $G$-equivariant cochain complexes equipped with the symmetric monoidal structure given by the tensor product with the diagonal $G$-action, and a $G$-equivariant module is a module over such an algebra.

\begin{definition}
  For a cdga $R$, we say that an $R$-module $M$ is \emph{of finite homotopical type} if it admits a minimal model $R \tensor V$ such that $V$ is finite-dimensional in each degree; we say that $M$ is \emph{homotopically finite} if it admits a minimal model such that $V$ is finite-dimensional.
  For a group $G$ and a $G$-equivariant cdga $R$, we write $(\Modeq{G}{R})_\fht \subseteq \Modeq{G}{R}$ for the full subcategory spanned by those $G$-equivariant $R$-modules whose underlying $R$-module is of finite homotopical type.
\end{definition}

\begin{definition}
  Let $X$ be an anima.
  We say that an $X$-spectrum $E$ is of \emph{finite rational type} if $\pi_n(E, x) \tensor \QQ$ is finite-dimensional for all $n \in \ZZ$ and $x \in X$.
  We say that an $X$-spectrum $E$ is \emph{bounded below} if there exists an integer $N$ such that $\pi_*(E, x)$ is concentrated in degrees $\ge N$ for all $x \in X$.
  For a group $G$ and an anima $X$ over $\B G$, we denote by $\Sp[X]^{G}_{\nil,\ft,\bbl} \subseteq \Sp[X]$ the full subcategory spanned by those $X$-spectra whose restriction to the fiber of the structure map $X \to \B G$ is nilpotent, of finite rational type, and bounded below; we furthermore write $\Sp[X]^{G,\QQ}_{\nil,\ft,\bbl} \subseteq \Sp[X]^{G}_{\nil,\ft,\bbl}$ for the full subcategory spanned by those $X$-spectra where this restriction is furthermore rational.
\end{definition}

With notation as above, recall from \cite[\IpropBMeq]{I} that, given a group $G$ and a $G$-equivariant homologically connected cofibrant cdga $R$ of finite homotopical type that models a fiberwise nilpotent anima $X$ over $\B G$, there is an equivalence
\[ \Psi_R  \colon  \opcat{\Underlying(\Modeq{G}{R})_\fht}  \xlongto{\eq}  \Sp[X]^{G,\QQ}_{\nil,\ft,\bbl} \]
of $\infty$-categories.
This is a generalization of a result of Braunack--Mayer \cite[Theorem~4.20]{Bra} for the case $G = 1$.

\begin{definition}
  Let $G$ be a group, $R$ a $G$-equivariant homologically connected cofibrant cdga of finite homotopical type that models a fiberwise nilpotent anima $X$ over $\B G$, and $\cat I$ a category.
  We say that a diagram $M \colon \cat I \to \opcat{(\Modeq{G}{R})_\fht}$ \emph{models} a diagram $E \colon \cat I \to \Sp[X]^G_{\nil,\ft,\bbl}$ if it comes equipped with an equivalence $\Psi_R \after u \after M \eq \rat E$ of functors $\cat I \to \Sp[X]^{G,\QQ}_{\nil,\ft,\bbl}$, where $u \colon \opcat{(\Modeq{G}{R})_\fht}  \longto  \opcat{\Underlying(\Modeq{G}{R})_\fht}$ is the canonical functor.
\end{definition}

\subsection{Dualizable objects and the trace}

We recall here the notion of a dualizable object of a monoidal category, as well as the definition of the trace of an endomorphism of a dualizable object.

\begin{definition} \label{def:dualizable}
  An object $C$ of a monoidal category $\cat C$ is \emph{dualizable} if there exists an object $\dual C \in \cat C$ and maps $\coev_C \colon \unit \to C \tensor \dual C$ and $\ev_C \colon \dual C \tensor C \to \unit$ such that the \emph{triangle identities} hold, i.e.\ both of the composites
  \[ C  \xlongto{{\coev_C} \tensor \id}  C \tensor \dual C \tensor C  \xlongto{{\id} \tensor \ev_C}  C  \qquad \text{and} \qquad  \dual C  \xlongto{{\id} \tensor \coev_C}  \dual C \tensor C \tensor \dual C  \xlongto{{\ev_C} \tensor \id}  \dual C \]
  are the identity.
\end{definition}

Note that, when $\cat C$ is closed symmetric monoidal, then the dual $\dual C$ of a dualizable object $C$ is isomorphic to the internal hom $\Hom(C, \unit)$.
To see this, note that $\ev_C$ induces a map $\dual C \to \Hom(C, \unit)$, that $\coev_C$ induces a map $\Hom(C, \unit) \to \dual C$, and that these are inverse to each other by the triangle identities.
More generally, for any object $X \in \cat C$, there is a canonical isomorphism $X \tensor \dual C \iso \Hom(C, X)$.

\begin{definition} \label{def:trace}
  The \emph{trace} of an endomorphism $f \colon C \to C$ of a dualizable object in a symmetric monoidal category $\cat C$ is defined to be the composite endomorphism
  \[ \tr(f)  \colon  \unit  \xlongto{\coev}  C \tensor \dual C  \xlongto{f \tensor \id}  C \tensor \dual C  \iso  \dual C \tensor C  \xlongto{\ev}  \unit \]
  of the unit of $\cat C$.
  When $\cat C = \Mod{R}$ for some cdga $R$, we write $\tr_R(f) \defeq \tr(f)(1)$.
\end{definition}

Recall that the trace is cyclically invariant, i.e.\ given two dualizable objects $C$ and $D$ and two morphisms $f \colon C \to D$ and $g \colon D \to C$, we have $\tr(f \after g) = \tr(g \after f)$ (see e.g.\ \cite[Proposition~2.4]{PS}).

Note that, given a $G$-equivariant cdga $R$ and a $G$-equivariant $R$-module $M$, the evaluation $\Hom_R(M, R) \tensor_R M \to R$ is $G$-equivariant.
In particular the canonical map $M \tensor_R \Hom_R(M, R) \to \Hom_R(M, M)$ is $G$-equivariant.
Hence, if the underlying $R$-module of $M$ is dualizable, then the isomorphism $M \tensor_R \Hom_R(M, R) \iso \Hom_R(M, M)$ is $G$-equivariant, and so is $\coev_M$, since it corresponds under this isomorphism to the $G$-equivariant map $R \to \Hom_R(M, M)$ determined by $1 \mapsto \id$.
Thus also $\tr_R \colon \Hom_R(M, M) \to R$ is $G$-equivariant.

Furthermore note that a finite-dimensional quasi-free module over a cdga $R$ is dualizable.
The following lemma shows that the converse is often true up to quasi-isomorphism.

\begin{lemma} \label{lemma:dualizable_finite}
  Let $R$ be an augmented cdga that is bounded below, and $M$ a cofibrant $R$-module that is dualizable.
  Then $M$ is homotopically finite.
\end{lemma}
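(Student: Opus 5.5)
The plan is to pass to the augmentation: write $\epsilon \colon R \to \QQ$ for the augmentation and consider the cochain complex $\QQ \tensor_R M$. Since $M$ is cofibrant, this computes the derived tensor product. Moreover, the base change functor $\QQ \tensor_R \blank$ is strong symmetric monoidal from $\Mod{R}$ to $\Ch$, so it preserves dualizable objects. Hence $\QQ \tensor_R M$ is dualizable in $\Ch$, i.e.\ finite-dimensional in total, and in particular its homology is finite-dimensional. The strategy is then to take a minimal model $R \tensor V \to M$ and show that $V$ is finite-dimensional by relating $V$ to $\Coho*(\QQ \tensor_R M)$.

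The first step is to produce the minimal model. Because $M$ is dualizable, $M \tensor_R \dual M_R \iso \Hom_R(M, M)$, and $M$ is a retract of a finite cell-like object in the homotopy category: it is compact, since $\Hom_R(M, \blank) \iso \dual M_R \tensor_R \blank$ commutes with filtered colimits. Together with $R$ being bounded below (in the paper's conventions, non-negatively graded), I would argue that $\Coho*(M)$ is bounded below. For this, use that a compact object is a retract of a finite extension of shifts of $R$ (Thomason–Neeman style compact generation by $R$), and such finite extensions have bounded below homology. By the result recalled from \cite[§3.2]{Bra}, $M$ then admits a minimal model $R \tensor V \xrightarrow{\sim} M$. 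This is a quasi-isomorphism between cofibrant modules, so it remains so after applying $\QQ \tensor_R \blank$.

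The second step is the key minimality computation. For a minimal semifree module $R \tensor V$ over an augmented (connected, if needed) $R$, the differential of $\QQ \tensor_R (R \tensor V) = V$ vanishes. The reason is that minimality forces $d(v_\alpha) \in R^{>0} \tensor V$, using degree ordering and the fact that $R^0$ acts through the augmentation up to decomposables. Therefore $V \iso \Coho*(\QQ \tensor_R M)$, which is finite-dimensional by the dualizability argument above. Hence $M$ is homotopically finite.

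I expect the main obstacle to be the claim that the differential on $V$ vanishes, which requires $\bar R = \ker \epsilon$ to lie in positive degrees, or at least that $d(V) \subseteq \bar R \tensor V$ for a minimal model. If $R$ is not connected, I would first replace $R$ by a connected quasi-isomorphic model, assuming the lemma is only used for homologically connected $R$. Equivalently, I would adjust the notion of minimality to mean $d(V) \subseteq \bar R \tensor V$ and construct such a model by killing homology of $\QQ \tensor_R M$ degree by degree, inducting from the bottom degree.
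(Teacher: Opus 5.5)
Your proof is correct, and its skeleton matches the paper's. Base change $\QQ \otimes_R -$ along the augmentation is left Quillen and strong monoidal, so $\QQ \otimes_R M$ is a finite-dimensional complex, and a minimal model $R \otimes V \to M$ then gives $V \cong H^*(\QQ \otimes_R M)$.

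The genuinely different step is how you get bounded-belowness. The paper reads it off the strict coevaluation: writing $\coev_M(1) = \sum_i m_i \otimes \phi_i$ gives $m = \sum_i m_i \phi_i(m)$. So $M$ is generated by finitely many homogeneous elements and is itself bounded below. You instead use compactness of $M$ in the derived category together with the Neeman--Keller description of compact objects as the thick subcategory generated by $R$. The paper's argument is elementary but uses strict dualizability essentially. Yours is heavier but only uses that $M$ is compact. Combined with the fact that derived base change to $\QQ$ preserves compact objects, it would give the statement for merely derived-dualizable (perfect) $M$ too.

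The ``main obstacle'' you flag is not one. Under the paper's definition of minimality, $d(v_\alpha) \in R \otimes V_{<\alpha}$ and degrees are non-decreasing along the well-ordering. So every coefficient in $d(v_\alpha) = \sum_{\beta<\alpha} r_\beta v_\beta$ has degree $\deg v_\alpha + 1 - \deg v_\beta \ge 1$. The augmentation kills all of $R^{\ge 1}$, since $\QQ$ is concentrated in degree $0$, so the induced differential on $V$ vanishes.

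No connectivity of $R$ is needed here, nor any statement about $R^0$ acting through the augmentation. You should also drop the idea of replacing $R$ by a connected model: that would require transporting strict dualizability and the augmentation, which is not automatic.
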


\begin{proof}
  Since $M$ is dualizable, it is in particular finitely generated (writing $\coev_M(1)$ as a finite sum $\sum_i m_i \tensor \phi_i$, any element $m \in M$ can be written as $m = \sum_i m_i \phi_i(m)$, so that the $m_i$ generate $M$).
  Hence $M$ is bounded below since $R$ is.
  Thus $M$ admits a minimal model $V \tensor R$; we want to show that $V$ is finite-dimensional.
  The functor $\blank \tensor_R \QQ$, defined using the augmentation, is a left Quillen functor and hence preserves quasi-isomorphisms between cofibrant $R$-modules; hence $V \eq M \tensor_R \QQ$ where the differential of $V$ is trivial since $V \tensor R$ is minimal.
  The functor $\blank \tensor_R \QQ$ is moreover strong monoidal and thus preserves dualizable objects.
  Hence $M \tensor_R \QQ$ is finite-dimensional and so is $V$.
\end{proof}

Lastly we include the following standard lemma, which we will need later.

\begin{lemma} \label{lemma:Hom_cofibrant}
  Let $R$ be a cdga, and let $M$ and $N$ be cofibrant $R$-modules.
  If $M$ is dualizable, then $\Hom_R(M, N)$ is cofibrant as an $R$-module.
\end{lemma}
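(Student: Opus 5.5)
Since $M$ is dualizable, the canonical map $N \tensor_R \dual M_R \to \Hom_R(M, N)$ is an isomorphism of $R$-modules (see the remark after \cref{def:dualizable}). It therefore suffices to show that $N \tensor_R \dual M_R$ is cofibrant. The model category $\Mod{R}$ is monoidal by \cite[Theorem~3.3]{BMR}, so by the pushout-product axiom the tensor product of two cofibrant $R$-modules is cofibrant. Since $N$ is cofibrant by assumption, the task reduces to showing that $\dual M_R = \Hom_R(M, R)$ is cofibrant.

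Dualizability is symmetric: $\dual M_R$ is dualizable with dual $M$. To see that it is cofibrant, I would exhibit it as a retract of a cofibrant module, using the triangle identities. Since $M$ is cofibrant and dualizable, it is finitely generated (as in the proof of \cref{lemma:dualizable_finite}). Write $\coev_M(1) = \sum_{i=1}^n m_i \tensor \phi_i$.

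The key step is a lifting argument. Choose a cofibrant replacement, i.e.\ a trivial fibration $q \colon P \to \dual M_R$ with $P$ cofibrant. We want a section. Let $\dual q \colon \dual{(\dual M_R)} \iso M \to \dual P_R$ be the dual of $q$. The cleaner route is to work with homs directly:
\[ \Hom_R(\dual M_R, P) \iso P \tensor_R M \to \dual M_R \tensor_R M \iso \Hom_R(\dual M_R, \dual M_R). \]
The middle map is $q \tensor \id_M$. This is surjective, since $q$ is surjective and tensoring preserves surjections. Hence $\id_{\dual M_R}$ lifts to a map $s \colon \dual M_R \to P$ with $q s = \id$. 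Because the isomorphisms above are natural in the target, this $s$ is a genuine map of $R$-modules, commuting with differentials: it is a degree-$0$ cycle lifting a cycle.

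The main obstacle is exactly this point. Surjectivity of $q \tensor \id$ in each degree gives only a graded preimage of $\id$, which need not be a cycle. To get a cycle, I would instead use that $q \tensor \id_M$ is a trivial fibration. It is surjective, and it is a quasi-isomorphism because $- \tensor_R M$ preserves quasi-isomorphisms: $M$ is cofibrant, hence flat in the sense of \cite[\Ilemmacofibrantflat]{I}. A surjective quasi-isomorphism induces a surjection on cycles, since its kernel is acyclic. So the cycle $\id$ lifts to a cycle $s$.

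Then $\dual M_R$ is a retract of the cofibrant module $P$ and hence cofibrant. Consequently $\Hom_R(M, N) \iso N \tensor_R \dual M_R$ is cofibrant, as claimed.
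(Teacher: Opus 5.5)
Your argument is correct. After the common reduction to cofibrancy of $\dual{M}_R$ (via $\Hom_R(M, N) \iso N \otimes_R \dual{M}_R$ and the pushout-product axiom), it takes a different route from the paper.

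The paper proves that $M$ itself is a retract of a \emph{finite} cell complex. Since $\Mod{R}$ is compactly generated, $M$ is a retract of a sequential cell complex $C = \colim_i C_i$. Since $\Hom_R(M, \blank) \iso (\blank) \otimes_R \dual{M}_R$ commutes with filtered colimits, the inclusion $M \to C$ factors through some finite stage $C_i$. Applying $\Hom_R(\blank, R)$, which preserves retracts and sends finite cell complexes to finite cell complexes, shows that $\dual{M}_R$ is a retract of a finite cell complex.

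You instead use that $\dual{M}_R$ is dualizable with dual $M$. Postcomposition with a trivial fibration $q$ on $\Hom_R(\dual{M}_R, \blank)$ is then identified with $q \otimes \id_M$, which is again a surjective quasi-isomorphism because cofibrant modules are homotopically flat. Lifting the cycle $\id$ then splits a cofibrant replacement.

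What each route buys:
\begin{itemize}
\item Your route needs neither cell structures nor compactness. It actually shows that any dualizable module with homotopically flat dual has the left lifting property against trivial fibrations, so you could skip choosing $P$ altogether.
\item The paper's route yields the additional structural fact that $M$ and $\dual{M}_R$ are retracts of finite cell complexes.
\end{itemize}

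You apply the flatness lemma to a quasi-isomorphism whose target $\dual{M}_R$ is not yet known to be cofibrant. This is fine: it is the form in which the paper itself uses \cite[\Ilemmacofibrantflat]{I}, for example in the proof of \cref{lemma:HH_qiso}.

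In a write-up, drop the unused remarks on finite generation and on $\dual{q}$. Also drop the premature claim that a graded preimage of $\id$ is automatically a chain map; your next paragraph correctly repairs it.
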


\begin{proof}
  First note that $\Hom_R(M, N) \iso N \tensor_R \Hom_R(M, R)$, so that it is enough to prove that $\Hom_R(M, R)$ is cofibrant.
  We will now prove that $M$ is a retract of a finite cell complex.
  Since $\Hom_R(\blank, R)$ preserves retracts and finite cell complexes, and retracts of cell complexes are cofibrant, this will complete the proof.
  The model category $\Mod{R}$ is compactly generated by \cite[Theorem~3.3]{BMR}, so that the cofibrant object $M$ is a retract of a sequential cell complex $C$, which is the filtered colimit of its finite subcomplexes $C_i$.
  We now observe that
  \[ \Hom_R(M, \colim_i C_i)  \iso  (\colim_i C_i) \tensor_R \dual{M}_R  \iso  \colim_i ( C_i \tensor_R \dual{M}_R )  \iso  \colim_i \Hom_R(M, C_i) \]
  so that the inclusion $M \to C$ factors through some $C_i$.
  In particular $M$ is a retract of the finite cell complex $C_i$.
\end{proof}

\subsection{Topological Hochschild homology} \label{sec:THH}

In this subsection, we recall the construction of fiberwise topological Hochschild homology (THH) of animas, and the fiberwise THH transfer.
This uses the definition of THH as a higher categorical trace due to Hoyois--Scherotzke--Sibilla \cite[§4.5]{HSS} (see also Carmeli--Cnossen--Ramzi--Yanovski \cite[§4]{CCRY}).
This subsection is an abbreviated and simplified version of \cite[\IsecTHH]{I}; see there for more details.

\begin{notation}
  We write $\PrLLst$ for the $\infty$-category of presentable stable $\infty$-categories and strongly left adjoint functors, i.e.\ the left adjoint functors whose right adjoint is also left adjoint.
\end{notation}

Recall that there is a trace (or dimension) functor $\Dim \colon \PrLLst \to \Sp$ (see \cite[§4.5]{HSS} or \cite[Definition~4.1]{CCRY}).
Furthermore note that the construction $\Sp[\blank] = \Fun(\blank, \Sp)$ is a functor $\An \to \PrLLst$ where a map of animas $f \colon X \to Y$ is sent to the left adjoint $f_! \colon \Sp[X] \to \Sp[Y]$ of the restriction functor $f^* \colon \Sp[Y] \to \Sp[X]$.

\begin{notation}
  We write
  \[ \THH  \colon  \An  \xlongto{\Sp[\blank]}  \PrLLst  \xlongto{\Dim}  \Sp \]
  for the \emph{topological Hochschild homology} of animas.
  For an anima $B$, we write
  \[ \THH_B  \colon  \Anover{B}  \eq  \Fun(B, \An)  \xlongto{(\THH)_*}  \Fun(B, \Sp)  =  \Sp[B] \]
  for the \emph{fiberwise topological Hochschild homology} of animas over $B$.
\end{notation}

There is also a definition of $\THH$ for stable $\infty$-categories (see e.g.\ \cite[Definition~4.3]{HNS}), and $\THH$ of an anima $X$ is often defined as $\THH$ of the $\infty$-category of compact objects in $\Sp[X]$.
This agrees with the definition we use by \cite[Proposition~4.24]{HSS} (see also \cite[Remark~4.38]{HNS}).

Furthermore recall that, given a map $f \colon X \to Y$ of animas, the right adjoint $f_*$ of $f^*$ is itself left adjoint when the fibers of $f$ are compact; in particular $f^*$ is a map of $\PrLLst$ (see e.g.\ \cite[Example~4.27 and Lemma~4.21]{CCRY}).
Similarly, given a map $f \colon X \to Y$ of animas over $B$, we can consider it as a map in $\Fun(B, \An)$ and apply $\Sp[\blank]$ pointwise to obtain a map $f_!$ in $\Fun(B, \PrLLst)$.
We can take pointwise right adjoints of this map; when the fibers of $f$ are compact, this again yields a map $f^*$ of $\Fun(B, \PrLLst)$.

\begin{notation}
  For a map $f \colon X \to Y$ of animas, we write
  \[ f^* \defeq \Dim(f^*)  \colon  \THH(Y)  \longto  \THH(X) \]
  and call it the \emph{THH transfer} of $f$.
  For a map $f \colon X \to Y$ of animas over an anima $B$, we write
  \[ f^* \defeq \Dim_*(f^*)  \colon  \THH_B(Y)  \longto  \THH_B(X) \]
  and call it the \emph{fiberwise THH transfer} of $f$.
\end{notation}

Recall that there is a natural equivalence $\THH(X) \eq \SS[\L X]$, where $\L X = \Map(\Sphere 1, X)$ denotes the free loop space of $X$ (see e.g.\ \cite[Theorem~4.40]{CCRY}).
Hence there is a natural equivalence $\THH_B(X) \eq \SS_B[\L_B X]$, where $\L_B X$ denotes the fiberwise free loop space of $X$, given by fiberwise applying $\L (\blank)$.

\begin{definition} \label{def:THH_assembly}
  The \emph{assembly map} is the unique natural transformation $\alpha \colon \SS[X] \to \THH(X)$ that restricts to the equivalence $\SS \eq \THH(*)$ for $X = *$.
  For an anima $X$ over an anima $B$, the \emph{fiberwise assembly map} is the map $\SS_B[X] \to \THH_B(X)$ given by applying $\alpha$ to $X \colon B \to \An$.
\end{definition}

%% file: 2A_infty.tex
\section{\texorpdfstring{$\Ainf$}{A∞}-preliminaries} \label{sec:Ainf}

In this section, we recall the definitions of $\Ainf$-algebras, bimodules over them, and various related constructions, generalized slightly to take place over a base cdga.
In addition, we will prove various elementary lemmas that we require later.

The concept of an $\Ainf$-algebra was originally introduced by Stasheff \cite{Sta} and has found many applications since then.
See Lefèvre-Hasegawa \cite{Lef} for a comprehensive treatment, and Keller \cite{Kel} for a survey.
We will follow Getzler--Jones \cite{GJ} with our definitions, and the conventions of \cite{Kel} when dealing with explicit formulas.

\begin{notation}
  Throughout this section, we fix a cdga $\k$.
\end{notation}

\subsection{Coalgebras, comodules, and the cotensor product}

We will define $\Ainf$-algebras as coderivations on a tensor coalgebra; thus we begin by recalling coalgebras, comodules, and the cotensor product.

\begin{definition}
  A \emph{coalgebra over $\k$} is a comonoid in $\Mod{\k}$.
  Given a coalgebra $C$ over $\k$, a \emph{left comodule}, \emph{right comodule}, or \emph{bicomodule} over $C$ is an object of $\Mod{\k}$ equipped with the respective structure.
\end{definition}

\begin{definition}
  Let $C$ be a coalgebra over $\k$, $M$ a right $C$-comodule, and $N$ a left $C$-comodule.
  Then we define their \emph{cotensor product} to be
  \[ M \cotensor^C N  \defeq  \equalizer \bigl( M \tensor_\k N  \rightrightarrows  M \tensor_\k C \tensor_\k N \bigr) \]
  where the two maps are induced by the structure maps of $M$ and $N$, respectively.
\end{definition}

Note that the cotensor product is canonically functorial in $(M, N)$, and that the counit of $C$ and the structure maps of  $M$ and $N$ induce mutually inverse natural isomorphisms $C \cotensor^C N \iso N$ and $M \cotensor^C C \iso M$.
Unfortunately, when $M$ is a $B$-$C$-bicomodule and $N$ is a $C$-$D$-bicomodule, it is in general not clear how to promote $M \cotensor^C N$ to a $B$-$D$-bicomodule since the functors $B \tensor_\k \blank$ and $\blank \tensor_\k D$ do not necessarily preserve equalizers.
However, when $M$ is quasi-cofree as a right $C$-comodule, i.e.\ when there is an isomorphism $M \iso V \tensor_\k C$ of graded comodules over the graded coalgebra underlying $C$ for some graded $\k$-module $V$, then there is a natural isomorphism $(V \tensor_\k C) \cotensor^C N \iso V \tensor_\k N$ (as in the case $V = \k$ above).
This implies that the canonical natural map
\[ X \tensor_\k (M \cotensor^C N) \tensor_\k Y  \longto  (X \tensor_\k M) \cotensor^C (N \tensor_\k Y) \]
is an isomorphism for all $\k$-modules $X$ and $Y$.
Hence, in this situation, we can equip $M \cotensor^C N$ with a $B$-$D$-bicomodule structure; the same argument applies when $N$ is quasi-cofree as a left $C$-comodule.
All cotensor products we will encounter are going to be of one of these two forms, so that this suffices for our purposes.
Lastly we note that there is a canonical natural isomorphism (of $A$-$D$-bicomodules)
\[ L \cotensor^B (M \cotensor^C N)  \iso  (L \cotensor^B M) \cotensor^C N \]
when $M$ is quasi-cofree as a $B$-$C$-bicomodule, i.e.\ when there exists an isomorphism $M \iso B \tensor_\k V \tensor_\k C$ of graded $B$-$C$-bicomodules.

\subsection{\texorpdfstring{$\Ainf$}{A∞}-algebras}

We now define $\Ainf$-algebras, following Getzler--Jones \cite[§1]{GJ}; also see \cite[§2]{Kel}.

\begin{notation}
  For a $\k$-module $M$, we write $\coT_\k M$ for the cofree counital conilpotent coalgebra on $M$, i.e.\ the $\k$-module $\bigoplus_{n \ge 0} M^{\tensor_\k n}$ equipped with its canonical coalgebra structure.
  We similarly write $\rcoT_\k M \defeq \bigoplus_{n \ge 1} M^{\tensor_\k n}$ for the cofree non-counital conilpotent coalgebra on $M$.
  We omit the subscripts when $\k = \QQ$.
\end{notation}

\begin{definition} \label{def:Ainf}
  An \emph{$\Ainf$-algebra over $\k$} consists of a $\k$-module $R$ together with a differential $\mu^R$ on $\coT_\k (\shift R)$ of degree $1$ such that its linear part $\mu^R_1 \colon \shift R \to \shift R$ is the differential of $\shift R$ and $\B(R) \defeq (\coT_\k (\shift R), \mu^R)$ is a coaugmented coalgebra in $\k$-modules (with coaugmentation given by the canonical inclusion $\eta \colon \k \to \coT_\k (\shift R)$).
  A \emph{morphism of $\Ainf$-algebras} $f \colon R \to S$ is a map $\B(f) \colon \B(R) \to \B(S)$ of coaugmented coalgebras in $\k$-modules.
  We denote the category of $\Ainf$-algebras over $\k$ by $\AinfAlg$.
  We furthermore write $\rB(R) \defeq \quot{\B(R)}{\eta}$ for the non-counital coalgebra $\rcoT_\k(\shift R)$, equipped with the differential induced by $\mu_R$.
\end{definition}

Note that our $\Ainf$-algebras are in general non-unital.

\begin{observation}
  An $\Ainf$-algebra structure on a $\k$-module $R$ equivalently consists of a collection $(\mu^R_n)_{n \ge 1}$ of $\k$-linear maps of degree $1$
  \[ \mu^R_n \colon (\shift R)^{\otimes_\k n} \longto \shift R \]
  such that $\mu^R_1$ is the differential of $\shift R$ and the equation
  \begin{equation} \label{eq:Ainf}
    \sum_{r+s+t = n} \mu^R_{r+1+t} \circ ({\id^{\otimes r}} \otimes \mu^R_s \otimes \id^{\otimes t}) = 0
  \end{equation}
  holds for all $n \ge 1$.
  
  A morphism $f \colon R \to S$ is equivalently a collection $(f_n)_{n \ge 1}$ of $\k$-linear maps of degree $0$
  \[ f_n \colon (\shift R)^{\otimes_\k n} \longto \shift S\]
  such that
  \begin{equation} \label{eq:Ainf_map}
    \sum_{n_1 + \dots + n_i = n} \mu^{S}_i \circ (f_{n_1} \otimes \dots \otimes f_{n_i}) = \sum_{r+s+t = n} f_{r+1+t}\circ ({\id^{\otimes r}} \otimes \mu^R_s \otimes \id^{\otimes t})
  \end{equation}
  holds for all $n \ge 1$.
  The composite of two morphisms $f \colon R \to S$ and $g \colon S \to S'$ has structure maps
  \[ (g \circ f)_n  =  \sum_{n_1 + \dots + n_i = n} g_i \circ (f_{n_1} \otimes \dots \otimes f_{n_i}) \]
  and the identity $\id \colon R \to R$ has structure maps $\id_1 = \id$ and $\id_n = 0$ for $n \ge 2$.
\end{observation}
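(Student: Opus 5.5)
The Observation has two parts: unpacking a square-zero-type differential on $\coT_\k(\shift R)$ into components, and unpacking a coalgebra map into components. Both reduce to the universal property of the cofree conilpotent coalgebra. For the first part, I would use the standard fact that a coderivation $D$ of $\coT_\k(\shift R)$ (over $\k$) that is compatible with the coaugmentation, i.e.\ $D \after \eta = 0$, is uniquely determined by its corestriction $\pr_1 \after D \colon \coT_\k(\shift R) \to \shift R$. Conversely, any family of $\k$-linear maps $\mu_n \colon (\shift R)^{\tensor_\k n} \to \shift R$ for $n \ge 1$ (with $\mu_0 = 0$) extends uniquely to such a coderivation. On the component $(\shift R)^{\tensor n} \to (\shift R)^{\tensor(r+1+t)}$, this coderivation is given by
\[ \sum_{r+s+t = n} {\id^{\tensor r}} \tensor \mu_s \tensor \id^{\tensor t}. \]
I would verify this directly by writing down the comultiplication (deconcatenation) and checking the coLeibniz rule componentwise. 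Since everything is defined by finite sums of $\k$-linear maps, the base cdga $\k$ causes no difficulty.

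Next, I would check that "differential" means a degree-$1$ coderivation $D$ with $D^2 = 0$. Here $D^2 = \frac12[D,D]$ is again a coderivation, because $D$ has odd degree. So $D^2 = 0$ if and only if $\pr_1 \after D^2 = 0$. Computing $\pr_1 \after D \after D$ on $(\shift R)^{\tensor n}$ gives exactly the left-hand side of \eqref{eq:Ainf}. The requirement that the linear part equals the differential of $\shift R$ fixes $\mu_1$.

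For morphisms, I would similarly use that a map of coaugmented conilpotent coalgebras $F \colon \coT_\k(\shift R) \to \coT_\k(\shift S)$ is determined by $\pr_1 \after F$, which vanishes on $\k$. Conversely, the components $f_n$ extend uniquely to
\[ F = \sum f_{n_1} \tensor \dots \tensor f_{n_i} \]
on each tensor power. Compatibility with the differentials, $\mu^S \after F = F \after \mu^R$, is a statement about two $(F,F)$-coderivations. It therefore suffices to compare their corestrictions, which yields \eqref{eq:Ainf_map}. The composite formula follows by computing $\pr_1 \after G \after F$, and the identity has $\pr_1 \after \id = $ projection onto the linear part.

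The only step needing genuine care is the statement that coderivations, and maps into the cofree conilpotent coalgebra, are determined by their corestriction, together with the sign bookkeeping. I would handle the signs by working entirely in the shifted setting, where they follow from the Koszul rule. Conilpotence guarantees that the sums are finite.
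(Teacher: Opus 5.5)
Your proposal is correct. It is the standard unwinding via the cofreeness of the conilpotent coalgebra $\coT_\k(\shift R)$ (applied to coderivations, to coalgebra maps, and to $(F,F)$-coderivations), which is exactly what the paper takes for granted here: it states this as an observation without proof, citing Getzler--Jones and Keller.

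One point needs a small fix, and the paper's own phrasing also glosses over it. When $d_\k \neq 0$, the full differential $\mu^R$ (and so $\mu^R_1$) is a $\k$-derivation rather than $\k$-linear, and $\mu^R \circ \eta = \eta \circ d_\k$ rather than $0$. So the corestriction argument should be applied to $\mu^R$ minus the tensor-product differential, which is a $\k$-linear coderivation vanishing on $\k$. The same applies on the morphism side to $\mu^S \circ F - F \circ \mu^R$, which is $\k$-linear.
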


\begin{remark}
  Recall that $\shift(\blank)$ denotes a cohomological shift by $-1$, so that $\mu^R_n$ corresponds to a map $R^{\tensor_\k n} \to R$ of degree $2 - n$.
  There are different conventions in the literature for the precise form of this correspondence.
  For example Mescher \cite{Mes} uses maps $\bar \mu_n \colon R^{\otimes n} \to R$ corresponding to our $\mu_n$ via
  \[ \mu_n(\shift a_1 \otimes \dots \otimes \shift a_n) = \shift \bar \mu_n(a_1 \otimes \dots \otimes a_n) \]
  and Getzler--Jones \cite{GJ} and Lefèvre-Hasegawa \cite{Lef} use maps $m_n \colon R^{\otimes n} \to R$ corresponding to those via
  \[ m_n(a_1 \otimes \dots \otimes a_n) = (-1)^{\sum_i (n - i) (\deg {a_i} + 1)} \bar \mu_n(a_1 \otimes \dots \otimes a_n) \]
  or equivalently $\mu_n = \shift \circ m_n \circ (\shift[-1])^{\otimes n}$.
  See Tradler \cite[Proposition~1.4]{Tra} for a comparison.
\end{remark}

\begin{definition}
  We say that a morphism $f \colon R \to S$ of $\Ainf$-algebras over $\k$ is a \emph{quasi-isomorphism} when its linear part $f_1 \colon \shift R \to \shift S$ is a quasi-isomorphism.
  We say that $f$ is \emph{strict} when $f_n = 0$ for all $n \ge 2$.
\end{definition}

\begin{definition}
  An $\Ainf$-algebra $R$ over $\k$ is \emph{unital} if it is equipped with a distinguished cocycle $1 \in R^0$ such that
  \[ \mu_2(\shift 1 \otimes \shift a) = \shift a = (-1)^{\deg {a}} \mu_2(\shift a \otimes \shift 1) \]
  for all $a \in R$ and such that $\mu_n(x_1 \tensor \dots \tensor x_n) = 0$ whenever $n \ge 3$ and $x_i = \shift 1$ for some $i$.
  
  Let $R$ and $S$ be unital $\Ainf$-algebras over $\k$; a map $f \colon R \to S$ is \emph{unital} if
  \[ f_1(\shift 1) = \shift 1  \qquad \text{and} \qquad  f_n(x_1 \tensor \dots \tensor x_n) = 0 \]
  whenever $n \ge 2$ and $x_i = \shift 1$ for some $i$.
\end{definition}

\begin{observation} \label{obs:dga_as_Ainf}
  Let $\k \to R$ be a morphism of dgas.
  Then $R$ becomes a unital $\Ainf$-algebra over $\k$ by defining
  \[ \mu_1(\shift a) \defeq - \shift d(a), \qquad \mu_2(\shift a \tensor \shift b) \defeq (-1)^{\deg a} \shift (a \cdot b), \]
  and $\mu_n \defeq 0$ for $n \ge 3$.
  A map $f \colon R \to S$ of dgas over $\k$ yields a strict, unital map of unital $\Ainf$-algebras over $\k$ by defining $f_1(\shift a) \defeq \shift f(a)$ and $f_n \defeq 0$ for $n > 1$.
\end{observation}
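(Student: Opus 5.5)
The plan is to verify the $\Ainf$-relations \eqref{eq:Ainf} and the unit axioms directly from the formulas, and then check \eqref{eq:Ainf_map} for $f$.

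\textbf{$\k$-linearity.} First, I will note that $\mu_1$ and $\mu_2$ are $\k$-linear of degree $1$ as maps $\shift R \to \shift R$ and $(\shift R)^{\tensor_\k 2} \to \shift R$. For $\mu_2$ this amounts to the multiplication of $R$ descending to $R \tensor_\k R$. That holds because $\k \to R$ is a map of dgas making $R$ an algebra in $\k$-modules. The only sign to keep track of is the one from moving $\shift$ past elements.

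\textbf{The $\Ainf$-relations.} Since $\mu_n = 0$ for $n \ge 3$, a term $\mu_{r+1+t} \after (\id^{\tensor r} \tensor \mu_s \tensor \id^{\tensor t})$ can be nonzero only if $r+1+t \le 2$ and $s \le 2$. This forces $n = r+s+t \le 3$, so \eqref{eq:Ainf} is automatic for $n \ge 4$, and I only need $n = 1, 2, 3$.

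\emph{$n=1$.} The relation $\mu_1^2 = 0$ is just $d^2 = 0$.

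\emph{$n=2$.} The relation is $\mu_1\mu_2 + \mu_2(\mu_1 \tensor \id) + \mu_2(\id \tensor \mu_1) = 0$. Evaluating on $\shift a \tensor \shift b$, with the Koszul sign $(-1)^{\deg{a}-1}$ from passing $\mu_1$ past $\shift a$, the three terms are
\[ (-1)^{\deg a + 1}\shift\bigl( d(a)b + (-1)^{\deg a} a\,d(b) \bigr), \qquad (-1)^{\deg a}\shift\bigl( d(a)b \bigr), \qquad \shift\bigl( a\,d(b) \bigr). \]
The first is obtained by expanding $d(ab)$ with the Leibniz rule. These sum to zero.

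\emph{$n=3$.} The relation reads $\mu_2(\mu_2 \tensor \id) + \mu_2(\id \tensor \mu_2) = 0$ on $\shift a \tensor \shift b \tensor \shift c$. The first term is $(-1)^{\deg a}(-1)^{\deg a + \deg b}\shift(abc) = (-1)^{\deg b}\shift(abc)$. The second carries the Koszul sign $(-1)^{\deg a - 1}$ and equals $(-1)^{\deg a - 1}(-1)^{\deg b}(-1)^{\deg a}\shift(abc) = -(-1)^{\deg b}\shift(abc)$. So the two cancel by associativity.

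I expect this bookkeeping of signs, particularly in $n=3$ and in the Koszul sign for $\id \tensor \mu$, to be the only real obstacle. The structure of the argument itself is forced.

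\textbf{Unitality.} The element $1 \in R^0$ is a cocycle. We have $\mu_2(\shift 1 \tensor \shift a) = (-1)^0\shift(1\cdot a) = \shift a$ and $\mu_2(\shift a \tensor \shift 1) = (-1)^{\deg a}\shift a$, which are exactly the required identities. The vanishing of $\mu_n$ on inputs containing $\shift 1$ for $n \ge 3$ is vacuous, since these $\mu_n$ are zero.

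\textbf{The map $f$.} For $f$ with $f_1 = \shift f$ and $f_n = 0$ for $n \ge 2$, equation \eqref{eq:Ainf_map} is vacuous for $n \ge 3$: every summand on both sides involves some $f_{m}$ with $m \ge 2$ or some $\mu_{m}$ with $m \ge 3$.

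\emph{$n=1$.} The equation says $f$ commutes with the differentials.

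\emph{$n=2$.} The equation reads $\mu_2^S(f_1 \tensor f_1) = f_1\mu_2^R$. Because $f$ has degree $0$, $\deg{f(a)} = \deg{a}$, so the signs $(-1)^{\deg a}$ on the two sides match, and the equation holds by multiplicativity of $f$.

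\emph{Unitality.} We have $f_1(\shift 1) = \shift f(1) = \shift 1$, and the condition on $f_n$ for $n \ge 2$ is vacuous. Hence $f$ is a strict unital map.
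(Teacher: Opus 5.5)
Your proposal is correct and is the direct check that the paper leaves implicit (the statement is recorded as an observation without proof). Your signs for the $n=2$ and $n=3$ relations, the unit identities, the $n \le 2$ cases of \eqref{eq:Ainf_map}, and the vanishing argument for larger $n$ are all right.

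Two small precisions. First, $\mu_1 = -\shift d$ is not literally $\k$-linear when $\k$ has nonzero differential; it is a derivation, and the definition only requires it to be the Koszul-signed differential of $\shift R$. Second, for $\mu_2$ to descend to $(\shift R)^{\otimes_\k 2}$, the image of $\k$ must lie in the graded center of $R$; this is implicit in the paper's standing conventions.
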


Note that this in particular applies to the terminal morphism from $\k$ to the trivial dga $0$, so that $0$ becomes a unital $\Ainf$-algebra over $\k$ with $\B(0) \iso \k$.
Also note that, for a unital $\Ainf$-algebra $R$, the map of $\k$-modules $\eta \colon \shift \k \to \shift R$ determined by $\eta(\shift 1) = \shift 1$ is a strict unital map of $\Ainf$-algebras.

\subsection{\texorpdfstring{$\Ainf$}{A∞}-bimodules}

We now introduce (bi)modules over an $\Ainf$-algebra, following Getzler--Jones \cite[§3]{GJ}; also see \cite[§3]{Kel}.

\begin{definition}
  Let $R$ and $S$ be $\Ainf$-algebras over $\k$.
  An \emph{$R$-$S$-bimodule} $M$ consists of a $\k$-module $M$ together with a differential $\mu^M$ on $\coT_\k (\shift R) \tensor_\k M \tensor_\k \coT_\k (\shift S)$ of degree $1$, such that its linear part $\mu^M_{0,0} \colon M \to M$ is the differential of $M$ and
  \[ \B(R, M, S) \defeq (\coT_\k (\shift R) \tensor_\k M \tensor_\k \coT_\k (\shift S), \mu^M) \]
  is a $\B(R)$-$\B(S)$-bicomodule in $\k$-modules.
  A \emph{morphism} of $R$-$S$-modules $f \colon M \to N$ of degree $d$ is a map of $\B(R)$-$\B(S)$-bicomodules in $\k$-modules $\B(f) \colon \B(R, M, S) \to \B(R, N, S)$ of degree $d$.
  We denote the category of $R$-$S$-bimodules by $\BMod{R}{S}$; it is enriched in graded vector spaces.
  A \emph{left $R$-module} is an $R$-$0$-bimodule, and a \emph{left $S$-module} is a $0$-$S$-bimodule; we write $\LMod{R} \defeq \BMod{R}{0}$ and $\Mod{S} \defeq \BMod{0}{S}$ for the categories of left and right modules, respectively.
\end{definition}

Note that a $0$-$0$-bimodule is the same thing as a $\k$-module.

\begin{observation}
  Let $R$ and $S$ be $\Ainf$-algebras over $\k$.
  An $R$-$S$-bimodule $M$ equivalently consists of a collection $(\mu^M_{l,r})_{l,r \ge 0}$ of $\k$-linear maps of degree $1$
  \[ \mu^M_{l,r} \colon (\shift R)^{\otimes_\k l} \otimes_\k M \otimes_\k (\shift S)^{\otimes_\k r} \longto M \]
  such that $\mu^M_{0,0}$ is the differential of $M$ and the equation
  \begin{align} \label{eq:bimodule}
    \begin{split}
      0 &= \sum_{l_1 + l_2 + l_3 = l} \mu^M_{l_1 + 1 + l_3, r} \circ ({\id^{\otimes l_1}} \otimes \mu^R_{l_2} \otimes \id^{\otimes l_3 + 1 + r}) \\
      &+ \sum_{\substack{l_1 + l_2 = l \\ r_1 + r_2 = r}} \mu^M_{l_1, r_2} \circ ({\id^{\otimes l_1}} \otimes \mu^M_{l_2, r_1} \otimes \id^{\otimes r_2}) \\
      &+ \sum_{r_1 + r_2 + r_3 = r} \mu^M_{l, r_1 + 1 + r_3} \circ ({\id^{\otimes l + 1 + r_1}} \otimes \mu^{S}_{r_2} \otimes \id^{\otimes r_3})
    \end{split}
  \end{align}
  holds for all $l,r \ge 0$.
  An analogous statement holds for left and right modules by omitting all terms with $r \neq 0$ and $l \neq 0$, respectively.
  In particular an $R$-$S$-bimodule has an underlying left $R$-module and an underlying right $S$-module by forgetting all structure maps with $r \neq 0$ or $l \neq 0$, respectively.
  
  A morphism of $R$-$S$-bimodules $f \colon M \to M'$ of degree $d$ equivalently consists of a collection $(f_{l,r})_{l,r \ge 0}$ of $\k$-linear maps of degree $d$
  \[ f_{l,r} \colon (\shift R)^{\otimes_\k l} \otimes_\k M \otimes_\k (\shift S)^{\otimes_\k r} \longto M' \]
  such that the equation
  \begin{equation} \label{eq:bimodule_map}
    \begin{multlined}
      (-1)^d \sum_{\substack{l_1 + l_2 = l \\ r_1 + r_2 = r}} \mu^{M'}_{l_1, l_2} \circ ({\id^{\otimes l_1}} \otimes f_{l_2, r_1} \otimes \id^{\otimes r_2}) \\
      \hspace{5em}
      \begin{aligned}
        &= \sum_{l_1 + l_2 + l_3 = l} f_{l_1 + 1 + l_3, r} \circ ({\id^{\otimes l_1}} \otimes \mu^R_{l_2} \otimes \id^{\otimes l_3 + 1 + r}) \\
        &+ \sum_{\substack{l_1 + l_2 = l \\ r_1 + r_2 = r}} f_{l_1, r_2} \circ ({\id^{\otimes l_1}} \otimes \mu^M_{l_2, r_1} \otimes \id^{\otimes r_2}) \\
        &+ \sum_{r_1 + r_2 + r_3 = r} f_{l, r_1 + 1 + r_3} \circ ({\id^{\otimes l + 1 + r_1}} \otimes \mu^{S}_{r_2} \otimes \id^{\otimes r_3})
      \end{aligned}
    \end{multlined}
  \end{equation}
  holds for all $l,r \ge 0$.
  The composite of $f$ and a morphism $f' \colon M' \to M''$ has structure maps
  \[ (f' \circ f)_{l,r}  =  \sum_{\substack{l_1 + l_2 = l \\ r_1 + r_2 = r}} f'_{l_1, r_2} \circ ({\id^{\otimes l_1}} \otimes f_{l_2, r_1} \otimes \id^{\otimes r_2}) \]
  and the identity $\id \colon M \to M$ has structure maps $\id_{0,0} \defeq \id$ and $\id_{l,r} \defeq 0$ for $(l,r) \neq (0,0)$.
\end{observation}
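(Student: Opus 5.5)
The plan is to use the universal property of the quasi-cofree bicomodule $\B(R)\tensor_\k M\tensor_\k \B(S)$, with underlying graded object $\coT_\k(\shift R)\tensor_\k M\tensor_\k\coT_\k(\shift S)$: bicomodule maps and coderivations into it are determined by their corestriction to $M$. This is the same mechanism that gives the Observation describing $\Ainf$-algebras by $(\mu_n)$ and morphisms by $(f_n)$; here it runs relative to the fixed coalgebra structures on $\B(R)$ and $\B(S)$.

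\textbf{Step 1: corestriction.} Let $\pi\colon \coT_\k(\shift R)\tensor_\k M\tensor_\k\coT_\k(\shift S)\to M$ be the projection onto the $(0,0)$ summand.
\begin{itemize}
\item A map of graded $\coT_\k(\shift R)$-$\coT_\k(\shift S)$-bicomodules $F$ into this quasi-cofree bicomodule is uniquely determined by $\pi\circ F$.
\item Concretely, if $\pi\circ F$ has components $f_{l,r}$, then
\[ F(a_1\tensor\dots\tensor a_p\tensor m\tensor b_1\tensor\dots\tensor b_q)=\sum \pm\, a_1\tensor\dots\tensor a_{l_1}\tensor f_{l_2,r_1}(a_{l_1+1}\tensor\dots\tensor b_{r_1})\tensor b_{r_1+1}\tensor\dots\tensor b_q. \]
\item Conversely, every family $(f_{l,r})$ of degree $d$ defines such a bicomodule map by this formula. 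I would check this by applying the left coaction $\Delta\tensor\id$ and the right coaction, which deconcatenate the outer tensor factors. Both sides of the comodule condition then agree term by term, since $f_{l,r}$ only ever touches a contiguous middle block.
\end{itemize}

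\textbf{Step 2: the differential.} A degree-$1$ map $\mu^M$ on $\coT_\k(\shift R)\tensor_\k M\tensor_\k\coT_\k(\shift S)$ making it a dg $\B(R)$-$\B(S)$-bicomodule must satisfy a compatibility with the coactions that involves $\mu^R$ and $\mu^S$. It is therefore a coderivation relative to the coderivations $\mu^R$ on $\B(R)$ and $\mu^S$ on $\B(S)$, rather than a plain bicomodule map.
\begin{itemize}
\item The analogue of Step~1 says such a coderivation is determined by $\pi\circ\mu^M=(\mu^M_{l,r})$.
\item It is given by the sum of three kinds of terms. The first applies $\mu^R_{l_2}$ to a block of $R$-inputs strictly left of $M$. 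The second applies $\mu^M_{l_2,r_1}$ to a block containing $m$. The third applies $\mu^S_{r_2}$ to a block of $S$-inputs strictly right of $M$.
\item The linear part of this coderivation is $\mu^M_{0,0}$, so requiring it to be the differential of $M$ is literally the condition stated.
\end{itemize}

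\textbf{Step 3: the equations.} The square $(\mu^M)^2$ is again a coderivation of the same kind, now relative to $(\mu^R)^2=0$ and $(\mu^S)^2=0$, so it is an honest bicomodule map. By the uniqueness in Step~1 it therefore vanishes iff $\pi\circ(\mu^M)^2=0$.
\begin{itemize}
\item Expanding $\pi\circ(\mu^M)^2$ on the $(l,r)$ component gives exactly the three sums of \eqref{eq:bimodule}. Terms where $\mu^R$ or $\mu^S$ acts first and then $\mu^M_{\cdot,\cdot}$ gives the first and third sums; terms with two nested $\mu^M$'s give the middle sum.
\item Likewise, a degree-$d$ bicomodule map $\B(f)$ commutes with the differentials up to the sign $(-1)^d$ iff $\pi\circ(\mu^{M'}\B(f)-(-1)^d\B(f)\mu^M)=0$, again because the difference is a bicomodule map. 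Expanding this gives \eqref{eq:bimodule_map}.
\item The composition formula follows from $\pi\circ\B(f')\circ\B(f)$ together with the explicit formula of Step~1. The identity is the bicomodule map with corestriction $\id_{0,0}$.
\item The statements for left and right modules are the special cases $S=0$ or $R=0$, where $\coT_\k(\shift 0)=\k$.
\end{itemize}

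\textbf{Main obstacle.} The only genuinely delicate point is Step~2–3: that the relative coderivation $\mu^M$, and the squared operator, are determined by their corestriction. Because $\k$ is a general cdga, tensor products over $\k$ need not commute with the equalizers involved. I would handle this by working directly with the explicit formulas on the graded quasi-cofree object, which avoids any use of equalizers.

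The Koszul signs are routine once everything is written in the shifted convention of \cite{Kel}. Note that $\mu^M_{l,r}$ is valued in $M$, not in $\shift M$, which fixes the sign conventions.
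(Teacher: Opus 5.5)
Your proposal is correct and is essentially the argument the paper leaves implicit in stating this as an Observation, following Getzler--Jones. That argument runs through the universal property of the quasi-cofree bicomodule $\coT_\k(\shift R)\tensor_\k M\tensor_\k\coT_\k(\shift S)$. One regards $\mu^M$ as a coderivation relative to $\mu^R,\mu^S$; its square, and likewise $\mu^{M'}\B(f)-(-1)^d\B(f)\mu^M$, is then an honest bicomodule map and is therefore detected by its corestriction to $M$.

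The obstacle you flag is not a real one: uniqueness follows by applying iterated coactions and projecting to the linear factors, which uses no exactness of $\tensor_\k$. The forgetful statement about underlying left and right modules, which you did not address explicitly, just uses that the $(l,0)$ equations involve only the $\mu^M_{\cdot,0}$, because $\mu^S_0=0$.
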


\begin{definition}
  Let $R$ and $S$ be $\Ainf$-algebras over $\k$.
  We say that a morphism $f \colon M \to N$ of $R$-$S$-bimodules is a \emph{quasi-isomorphism} when its linear part $f_{0,0} \colon M \to N$ is a quasi-isomorphism.
  We say that $f$ is \emph{strict} if $f_{l,r} = 0$ whenever $(l, r) \neq (0, 0)$.
  The \emph{homotopy category} of $R$-$S$-bimodules is the localization of $\BMod{R}{S}$ at the class of quasi-isomorphisms.
\end{definition}

\begin{definition}
  Let $R$ and $S$ be unital $\Ainf$-algebras over $\k$.
  An $R$-$S$-bimodule $M$ is \emph{unital} if, for all $m \in M$,
  \begin{gather*}
    \mu^M_{1,0}(\shift 1 \otimes m) = m = (-1)^{\deg {m}} \mu^M_{0,1}(m \otimes \shift 1)
    \shortintertext{and}
    \mu^M_{l,r}(x_1 \otimes \dots \otimes x_l \otimes m \otimes y_1 \otimes \dots \otimes y_r) = 0
  \end{gather*}
  whenever $l + r > 1$ and $x_i = \shift 1$ or $y_i = \shift 1$ for some $i$.
\end{definition}

\begin{observation} \label{obs:algebra_is_bimodule}
  Given an $\Ainf$-algebra $R$ over $\k$, the $\k$-module $\shift R$ is canonically an $R$-$R$-bimodule with structure maps $\mu^{\shift R}_{l,r} \defeq \mu^R_{l+1+r}$, and hence also a left $R$-module and a right $R$-module.
  If $R$ is unital, then $\shift R$ is unital as a (bi)module.
\end{observation}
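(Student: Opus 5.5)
The plan is to verify the claim directly in terms of structure maps, using the description of bimodules via the maps $\mu^M_{l,r}$ and equation~\eqref{eq:bimodule}. We set $M \defeq \shift R$ and $\mu^M_{l,r} \defeq \mu^R_{l+1+r}$. These are $\k$-linear maps of degree $1$ from $(\shift R)^{\tensor_\k l} \tensor_\k \shift R \tensor_\k (\shift R)^{\tensor_\k r} = (\shift R)^{\tensor_\k (l+1+r)}$ to $\shift R$, as required. The linear part $\mu^M_{0,0} = \mu^R_1$ is the differential of $\shift R$ by \cref{def:Ainf}.

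The main step is to check \eqref{eq:bimodule}. Fix $l, r \ge 0$ and put $n \defeq l+1+r$. The right-hand side of \eqref{eq:Ainf} for this $n$ is a sum over all ways of choosing a consecutive block of inputs (positions $a+1, \dots, a+s$) on which the inner map $\mu^R_s$ acts, with $s \ge 1$. I will sort these terms according to where the block lies relative to the distinguished middle slot, at position $l+1$.

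If the block lies entirely to the left of the middle slot, the term is exactly a summand of the first sum in \eqref{eq:bimodule}, with $l_1 = a$, $l_2 = s$ and $l_3 = l - a - s$. If the block lies entirely to the right, it is a summand of the third sum. If the block contains the middle slot, then writing $l_2$ and $r_1$ for the number of block entries to the left and right of the middle slot, the inner map is $\mu^R_{l_2+1+r_1} = \mu^M_{l_2,r_1}$ and the outer map is $\mu^R_{l_1+1+r_2} = \mu^M_{l_1,r_2}$. This is a summand of the second sum.

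This sorting is a bijection between the terms of \eqref{eq:Ainf} and those of \eqref{eq:bimodule}. Terms with $\mu^R_0$ do not occur; equivalently, the convention $\mu_0 = 0$ is used in both equations. Since the two sums apply literally the same maps to the same tensor string, the Koszul signs agree. Hence \eqref{eq:bimodule} for $(l,r)$ is precisely \eqref{eq:Ainf} for $n = l+1+r$, which holds. The only point I expect to need care is this sign and indexing bookkeeping; in particular one must make sure that the conventions in \eqref{eq:bimodule} introduce no extra sign for the middle slot compared to \eqref{eq:Ainf}. Since both are written as plain composites of the same structure maps, I expect no extra sign. Restricting to $r=0$ or $l=0$ gives the underlying left and right module structures.

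For unitality, suppose $R$ is unital and let $m = \shift a \in \shift R$. Then $\mu^M_{1,0}(\shift 1 \tensor m) = \mu^R_2(\shift 1 \tensor \shift a) = \shift a = m$. Next, $\mu^M_{0,1}(m \tensor \shift 1) = \mu^R_2(\shift a \tensor \shift 1) = (-1)^{\deg a}\shift a$. Since $\deg m = \deg a - 1$, this gives $(-1)^{\deg m}\mu^M_{0,1}(m \tensor \shift 1) = -m$. So the sign needs to be checked against the unitality convention for bimodules; I would reconcile it by tracking the degree of $m$ in $\shift R$ versus $R$, and if necessary adjust by the Koszul sign implicit in how $\shift 1$ passes $m$. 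Finally, for $l+r > 1$ we have $\mu^M_{l,r} = \mu^R_{l+1+r}$ with $l+1+r \ge 3$. This map vanishes whenever some input other than the middle one is $\shift 1$, by the unitality of $R$.
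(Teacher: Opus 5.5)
Your verification of the bimodule equations is correct, and it is exactly the check the paper leaves implicit; the Observation is stated without proof. For $n = l+1+r$, sorting the terms of \eqref{eq:Ainf} by where the inner block sits relative to slot $l+1$ matches them bijectively with the three sums of \eqref{eq:bimodule}. Since $M = \shift R$ literally, the Koszul signs agree. Left unitality, and the vanishing of $\mu^{\shift R}_{l,r}$ on inputs $\shift 1$ for $l+r>1$, are also fine.

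The gap is the right unit axiom, which you leave unresolved. Your computation $(-1)^{\deg m}\mu^{\shift R}_{0,1}(m \otimes \shift 1) = -m$ is correct, and the reconciliation you propose does not exist. The relation $\deg m = \deg a - 1$ is already used, and no element passes another when evaluating $\mu^R_2(\shift a \otimes \shift 1)$, so there is no hidden Koszul sign. Nor can you flip the sign of $\mu^{\shift R}_{0,1}$: the $(0,2)$ instance of \eqref{eq:bimodule} is quadratic in $\mu_{0,1}$ but linear in $\mu^R_2$, so it would then fail. Already for $R = \QQ$, $m = \shift 1$ has degree $-1$ and $\mu^{\shift R}_{0,1}(m \otimes \shift 1) = m$. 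So against the right unit axiom exactly as written in the paper, the claim is off by a sign, and a proof has to say so.

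The underlying issue is that this axiom carries a sign slip, as does the matching right action $\mu^M_{0,1}(m \otimes \shift y) = (-1)^{\deg m} m \cdot y$ in \cref{obs:dgm_as_Ainf}. Take a dga $S$ with the structure of \cref{obs:dga_as_Ainf}, a dg right $S$-module $M$, and put $\mu^M_{0,1}(m \otimes \shift y) = \sigma (-1)^{\deg m} m y$ with $\sigma = \pm 1$. The $(0,2)$ equation evaluated on $m \otimes \shift y \otimes \shift y'$ is then $(1+\sigma)(-1)^{\deg y} m y y'$, which forces $\sigma = -1$. Correspondingly, the right unit axiom should read $m = (-1)^{\deg m + 1}\mu^M_{0,1}(m \otimes \shift 1)$.

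The same sign is needed for $\dHH_\k(R, M)$ in \cref{def:nHH} to be closed under the differential. The Hochschild-degree-$0$ component of $d(m \otimes \shift 1)$ is $\mu^M_{0,1}(m \otimes \shift 1) + (-1)^{\deg m}\mu^M_{1,0}(\shift 1 \otimes m)$, and this vanishes only with the corrected sign.

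With the corrected axiom, your own computation gives right unitality of $\shift R$ immediately. Uses of $\shift R$ only as a unital left module, such as applying \cref{prop:transfer_natural} to $\shift S_\infty$, are unaffected.
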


\begin{observation} \label{obs:dgm_as_Ainf}
  Let $\k \to R$ and $\k \to S$ be maps of dgas, and $M$ an $(R \tensor_\k \opmod{S})$-module.
  Then $M$ becomes a unital $R$-$S$-bimodule (with $R$ and $S$ considered as $\Ainf$-algebras over $\k$ via \cref{obs:dga_as_Ainf}) by defining, for $x \in R$, $m \in M$, and $y \in S$,
  \[ \mu^M_{0,0}(m) \defeq d(m), \qquad \mu^M_{1,0}(\shift x \tensor m) \defeq x \cdot m, \qquad \mu^M_{0,1}(m \tensor \shift y) \defeq (-1)^{\deg m} m \cdot y, \]
  and $\mu^M_{l,r} \defeq 0$ when $l + r > 1$.
  An $(R \tensor_\k \opmod{S})$-module map $f \colon M \to M'$ of degree $d$ yields a strict $R$-$S$-bimodule map of degree $d$ by defining $f_{0,0} \defeq f$ and $f_{l,r} \defeq 0$ for $l + r > 0$.
\end{observation}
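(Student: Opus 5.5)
The proof is a direct verification: we check that the structure maps defined in the statement satisfy the bimodule equation \eqref{eq:bimodule}, the unitality conditions, and, for a module map, the equation \eqref{eq:bimodule_map}. The main simplification is that only finitely many terms can be nonzero. By \cref{obs:dga_as_Ainf}, $\mu^R_s = 0$ and $\mu^S_s = 0$ for $s \ge 3$, and by definition $\mu^M_{l,r} = 0$ for $l + r > 1$.

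First we bound the arities for which \eqref{eq:bimodule} can have a nonzero term.
\begin{itemize}
  \item In the first sum, the outer map $\mu^M_{l_1+1+l_3,r}$ forces $l_1 + 1 + l_3 + r \le 1$. The inner map $\mu^R_{l_2}$ forces $l_2 \le 2$. Hence $l + r \le 2$.
  \item The same argument applies to the third sum, with the roles of $R$ and $S$ exchanged.
  \item In the middle sum both factors $\mu^M$ have arity at most $1$, so again $l + r \le 2$.
\end{itemize}
So \eqref{eq:bimodule} holds trivially when $l + r \ge 3$. The remaining cases $(l,r) \in \{(0,0),(1,0),(0,1),(2,0),(1,1),(0,2)\}$ are checked one at a time.
\begin{itemize}
  \item $(0,0)$: the equation reduces to $d^2 = 0$.
  \item $(1,0)$ and $(0,1)$: these are the Leibniz rules $d(x \cdot m) = dx \cdot m \pm x \cdot dm$ and the analogous rule for $m \cdot y$. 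The sign conventions $\mu^R_1(\shift a) = -\shift d(a)$ and $\mu^M_{0,1}(m \tensor \shift y) = (-1)^{\deg m} m \cdot y$ are what make the three terms cancel.
  \item $(2,0)$ and $(0,2)$: these are associativity of the left $R$-action and of the right $S$-action. The factor $(-1)^{\deg a}$ in $\mu_2$ is what matches the Koszul sign arising from passing $\mu^M_{1,0}$ past $\shift x_1$.
  \item $(1,1)$: this expresses the compatibility $(x \cdot m) \cdot y = x \cdot (m \cdot y)$ of the two actions, which is exactly the statement that $M$ is an $(R \tensor_\k \opmod{S})$-module.
\end{itemize}
Throughout, the signs come from the Koszul rule for $\shift x$, which has degree $\deg x - 1$.

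Unitality follows immediately from the definitions. We have $\mu^M_{1,0}(\shift 1 \tensor m) = 1 \cdot m = m$ and $(-1)^{\deg m}\mu^M_{0,1}(m \tensor \shift 1) = m \cdot 1 = m$. The higher structure maps vanish identically, so the remaining unitality conditions hold automatically.

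For a module map $f$ with $f_{0,0} = f$ and all other components zero, the same arity count shows that \eqref{eq:bimodule_map} can only be nonzero when $l + r \le 1$.
\begin{itemize}
  \item For $(0,0)$ the equation says $(-1)^d\, d f = f d$, which is the definition of a map of degree $d$ commuting with differentials up to sign, in the convention used here.
  \item For $(1,0)$ and $(0,1)$ it says that $f$ is $R$-linear and $S$-linear with the Koszul signs $x \cdot f(m) = (-1)^{d \deg x} f(x \cdot m)$, and similarly on the right.
\end{itemize}
Here the prefactor $(-1)^d$ in \eqref{eq:bimodule_map} and the shift $\shift x$ combine to produce exactly this Koszul sign.

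The only real obstacle is sign bookkeeping, particularly in the $(1,1)$ and $(0,2)$ cases and in the linearity condition for maps of odd degree. We would handle this by evaluating each equation on homogeneous elements $\shift x_1 \tensor \dots \tensor m \tensor \dots$ and moving all shifts to the left using the Koszul rule, so that every term is expressed through the underlying dg operations.
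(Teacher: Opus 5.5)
The paper states this observation without proof, so the intended argument is exactly the direct verification you outline. Your arity count is also correct: only $l+r\le 2$ contributes to \eqref{eq:bimodule}, and for a strict map \eqref{eq:bimodule_map} reduces to $(-1)^d\mu^{M'}_{l,r}\circ(\id^{\otimes l}\otimes f\otimes\id^{\otimes r}) = f\circ\mu^M_{l,r}$, which is vacuous for $l+r>1$. The gap is in the sign bookkeeping you postpone, and it is not cosmetic: with the sign as stated, the $(0,2)$ equation is false. Its only surviving terms, evaluated on $m\otimes\shift y_1\otimes\shift y_2$, are
\[ \mu^M_{0,1}\bigl(\mu^M_{0,1}(m\otimes\shift y_1)\otimes\shift y_2\bigr) = (-1)^{\deg m}(-1)^{\deg m+\deg y_1}(m y_1)y_2 = (-1)^{\deg y_1}\, m y_1 y_2 \]
and
\[ \mu^M_{0,1}\bigl((\id\otimes\mu^S_2)(m\otimes\shift y_1\otimes\shift y_2)\bigr) = (-1)^{\deg m+\deg y_1}\mu^M_{0,1}\bigl(m\otimes\shift(y_1y_2)\bigr) = (-1)^{\deg y_1}\, m(y_1y_2). \]
They add up to $2(-1)^{\deg y_1}my_1y_2$ instead of cancelling. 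Already for $S=M=\QQ$ in degree $0$ and $m=y_1=y_2=1$ this is $2\neq 0$. So, contrary to what you assert, the $(0,2)$ case does not reduce to associativity of the right action.

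The correct convention is $\mu^M_{0,1}(m\otimes\shift y)\defeq(-1)^{\deg m+1}m\cdot y$, and with it the two terms above cancel. This is also what \cref{obs:algebra_is_bimodule} gives for $M=\shift S$: there $\mu^S_2(\shift a\otimes\shift y)=(-1)^{\deg a}\shift(ay)$ and $\deg\shift a=\deg a-1$. The $(0,1)$ and $(1,1)$ equations and the equations for strict maps are linear in $\mu^M_{0,1}$. Your treatment of them therefore survives this global sign change unchanged, which is exactly why only $(0,2)$ detects the problem.

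Your unitality check, however, then fails, since $(-1)^{\deg m}\mu^M_{0,1}(m\otimes\shift 1)=-m$. It went through in your proposal only because the formula in the statement and the right-unit condition in the paper's definition of a unital bimodule carry the same sign slip. That condition must read $m=(-1)^{\deg m+1}\mu^M_{0,1}(m\otimes\shift 1)$. This is also the version needed for the claim in \cref{obs:algebra_is_bimodule} that $\shift R$ is unital when $R$ is.

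A complete proof therefore has to correct the signs, both in the statement and in that definition. It should then carry out the six cases $(l,r)$ with $l+r\le 2$ explicitly, rather than verify the statement as written.
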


\begin{lemma} \label{lemma:upsilon_algebra_map}
  Let $R$ be an $\Ainf$-algebra over $\k$, and $M$ a left $R$-module.
  Then there is a map $\upsilon \colon R \to \End_\k(M, M)$ of $\Ainf$-algebras over $\k$ (where the dga $\End_\k(M, M)$ is considered as an $\Ainf$-algebra via \cref{obs:dga_as_Ainf}) with structure maps $\upsilon_n  \colon  (\shift R)^{\tensor n} \to \shift \End_\k(M, M)$ given by $x_1 \tensor \dots \tensor x_n \mapsto \shift \mu^M_n(x_1, \dots, x_n, \blank)$.
\end{lemma}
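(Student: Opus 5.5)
We verify directly that the maps $\upsilon_n$ satisfy \eqref{eq:Ainf_map}, with $S = \End_\k(M, M)$ regarded as an $\Ainf$-algebra via \cref{obs:dga_as_Ainf}. The first step is to make the definition precise. A left $R$-module is an $R$-$0$-bimodule, so its structure maps are $\mu^M_{l} \defeq \mu^M_{l,0} \colon (\shift R)^{\tensor l} \tensor_\k M \to M$ of degree $1$. For $n \ge 1$ we set $\upsilon_n(x_1 \tensor \dots \tensor x_n) \defeq \shift \phi$, where $\phi \in \End_\k(M,M)$ is the $\k$-linear map $m \mapsto \mu^M_n(x_1, \dots, x_n, m)$. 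Since $\mu^M_n$ has degree $1$, $\phi$ has degree $1 + \sum \deg x_i$, so $\shift\phi$ has degree $\sum \deg x_i$ and $\upsilon_n$ has degree $0$, as required. Moreover $\k$-linearity of $\mu^M_n$ in each variable, which holds because we work in $\k$-modules, makes $\phi$ $\k$-linear.

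Next we specialize \eqref{eq:Ainf_map} to this target. Because $\mu^S_i = 0$ for $i \ge 3$, the left-hand side has only two kinds of terms. The first is $\mu^S_1 \after \upsilon_n$, which equals $-\shift d_{\End}(\phi)$, where $d_{\End}(\phi) = d_M \after \phi - (-1)^{\deg\phi}\phi \after d_M$ and $d_M = \mu^M_0$. The second consists of the terms $\mu^S_2 \after (\upsilon_{p} \tensor \upsilon_{q})$ with $p+q = n$, each of which is a signed composite $\phi_{x_1..x_p} \after \phi_{x_{p+1}..x_n}$. The right-hand side is $\sum_{r+s+t=n} \upsilon_{r+1+t} \after (\id^{\tensor r} \tensor \mu^R_s \tensor \id^{\tensor t})$, and for $s = 1$ this includes the terms where $\mu^R_1$ acts on a single input.

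We then compare this with the left-module relation \eqref{eq:bimodule} for $(l, r) = (n, 0)$ evaluated on $x_1 \tensor \dots \tensor x_n \tensor m$. That relation says the following sum vanishes: the terms $\mu^M_{l_1+1+l_3}(\dots, \mu^R_{l_2}(\dots), \dots, m)$, plus the terms $\mu^M_{l_1}(x_1, \dots, x_{l_1}, \mu^M_{l_2}(x_{l_1+1}, \dots, m))$. The first group matches the right-hand side of \eqref{eq:Ainf_map}. In the second group, the terms with $l_1, l_2 \ge 1$ are exactly the composites appearing in $\mu^S_2(\upsilon_{l_1} \tensor \upsilon_{l_2})$. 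The two terms with $l_1 = 0$ or $l_2 = 0$ are $d_M \after \phi$ and $\mu^M_n(x, d_M m)$, and these together form the commutator $d_{\End}(\phi)$. Hence, up to signs, the module identity becomes term-for-term the morphism identity \eqref{eq:Ainf_map}.

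The only real obstacle is matching the signs. These are the sign $(-1)^{\deg a}$ in $\mu_2$ and the minus sign in $\mu_1$ from \cref{obs:dga_as_Ainf}, the Koszul sign incurred when $\phi$ is applied to $m$ after passing over the $x_i$, and the shift conventions. The plan is to work systematically in shifted degrees. Writing $\abs{x}$ for the degree of $x \in \shift R$, we have $\deg(\shift\phi) = \sum\abs{x_i}$ and $\deg\phi = 1 + \sum \abs{x_i}$. We then check separately that the sign attached to each of the three term types agrees with the corresponding term of \eqref{eq:bimodule}. Each of these checks is a finite routine Koszul computation, and the conventions of \cref{obs:dga_as_Ainf} were chosen precisely so that $\shift$ turns a dga into an $\Ainf$-algebra compatibly with these signs.
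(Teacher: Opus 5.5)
Your proposal is correct and follows essentially the same route as the paper: it writes out the $n$-th $\Ainf$-morphism equation \eqref{eq:Ainf_map} for $\upsilon$ (only $\mu_1$ and $\mu_2$ of the endomorphism dga contribute) and identifies it term by term with the $n$-th left module equation \eqref{eq:bimodule} for $M$. In both, the terms with $l_1 = 0$ or $l_2 = 0$ assemble into the differential of $\phi$ in $\End_\k(M,M)$, and the remaining terms match the $\mu^R$-terms and the composite terms. The paper simply displays the resulting signs explicitly, for instance $(-1)^{|\vec x_{n_1}|+1}$ on the composite terms, and these agree with what your Koszul bookkeeping produces.
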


\begin{proof}
  The $n$-th $\Ainf$-algebra map equation \eqref{eq:Ainf_map} for $\upsilon$, evaluated on $\vec x \defeq x_1 \tensor \dots \tensor x_n$, reads
  \begin{multline*}
    \sum_{\substack{n_1 + n_2 = n \\ n_i > 0}} (-1)^{\deg{\vec x_{n_1}} + 1} \shift \mu^M_{n_1} (\vec x_{n_1} \tensor \blank) \after \mu^M_{n_2}(\vec x_{n_2} \tensor \blank) - \shift \mu^M_0 \after \mu^M_n(\vec x \tensor \blank) - (-1)^{\deg{\vec x}} \shift \mu^M_n(\vec x \tensor \blank) \after \mu^M_0 \\
    = \sum_{n_1 + n_2 + n_3 = n} (-1)^{\deg{\vec x_{n_1}}} \shift \mu^M_{n_1+1+n_3} \bigl( \vec x_{n_1} \tensor \mu^R_{n_2}(\vec x_{n_2}) \tensor \vec x_{n_3} \tensor \blank \bigr)
  \end{multline*}
  where $\vec x_{n_i} \defeq x_{n_1 + \dots + n_{i-1} + 1} \tensor \dots \tensor x_{n_1 + \dots + n_i}$.
  This is precisely the $n$-th left module equation \eqref{eq:bimodule} for $M$.
\end{proof}

\subsubsection*{Restriction of scalars}

We conclude this subsection by considering how to restrict bimodule structures along morphisms of $\Ainf$-algebras.

\begin{definition}
  Let $f \colon R \to R'$ and $g \colon S \to S'$ be two morphisms of $\Ainf$-algebras over $\k$.
  We denote by $(f, g)^* \colon \BMod{R'}{S'} \to \BMod{R}{S}$ the functor given by sending an $R'$-$S'$-bimodule $M'$ to $M'$ equipped with the structure maps of
  \[ \B(R, M', S) \defeq \B(R) \cotensor^{\B(R')} \B(R', M', S') \cotensor^{\B(S')} \B(S) \]
  considered as a $\B(R)$-$\B(S)$-bicomodule in $\k$-modules.
\end{definition}

\begin{observation} \label{obs:restrict_bimodule}
  Let $f \colon R \to R'$ and $g \colon S \to S'$ be two morphisms of $\Ainf$-algebras over $\k$.
  For an $R'$-$S'$-module $M'$, the structure maps of $M \defeq (f, g)^*(M')$ are given by
  \begin{align*}
    \mu^{M}_{l,r} &\colon \shift R^{\otimes_\k l} \otimes_\k M' \otimes_\k \shift S^{\otimes_\k r} \longto M' \\
    \mu^{M}_{l,r} &= \sum_{\substack{l_1 + \dots + l_i = l \\ r_1 + \dots + r_j = r}} \mu^{M'}_{i,j} \circ ( f_{l_1} \otimes \dots \otimes f_{l_i} \otimes \id_{M'} \otimes g_{r_1} \otimes \dots \otimes g_{r_j} ).
  \end{align*}
  where $\mu^{M'}_{i,j}$ are the structure maps of $M'$.

  For a map $h \colon M' \to N'$ of $R'$-$S'$-bimodules, the structure maps of $h \defeq (f,g)^*(h')$ are given by
  \begin{align*}
    h_{l,r} &\colon (\shift R)^{\otimes_\k l} \otimes_\k M' \otimes_\k (\shift R)^{\otimes_\k r} \longto N' \\
    h_{l,r} &= \sum_{\substack{l_1 + \dots + l_i = l \\ r_1 + \dots + r_j = r}} h'_{i,j} \circ ( f_{l_1} \otimes \dots \otimes f_{l_i} \otimes \id_{M'} \otimes g_{r_1} \otimes \dots \otimes g_{r_j} )
  \end{align*}
  where $h'_{i,j}$ are the structure maps of $h'$.
\end{observation}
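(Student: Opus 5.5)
The plan is to compute the cotensor products explicitly, using the fact that a bicomodule coderivation on a quasi-cofree bicomodule is determined by its corestriction to the cogenerators. Here $\B(R', M', S') = \coT_\k(\shift R') \tensor_\k M' \tensor_\k \coT_\k(\shift S')$ is quasi-cofree as a $\B(R')$-$\B(S')$-bicomodule. The first step is therefore to use the natural isomorphism $(V \tensor_\k C) \cotensor^C N \iso V \tensor_\k N$ recalled above, on both sides, to identify the underlying graded $\B(R)$-$\B(S)$-bicomodule of $\B(R) \cotensor^{\B(R')} \B(R', M', S') \cotensor^{\B(S')} \B(S)$ with $\coT_\k(\shift R) \tensor_\k M' \tensor_\k \coT_\k(\shift S)$. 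Concretely, an element $b \tensor m \tensor c$ corresponds to
\[ \bigl( (\id \tensor \B(f)) \Delta(b) \bigr) \tensor m \tensor \bigl( (\B(g) \tensor \id) \Delta(c) \bigr) \]
inside the fivefold tensor product $\B(R) \tensor_\k \B(R') \tensor_\k M' \tensor_\k \B(S') \tensor_\k \B(S)$. The inverse is given by applying the counits of $\B(R')$ and $\B(S')$.

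The differential on the cotensor product is the restriction of the tensor product differential $\mu^R \tensor 1 + 1 \tensor \mu^{M'} + 1 \tensor \mu^S$ on the fivefold tensor product. This restriction preserves the equalizer because $\B(f)$, $\B(g)$ and $\mu^{M'}$ are compatible with the comodule structures. Transported along the above isomorphism, it is a coderivation of $\B(R)$-$\B(S)$-bicomodules. Hence its structure maps $\mu^M_{l,r}$ are obtained by postcomposing with the projection to $M'$, that is, by applying the counits of $\B(R)$ and $\B(S)$.

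The next step is to carry out this projection term by term. The summands involving $\mu^R$ or $\mu^S$ vanish after applying the counit, since $\mu^R$ and $\mu^S$ have no component landing in tensor length $0$. What remains is $\mu^{M'}$ applied to $\B(f)(b) \tensor m \tensor \B(g)(c)$, followed by the projection to $M'$. Since the component of $\B(f)$ from $(\shift R)^{\tensor l}$ to $(\shift R')^{\tensor i}$ is $\sum_{l_1 + \dots + l_i = l} f_{l_1} \tensor \dots \tensor f_{l_i}$, and similarly for $\B(g)$, this yields exactly the stated formula for $\mu^M_{l,r}$.

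The formula for $h_{l,r}$ follows by the same argument. There the map $\id \cotensor \B(h') \cotensor \id$ is transported along the identical isomorphism and then projected to $N'$; again only the $\B(f)$- and $\B(g)$-components survive next to $h'_{i,j}$.

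I expect the main obstacle to be bookkeeping rather than conceptual. It consists of two points. First, one must check that the isomorphism above really is one of graded bicomodules, and that restricting the differential lands in the cotensor product. Second, one must track Koszul signs when passing $\B(f)$ and the counits past elements of $M'$. Since $f_n$ and $g_n$ have degree $0$, I expect no extra signs to appear, matching the sign-free formula in the statement.
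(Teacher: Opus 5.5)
Your proposal is correct and is exactly the unwinding the paper intends. The paper states this as an observation with no separate proof: it identifies $\B(R) \cotensor^{\B(R')} \B(R', M', S') \cotensor^{\B(S')} \B(S)$ with $\coT_\k(\shift R) \tensor_\k M' \tensor_\k \coT_\k(\shift S)$ via quasi-cofreeness, and then reads off the corestriction to $M'$ of the transported differential, or to $N'$ of $\id \cotensor \B(h') \cotensor \id$. Your bookkeeping points are the right ones: the $\mu^R$- and $\mu^S$-terms die under the counits, since only the empty prefix and suffix survive and the differential kills $1$. No Koszul signs appear, because the $f_n$ and $g_n$ have degree $0$ and only the unit remains in the outer factors.
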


\begin{lemma} \label{lemma:algebra_map_is_bimodule_map}
  Let $f \colon R \to S$ be a map of $\Ainf$-algebras over $\k$.
  Then there is a map of $R$-$R$-bimodules $f' \colon \shift R \to (f, f)^*(\shift S)$ of degree $0$ with structure maps $f'_{l,r} \defeq f_{l+1+r}$.
  It is a quasi-isomorphism if and only if $f$ is a quasi-isomorphism.
\end{lemma}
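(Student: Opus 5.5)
We verify the bimodule-map equation \eqref{eq:bimodule_map} for the collection $f'_{l,r} \defeq f_{l+1+r}$ directly, using the explicit formulas for the structure maps of $\shift R$ (\cref{obs:algebra_is_bimodule}) and of $(f,f)^*(\shift S)$ (\cref{obs:restrict_bimodule}). The degree is $d = 0$, so there is no overall sign on the left-hand side.

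First we compute both sides.

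\emph{Source side.} The bimodule structure of $\shift R$ is $\mu^{\shift R}_{l,r} = \mu^R_{l+1+r}$. The three sums on the right-hand side of \eqref{eq:bimodule_map} are: $\mu^R$ applied inside the left $R$-inputs, the module structure applied around the middle slot, and $\mu^R$ applied inside the right $R$-inputs. Together they give exactly
\[ \sum_{a+s+b = l+1+r} f_{a+1+b} \after (\id^{\tensor a} \tensor \mu^R_s \tensor \id^{\tensor b}), \]
summed over all positions of a consecutive block of the $n \defeq l+1+r$ inputs. This is precisely the right-hand side of \eqref{eq:Ainf_map} for $f$ in arity $n$.

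\emph{Target side.} By \cref{obs:restrict_bimodule}, the structure maps of $(f,f)^*(\shift S)$ are
\[ \sum \mu^S_{i+1+j} \after (f_{l_1} \tensor \dots \tensor f_{l_i} \tensor \id \tensor f_{r_1} \tensor \dots \tensor f_{r_j}). \]
Composing these with $f'_{l_2,r_1} = f_{l_2+1+r_1}$ in the middle slot, as in the left-hand side of \eqref{eq:bimodule_map}, yields $\sum \mu^S_k \after (f_{n_1} \tensor \dots \tensor f_{n_k})$. Here one distinguished factor $f_{n_p}$ is the one covering the middle input, and the other factors are split into those lying to its left and to its right.

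The key point is that every decomposition $n_1 + \dots + n_k = n$ has exactly one block containing the middle position $l+1$. So the left-hand side of \eqref{eq:bimodule_map} is exactly the left-hand side of \eqref{eq:Ainf_map}, and the bimodule-map equation reduces to the $\Ainf$-map equation for $f$ in arity $l+1+r$.

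The main obstacle is the signs. The bimodule structure involves $\mu^{\shift R}_{l,r} = \mu^R_{l+1+r}$ with the middle factor already shifted. Since everything is written in terms of the shifted maps $\mu_n$ and $f_n$ of degree $1$ and $0$, all Koszul signs arise the same way on both sides. I would check that no extra sign enters when the middle argument $M = \shift R$ is treated as a module element rather than a shifted algebra element; it does not, because the conventions were chosen so that $\mu^{\shift R}_{l,r}$ is literally $\mu^R_{l+1+r}$.

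Finally, the quasi-isomorphism claim is immediate from the definitions. The linear part of $f'$ is $f'_{0,0} = f_1 \colon \shift R \to \shift S$, and a bimodule map is a quasi-isomorphism exactly when its linear part is, just as for $\Ainf$-algebra maps. Hence $f'$ is a quasi-isomorphism if and only if $f$ is.
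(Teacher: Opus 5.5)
Your proposal is correct and follows the paper's own proof: you write out the $(l,r)$ bimodule-map equation for $f'$ and identify it with the $\Ainf$-map equation \eqref{eq:Ainf_map} for $f$ in arity $l+1+r$. The key bijection is that each decomposition $n_1+\dots+n_k=n$ has exactly one block covering the middle input. The quasi-isomorphism claim then follows, as in the paper, from $f'_{0,0}=f_1$.
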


\begin{proof}
  The $(l, r)$ bimodule map equation \eqref{eq:bimodule_map} for $f'$ is
  \begin{multline*}
    \sum_{\substack{l_1 + \dots + l_i = l \\ r_1 + \dots + r_j = r}} \mu^{S}_{(i - 1) + 1 + (j - 1)} \circ (f_{l_1} \otimes \dots \otimes f_{l_{i-1}} \otimes f_{l_i + 1 + r_1} \otimes f_{r_2} \otimes \dots \otimes f_{r_j}) \\
    \begin{aligned}
      &= \sum_{l_1 + l_2 + l_3 = l} f_{(l_1 + 1 + l_3) + 1 + r} \circ ({\id^{\otimes l_1}} \otimes \mu^R_{l_2} \otimes \id^{\otimes l_3 + 1 + r}) \\
      &+ \sum_{\substack{l_1 + l_2 = l \\ r_1 + r_2 = r}} f_{l_1 + 1 + r_2} \circ ({\id^{\otimes l_1}} \otimes \mu^R_{l_2 + 1 + r_1} \otimes \id^{\otimes r_2}) \\
      &+ \sum_{r_1 + r_2 + r_3 = r} f_{l + 1 + (r_1 + 1 + r_3)} \circ ({\id^{\otimes l + 1 + r_1}} \otimes \mu^{R}_{r_2} \otimes \id^{\otimes r_3})
    \end{aligned}
  \end{multline*}
  which is precisely the $(l+1+r)$-th $\Ainf$-algebra map equation \eqref{eq:Ainf_map} for $f$.
  Since $f'_{0,0} = f_1$, it is clear that $f'$ is a quasi-isomorphism if and only if $f$ is.
\end{proof}

\subsection{\texorpdfstring{$\Cinf$}{C∞}-algebras and symmetric bimodules}

We now define $\Cinf$-algebras, which are a (strictly) commutative analog of $\Ainf$-algebras.
The dual notion, of a $\Cinf$-coalgebra, first appeared in work of Smirnov \cite[Definition~3]{Smi}; the definition we use is due to Kadeishvili \cite{Kad}.
For a short summary, see \cite[§3]{Kad09}.

\begin{definition} \label{def:shuffle}
  Let $p$ and $q$ be non-negative integers.
  A \emph{$(p, q)$-shuffle} is a permutation $\sigma \in \Sigma_{p+q}$ such that the inequalities
  \[ \sigma(1) < \sigma(2) < \dots < \sigma(p)  \qquad \text{and} \qquad  \sigma(p + 1) < \sigma(p + 2) < \dots < \sigma(p + q) \]
  hold.
  We denote by $\Sh p q \subseteq \Sigma_{p + q}$ the set of $(p, q)$-shuffles.
\end{definition}

\begin{definition}
  A \emph{$\Cinf$-algebra over $\k$} is an $\Ainf$-algebra $R$ over $\k$ such that its structure maps vanish on sums of shuffles, i.e.\ for all $p,q \ge 1$ holds
  \[ \sum_{\sigma \in \Sh p q} \mu^R_{p+q} \after \sigma = 0 \]
  where $\sigma$ acts from the left, i.e.\ $\sigma \act (x_1 \tensor \dots \tensor x_{p+q}) \defeq \pm x_{\inv \sigma(1)} \tensor \dots \tensor x_{\inv \sigma(p+q)}$ with the sign determined by the Koszul rule.
  A map $R \to S$ of $\Cinf$-algebras over $\k$ is a map $f$ of the underlying $\Ainf$-algebras over $\k$ such that
  \[ \sum_{\sigma \in \Sh p q} f_{p+q} \after \sigma = 0 \]
  for all $p, q \ge 1$.
\end{definition}

\begin{observation} \label{obs:cdga_as_Cinf}
  Let $\k \to R$ be a morphism of cdgas.
  Then $R$, considered as an $\Ainf$-algebra over $\k$ via \cref{obs:dga_as_Ainf}, is a $\Cinf$-algebra over $\k$.
  Similarly a map $f \colon R \to S$ of cdgas over $\k$ yields a map of $\Cinf$-algebra over $\k$.
\end{observation}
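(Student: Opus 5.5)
The statement to prove is \cref{obs:cdga_as_Cinf}: a map of cdgas $\k \to R$ makes $R$ a $\Cinf$-algebra over $\k$, and a map of cdgas over $\k$ gives a map of $\Cinf$-algebras. The plan is to check the shuffle-vanishing conditions directly on the structure maps from \cref{obs:dga_as_Ainf}. Only finitely many structure maps are nonzero there, so the verification is short.

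For the algebra, take $p, q \ge 1$, so $n = p + q \ge 2$.
\begin{enumerate}
  \item If $n \ge 3$, then $\mu_n = 0$ and there is nothing to check.
  \item If $n = 2$, then $p = q = 1$. The $(1,1)$-shuffles are the identity and the transposition $\tau$, so we must show
  \[ \mu_2(\shift a \tensor \shift b) + \mu_2\bigl(\tau \act (\shift a \tensor \shift b)\bigr) = 0. \]
  \item By the Koszul rule, $\tau \act (\shift a \tensor \shift b) = (-1)^{(\deg a - 1)(\deg b - 1)} \shift b \tensor \shift a$, since the shifted elements have degrees $\deg a - 1$ and $\deg b - 1$.
  \item Using $\mu_2(\shift a \tensor \shift b) = (-1)^{\deg a} \shift(ab)$ and graded commutativity $ba = (-1)^{\deg a \deg b} ab$, the second term is
  \[ (-1)^{(\deg a - 1)(\deg b - 1) + \deg b + \deg a \deg b} \shift(ab). \]
  \item The exponent simplifies to $\deg a + 1 \pmod 2$. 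So the second term is $-(-1)^{\deg a} \shift(ab)$, which cancels the first term as required.
\end{enumerate}

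For the map, the strict $\Ainf$-map associated to $f$ has $f_n = 0$ for $n \ge 2$. Any $(p, q)$-shuffle condition with $p, q \ge 1$ involves $f_{p+q}$ with $p + q \ge 2$, so it holds trivially.

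The only step needing care is the sign bookkeeping in the $n = 2$ case, specifically getting the degrees of the shifted elements right in the Koszul sign. Everything else is immediate from the definitions.
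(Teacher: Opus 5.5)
Your proposal is correct. The paper states this as an observation without a written proof, and your direct check is exactly the verification it leaves implicit: for the algebra only the $(1,1)$-shuffle condition on $\mu_2$ is nontrivial, and there the Koszul sign from the shifted degrees cancels the sign from graded commutativity; for the map the condition holds trivially because $f_n = 0$ for $n \ge 2$.
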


We now introduce an $\Ainf$-analog of the notion of a symmetric bimodule over an algebra.%
\footnote{Note that our definition differs from the balanced bimodules of Markl \cite{Mar}, which are called $\Cinf$-bimodules by Ginot \cite[Definition~2.5]{Gin}.}
We will see below that a $\Cinf$-algebra is symmetric when considered as a bimodule over itself.

\begin{notation} \label{not:cyclic}
  Let $A_1, \dots, A_i$ be $\k$-modules.
  We write
  \[ \cyclic i  \colon  A_1 \otimes_\k \dots \otimes_\k A_i  \xlongto{\iso}  A_i \otimes_\k A_1 \otimes_\k \dots \otimes_\k A_{i - 1} \]
  for the canonical isomorphism permuting the factors.
\end{notation}

\begin{definition} \label{def:symmetric_bimod}
  Let $R$ be an $\Ainf$-algebra over $\k$.
  An $R$-$R$-bimodule $M$ is \emph{symmetric} if the structure maps vanish on sums of cyclic permutations, i.e.\ if for all $n \ge 1$
  \[ \sum_{i = 0}^n \mu^M_{i, n-i} \after \cyclic{n+1}^{\circ i}  =  0 \]
  as maps $M \tensor_\k (\shift R)^{\tensor_\k n} \to M$.
  A map $f \colon M \to N$ of $R$-$R$-bimodules is \emph{symmetric} if for all $n \ge 1$
  \[ \sum_{i = 0}^n f_{i, n-i} \after \cyclic{n+1}^{\circ i}  =  0 \]
  as maps $M \tensor_\k (\shift R)^{\tensor_\k n} \to N$.
\end{definition}

Note that the composite of two symmetric maps of $R$-$R$-bimodules is again symmetric.

\begin{observation} \label{obs:dgm_as_symmetric}
  Let $\k \to R$ be a map of dgas, and $M$ an $(R \tensor_\k \opmod{R})$-module.
  If $M$ is symmetric in the sense that $r m = (-1)^{\deg r \deg m} m r$, then it is symmetric when considered as an $R$-$R$-bimodule via \cref{obs:dgm_as_Ainf}.
  Furthermore any $(R \tensor_\k \opmod{R})$-module map $f \colon M \to M'$ yields a symmetric map of $R$-$R$-bimodules.
\end{observation}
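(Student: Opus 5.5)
The plan is a direct check from the definitions. The bimodule structure of \cref{obs:dgm_as_Ainf} has only three possibly nonzero structure maps, and a strict map has only one, so both symmetry conditions of \cref{def:symmetric_bimod} reduce to very few terms.

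\emph{Maps.} This part is immediate. Let $f$ be an $(R \tensor_\k \opmod{R})$-module map, viewed as a strict bimodule map. Then $f_{l,r} = 0$ whenever $l + r > 0$. For $n \ge 1$, every summand $f_{i, n-i} \after \cyclic{n+1}^{\circ i}$ in the symmetry condition has $i + (n-i) = n \ge 1$, so each summand vanishes and so does the sum. (Symmetry of $M$ is not needed here.)

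\emph{Modules, case $n \ge 2$.} The condition is
\[ \sum_{i=0}^n \mu^M_{i,n-i} \after \cyclic{n+1}^{\circ i} = 0 . \]
Since $\mu^M_{l,r} = 0$ whenever $l + r > 1$, every summand vanishes and there is nothing to show.

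\emph{Modules, case $n = 1$.} Only two terms remain. On $m \tensor \shift x \in M \tensor_\k \shift R$ we must show
\[ \mu^M_{0,1}(m \tensor \shift x) + \mu^M_{1,0}\bigl(\cyclic 2(m \tensor \shift x)\bigr) = 0 . \]
I will compute each term with the formulas of \cref{obs:dgm_as_Ainf}:
\begin{itemize}
  \item The first term is $(-1)^{\deg m} m x$.
  \item For the second term, $\cyclic 2(m \tensor \shift x) = (-1)^{\deg m (\deg x - 1)} \shift x \tensor m$, using that $\shift x$ has degree $\deg x - 1$. Applying $\mu^M_{1,0}$ gives $(-1)^{\deg m (\deg x - 1)} x m$.
  \item The symmetry hypothesis $x m = (-1)^{\deg x \deg m} m x$ then turns the second term into a sign times $m x$.
\end{itemize}
It remains to compare this sign with the first term's sign $(-1)^{\deg m}$.

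\emph{Expected obstacle.} The only real difficulty is the sign bookkeeping in this last comparison. A naive count gives the same sign $(-1)^{\deg m}$ for both terms. That would produce $2(-1)^{\deg m} m x$ rather than $0$, which means I have mis-tracked some Koszul sign. I would fix this by rechecking three places against the conventions the paper takes from \cite{Kel}:
\begin{itemize}
  \item the sign with which $\mu^M_{0,1}$ and $\mu^M_{1,0}$ arise from the bicomodule differential on $\B(R, M, R)$, including how $\shift$ passes across $m$;
  \item the exact direction of $\cyclic{n+1}$ in \cref{not:cyclic}, which moves the last factor to the front;
  \item the grading of $\shift x$.
\end{itemize}
The target is that these conventions produce exactly one extra factor $-1$ relative to the naive count, so that the two terms cancel. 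Once the sign is settled, the observation follows.
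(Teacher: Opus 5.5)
\textbf{There is a genuine gap at the only nontrivial step.} Your reductions are fine: strict maps have $f_{l,r}=0$ for $l+r>0$, so the statement about maps is immediate, and for modules only $n=1$ contributes. But you leave the $n=1$ step, which is the whole content, unresolved.

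Your computation there is not mis-tracked. With $\mu^M_{0,1}(m\tensor\shift y) = (-1)^{\deg m} m\cdot y$ exactly as printed in \cref{obs:dgm_as_Ainf}, the sum really is $2(-1)^{\deg m}mx$. Two of your three proposed rechecks cannot change this:
\begin{itemize}
  \item $\cyclic 2$ is a transposition, so its direction is irrelevant.
  \item The parity of $\deg \shift x = \deg x - 1$ is fixed.
\end{itemize}
Ending with ``the target is that these conventions produce exactly one extra factor $-1$'' assumes the conclusion rather than proving it.

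\textbf{What is missing.} Your first item is the right place to look, but you have to pin the sign down. You can do this directly from the displayed equations, without unwinding the bicomodule differential: the printed sign of $\mu^M_{0,1}$ is incompatible with \eqref{eq:bimodule}. The $(l,r)=(0,2)$ component of \eqref{eq:bimodule} reads
\[ \mu^M_{0,1}\after(\mu^M_{0,1}\tensor\id) + \mu^M_{0,1}\after(\id\tensor\mu^R_2) = 0 . \]
Write $\mu^M_{0,1}(m\tensor\shift y) = \varepsilon(-1)^{\deg m} my$. Use $\mu^R_2(\shift y\tensor\shift y') = (-1)^{\deg y}\shift(yy')$ from \cref{obs:dga_as_Ainf}, together with the Koszul sign $(-1)^{\deg m}$ from applying $\id\tensor\mu^R_2$. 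On $m\tensor\shift y\tensor\shift y'$ the left-hand side is then $(1+\varepsilon)(-1)^{\deg y}myy'$. So $\varepsilon=-1$ is forced, i.e.\ $\mu^M_{0,1}(m\tensor\shift y) = (-1)^{\deg m+1}my$.

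Two consistency checks support this. The same check in bidegree $(2,0)$ confirms the printed $\mu^M_{1,0}(\shift x\tensor m) = xm$. The corrected sign also reproduces the bimodule $\shift R$ of \cref{obs:algebra_is_bimodule}: taking the right action $(\shift a)\cdot y = \shift(ay)$, one gets $\mu^{\shift R}_{0,1}(\shift a\tensor\shift y) = (-1)^{\deg a}\shift(ay) = (-1)^{\deg \shift a+1}(\shift a)\cdot y$.

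With the corrected sign, your own computation finishes the proof:
\[ \mu^M_{0,1}(m\tensor\shift x) + \mu^M_{1,0}\bigl(\cyclic 2(m\tensor\shift x)\bigr) = (-1)^{\deg m+1}mx + (-1)^{\deg m}mx = 0 . \]
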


\begin{observation}
  Let $f \colon R \to S$ be a map of $\Ainf$-algebras over $\k$.
  It follows from \cref{obs:restrict_bimodule} that the restriction functor $(f, f)^* \colon \BMod{S}{S} \to \BMod{R}{R}$ preserves symmetric bimodules and symmetric maps of bimodules.
\end{observation}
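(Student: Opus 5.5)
The plan is to verify the defining identity of \cref{def:symmetric_bimod} for $M \defeq (f, f)^*(M')$ directly, using the explicit formula for the restricted structure maps from \cref{obs:restrict_bimodule}. Fix $n \ge 1$ and evaluate
\[ \sum_{i = 0}^n \mu^M_{i, n-i} \after \cyclic{n+1}^{\circ i} \]
on $m \tensor x_1 \tensor \dots \tensor x_n$. The rotation $\cyclic{n+1}^{\circ i}$ moves $x_{n-i+1}, \dots, x_n$ in front of $m$ and leaves $x_1, \dots, x_{n-i}$ behind it, with the Koszul sign.

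Next I would expand each $\mu^M_{i,n-i}$ by \cref{obs:restrict_bimodule}. The left tensor factors are cut into consecutive blocks $(x_{n-i+1}, \dots)$, \dots, $(\dots, x_n)$, and the right ones into consecutive blocks of $(x_1, \dots, x_{n-i})$. This gives a double sum over $i$ and over block decompositions.

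The key step is a reindexing of this double sum. It is indexed by the pairs consisting of
\begin{itemize}
  \item a decomposition of the linear word $x_1 \cdots x_n$ into consecutive nonempty blocks $B_1, \dots, B_k$ with $k \ge 1$, and
  \item an integer $0 \le i' \le k$, the number of final blocks placed to the left of $m$.
\end{itemize}
Here $i$ is the total length of the last $i'$ blocks. This is a bijection: the cut point $n-i$ separating the two sides must be a block boundary, and conversely every choice of $i'$ determines such a cut. No block straddles $m$, so no wrap-around blocks occur.

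For a fixed decomposition, the corresponding terms assemble to
\[ \sum_{i'=0}^{k} \mu^{M'}_{i', k-i'} \after \cyclic{k+1}^{\circ i'} \]
applied to $m \tensor f_{|B_1|}(B_1) \tensor \dots \tensor f_{|B_k|}(B_k)$. This uses that the $f_j$ have degree $0$ and that permuting tensor factors is natural, so rotating the $x$'s and then applying the $f$'s agrees, sign included, with applying the $f$'s and then rotating the blocks. Since $k \ge 1$, this inner sum vanishes by symmetry of $M'$, so the total sum vanishes.

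For a symmetric map $h'$, the same argument applies verbatim with $h'_{i,j}$ in place of $\mu^{M'}_{i,j}$, using the formula for $(f,f)^*(h')$ in \cref{obs:restrict_bimodule}.

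The only real obstacle is sign bookkeeping in the naturality step. I would handle it by working in the symmetric monoidal category of graded $\k$-modules and noting that both sides are images of the same permutation of tensor factors, so no explicit sign computation is needed.
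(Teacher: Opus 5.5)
Your proposal is correct and is exactly the direct check the paper has in mind when it says the statement follows from \cref{obs:restrict_bimodule}. Regrouping the terms by the induced decomposition of $x_1 \tensor \dots \tensor x_n$ into consecutive nonempty blocks, with the cut placed at a block boundary, reduces the cyclic sum for $(f,f)^*(M')$ to the $k$-th symmetry identity of $M'$ (with $k \ge 1$) evaluated on $m \tensor f_{|B_1|}(B_1) \tensor \dots \tensor f_{|B_k|}(B_k)$. The Koszul signs agree because the $f_j$ have degree $0$, and the same argument applies verbatim to symmetric maps.
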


\begin{lemma} \label{lemma:Cinf_is_symm}
  For a $\Cinf$-algebra $R$ over $\k$, the $R$-$R$-bimodule $\shift R$ of \cref{obs:algebra_is_bimodule} is symmetric.
  For a map $f \colon R \to S$ of $\Cinf$-algebras, the $R$-$R$-bimodule map $\shift R \to (f, f)^*(\shift S)$ of \cref{lemma:algebra_map_is_bimodule_map} is symmetric.
\end{lemma}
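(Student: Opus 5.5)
The plan is to reduce both statements to one combinatorial fact about the tensor coalgebra, and to prove that fact using Ree's theorem on shuffles and Lie elements.

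First unwind the definitions. For the bimodule $\shift R$ of \cref{obs:algebra_is_bimodule} we have $\mu^{\shift R}_{i,n-i} = \mu^R_{n+1}$. Evaluate the term $\mu^{\shift R}_{i,n-i} \after \cyclic{n+1}^{\circ i}$ on $m \tensor x_1 \tensor \dots \tensor x_n$. It gives, up to the Koszul sign of the permutation, $\mu^R_{n+1}(x_{n-i+1}, \dots, x_n, m, x_1, \dots, x_{n-i})$. As $i$ runs from $0$ to $n$, these arguments are exactly the $n+1$ cyclic rotations of the word $w = m x_1 \cdots x_n$. So the symmetry condition of \cref{def:symmetric_bimod} says
\[ \mu^R_{n+1} \after \Bigl( \textstyle\sum_{j=0}^{n} \cyclic{n+1}^{\circ j} \Bigr) = 0 \]
on $(\shift R)^{\tensor_\k n+1}$. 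The same unwinding for the map $f' \colon \shift R \to (f,f)^*(\shift S)$ of \cref{lemma:algebra_map_is_bimodule_map}, whose components are $f'_{l,r} = f_{l+1+r}$, reduces the second statement to $f_{n+1} \after \sum_j \cyclic{n+1}^{\circ j} = 0$. Thus it suffices to prove the following. Let $\phi \colon (\shift R)^{\tensor_\k k} \to X$ be a $\k$-linear map with $k \ge 2$ that vanishes on all sums of $(p,q)$-shuffles with $p, q \ge 1$. Then $\phi$ vanishes on the image of the cyclic symmetrization $N_k \defeq \sum_j \cyclic{k}^{\circ j}$. (For $n \ge 1$ we have $k = n+1 \ge 2$.)

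To prove this, I would work universally in the tensor algebra $T(V)$, where the letters are formal graded variables, so that all signs are Koszul signs. Here $N_k$ and the shuffle sums are natural operations defined over $\QQ$ with signs, so a universal identity specializes to $(\shift R)^{\tensor_\k k}$. Consider the pairing between $T(V)$ and $T(V^\vee)$. By Ree's theorem, in characteristic $0$ the annihilator of the span of all nontrivial shuffle products is the free Lie algebra. Dually, the span of the nontrivial shuffles in arity $k$ is the annihilator of the degree-$k$ Lie elements. So $N_k(w)$ lies in the span of shuffles if and only if $N_k^\dagger$ kills every Lie element of degree $k$, where $N_k^\dagger$ is the adjoint of $N_k$, which is again cyclic symmetrization. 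For $k \ge 2$ the Lie elements are spanned by commutators $[u,v] = uv \pm vu$ of homogeneous words. Cyclic symmetrization identifies $uv$ with $\pm vu$, with exactly the graded sign, so it kills every commutator. Hence $N_k(w)$ is a linear combination of shuffle sums, and $\phi$ kills it.

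The step I expect to be the main obstacle is making this duality argument rigorous in the graded, signed setting over the base $\k$. Ree's theorem is classically stated for ungraded letters over a field. I would apply it to the multilinear part of $T$ on graded letters, where it remains valid by the Koszul sign rule. Then I would note that the resulting expression of $N_k(w)$ as a $\QQ$-linear combination of permuted shuffle sums is a universal identity in the group algebra of $\Sigma_k$. Such an identity transfers to any graded $\k$-module by naturality, which sidesteps any finiteness issues with duals over $\k$. A fallback is to avoid Ree entirely and find the identity directly. Summing the $(i, k-i)$-shuffle relations with alternating signs gives the Eulerian idempotent relations, and in particular $\phi(x_1 \cdots x_k) = (-1)^{k-1} \phi(x_k \cdots x_1)$ with signs. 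Combining this with the insertion relations $a \ssh v$ should yield the cyclic sum.
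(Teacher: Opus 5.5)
Your proposal is correct and follows the paper's route. The paper also reduces both claims to showing that the sum $c_n$ of cyclic permutations lies in the span $S_n \subseteq \QQ[\Sigma_n]$ of permuted shuffle sums, identifies $S_n$ with the annihilator of $\mathrm{Lie}(n)$ (the Loday--Vallette form of Ree's theorem), and checks that the cyclic functional kills $\mathrm{Lie}(n)$. It does this last check by an induction counting cyclic components of right-normed Lie words. Your argument is shorter: Lie elements of degree $\ge 2$ lie in the span of commutators $uv - vu$ of words, and any rotation-invariant functional kills these. Your worry about signs is unnecessary, since you can prove the identity in $\QQ[\Sigma_k]$ with ungraded letters and then apply the Koszul-signed representation, as the paper implicitly does.
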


\begin{proof}
  This follows immediately from \cref{lemma:cyclic_shuffle} below.
\end{proof}

\begin{lemma} \label{lemma:cyclic_shuffle}
  Let $\mathbb{K}$ be a field and $n \ge 2$.
  Then the sum $c_n$ of all cyclic permutations of $n$ letters lies in the $\Sigma_n$-subrepresentation $S_n \subseteq \mathbb{K}[\Sigma_n]$ spanned by $\sum_{\sigma \in \Sh p q} \sigma$ for all $p, q \ge 1$ with $p + q = n$.
\end{lemma}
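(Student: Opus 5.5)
The plan is to dualize: instead of writing $c_n$ explicitly as a combination of shuffle sums, I would identify the orthogonal complement of $S_n$ under a natural pairing and show that $c_n$ pairs to zero with it. Identify $\mathbb{K}[\Sigma_n]$ with the multilinear part $T_n$ of the tensor algebra on letters $x_1, \dots, x_n$, via $\sigma \mapsto x_{\sigma(1)} \cdots x_{\sigma(n)}$ (with Koszul signs in the graded setting; I would verify the sign conventions at the end). Under this identification, letting $\sum_{\sigma \in \Sh p q} \sigma$ act on a multilinear word and then applying $\Sigma_n$ produces exactly the shuffle products $u \sqcup\!\sqcup\, v$ of nonempty words $u, v$ on complementary sets of letters. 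Hence $S_n$ is the multilinear degree-$n$ part of the span of shuffle products of two nonempty words, that is, of the decomposables of the shuffle algebra.

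Next, use the nondegenerate bilinear pairing $\langle \cdot, \cdot \rangle$ on $T_n$ for which distinct words are orthonormal. It satisfies $\langle u \sqcup\!\sqcup\, v, P\rangle = \langle u \otimes v, \Delta P\rangle$, where $\Delta$ is the deconcatenation-dual, i.e.\ the unshuffle coproduct for which the letters are primitive. Therefore $S_n^\perp$ is exactly the space of multilinear elements $P \in T_n$ with $\Delta P = P \otimes 1 + 1 \otimes P$, the multilinear primitives. In every characteristic these are the multilinear Lie polynomials. The restricted $p$-th powers that appear in positive characteristic are never multilinear, so they do not enter. Since $T_n$ is finite-dimensional, $S_n = (S_n^\perp)^\perp$, and it suffices to show $\langle c_n, P\rangle = 0$ for every multilinear Lie polynomial $P$ of degree $n \ge 2$.

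For that, consider the quotient map $q \colon T_n \to T_n/[T, T]$ onto cyclic words. The functional $\langle c_n, \cdot\rangle$ sums the coefficients of the $n$ cyclic rotations of $x_1 \cdots x_n$, so it is constant on cyclic-equivalence classes and therefore factors through $q$. Now every Lie polynomial of degree $\ge 2$ is a linear combination of commutators $[a, b] = ab \mp ba$, and these lie in $[T, T]$, so $q(P) = 0$. Hence $\langle c_n, P\rangle = 0$. (Equivalently, the cyclic sum of a right-normed bracket vanishes.)

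The main obstacle I expect is bookkeeping rather than ideas. First, the identification of the $\Sigma_n$-span of the shuffle sums with shuffle products must use the correct left action and the Koszul signs, and those signs must be checked to be compatible with the cyclic sign conventions of \cref{not:cyclic}. Second, the identification of multilinear primitives with Lie elements must be justified over an arbitrary field. For this I would argue directly: multilinear primitives are spanned by right-normed brackets, via the Dynkin-type map $x_{i_1} \cdots x_{i_n} \mapsto [x_{i_1}, [\dots, x_{i_n}]]$ restricted to words starting with a fixed letter, which avoids division by $n$. If that second step proves delicate, I would fall back on characteristic $0$, which is all the paper needs since its base field is $\QQ$.
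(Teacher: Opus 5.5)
Your proof is correct. It shares the paper's overall strategy: pass to the annihilator of $\Lie(n)$ under the pairing for which permutations, read as multilinear words, are orthonormal. Both halves, however, are carried out differently.

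For the reduction, the paper simply cites the identification of $S_n$ with the kernel of $\dual{\Ass(n)} \to \dual{\Lie(n)}$. You instead derive it from the duality between shuffle product and unshuffle coproduct, together with the fact that multilinear primitives are Lie polynomials.

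For the vanishing, the paper runs a combinatorial induction over the left-normed words $w^i_\sigma$. It shows that $\kappa_i(w^i_\sigma)$ has at most one cyclically consecutive component, and that the last bracket produces two such components of opposite sign. You instead observe that $\langle c_n, \blank \rangle$ is invariant under rotating words. Hence it kills every commutator $ab - ba$, and therefore every Lie polynomial of degree $\ge 2$.

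Your route is shorter and explains the paper's cancellation. Only the final bracket matters, because for any word $w$ the words $w x_j$ and $x_j w$ are rotations of each other, so the induction over the earlier stages is unnecessary. It also gives the stronger fact that $c_n$ annihilates all of $[T,T] \cap T_n$.

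A few details need fixing.

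\emph{The identification.} The one under which $\sum_{\sigma \in \Sh{p}{q}} \sigma$ yields shuffle products is $\sigma \mapsto \sigma \cdot (x_1 \cdots x_n) = x_{\sigma^{-1}(1)} \cdots x_{\sigma^{-1}(n)}$. This is the inverse of the one you wrote: under yours, $\sum_{\sigma \in \Sh{1}{2}} \sigma$ becomes $x_1x_2x_3 + x_2x_1x_3 + x_3x_1x_2$, which pairs to $-1$ with $[[x_1,x_2],x_3]$. Since the cyclic group is closed under inversion, $c_n$ itself is unaffected.

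\emph{The characteristic-free sketch.} A right-normed bracket $[x_{i_1},[\dots,[x_{i_{n-1}},x_n]]]$ contains exactly one word ending in $x_n$. So you must fix the last letter, or else use left-normed brackets and fix the first. With right-normed brackets and a fixed first letter, $P = [x_1,[x_2,x_3]]$ is sent to $2P$. You also need that a multilinear primitive whose coefficients on all words ending in $x_n$ vanish is itself zero. This follows by induction on the number of letters after $x_n$, pairing against $(u x_n) \sqcup\!\sqcup\, v$.

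\emph{Minor points.} The unshuffle coproduct is dual to the shuffle product, not to deconcatenation (whose dual is concatenation). No Koszul signs are needed, since the lemma is a statement inside $\mathbb{K}[\Sigma_n]$.
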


\begin{proof}
  It is well-known (see e.g.\ \cite[Theorem~1.3.6]{LV}) that $S_n$ is the kernel of the surjective map
  \[ \mathbb{K}[\Sigma_n]  \iso  \dual{\mathbb{K}[\Sigma_n]}  =  \dual{\Ass(n)}  \xlongto{\dual{\kappa_n}}  \dual{\Lie(n)} \]
  where the isomorphism is the one associated to the canonical basis and $\kappa_n \colon \Lie(n) \to \Ass(n)$ is the canonical map from the $\mathbb{K}$-linear Lie operad to the $\mathbb{K}$-linear associative operad.
  Hence we want to prove that $c_n$ is mapped to $0$ under this composite, or equivalently that $\dual{c_n} \after \kappa_n = 0$.
  
  To this end, we consider the following inductively defined Lie words in the letters $\quot{\ZZ}{n} = \set {0, \dots, n - 1}$
  \[ w_\sigma^1  \defeq  \sigma(0)  \qquad \text{and} \qquad  w_\sigma^{i+1}  \defeq  [ w_\sigma^i, \sigma(i)] \]
  where $1 \le i < n$ and $\sigma$ is a permutation of $\quot{\ZZ}{n}$.
  Note that $\Lie(n)$ is spanned by the elements $w_\sigma^n$, so that it is enough to show that $\dual{c_n} \after \kappa_n$ vanishes on those.
  Also note that $\Ass(n)$ has a basis consisting of permutations of the set $\quot{\ZZ}{n}$, which we will write as tuples.
  We will prove by induction that for all $\sigma$ and $1 \le i \le n - 1$, the element $\kappa_i(w_\sigma^i)$ has at most one component of the form $\lambda_i \cdot (x_i, x_i + 1, \dots, x_i + i - 1)$ for some $\lambda_i \neq 0$ and $x_i \in \quot{\ZZ}{n}$.
  The claim is clear for $i = 1$.
  Now assume it is proven for some $1 \le i < n - 1$.
  If $\kappa_i(w_\sigma^i)$ has no component of the specified form, then neither does $\kappa_{i+1}(w_\sigma^{i+1})$.
  Now assume that $\kappa_i(w_\sigma^i)$ does have such a component for some $x_i$.
  If $\sigma(i)$ is not equal to $x_i - 1$ or $x_i + i$, then $\kappa_{i+1}(w_\sigma^{i+1})$ again has no such component; otherwise it has precisely one (here we use that $i < n - 1$ so that $x_i - 1 \neq x_i + i$).
  This completes the induction, so that we have proven the claim for $i = n - 1$.
  Now, if $\kappa_{n-1}(w_\sigma^{n-1})$ has no component of the specified form, then $\kappa_n(w_\sigma^n)$ has no cyclic components and we are done.
  Otherwise $\kappa_{n-1}(w_\sigma^{n-1})$ does have such a component for some $\lambda_{n-1}$ and $x_{n-1}$.
  In that case $\sigma(n-1) = x_{n-1} - 1 = x_{n-1} + n - 1$, so that $\kappa_n(w_\sigma^n)$ has exactly the two cyclic components
  \[ \lambda_{n-1} \cdot (x_{n-1}, x_{n-1} + 1, \dots, x_{n-1} + n - 1) - \lambda_{n-1} \cdot (x_{n-1} - 1, x_{n-1}, \dots, x_{n-1} + n - 2) \]
  which have opposite signs and hence cancel each other upon applying $\dual{c_n}$.
  This completes the proof.
\end{proof}

\subsection{Tensor products and the hom-functor}

We begin this subsection by introducing the tensor product of bimodules over $\Ainf$-algebras, following Getzler--Jones \cite[§3]{GJ}; also see \cite[§4.1.1]{Lef}.

\begin{definition}
  Let $R$, $S$, and $T$ be $\Ainf$-algebras over $\k$.
  We denote by
  \[ \blank \inftensor_S \blank  \colon  \BMod{R}{S} \times \BMod{S}{T}  \longto  \BMod{R}{T} \]
  the functor given by sending $(M, N)$ to the $R$-$T$-bimodule corresponding to
  \[ \B \bigl( R, M \inftensor_S N, T \bigr)  \defeq  \B(R, M, S) \cotensor^{\B(S)} \B(S, N, T) \]
  considered as a $\B(R)$-$\B(T)$-bicomodule in $\k$-modules.
  If $S = 0$, we simply write $M \otimes_\k N \defeq M \inftensor_0 N$.
\end{definition}

\begin{observation} \label{obs:comp_description_infty_tensor}
  Let $R$, $S$, and $T$ be $\Ainf$-algebras over $\k$, and let $M$ be an $R$-$S$-bimodule and $N$ an $S$-$T$-bimodule.
  Then the underlying $\k$-module of $M \inftensor_S N$ is $\bigoplus_{n \ge 0} M \otimes_\k (\shift S)^{\otimes_\k n} \otimes_\k N$.
  Its structure maps are given, on the $n$-th summand, by
  \begin{align*}
    \mu_{l, r} =&
      \begin{cases*}
        \sum_{n_1 + n_2 = n} \mu^M_{l,n_1} \otimes \id^{\otimes n_2 + 1}, & if $r = 0$ \\
        0, & else
      \end{cases*} \\
      +&
      \begin{cases*}
        \sum_{n_1 + n_2 + n_3 = n} {\id^{\otimes 1 + n_1}} \otimes \mu^S_{n_2} \otimes \id^{\otimes n_3 + 1}, & if $l = r = 0$ \\
        0, & else
      \end{cases*} \\
      +&
      \begin{cases*}
        \sum_{n_1 + n_2 = n} {\id^{\otimes 1 + n_1}} \otimes \mu^N_{n_2,r}, & if $l = 0$ \\
        0, & else
      \end{cases*}
  \end{align*}
  where $\mu^M$, $\mu^R$, and $\mu^N$ are the structure maps of $M$, $R$, and $N$, respectively.
  
  For a map $f \colon M \to M'$ of $R$-$S$-bimodules and a map $g \colon N \to N'$ of $S$-$T$-bimodules, the induced map $(f,g)_* \colon M \inftensor_S N \to M' \inftensor_S N'$ has the structure maps given on the $n$-th summand by
  \[ \bigl( (f,g)_* \bigr)_{l, r}  =  \sum_{n_1 + n_2 + n_3 = n} f_{l,n_1} \tensor {\id^{\otimes n_2}} \tensor g_{n_3,r} \]
  where $f_{l,r}$ and $g_{l,r}$ are the respective structure maps of $f$ and $g$.
  
  In the case $S = 0$, the underlying $\k$-module of $M \tensor_\k N = M \inftensor_S N$ is the $\k$-module $M \tensor_\k N$ (hence the notation).
  The structure maps of $M \tensor_\k N$ are given by
  \begin{align*}
    \mu_{l,r} &\colon (\shift R)^{\otimes_\k l} \otimes_\k M \otimes_\k N \otimes_\k (\shift T)^{\otimes_\k r}  \longto  M \otimes_\k N \\
    \mu_{l,r} &=
    \begin{lrcases*}
      \mu^M_l \tensor \id, & if $r = 0$ \\
      0, & else
    \end{lrcases*}
    +
    \begin{lrcases*}
      \id \tensor \mu^N_r, & if $l = 0$ \\
      0, & else
    \end{lrcases*}
  \end{align*}
  where $\mu^M_l$ and $\mu^N_r$ are the structure maps of $M$ and $N$, respectively.
\end{observation}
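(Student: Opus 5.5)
The plan is to identify $\B(R, M \inftensor_S N, T)$ explicitly as a graded bicomodule, and then read off the structure maps by corestricting the differential to the cogenerators. First, note that $\B(R, M, S) = \coT_\k(\shift R) \tensor_\k M \tensor_\k \coT_\k(\shift S)$ is quasi-cofree as a right $\B(S)$-comodule, with $V \defeq \coT_\k(\shift R) \tensor_\k M$. By the discussion of cotensor products in the first subsection, there is an isomorphism of graded $\k$-modules
\[ V \tensor_\k \B(S, N, T) \xlongto{\iso} \B(R,M,S) \cotensor^{\B(S)} \B(S,N,T), \]
given by applying the left $\B(S)$-coaction $\Delta_N$ of $\B(S,N,T)$ to the second factor. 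Unwinding, the left-hand side is
\[ \coT_\k(\shift R) \tensor_\k \Bigl( \bigoplus_{n \ge 0} M \tensor_\k (\shift S)^{\tensor_\k n} \tensor_\k N \Bigr) \tensor_\k \coT_\k(\shift T), \]
and it is cofree as a graded $\B(R)$-$\B(T)$-bicomodule on the middle factor. This identifies the underlying $\k$-module of $M \inftensor_S N$ as claimed. In the case $S = 0$ we have $\B(0) = \k$, so the cotensor is just the tensor product and only the $n = 0$ summand appears.

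Next, a bicomodule coderivation on a cofree bicomodule is determined by its composite with the projection onto the cogenerators, and these components are exactly the maps $\mu_{l,r}$. The differential on the cotensor product is the restriction of $\mu^{\B(R,M,S)} \tensor \id + \id \tensor \mu^{\B(S,N,T)}$. I would transport it along the isomorphism above: start with an element $x \tensor m \tensor \vec s \tensor n' \tensor y$, where $x \in (\shift R)^{\tensor l}$, $\vec s \in (\shift S)^{\tensor n}$ and $y \in (\shift T)^{\tensor r}$. Apply the coaction to write it as a sum of split tensors $(x \tensor m \tensor \vec s_{(1)}) \tensor (\vec s_{(2)} \tensor n' \tensor y)$, apply the differential, and project onto $M \tensor (\shift S)^{\tensor} \tensor N$ using the counits of $\coT_\k(\shift R)$ and $\coT_\k(\shift T)$.

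The terms are then sorted as follows. Terms of $\mu^{\B(R,M,S)}$ that apply $\mu^R$ to part of $x$ without consuming all of $x$ leave some $\shift R$ tensor factor and are killed by the projection, and similarly on the $T$ side. The surviving terms of $\mu^{\B(R,M,S)}$ are $\mu^M_{l,n_1}$, which consumes all of $x$ and the first $n_1$ letters of $\vec s$; this forces $r = 0$. The surviving terms of $\mu^{\B(S,N,T)}$ are $\mu^N_{n_2,r}$, which forces $l = 0$. The inner $\mu^S$-terms on $\vec s$ come from both factors, but the equalizer condition identifies them, so that each inner $\mu^S_{n_2}$ is counted exactly once; this forces $l = r = 0$. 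For morphisms I would argue the same way: $(f,g)_*$ is the restriction of $\B(f) \tensor \B(g)$, and corestricting gives $\sum f_{l,n_1} \tensor \id^{\tensor n_2} \tensor g_{n_3,r}$. Here the middle $\id^{\tensor n_2}$ arises because the identity components of $f$ and $g$ on their $\coT_\k(\shift S)$ parts combine through the coaction.

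I expect the main obstacle to be the bookkeeping in the middle step. One has to check carefully that, under the identification with $V \tensor_\k \B(S,N,T)$, the inner $\mu^S$-terms are neither double counted nor lost, and that the Koszul signs match the stated formulas. The cleanest route is probably to compute directly on $V \tensor_\k \B(S,N,T)$, writing the transported differential as $\mu^{\B(R,M,S)}$ acting on $V \tensor (\text{$S$-prefix})$ plus $\mu^{\B(S,N,T)}$. Then each term is visibly attributable to exactly one of the three sums in the statement.
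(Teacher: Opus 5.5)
Your proposal is correct and is exactly the unwinding of the definition that the paper leaves implicit: the paper states this as an observation with no proof, and you identify $\B(R,M,S) \cotensor^{\B(S)} \B(S,N,T) \iso V \tensor_\k \B(S,N,T)$ via the quasi-cofree isomorphism from the first subsection and then corestrict the differential and $\B(f) \tensor \B(g)$ to the cogenerators. The one step to state precisely is the projection onto the cogenerators: besides the counits of $\coT_\k(\shift R)$ and $\coT_\k(\shift T)$, it must also apply the counit of $\B(S)$ to the $S$-letters of the left-hand factor (this inverts ${\id_V} \tensor \Delta_N$), which kills every $\mu^S$-term coming from $\B(R,M,S)$ and so ensures that each $\mu^S_{n_2}$, and the middle $\id^{\tensor n_2}$ in $(f,g)_*$, is counted exactly once.
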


In the case that $S$ is a dga, the construction $M \inftensor_S N$ is the derived tensor product of $M$ and $N$ as follows.

\begin{observation} \label{obs:inftensor_dga}
  Let $\k \to S$ be a map of dgas, let $R$ and $T$ be $\Ainf$-algebras over $\k$, and let $M$ be an $R$-$S$-bimodule such that $\mu^M_{l,r} = 0$ except if $r = 0$ or $(l, r) = (0, 1)$, and $N$ an $S$-$T$-bimodule such that $\mu^N_{l,r} = 0$ except if $l = 0$ or $(l, r) = (1, 0)$.
  Then there is a canonical isomorphism of $\k$-modules
  \[ M \inftensor_S N \iso M \tensor_S \B_\k(S, S, S) \tensor_S N \]
  where on the right-hand side we consider $M$ and $N$ as modules over the dga $S$ via $\mu^M_{0,1}$ and $\mu^N_{1,0}$.
  The map $\epsilon \colon \B_\k(S, S, S) \to R$ induces a strict map $M \inftensor_S N \to M \tensor_S N$ of $R$-$T$-bimodules.
  It is a quasi-isomorphism when $S$ is cofibrant as a $\k$-module and either $M$ is cofibrant as an $S$-module or $N$ is cofibrant as an $\opmod{S}$-module (using \cite[\Ilemmabarcomplex{} and \Ilemmacofibrantflat]{I}).
\end{observation}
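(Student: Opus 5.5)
The plan is to verify the isomorphism first on underlying graded $\k$-modules, then match the differentials, then check the $R$-$T$-structure maps, and finally deduce the quasi-isomorphism from the cited lemmas of Part~I.

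\emph{Underlying modules.} By \cref{obs:comp_description_infty_tensor}, the underlying $\k$-module of $M \inftensor_S N$ is $\bigoplus_{n \ge 0} M \tensor_\k (\shift S)^{\tensor_\k n} \tensor_\k N$. On the other side, the $n$-th term of $\B_\k(S, S, S)$ is $S \tensor_\k S^{\tensor_\k n} \tensor_\k S$, placed in total degree shifted by $n$. Hence
\[ M \tensor_S \bigl( S \tensor_\k S^{\tensor_\k n} \tensor_\k S \bigr) \tensor_S N \iso M \tensor_\k S^{\tensor_\k n} \tensor_\k N , \]
again shifted by $n$, which agrees with $M \tensor_\k (\shift S)^{\tensor_\k n} \tensor_\k N$. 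I fix the isomorphism summandwise by $m \tensor \shift s_1 \tensor \dots \tensor \shift s_n \tensor n' \mapsto \pm\, m \tensor (1 \tensor s_1 \tensor \dots \tensor s_n \tensor 1) \tensor n'$, with a Koszul-type sign to be determined.

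\emph{Differentials.} On the $n$-th summand, the component $\mu_{0,0}$ of $M \inftensor_S N$ consists of four kinds of terms, using the hypotheses that $\mu^M_{0,r}$ and $\mu^N_{l,0}$ vanish for $r, l \ge 2$ and $\mu^S_k = 0$ for $k \ge 3$:
\begin{itemize}
\item the internal differentials $\mu^M_{0,0}$, $\mu^S_1$ and $\mu^N_{0,0}$;
\item $\mu^M_{0,1}$, i.e.\ the right action of $s_1$ on $m$;
\item $\mu^S_2$ applied to adjacent pairs $s_i, s_{i+1}$;
\item $\mu^N_{1,0}$, i.e.\ the left action of $s_n$ on $n'$.
\end{itemize}
These are exactly the internal differential together with the bar differential $d_n = \sum_i (-1)^i \id \tensor \mu \tensor \id$, after tensoring with $M$ and $N$ over $S$. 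The outer faces become the actions on $M$ and $N$ via $S \tensor_S N \iso N$ and $M \tensor_S S \iso M$. The sign choice above is to be made so that the signs of \cref{obs:dga_as_Ainf} and \cref{obs:dgm_as_Ainf} turn into the alternating bar signs. \textbf{This sign bookkeeping is the main (though routine) obstacle.} I would handle it by using the convention $\mu_n = \shift \after m_n \after (\shift[-1])^{\tensor n}$ and reducing to the standard comparison of the $\Ainf$ bar construction of a dga with the classical one.

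\emph{$R$-$T$-structure.} The remaining structure maps are $\mu^M_{l,0} \tensor \id$ for $l \ge 1$ and $\id \tensor \mu^N_{0,r}$ for $r \ge 1$. The bimodule equation \eqref{eq:bimodule} with $r = 1$, combined with the vanishing hypotheses, shows that $\mu^M_{l,0}(\vec x \tensor \blank)$ is right $S$-linear up to sign, and symmetrically for $N$. Consequently $M \tensor_S N$ inherits an $R$-$T$-bimodule structure, with $\mu_{l,0}$ and $\mu_{0,r}$ defined by the same formulas.

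\emph{The strict map induced by $\epsilon$.} The map $\id_M \tensor \epsilon \tensor \id_N$ is zero on summands with $n \ge 1$ and is the multiplication isomorphism $M \tensor_S S \tensor_S N \iso M \tensor_S N$ on $n = 0$. The structure maps with $(l,r) \neq (0,0)$ preserve the summand index $n$ and act only on the $M$ and $N$ factors, so they commute with it. Its compatibility with $\mu_{0,0}$ is the fact that $\epsilon$ is a chain map. Hence it defines a strict bimodule map.

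\emph{Quasi-isomorphism.} By \cite[\Ilemmabarcomplex]{I}, $\epsilon$ is a quasi-isomorphism between $(S \tensor_\k \opmod S)$-modules, and the source $\B_\k(S,S,S)$ is cofibrant when $S$ is cofibrant over $\k$. Suppose $M$ is a cofibrant $S$-module (the other case is symmetric). Then $M \tensor_S \B_\k(S,S,S) \to M \tensor_S S = M$ is a quasi-isomorphism between modules that are cofibrant as left $S$-modules, by \cite[\Ilemmacofibrantflat]{I}. Applying \cite[\Ilemmacofibrantflat]{I} once more, now to $\blank \tensor_S N$, gives the quasi-isomorphism.
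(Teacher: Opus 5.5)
Your proposal is correct and follows the same route as the paper, whose justification is only implicit here. You identify the $n$-th summand of \cref{obs:comp_description_infty_tensor}, using the vanishing hypotheses, with the bar-complex term of $M \tensor_S \B_\k(S,S,S) \tensor_S N$. The remaining structure maps act only on $M$ or $N$, and the quasi-isomorphism then comes from \cite[\Ilemmabarcomplex{} and \Ilemmacofibrantflat]{I}, exactly as the paper cites.

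You assert without justification that $M \tensor_S \B_\k(S,S,S)$ is cofibrant for its remaining (right) $S$-action. This is true, but you can bypass it by writing the map as $M \tensor_S (\epsilon \tensor_S N)$. There $\epsilon \tensor_S N$ is already a quasi-isomorphism, because $\epsilon$ is a quasi-isomorphism between modules that are cofibrant for the right $S$-action (using that $S$ is cofibrant over $\k$).
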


We will require the following interaction between the tensor product and restriction functors.

\begin{observation} \label{obs:inftensor_restrict}
  Let $f \colon R \to R'$, $g \colon S \to S'$, and $h \colon T \to T'$ be maps of $\Ainf$-algebras over $\k$, $M$ an $R'$-$S'$-bimodule, and $N$ an $S'$-$T'$-bimodule.
  Then there is a strict map of $R$-$T$-bimodules, natural in $M$ and $N$,
  \[ g_*  \colon  (f, g)^*(M) \inftensor_S (g, h)^*(N)  \longto  (f, h)^* (M \inftensor_{S'} N) \]
  associated to the canonical map
  \[
  \begin{tikzcd}[row sep = 15]
    \bigl( \B(R) \cotensor^{\B(R')} \B(R', M, S') \cotensor^{\B(S')} \B(S) \bigr) \cotensor^{\B(S)} \bigl( \B(S) \cotensor^{\B(S')} \B(S', N, T') \cotensor^{\B(T')} \B(T) \bigr) \dar \\
    \B(R) \cotensor^{\B(R')} \bigl( \B(R', M, S') \cotensor^{\B(S')} \B(S', N, T') \bigr) \cotensor^{\B(T')} \B(T)
  \end{tikzcd}
  \]
  of $\B(R)$-$\B(T)$-bicomodules.
  Its underlying map is, on the $n$-th summand, given by
  \[ \sum_{n_1 + \dots + n_i = n} \id_M \otimes g_{n_1} \otimes \dots \otimes g_{n_i} \otimes \id_N \]
  where the $g_m$ are the structure maps of $g$.
\end{observation}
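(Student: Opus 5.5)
The plan is to construct $g_*$ directly at the level of bicomodules, using only the unit and associativity isomorphisms for cotensor products and the coalgebra map $\B(g) \colon \B(S) \to \B(S')$. Strictness and the formula for the underlying map should then fall out from the quasi-cofree identifications recalled at the start of this section.

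First, I will check that every cotensor product involved is well defined as a bicomodule. Each $\B(R', M, S') \iso \coT_\k(\shift R') \tensor_\k M \tensor_\k \coT_\k(\shift S')$ is quasi-cofree as a $\B(R')$-$\B(S')$-bicomodule, and similarly for $N$. The associativity isomorphism $L \cotensor^B (M \cotensor^C N) \iso (L \cotensor^B M) \cotensor^C N$ therefore applies, and it lets us rebracket the top expression as
\[ \B(R) \cotensor^{\B(R')} \B(R', M, S') \cotensor^{\B(S')} \bigl( \B(S) \cotensor^{\B(S)} \B(S) \bigr) \cotensor^{\B(S')} \B(S', N, T') \cotensor^{\B(T')} \B(T). \]
Next I use the unit isomorphism $\B(S) \cotensor^{\B(S)} \B(S) \iso \B(S)$. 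After that I apply $\B(g) \colon \B(S) \to \B(S')$, which is a map of dg coalgebras and hence of $\B(S')$-bicomodules. Finally the unit isomorphisms $X \cotensor^{\B(S')} \B(S') \cotensor^{\B(S')} Y \iso X \cotensor^{\B(S')} Y$ give the bottom expression.

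Each step is a map of $\B(R)$-$\B(T)$-bicomodules, because none of them touches the outer factors $\B(R)$ and $\B(T)$. Each step also commutes with the differentials: the cotensor functoriality is dg, and $\B(g)$ is a chain map. Naturality in $M$ and $N$ is clear, since the construction is functorial in the bicomodules $\B(R', M, S')$ and $\B(S', N, T')$.

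For strictness and the explicit formula, I identify the underlying graded objects. Via quasi-cofreeness, the top object is $\coT_\k(\shift R) \tensor_\k \bigl( \bigoplus_n M \tensor_\k (\shift S)^{\tensor n} \tensor_\k N \bigr) \tensor_\k \coT_\k(\shift T)$, and the bottom object is the same with $S'$ in place of $S$. Under these identifications the constructed map is $\id \tensor \Phi \tensor \id$, where $\Phi$ is $\id_M \tensor \B(g) \tensor \id_N$ restricted to the middle tensor factor. A bicomodule map of this form has all components $f_{l,r}$ with $(l,r) \neq (0,0)$ equal to zero. Its linear part is obtained by composing with the projections onto $M \tensor \coT(\shift S') \tensor N$.

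It then remains to note that the component of $\B(g)$ sending $(\shift S)^{\tensor n}$ to $\coT(\shift S')$ is $\sum_{n_1 + \dots + n_i = n} g_{n_1} \tensor \dots \tensor g_{n_i}$. This is the standard formula for the coalgebra map determined by the $g_m$, and it yields exactly the stated formula for the underlying map.

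The main obstacle I expect is bookkeeping: verifying that the rebracketing isomorphisms are genuinely compatible with the outer coactions, given that cotensor products need not be bicomodules in general. My approach is to invoke the quasi-cofree criterion at each step, as explained before the definition of restriction of scalars.
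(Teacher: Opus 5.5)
Your proposal is correct and follows the argument the paper intends (the paper gives nothing beyond naming the canonical map). You build the map from the unit isomorphism $\B(S)\cotensor^{\B(S)}\B(S)\iso\B(S)$ and the coalgebra map $\B(g)$, then check on the quasi-cofree models that it becomes $\id\otimes(\id_M\otimes\B(g)\otimes\id_N)\otimes\id$; this uses that $\B(g)$ is counital and comultiplicative, and it yields both strictness and the stated formula.

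One small correction to your justification: the rebracketing across $\cotensor^{\B(S)}$ has $\B(S)$ as the middle factor, and $\B(S)$ is not quasi-cofree as a $\B(S')$-$\B(S)$-bicomodule. So the paper's bicomodule associativity statement does not literally cover that step. Justify it instead, as you anticipated, by the one-sided criterion: $\B(R)\cotensor^{\B(R')}\B(R',M,S')$ is quasi-cofree as a right $\B(S')$-comodule, and symmetrically on the $N$ side.
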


%

\subsubsection*{The hom-functor}

We now recall the construction of the following special case of the hom-functor of modules over $\Ainf$-algebras
\[ \Hom_\k(\blank, \blank)  \colon  \opcat{\LMod{S}} \times \LMod{R}  \longto  \BMod{R}{S} \]
by exhibiting it as right adjoint to the tensor product.
See \cite[§4.1.1]{Lef} for a more general and extensive treatment.

\begin{lemma} \label{lemma:Ainf_Hom}
  Let $R$ and $S$ be $\Ainf$-algebras over $\k$ and $M$ a left $S$-module.
  Then the functor $\blank \inftensor_S M \colon \BMod{R}{S} \to \LMod{R}$ has a right adjoint (as functors enriched in graded vector spaces).
  Its underlying $\k$-module is given by $\Hom_\k(M, \blank)$.
\end{lemma}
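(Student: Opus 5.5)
The plan is to construct the right adjoint explicitly, mimicking the dga case via the bar-comodule description, and then check the adjunction on structure maps. For a left $R$-module $N$, define $\Hom_\k(M, N)$ to have underlying $\k$-module $\Hom_\k(M, N)$. Its $R$-$S$-bimodule structure maps
\[ \mu^{\Hom}_{l,r} \colon (\shift R)^{\tensor_\k l} \tensor_\k \Hom_\k(M, N) \tensor_\k (\shift S)^{\tensor_\k r} \longto \Hom_\k(M, N) \]
are given, on $x_1 \tensor \dots \tensor x_l \tensor \phi \tensor y_1 \tensor \dots \tensor y_r$, by the following rule. Write $m$ for the argument. If $r = 0$, $l \ge 1$, take the map $m \mapsto \mu^N_l(x_1 \tensor \dots \tensor x_l \tensor \phi(m))$. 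If $l = 0$, $r \ge 1$, take $m \mapsto \pm \phi(\mu^M_r(y_1 \tensor \dots \tensor y_r \tensor m))$. The map $\mu^{\Hom}_{0,0}$ is the usual differential $\mu^N_0 \after \phi \mp \phi \after \mu^M_0$. All other structure maps vanish. Signs are fixed by the Koszul rule.

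\textbf{Step 1: bimodule equations.} We check that these maps satisfy \eqref{eq:bimodule}. Because mixed structure maps vanish, the equation for $(l, r)$ with both $l, r > 0$ reduces to the commutation of postcomposition and precomposition. The equation with $r = 0$ is the left module equation for $N$. The equation with $l = 0$ is the left module equation for $M$, transported to a right action by precomposition, as in the proof of \cref{lemma:upsilon_algebra_map}.

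\textbf{Step 2: the adjunction bijection.} Take a bimodule map $f \colon P \inftensor_S M \to N$ of degree $d$. By \cref{obs:comp_description_infty_tensor}, it consists of components
\[ f_{l} \colon (\shift R)^{\tensor l} \tensor P \tensor (\shift S)^{\tensor n} \tensor M \to N \]
for each $n$ and $l$. Currying in the $M$ variable gives maps
\[ g_{l,n} \colon (\shift R)^{\tensor l} \tensor P \tensor (\shift S)^{\tensor n} \to \Hom_\k(M, N), \]
which are exactly the data of a degree $d$ bimodule map $P \to \Hom_\k(M, N)$. This bijection is clearly natural in $P$ and $N$. It is also compatible with the graded-vector-space enrichment, since it is linear in $f$.

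\textbf{Step 3: matching the map equations.} The remaining work is to show that $f$ satisfies \eqref{eq:bimodule_map} exactly when $g$ does. I would expand both equations using \cref{obs:comp_description_infty_tensor} and compare term by term. In the equation for $f$, the terms from $\mu^N$ postcomposed correspond to $\mu^{\Hom}_{l,0}$ in the equation for $g$. The terms from $\mu^M_{n_2,0}$ acting on the tail $(\shift S)^{\tensor n_2} \tensor M$ correspond to $\mu^{\Hom}_{0,r}$. The terms from $\mu^P$ and $\mu^S$ correspond to themselves, and the differential terms correspond to $\mu^{\Hom}_{0,0}$.

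The main obstacle is this bookkeeping of Koszul signs when passing $m$ past the shifted inputs. I would handle it by first checking the case where $P$, $R$, $S$ are dgas, so that only $l, r \le 1$ occur, and then noting that the general signs are forced to be the uniform Koszul signs. A cleaner alternative, which I would try first, is to phrase everything on the level of bicomodules. One identifies maps of $\B(R)$-comodules
\[ \B(R, P, S) \cotensor^{\B(S)} \B(S, M, 0) \to \B(R, N, 0) \]
with maps $\B(R, P, S) \to \Hom_\k(M, N)$ by coinduction from the cofree comodules. This avoids explicit signs.
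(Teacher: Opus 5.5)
Your proposal is correct and follows essentially the same route as the paper. The paper uses the same bimodule structure on $\Hom_\k(M,N)$: postcomposition with $\mu^N_l$ for $r=0$, minus precomposition with $\mu^M_r$ for $l=0$, and all mixed structure maps zero. It checks the bimodule equations by the same case split, builds the adjunction isomorphism $\Phi$ by currying the components $f_l$ in the $M$-variable, and, like you, leaves the matching of the map equations and naturality to unwinding the definitions. (One small slip: $f \colon P \inftensor_S M \to N$ is a map of left $R$-modules, not a bimodule map.)
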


\begin{proof}
  It is enough to prove that, for each left $R$-module $N$, the $\k$-module $\Hom_\k(M, N)$ can be equipped with an $R$-$S$-bimodule structure such that there is an isomorphism of graded vector spaces
  \begin{equation} \label{eq:Hom_tensorinf_adjunction}
    \Phi \colon \LMod{R}(T \inftensor_S M, N)  \xlongto{\iso}  \BMod{R}{S} \bigl( T, \Hom_\k(M, N) \bigr)
  \end{equation}
  that is natural in $T \in \BMod{R}{S}$ (see e.g.\ \cite[§IV.1, Corollary~2]{Mac}).
  We define the $R$-$S$-bimodule structure maps of $\Hom_\k(M, N)$ to be the maps
  \[ \mu_{l,r} \colon (\shift R)^{\otimes_\k l} \otimes_\k \Hom_\k(M, N) \otimes_\k (\shift S)^{\otimes_\k r}  \longto  \Hom_\k(M, N) \]
  adjoint to the maps
  \[ \hat \mu_{l,r} \colon (\shift R)^{\otimes_\k l} \otimes_\k \Hom_\k(M, N) \otimes_\k (\shift S)^{\otimes_\k r} \otimes_\k M  \longto  N \]
  given by
  \[ \hat \mu_{l,r} \defeq
  \begin{lrcases*}
    \mu^N_l \circ ({\id^{\otimes l}} \otimes \ev), & if $r = 0$ \\
    0, & else
  \end{lrcases*}
  -
  \begin{lrcases*}
    {\ev} \circ (\id \tensor \mu^M_r), & if $l = 0$ \\
    0, & else
  \end{lrcases*}
  \]
  where $\mu^M_l$ and $\mu^N_r$ are the structure maps of $M$ and $N$, respectively.
  We begin by proving that this indeed defines an $R$-$S$-bimodule.
  
  For $l = r = 0$ the bimodule equation \eqref{eq:bimodule} is $\mu_{0,0}^2=0$ which holds as $\mu_{0,0}$ agrees with the usual differential of $\Hom_\k(M,N)$.
  For $l, r > 0$ there are only two non-trivial terms in the bimodule equation and we have to show that
  \[ \mu_{l,0} \circ (\id^{\otimes l} \otimes \mu_{0,r}) + \mu_{0,r} \circ (\mu_{l,0} \otimes \id^{\otimes r}) = 0 \]
  which follows from unwinding the definitions.
  For $l > 0$ and $r = 0$, the adjoint of the bimodule equation for $\Hom_\k(M, N)$ is the left module equation for $N$ precomposed with $\ev$; for $l = 0$ and $r > 0$, the adjoint is the left module equation for $M$ postcomposed with $-\ev$.
  
  We now construct the isomorphism $\Phi$ of \eqref{eq:Hom_tensorinf_adjunction}.
  Given a left $R$-module map $f \colon T \inftensor_S M \to N$ of degree $d$ we define $\Phi(f) \colon T \to \Hom_\k(M, N)$ to be the $R$-$S$-bimodule map of degree $d$ whose structure map $\Phi(f)_{l,r}$ is adjoint to the restriction
  \[ f_{l;r} \colon (\shift R)^{\otimes_\k l} \otimes_\k T \otimes_\k (\shift S)^{\otimes_\k r} \otimes_\k M  \longto  N \]
  of $f_l$.
  To see that $\Phi$ yields a well-defined isomorphism as in \eqref{eq:Hom_tensorinf_adjunction}, it is enough to prove that the maps $f_l$ fulfill the left $R$-module map equation if and only if the maps $\Phi(f)_{l,r}$ fulfill the $R$-$S$-bimodule map equation \eqref{eq:bimodule_map}.
  This follows directly from unwinding the definitions.
  Similarly one verifies that $\Phi$ is natural in $T$.
\end{proof}

\begin{observation} \label{obs:Hom_bifunctor}
  Since $\blank \inftensor_S \blank$ is a bifunctor, there is a unique way of making the right adjoint of \cref{lemma:Ainf_Hom} into a bifunctor
  \[ \Hom_\k(\blank, \blank)  \colon  \opcat{\LMod{S}} \times \LMod{R}  \longto  \BMod{R}{S} \]
  such that the adjunction isomorphism \eqref{eq:Hom_tensorinf_adjunction} becomes natural in both variables (see e.g.\ \cite[§IV.7, Theorem~3]{Mac}).
  In particular, the unit and counit transformations
  \[ \eta  \colon  T  \longto  \Hom_\k(M, T \inftensor_S M)  \qquad \text{and} \qquad  \ev \colon \Hom_\k(M, N) \inftensor_S M  \longto  N \]
  are natural in $M \in \LMod{S}$, $T \in \BMod{R}{S}$, and $N \in \LMod{R}$ in the canonical way.
  
  For a morphism $f \colon M \to M'$ of left $S$-modules and a left $R$-module $N$, the $R$-$S$-bimodule map $f^* \colon \Hom_\k(M', N) \to \Hom_\k(M, N)$ has the structure maps whose adjoints are the maps
  \begin{align*}
    \hat f^*_{l,r} &\colon (\shift R)^{\otimes_\k l} \otimes_\k \Hom_\k(M',N) \otimes_\k (\shift S)^{\otimes_\k r} \otimes M \longto N \\
    \hat f^*_{l,r} &= \begin{cases} \ev \circ ({\id} \otimes f_r), & l = 0 \\ 0, & \text{otherwise} \end{cases}
  \end{align*}
  where $f_r$ are the structure maps of $f$.
  If $f$ is a quasi-isomorphism and $M$ and $M'$ are cofibrant as $\k$-modules, then $f^*$ is a quasi-isomorphism as well.
  
  For a left $S$-module $M$ and a morphism $g \colon N \to N'$ of left $R$-modules, the $R$-$S$-bimodule map $g_* \colon \Hom_\k(M, N) \to \Hom_\k(M, N')$ has the structure maps whose adjoints are the maps
  \begin{align*}
    (\hat g_*)_{l,r} &\colon (\shift R)^{\otimes_\k l} \otimes_\k \Hom_\k(M, N) \otimes_\k (\shift R)^{\otimes_\k r} \otimes_\k M \longto N' \\
    (\hat g_*)_{l,r} &= \begin{cases} g_l \circ ({\id^{\tensor l}} \tensor \ev), & r = 0 \\ 0, & \text{otherwise} \end{cases}
  \end{align*}
  where $g_l$ are the structure maps of $g$.
  If $g$ is a quasi-isomorphism and $M$ is cofibrant as a $\k$-module, then $g_*$ is a quasi-isomorphism as well.
\end{observation}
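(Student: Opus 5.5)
The formulas for $f^*$ and $g_*$ come from unwinding the adjunction isomorphism $\Phi$ of \eqref{eq:Hom_tensorinf_adjunction}, and the quasi-isomorphism statements come from looking at linear parts.

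First I determine the counit $\ev \colon \Hom_\k(M, N) \inftensor_S M \to N$ explicitly. By construction $\ev = \Phi^{-1}(\id)$. The identity of $\Hom_\k(M,N)$ has $\id_{0,0} = \id$ and all other components zero. Hence the only nonzero component of $\ev$ is the restriction of $\ev_0$ to the summand $\Hom_\k(M,N) \tensor_\k (\shift S)^{\tensor 0} \tensor_\k M$, where it is the usual evaluation. All components with $l > 0$, or on summands with $n > 0$, vanish.

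Next, by the uniqueness in the bifunctoriality statement (Mac Lane), $f^*$ is characterized by
\[ f^* = \Phi\bigl(\ev \after (\id, f)_*\bigr), \]
where $(\id, f)_* \colon \Hom_\k(M', N) \inftensor_S M \to \Hom_\k(M', N) \inftensor_S M'$. Similarly,
\[ g_* = \Phi\bigl(g \after \ev\bigr). \]

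I will read off the components using the formulas of \cref{obs:comp_description_infty_tensor} and the composition formula for bimodule maps.

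\begin{itemize}
\item For $f^*$: on the $r$-th summand, $(\id,f)_*$ has components $\sum_{n_2+n_3=r} \id \tensor \id^{\tensor n_2} \tensor f_{n_3}$, concentrated in $l = 0$. Postcomposing with $\ev$ kills everything except $n_2 = 0$. This leaves exactly $\ev \after (\id \tensor f_r)$ for $l = 0$, and zero for $l > 0$. Applying $\Phi$ (restriction and adjunction) gives the stated $\hat f^*_{l,r}$.
\item For $g_*$: the composite $g \after \ev$ has $l$-th component $g_l \after (\id^{\tensor l} \tensor \ev)$, which is nonzero only on the $r = 0$ summand. This gives the stated $(\hat g_*)_{l,r}$.
\end{itemize}

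The main care needed is bookkeeping of Koszul signs in $\Phi$. Since the identified components involve no reordering beyond what the adjunction already uses, I expect the signs to be trivial.

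For the quasi-isomorphism claims, I only need the linear parts.
\begin{itemize}
\item $(f^*)_{0,0}$ is adjoint to $\ev \after (\id \tensor f_0)$, so it is precomposition $f_0^* \colon \Hom_\k(M', N) \to \Hom_\k(M, N)$. By the remarks in the preliminaries, $\Hom_\k(\blank, N)$ is right Quillen as a functor $\opcat{\Mod{\k}} \to \Mod{\k}$. In the projective model structure on $\Mod{\k}$ every object is fibrant, so cofibrant objects are exactly those that are fibrant in $\opcat{\Mod{\k}}$. Ken Brown's lemma then shows that this functor sends the quasi-isomorphism $f_0$ between the cofibrant $\k$-modules $M, M'$ to a quasi-isomorphism.
\item $(g_*)_{0,0}$ is postcomposition with $g_0$. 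The functor $\Hom_\k(M, \blank)$ is right Quillen for cofibrant $M$, and all objects are fibrant, so again by Ken Brown it preserves all quasi-isomorphisms.
\end{itemize}
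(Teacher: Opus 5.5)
Your argument is correct and is essentially the one the paper has in mind. The paper gives no separate proof beyond citing Mac Lane's parameter theorem, and that theorem yields exactly $f^* = \Phi(\ev \after (\id, f)_*)$ and $g_* = \Phi(g \after \ev)$; unwinding these with the explicit counit gives the stated components. The quasi-isomorphism claims then reduce, as you say, to the linear parts $f_0^*$ and $(g_0)_*$, together with the right Quillen properties of the $\k$-linear hom recalled in the preliminaries and the fact that every object is fibrant.
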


\begin{observation} \label{obs:hom-tensor_unit}
  Let $R$ and $S$ be $\Ainf$-algebras over $\k$ and $M$ a left $S$-module.
  For an $R$-$S$-bimodule $T$, the structure maps $\eta_{l,r}$ of the unit $\eta \colon T \to \Hom_\k ( M, T \inftensor_S M )$ are adjoint to the maps
  \[ \hat \eta_{l,r}  \colon  (\shift R)^{\otimes_\k l} \otimes_\k T \otimes_\k (\shift S)^{\otimes_\k r} \otimes_\k M  \longto  T \inftensor_S M \]
  given by the inclusion for $l = 0$ and the trivial map for $l > 0$.
  For a left $R$-module $N$, the structure maps $\ev_l$ of the counit $\ev \colon \Hom_\k(M, N) \inftensor_S M \to N$ are trivial for $l > 0$ and given by the following composite for $l = 0$
  \[ \Hom_\k(M, N) \inftensor_S M  \xlongto{\pr}  \Hom_\k(M, N) \tensor_\k M  \xlongto{\ev}  N  \]
  where $\pr$ is the projection.
\end{observation}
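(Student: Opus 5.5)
The plan is to read off both transformations directly from the explicit adjunction isomorphism $\Phi$ of \eqref{eq:Hom_tensorinf_adjunction} constructed in the proof of \cref{lemma:Ainf_Hom}. There, a left $R$-module map $f \colon T \inftensor_S M \to N$ has structure maps $f_l$ defined on $(\shift R)^{\otimes_\k l} \otimes_\k (T \inftensor_S M)$. By \cref{obs:comp_description_infty_tensor}, the $\k$-module underlying $T \inftensor_S M$ is $\bigoplus_{r \ge 0} T \otimes_\k (\shift S)^{\otimes_\k r} \otimes_\k M$, so $f_l$ splits into the restrictions $f_{l;r}$. The map $\Phi(f)$ is then the bimodule map whose $(l,r)$-component is adjoint to $f_{l;r}$. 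The unit and counit are obtained in the standard way: $\eta = \Phi(\id_{T \inftensor_S M})$ and $\ev = \Phi^{-1}(\id_{\Hom_\k(M,N)})$. Their naturality is already provided by \cref{obs:Hom_bifunctor}, so only the explicit formulas need to be checked.

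For the unit, use that the identity of the left $R$-module $T \inftensor_S M$ has structure maps $\id_0 = \id$ and $\id_l = 0$ for $l > 0$. Restricting $\id_0$ to the $r$-th summand $T \otimes_\k (\shift S)^{\otimes_\k r} \otimes_\k M$ gives the inclusion of that summand into $T \inftensor_S M$. For $l > 0$ the restriction is zero. Hence $\hat\eta_{l,r}$ is the inclusion for $l = 0$ and trivial for $l > 0$, as claimed.

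For the counit, the identity of the bimodule $\Hom_\k(M,N)$ has $\id_{0,0} = \id$ and all other components zero. So $\ev = \Phi^{-1}(\id)$ is the left module map $g$ with the following components:
\begin{itemize}
\item $g_{0;0}$ is adjoint to $\id_{\Hom_\k(M,N)}$, i.e.\ it is the evaluation $\Hom_\k(M,N) \otimes_\k M \to N$;
\item $g_{l;r} = 0$ whenever $(l,r) \neq (0,0)$.
\end{itemize}
Assembling these over $r$, $g_0$ vanishes on all summands with $r > 0$ and is $\ev$ on the $r = 0$ summand, which is precisely $\ev \circ \pr$. Moreover $g_l = 0$ for $l > 0$.

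The only point requiring care is sign conventions in the adjunction $\Hom_\k(A \otimes_\k M, N) \iso \Hom_\k(A, \Hom_\k(M,N))$. Since all maps involved are identities of degree $0$ and $M$ sits in the last tensor factor, no Koszul signs arise, so I do not expect a genuine obstacle.
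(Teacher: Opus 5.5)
Your proposal is correct and is the same argument the paper relies on. The paper states this as an observation without a separate proof; it follows exactly as you describe, by computing $\eta = \Phi(\id)$ and $\ev = \Phi^{-1}(\id)$ from the explicit adjunction isomorphism $\Phi$ in the proof of \cref{lemma:Ainf_Hom}, using that identities have only a nonzero $(0,0)$ structure map.
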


The construction $\Hom_\k(\blank, \blank)$ in particular allows us to define the dual of a left module over an $\Ainf$-algebra, partially recovering a construction of Tradler \cite[Lemma~2.9]{Tra}.

\begin{notation}
  Let $R$ be an $\Ainf$-algebra over $\k$ and $M$ a left $R$-module.
  We denote by $\dual M_\k$ the right $R$-module $\Hom_\k(M, \k)$ (where we consider $\k$ as a left $0$-module).
\end{notation}

\begin{observation} \label{obs:Hom_hf}
  Let $R$ and $S$ be $\Ainf$-algebras over $\k$, $M$ a left $R$-module, and $N$ a left $S$-module.
  Then the $\k$-module map
  \begin{align*}
    N \tensor_\k \dual{M}_\k  &\longto  \Hom_\k(M, N) \\
    n \tensor \phi  &\longmapsto  n \cdot \phi(\blank)
  \end{align*}
  is a strict map of $S$-$R$-bimodules, and natural in both $M$ and $N$.
  It is a quasi-isomorphism if $M$ and $N$ are cofibrant as $\k$-modules and $M$ is homotopically finite, by \cite[\IlemmaHomqiso]{I}.
\end{observation}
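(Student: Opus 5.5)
The claim has two parts. First, the map $n \tensor \phi \mapsto n \cdot \phi(\blank)$ is a strict map of $S$-$R$-bimodules, natural in $M$ and $N$. Second, it is a quasi-isomorphism under the stated finiteness and cofibrancy hypotheses. I would handle them separately.

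\emph{Strictness.} Denote the map by $\theta$. I would write out the structure maps of both sides and check that $\theta$ intertwines them.

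On the source, \cref{obs:comp_description_infty_tensor} (the case of a tensor product over $\k$) gives the structure maps of $N \tensor_\k \dual M_\k$ as $\mu^N_l \tensor \id$ for $r = 0$, plus $\id \tensor \mu^{\dual M}_r$ for $l = 0$. Here $\mu^{\dual M}_{0,r}$ is, by \cref{lemma:Ainf_Hom} with the left $0$-module $\k$, adjoint to $-\ev \after (\id \tensor \mu^M_r)$ for $r \ge 1$.

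On the target, \cref{lemma:Ainf_Hom} says $\mu_{l,r}$ on $\Hom_\k(M,N)$ is adjoint to $\mu^N_l \after (\id^{\tensor l} \tensor \ev)$ when $r = 0$, minus $\ev \after (\id \tensor \mu^M_r)$ when $l = 0$.

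Being strict means $\theta$ must satisfy \eqref{eq:bimodule_map} with only $\theta_{0,0}$ nonzero. That reduces to $\theta \after \mu^{\mathrm{source}}_{l,r} = \mu^{\mathrm{target}}_{l,r} \after (\id^{\tensor l} \tensor \theta \tensor \id^{\tensor r})$. I would check this by evaluating both sides on $x_1 \tensor \dots \tensor x_l \tensor n \tensor \phi \tensor y_1 \tensor \dots \tensor y_r$ and then applying the result to $m \in M$. For $r = 0$, $l > 0$, both sides give $\mu^N_l(x, n)\,\phi(m)$; this uses $\k$-linearity of $\mu^N_l$ and the fact that $\phi(m) \in \k$ can be moved out of the last slot. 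For $l = 0$, $r > 0$, both sides give $-n\,\phi(\mu^M_r(y, m))$. The case $l = r = 0$ is just compatibility with differentials.

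The only care needed is Koszul signs when passing $\phi(m)$ past the $x_i$. I expect these to match because $\k$ is graded commutative and the $\mu^N_l$ are $\k$-linear, so this is the main place to be attentive.

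Naturality in $M$ and $N$ I would check against the formulas for $f^*$ and $g_*$ in \cref{obs:Hom_bifunctor}. The map induced on tensor products is $g_l \tensor \id$ resp.\ $\id \tensor (f^*)_r$, so the same evaluation argument applies.

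\emph{Quasi-isomorphism.} Strict maps are quasi-isomorphisms precisely when $\theta_{0,0}$ is one. Here $\theta_{0,0}$ is the ordinary $\k$-module map $N \tensor_\k \Hom_\k(M,\k) \to \Hom_\k(M,N)$, which depends only on the underlying $\k$-modules. So this part follows directly from the cited \cite[\IlemmaHomqiso]{I}, whose proof reduces via cofibrancy to a finite minimal model $\k \tensor V$ of $M$, where the map is an isomorphism.
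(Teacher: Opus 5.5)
Your proposal is correct and follows the same route as the paper: strictness and naturality come from unwinding the structure maps of \cref{lemma:Ainf_Hom} and \cref{obs:comp_description_infty_tensor}, where your reduction of the strict-map equation to $\theta \circ \mu_{l,r} = \mu_{l,r} \circ (\id^{\tensor l} \tensor \theta \tensor \id^{\tensor r})$ is exactly what is needed, and the quasi-isomorphism comes from applying the cited lemma of Part~I to the linear part. Two harmless nits: you should also record the case $l, r > 0$, where both sides vanish, and in the case $l = 0$ both sides equal $-(-1)^{\deg n + \deg \phi}\, n\,\phi(\mu^M_r(y, m))$ rather than $-n\,\phi(\mu^M_r(y,m))$, which still agree.
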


We will need certain compatibilities of restriction with the tensor--hom adjunction, as follows.

\begin{observation} \label{obs:Hom_restrict}
  Let $f \colon R \to R'$ and $g \colon S \to S'$ be maps of $\Ainf$-algebras over $\k$, $M$ a left $S'$-module, and $N$ a left $R'$-module.
  Then the identity defines a strict isomorphism
  \[ (f, g)^* \bigl( \Hom_\k(M, N) \bigr)  \iso  \Hom_\k \bigl( g^*(M), f^*(N) \bigr) \]
  of $R$-$S$-bimodules.
\end{observation}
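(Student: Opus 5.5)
The plan is to compare the structure maps of the two $R$-$S$-bimodules directly. Both have underlying $\k$-module $\Hom_\k(M, N)$, so it suffices to show that their structure maps $\mu_{l,r}$ coincide for all $l, r \ge 0$. Then the identity is a strict map in both directions, hence a strict isomorphism. Since the structure maps of a $\Hom$-bimodule were defined in \cref{lemma:Ainf_Hom} through their adjoints $\hat\mu_{l,r}$, I will compare adjoints throughout. Passing to adjoints only curries off the last tensor factor $M$, so it commutes with precomposition by maps acting on the $(\shift R)$- and $(\shift S)$-factors.

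First, I will compute the left-hand side using \cref{obs:restrict_bimodule}:
\[ \mu_{l,r} = \sum \mu^{\Hom}_{i,j} \circ (f_{l_1} \otimes \dots \otimes f_{l_i} \otimes \id \otimes g_{r_1} \otimes \dots \otimes g_{r_j}), \]
where $\mu^{\Hom}_{i,j}$ are the structure maps of $\Hom_\k(M, N)$ as an $R'$-$S'$-bimodule. By the formula for $\hat\mu_{i,j}$ in \cref{lemma:Ainf_Hom}, $\mu^{\Hom}_{i,j}$ vanishes unless $i = 0$ or $j = 0$. I will split the sum into these two kinds of terms.

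\begin{itemize}
\item Terms with $j = 0$ force $r = 0$. Their adjoints add up to
\[ \sum_{l_1 + \dots + l_i = l} \mu^N_i \circ (f_{l_1} \otimes \dots \otimes f_{l_i} \otimes \ev). \]
By \cref{obs:restrict_bimodule} this is exactly $\mu^{f^*N}_l \circ (\id^{\otimes l} \otimes \ev)$.
\item Terms with $i = 0$ force $l = 0$. Their adjoints add up to
\[ -\sum_{r_1 + \dots + r_j = r} \ev \circ \bigl(\id \otimes \mu^M_j \circ (g_{r_1} \otimes \dots \otimes g_{r_j} \otimes \id_M)\bigr) = -\ev \circ (\id \otimes \mu^{g^*M}_r). \]
Here I use that, in the left $S$-module version of \cref{obs:restrict_bimodule}, the $g$'s act on the $S$-inputs placed to the left of $M$.
\end{itemize}

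The two resulting expressions are precisely the two pieces of the adjoint $\hat\mu_{l,r}$ defining $\Hom_\k(g^*(M), f^*(N))$ in \cref{lemma:Ainf_Hom}.

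The only point needing care is the overlap $l = r = 0$, and the signs. For $l = r = 0$ the single term $i = j = 0$ is counted once and gives the usual differential $\mu^N_0 \circ \ev - \ev \circ (\id \otimes \mu^M_0)$ on both sides. This is consistent because $f$ and $g$ enter only through $f_{l_k}$ and $g_{r_k}$ with $l_k, r_k \ge 1$, so the differentials are unaffected. As for signs, the $f_n$ and $g_n$ have degree $0$, so commuting them past the $\Hom$ factor and the $\shift R$ factors introduces no Koszul signs. Also, the sign convention $\hat\mu = (\ldots) - (\ldots)$ is the same on both sides.

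I expect this sign and placement bookkeeping, in particular that the $g$'s in the adjoint act on the $(\shift S)$-factors sitting between $\Hom_\k(M,N)$ and $M$, to be the only real obstacle; everything else is unwinding definitions.
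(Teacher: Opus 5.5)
Your proof is correct and is exactly the unwinding of definitions that the paper leaves implicit: the paper states this as an observation and gives no proof. You compare the adjoints $\hat\mu_{l,r}$ of the two bimodule structures, using the restriction formula of \cref{obs:restrict_bimodule} and the explicit formula of \cref{lemma:Ainf_Hom}, and conclude that the structure maps agree, so the strict identity is a bimodule isomorphism. A slightly cleaner bookkeeping is to split each $\hat\mu^{\Hom}_{i,j}$ into its two summands rather than splitting the terms by $i=0$ or $j=0$; this removes the need to treat the overlap $l=r=0$ separately.
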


\begin{lemma} \label{lemma:eta_restrict}
  Let $f \colon R \to R'$ and $g \colon S \to S'$ be maps of $\Ainf$-algebras over $\k$, $M$ a left $S'$-module, and $N$ an $R'$-$S'$-bimodule.
  Then the following diagram of $R$-$S$-bimodules commutes
  \[
  \begin{tikzcd}[column sep = 35]
    (f, g)^* (N) \rar{\eta_{(f, g)^*(N)}} \dar[swap]{(f, g)^*(\eta_N)} & \Hom_\k \bigl( g^*(M) , (f, g)^*(N) \inftensor_S g^*(M) \bigr) \dar{g_*} \\
    (f, g)^* \bigl( \Hom_\k(M , N \inftensor_{S'} M) \bigr) \rar{\iso} & \Hom_\k \bigl( g^*(M) , f^*( N \inftensor_{S'} M) \bigr) 
  \end{tikzcd}
  \]
  where $\eta$ denotes the unit of the adjunction of \cref{lemma:Ainf_Hom}, and the other two maps are induced by the maps of \cref{obs:inftensor_restrict,obs:Hom_restrict}.
\end{lemma}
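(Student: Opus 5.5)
We compare structure maps directly, since every map in the square is given by an explicit formula in the earlier observations. Two $R$-$S$-bimodule maps agree exactly when all their structure maps $(\blank)_{l,r}$ agree. All four corners have underlying $\k$-module either $N$ or $\Hom_\k(M, N \inftensor_{S'} M)$, where $\bigoplus_n N \tensor_\k (\shift S')^{\tensor n} \tensor_\k M$ is the underlying module of $N \inftensor_{S'} M$. Each structure map into a Hom-object is determined by its adjoint, a map
\[ (\shift R)^{\tensor l} \tensor_\k N \tensor_\k (\shift S)^{\tensor r} \tensor_\k M \longto N \inftensor_{S'} M. \]
So the plan is to compute, for each composite, these adjoint maps and check that they coincide.

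For the upper-right path, \cref{obs:hom-tensor_unit} says that $\eta_{(f,g)^*(N)}$ is strict in the left variable and its adjoint is the inclusion into $(f,g)^*(N) \inftensor_S g^*(M)$. Here $(\shift S)^{\tensor r}$ lands in the $r$-th summand $N \tensor (\shift S)^{\tensor r} \tensor M$, and all $l > 0$ components vanish. Postcomposing with $g_*$, we use \cref{obs:Hom_bifunctor} together with the formula of \cref{obs:inftensor_restrict} for the map $(f,g)^*(N)\inftensor_S g^*(M) \to f^*(N \inftensor_{S'} M)$. That map is strict, with underlying map $\sum_{r_1 + \dots + r_j = r} \id_N \tensor g_{r_1} \tensor \dots \tensor g_{r_j} \tensor \id_M$. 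Because the map is strict, the composition formula reduces to postcomposition. We obtain: for $l > 0$ the adjoint vanishes, and for $l = 0$ it is $x \tensor s_1 \tensor \dots \tensor s_r \tensor m \mapsto \sum \pm\, x \tensor g_{r_1}(\dots) \tensor \dots \tensor g_{r_j}(\dots) \tensor m$.

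For the lower-left path, \cref{obs:restrict_bimodule} expresses the structure maps of $(f,g)^*(\eta_N)$ as $\sum (\eta_N)_{i,j} \after (f_{l_1} \tensor \dots \tensor f_{l_i} \tensor \id \tensor g_{r_1} \tensor \dots \tensor g_{r_j})$. Since $(\eta_N)_{i,j}$ vanishes for $i > 0$, only the term with $l = 0$ survives. In that term $(\eta_N)_{0,j}$ is adjoint to the inclusion of the $j$-th summand, so the adjoint is the same sum $\sum \id \tensor g_{r_1} \tensor \dots \tensor g_{r_j} \tensor \id$. The bottom isomorphism of \cref{obs:Hom_restrict} is the identity on underlying maps and strict, so it does not change these components. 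The two paths therefore agree componentwise.

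The main obstacle is bookkeeping rather than ideas. We must confirm that the adjunction-derived formula for $g_*$ in \cref{obs:Hom_bifunctor}, specialised to a strict map, really is plain postcomposition on adjoints. We must also track the Koszul signs coming from the shifts $\shift S$ and from the identification $(f,g)^*\Hom \iso \Hom(g^*, f^*)$, to check that they match on both sides. Since both sides arise from the same formal expressions with the same ordering of tensor factors, we expect the signs to coincide automatically.
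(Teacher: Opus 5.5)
Your proposal is correct and is essentially the paper's proof, which simply says the lemma follows by chasing through the definitions using the explicit formula for the unit in \cref{obs:hom-tensor_unit}. You carry out that chase explicitly: you use the strictness of the maps of \cref{obs:inftensor_restrict,obs:Hom_restrict} and the restriction formula of \cref{obs:restrict_bimodule}, and both composites come out as $\sum_{r_1+\dots+r_j=r} \id_N \otimes g_{r_1} \otimes \dots \otimes g_{r_j} \otimes \id_M$ for $l = 0$ and as zero for $l > 0$.
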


\begin{proof}
  This follows from chasing through the definitions, using \cref{obs:hom-tensor_unit}.
\end{proof}

\subsection{Maps to hom-spaces}

For a left $R$-module $M$, we will need a natural map $\nu \colon \shift R \to \Hom_\k(M, M)$ of $R$-$R$-bimodules that picks out the identity of $M$.
This is almost achieved by the unit of the adjunction between $\blank \inftensor_R M$ and $\Hom_\k(M, \blank)$, which is a map $\eta \colon \shift R \to \Hom_\k ( M, \shift R \inftensor_R M )$.
There is also a canonical map $\pi \colon \shift R \inftensor_R M \to M$ of left $R$-modules, but it turns out to not be natural in the pair $(R, M)$.
We will however prove that it is natural in the homotopy category of left $R$-modules as long as $R$ and $M$ are unital (and $M$ is cofibrant as a $\k$-module), which is enough for our purposes.
To this end, we observe that $\pi$ has a natural quasi-inverse and is thus itself natural in the homotopy category.
We begin by constructing $\pi$.

\begin{lemma} \label{lemma:pi}
  Let $R$ be an $\Ainf$-algebra over $\k$ and $M$ a left $R$-module.
  Then there is a left $R$-module map $\pi \colon \shift R \inftensor_R M \to M$ of degree $1$ whose structure map $\pi_l$ is given, on the $n$-th summand of $\shift R \inftensor_R M$, by $\mu^M_{l + 1 + n}$.
\end{lemma}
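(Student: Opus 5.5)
We verify the left $R$-module map equation \eqref{eq:bimodule_map} for $\pi$ directly, with $d = 1$, and recognize it as an instance of the left module equation \eqref{eq:bimodule} for $M$. The plan has three steps: write out all terms, sort them by source, and match the sorted sums against the module equation.

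\textbf{The source module.} By \cref{obs:comp_description_infty_tensor} applied with $S = R$, $N = M$ and $T = 0$, the module $\shift R \inftensor_R M$ is $\bigoplus_{n} \shift R \tensor_\k (\shift R)^{\tensor_\k n} \tensor_\k M$. Its structure maps $\mu_{l}$ (there is no right action) have three kinds of terms.
\begin{itemize}
  \item[(a)] The terms $\mu^{\shift R}_{l,n_1} \tensor \id^{\tensor n_2+1} = \mu^R_{l+1+n_1} \tensor \id$, acting on the first $l+1+n_1$ inputs.
  \item[(b)] For $l=0$ only, the terms $\id^{\tensor 1+n_1} \tensor \mu^R_{n_2} \tensor \id$.
  \item[(c)] For $l = 0$ only, the terms $\id^{\tensor 1 + n_1} \tensor \mu^M_{n_2}$.
\end{itemize}

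\textbf{Collecting the terms.} Evaluate the equation on $x_1 \tensor \dots \tensor x_l \tensor \shift r \tensor y_1 \tensor \dots \tensor y_n \tensor m$, and set $N = l+1+n$. The left-hand side is $-\sum_{l_1+l_2 = l} \mu^M_{l_1} \after (\id^{\tensor l_1} \tensor \pi_{l_2})$, which equals $-\sum \mu^M_{l_1} \after (\id^{\tensor l_1} \tensor \mu^M_{l_2+1+n})$. The right-hand side has three sums.
\begin{itemize}
  \item The first sum, $\pi$ precomposed with $\mu^R$ applied among the $x$'s, gives terms $\mu^M_{\cdot} \after (\id \tensor \mu^R \tensor \id)$ where $\mu^R$ acts inside the first $l$ slots.
  \item The second sum, $\pi$ precomposed with the structure map of $\shift R \inftensor_R M$, gives three families. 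The (a)-terms give $\mu^R$ applied to a block straddling the $x$'s and $\shift r$. The (b)-terms give $\mu^R$ applied inside the $y$'s or to a block starting at $\shift r$. The (c)-terms give $\mu^M_{\cdot} \after (\id \tensor \mu^M_{\cdot})$, where the inner map eats only $y$'s and $m$.
\end{itemize}

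\textbf{Matching.} The $\mu^R$-terms together exhaust all positions of a consecutive block in the length-$N$ word $x\,\shift r\,y$. Their sum is therefore exactly $\sum \mu^M_{a+1+c} \after (\id^{\tensor a} \tensor \mu^R_b \tensor \id^{\tensor c+1})$ with $a + b + c = N$. The $\mu^M \after \mu^M$ terms come in two groups: those from (c), where the outer map contains all of $x$ and $\shift r$, and those from the left-hand side, where the inner map contains $\shift r$. Together they exhaust $\sum_{a+b=N} \mu^M_a \after (\id^{\tensor a} \tensor \mu^M_b)$, except for the term $b = 0$, where the inner map is the differential of $M$ alone. That term should be supplied by the $\mu^M_{0}$ (i.e.\ differential) terms of $\pi \after \mu$ and $\mu \after \pi$, which appear on the two sides with the sign $(-1)^d = -1$. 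Moving everything to one side therefore yields precisely the $N$-th left module equation \eqref{eq:bimodule} for $M$, which holds by assumption. This is the same pattern as in \cref{lemma:algebra_map_is_bimodule_map,lemma:upsilon_algebra_map}.

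\textbf{Main obstacle: signs.} The hard step is bookkeeping the signs. The degree-$1$ convention introduces $(-1)^d = -1$ on the left-hand side, and Koszul signs appear when $\pi$, which has degree $1$, passes past inputs. The strategy I would follow is to work entirely with the shifted maps, in which every $\mu$ and $\pi$ has degree $1$, so that all signs are Koszul signs. Equivalently, I would check the equation at the level of coalgebras: $\B(\pi)$ is the unique bicomodule coextension of $\mu^M \colon \B(R) \tensor_\k \shift R \tensor_\k \B(R) \tensor_\k M \to M$, and the module equation for $M$ says exactly that this map anticommutes with the differentials. In this formulation the sign $(-1)^d$ is absorbed automatically.
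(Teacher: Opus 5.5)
Your proposal is correct and is the paper's own proof: expand both sides of the degree-$1$ left module map equation for $\pi$ on $(\shift R)^{\otimes l}\otimes \shift R\otimes(\shift R)^{\otimes n}\otimes M$, and recognize the resulting terms as exactly the $(l+1+n)$-th left module equation for $M$. The one flaw is in your bookkeeping, and it does not affect the conclusion. No term is missing: $\mu^M_{l+1+n}\circ(\id^{\otimes l+1+n}\otimes\mu^M_0)$ is the $n_2=0$ member of your family (c), and $\mu^M_0\circ\pi_l$ is the $l_1=0$ term of the left-hand side, so supplying the $b=0$ term separately would double count (also, blocks beginning at $\shift r$ come from (a), not (b)).
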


\begin{proof}
  We first check that $\pi$ is a left $R$-module map.
  The left-hand side of the $l$-th left module map equation \eqref{eq:bimodule_map} for $\pi$, restricted to $(\shift R)^{\otimes_\k l} \otimes_\k (\shift R \otimes_\k (\shift R)^{\otimes_\k n} \otimes_\k M)$, is
  \[ - \sum_{l_1 + l_2 = l} \mu^M_{l_1} \circ ({\id^{\otimes l_1}} \otimes \mu^M_{l_2 + 1 + n}) \]
  and the right-hand side is
  \begin{align*}
    &\sum_{l_1 + l_2 + l_3 = l} \mu^M_{l_1 + 1 + l_3 + 1 + n} \circ ({\id^{\otimes l_1}} \otimes \mu^R_{l_2} \otimes \id^{\otimes l_3 + 1 + n + 1}) \\
    + &\sum_{\substack{l_1 + l_2 = l \\ n_1 + n_2 = n}} \mu^M_{l_1 + 1 + n_2} \circ ({\id^{\otimes l_1}} \otimes \mu^R_{l_2 + 1 + n_1} \otimes \id^{\otimes n_2 + 1}) \\
    + &\sum_{n_1 + n_2 + n_3 = n} \mu^M_{l + 1 + n_1 + 1 + n_3} \circ ({\id^{\otimes l + 1 + n_1}} \otimes \mu^R_{n_2} \otimes \id^{\otimes n_3 + 1}) \\
    + &\sum_{n_1 + n_2 = n} \mu^M_{l + 1 + n_1} \circ ({\id^{\otimes l + 1 + n_1}} \otimes \mu^M_{n_2})
  \end{align*}
  which together precisely yields the $(l + 1 + n)$-th left module equation \eqref{eq:bimodule} for $M$.
\end{proof}

We now construct a natural map in the other direction and show that it is quasi-inverse to $\pi$.

\begin{lemma}
  Let $R$ be a unital $\Ainf$-algebra over $\k$ and $M$ a left $R$-module.
  There is a left $R$-module map $\iota \colon M \to \shift R \inftensor_R M$ of degree $-1$ with structure maps
  \[ \iota_l(x_1 \otimes \dots \otimes x_l \otimes m)  \defeq  \shift 1 \otimes x_1 \otimes \dots \otimes x_l \otimes m \]
  and $\iota$ is natural in $M$.
\end{lemma}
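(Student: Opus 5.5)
We need to verify that the maps $\iota_l$ satisfy the left module map equation \eqref{eq:bimodule_map} of degree $d = -1$, and then check naturality in $M$. The plan is to evaluate both sides of the $l$-th equation on $x_1 \otimes \dots \otimes x_l \otimes m$ and compare term by term, using the explicit structure maps of $\shift R \inftensor_R M$ from \cref{obs:comp_description_infty_tensor}.

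First we compute the left-hand side. Since $\shift R \inftensor_R M$ is a left $R$-module ($T = 0$), only $\mu_{l_1,0}$ appears. Its three contributions come from, respectively, $\mu^{\shift R}_{l_1, n_1} = \mu^R_{l_1 + 1 + n_1}$ acting on the first factor together with the following $\shift R$'s, the internal $\mu^R$ terms, and $\mu^M$ acting on the tail (this last one only for $l_1 = 0$). Applied to $\id^{\otimes l_1} \otimes \iota_{l_2}$, we get
\[ (-1)^{d}\sum_{l_1+l_2 = l} \mu_{l_1,0}\bigl(x_1\otimes\dots\otimes x_{l_1}\otimes \shift 1 \otimes x_{l_1+1}\otimes\dots\otimes x_l\otimes m\bigr). \]
By unitality of $R$ (\cref{def:Ainf} unital conditions), any $\mu^R_k$ with $k\ge 3$ having $\shift 1$ as an input vanishes. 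Thus the only surviving terms in which $\shift 1$ is consumed are of two kinds. Either $\mu^R_2(x_{l_1} \otimes \shift 1)$ with $l_1 = 1$ occurs in the first contribution, which gives $\pm x_1$ placed in front. Or $\mu^R_2(\shift 1 \otimes x_{l_1+1})$ occurs among the internal terms for $l_1 = 0$, which gives $x_1$ in the first slot. The remaining terms keep $\shift 1$ in front. Those with $l_1 = 0$ reproduce $\shift 1 \otimes(\text{internal }\mu^R\text{ terms and }\mu^M\text{ terms on } x_1\dots x_l\otimes m)$, and those with $l_1 \ge 1$ and no $\shift 1$ absorbed give $\mu^R_{l_1+1+n_1}$ with $\shift 1$ among its inputs, which again vanishes unless the arity is $2$. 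This is also where $\mu^R_1(\shift 1)=0$ is used, since $1$ is a cocycle.

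Next we compute the right-hand side. It is $\iota_{l_1+1+l_3}(\dots \mu^R_{l_2} \dots)$ plus $\iota_{l_1}(\id\otimes \mu^M_{l_2})$, which is exactly $\shift 1 \otimes (\text{the same internal and } \mu^M \text{ terms})$. The $\shift 1$-prefixed terms therefore match, given the sign $(-1)^d=-1$ together with the Koszul signs from passing the odd element $\shift 1$. What remains is that the two unit-absorbing terms from the left-hand side, namely $\mu^R_2(\shift x_1\otimes\shift 1)\otimes\cdots$ and $\mu^R_2(\shift 1 \otimes \shift x_1)\otimes\cdots$, cancel. By the unit axiom these are $\shift x_1$ and $(-1)^{\deg x_1}\cdot(\pm)\shift x_1$, and tracking the Koszul sign from moving $\mu^R$ past $x_1$ should show they cancel. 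The main obstacle is this sign bookkeeping; I would fix conventions by first checking the case $l=1$ and $l=0$ explicitly (for $l=0$ the equation says $\iota_0$ is a chain map up to sign, which follows from $\mu^R_1(\shift 1)=0$ and $\mu^R_2(\shift 1\otimes \blank)$ not appearing), then argue the general case identically.

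Naturality follows from \cref{obs:comp_description_infty_tensor}. For $f\colon M\to M'$, the map $(\id,f)_*$ has components $\id\otimes\id^{\otimes n_2}\otimes f_{n_3,0}$. The composite $(\id,f)_*\circ\iota$ has $l$-th component summing $\iota$'s insertion followed by $f$ on the tail. We check directly that it equals $\iota\circ f$, whose components are $\sum \id^{\otimes l_1}\otimes\iota_{l_1}$ applied after $f_{l_2}$, i.e.\ $\shift 1\otimes x_1\otimes\dots\otimes x_{l_1}\otimes f_{l_2}(\dots)$. The extra terms in $(\id,f)_*\circ\iota$ with positive first index on $f$ applied to $\shift 1$ do not arise, because $f$ acts only on the $M$-tail. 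Hence both composites agree.
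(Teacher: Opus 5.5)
Your proposal is correct and follows essentially the same route as the paper: you expand the $l$-th left module map equation using the structure maps of $\shift R \inftensor_R M$, and unitality leaves the two $\mu^R_2$ unit terms as the only ones consuming $\shift 1$; these cancel once you include the Koszul sign from passing $\iota$ (of degree $-1$) past $x_1$ in ${\id} \otimes \iota_{l-1}$, the remaining $\shift 1$-prefixed terms match the right-hand side, and naturality is checked componentwise. There are two small slips: the term $\mu^R_2(\shift 1 \otimes x_1)$ comes from $\mu^{\shift R}_{0,1} = \mu^R_2$ rather than from the internal terms (which never act on the first tensor factor), and for $f$ of nonzero degree you only get $\iota \after f = (-1)^{\deg f} f_* \after \iota$, which is the form the paper records.
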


\begin{proof}
  Writing $M' \defeq \shift R \inftensor_R M$, the left-hand side of the $l$-th left module map equation \eqref{eq:bimodule_map} for $\iota$ is
  \begin{align*}
    \smash{ - \sum_{l_1 + l_2 = l} \mu^{M'}_{l_1} \circ ({\id^{\otimes l_1}} \otimes \shift 1 \otimes \id^{\otimes l_2 + 1}) } &= \sum_{l_1 + l_2 = l} \shift 1 \otimes {\id^{\otimes l_1}} \otimes \mu^M_{l_2} \\
    &+ \sum_{l_1 + l_2 + l_3 = l} \shift 1 \otimes {\id^{\otimes l_1}} \otimes \mu^R_{l_2} \otimes \id^{\otimes l_3 + 1} \\
    &- \bigl( \mu^R_2 \after (\shift 1 \otimes \id) \bigr) \otimes \id^{\otimes l} \\
    &- \bigl( \mu^R_2 \after ({\id} \otimes \shift 1) \bigr) \otimes \id^{\otimes l}
  \end{align*}
  where on the right-hand side the last two terms cancel by unitality of $R$.
  The right-hand side of \eqref{eq:bimodule_map} is
  \[ \sum_{l_1 + l_2 + l_3 = l} \shift 1 \otimes {\id^{\otimes l_1}} \otimes \mu^R_{l_2} \otimes \id^{\otimes l_3 + 1} + \sum_{l_1 + l_2 = l} \shift 1 \otimes {\id^{\otimes l_1}} \otimes \mu^M_{l_2} \]
  and hence equals the left-hand side.
  
  Given a map $f \colon M \to N$ of left $R$-modules, we have
  \[ (\iota \after f)_l  =  \sum_{l_1 + l_2 = l} \shift 1 \otimes {\id^{\otimes l_1}} \otimes f_{l_2}  =  (-1)^{\deg f} (f_* \after \iota)_l \]
  and hence $\iota \after f = (-1)^{\deg f} f_* \after \iota$.
  Thus $\iota$ is natural.
\end{proof}

\begin{lemma} \label{lemma:pi_qiso}
  Let $R$ be a unital $\Ainf$-algebra over $\k$ and $M$ a unital left $R$-module.
  Then the composite
  \[ M  \xlongto{\iota}  \shift R \inftensor_R M  \xlongto{\pi}  M \]
  is the identity, and $\pi$ and $\iota$ are quasi-isomorphisms.
\end{lemma}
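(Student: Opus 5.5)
First I check that $\pi \after \iota = \id_M$ directly from the structure maps. By the preceding lemma, $\iota$ has structure maps $\iota_l(x_1 \otimes \dots \otimes x_l \otimes m) = \shift 1 \otimes x_1 \otimes \dots \otimes x_l \otimes m$, landing in the $n = l$ summand of $\shift R \inftensor_R M$. By \cref{lemma:pi}, $\pi$ has structure maps $\pi_l$ given on the $n$-th summand by $\mu^M_{l+1+n}$. The composition formula for left module maps gives
\[ (\pi \after \iota)_l  =  \sum_{l_1 + l_2 = l} \pi_{l_1} \after ({\id^{\otimes l_1}} \otimes \iota_{l_2}). \]
Each term evaluates to $\mu^M_{l_1 + 1 + l_2}$ applied to an input whose $(l_1+1)$-st tensor factor is $\shift 1$. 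By unitality of $M$, this vanishes whenever $l_1 + 1 + l_2 \ge 2$ and $(l_1, l_2) \neq (0,0)$. The only surviving term is $l = 0$, where it equals $\mu^M_{1}(\shift 1 \otimes m) = m$. Hence $(\pi \after \iota)_0 = \id$ and $(\pi \after \iota)_l = 0$ for $l > 0$, so $\pi \after \iota = \id$. Here I use the convention $\mu^M_{l}$ for $\mu^M_{l,0}$, and I will track the Koszul sign from the degrees $\deg \iota = -1$, $\deg \pi = 1$ to make sure it is $+1$.

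Since $\pi \after \iota = \id$, it suffices to show that one of $\pi_0$, $\iota_0$ is a quasi-isomorphism of underlying $\k$-modules; the other then follows by two-out-of-three on homology. I will show that $\iota_0 \after \pi_0$ is chain homotopic to the identity of $\shift R \inftensor_R M = \bigoplus_{n \ge 0} \shift R \otimes_\k (\shift R)^{\otimes n} \otimes_\k M$. This is an $\Ainf$-version of the standard extra degeneracy of the bar complex. The differential $\mu_{0}$ on this complex is described by \cref{obs:comp_description_infty_tensor}.

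For the homotopy I will take the degree $-1$ map
\[ h(x_0 \otimes x_1 \otimes \dots \otimes x_n \otimes m)  \defeq  \shift 1 \otimes x_0 \otimes x_1 \otimes \dots \otimes x_n \otimes m, \]
i.e.\ insert $\shift 1$ at the front. I then compute $\mu_0 h + h \mu_0$. The terms where a $\mu^R_s$ or $\mu^M_{\bullet}$ does not touch the new $\shift 1$ cancel against the corresponding terms of $h\mu_0$. The terms where an operation swallows the new $\shift 1$ vanish by unitality, except for two:
\begin{itemize}
\item $\mu^R_2(\shift 1 \otimes x_0) = x_0$, which yields the identity;
\item the term of $\mu^M_{1+n+1}$ applied to $\shift 1 \otimes x_0 \otimes \dots \otimes m$ in the leftmost slot, which vanishes unless $n=0$ and $x_0$ is absent. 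That case does not occur, since $x_0$ is always present, so unitality kills all of these except the $\mu^M$ term coming from $\iota_0\pi_0$.
\end{itemize}
More precisely, the leftover is $\id - \iota_0 \pi_0$, up to sign, where $\iota_0 \pi_0(x_0 \otimes \dots \otimes m) = \shift 1 \otimes \mu^M_{1+n}(x_0, \dots, m)$. This comes from the $\mu^M_{n_2}$ term in the differential applied after $h$, with $n_1 = 0$ so that the action lands next to $\shift 1$.

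The main obstacle is the sign bookkeeping in this homotopy computation, and verifying that the extra terms involving $\mu^R_{s}$ with $\shift 1$ among the inputs indeed vanish. For $s \ge 3$ they vanish by unitality of $R$. For $s = 2$ with $\shift 1$ on the right, the term is $\mu^R_2(x \otimes \shift 1)$, and it cancels against a neighbouring $\mu^R_2(\shift 1 \otimes \blank)$ contribution, exactly as in the proof of the previous lemma. If a clean homotopy proves too messy, I will instead use the fallback of filtering by $n$ and using unitality to identify the associated graded with the ordinary bar resolution of $M$ over the unital graded algebra $\Coho * (\shift R)$-type structure, which is acyclic. Given $\iota_0 \pi_0 \simeq \id$ and $\pi_0 \iota_0 = \id$, both $\pi$ and $\iota$ are quasi-isomorphisms.
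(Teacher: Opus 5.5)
Your proposal is correct and follows the paper's proof. You verify $\pi \after \iota = \id$ from the structure maps using unitality of $M$, then use the homotopy $h$ that inserts $\shift 1$ in front to obtain $d h + h d = \id - \iota_0 \after \pi_0$; the only uncancelled term is $\shift 1 \otimes \mu^M_{n+1}$, coming from the $\mu^M$-summand of $d \after h$ with $n_1 = 0$.

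One small correction: $h$ places $\shift 1$ in the leftmost slot and only the differential is involved (there are no left $R$-inputs). So terms like $\mu^R_2(x \otimes \shift 1)$, or a $\mu^M$ swallowing $\shift 1$, never occur, and both the cancellation you borrow from the previous lemma and your second bullet are unnecessary. The only terms that absorb the unit are $\mu^R_1(\shift 1) = 0$, $\mu^R_2(\shift 1 \otimes x_0) = x_0$, and higher $\mu^R_s$, which vanish by unitality.
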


\begin{proof}
  That $\pi \after \iota = \id$ follows from unwinding the definitions.
  We will now show that furthermore $\iota_0 \after \pi_0$ is homotopic to the identity.
  To this end, we define the homotopy $h \colon \shift R \inftensor_R M \to \shift R \inftensor_R M$ by the formula
  \[ h(x \otimes y_1 \otimes \dots \otimes y_n \otimes m) \defeq s1 \otimes x \otimes y_1 \otimes \dots \otimes y_n \otimes m \]
  and compute, using \cref{obs:comp_description_infty_tensor}, that on $\shift R \otimes_\k (\shift R)^{\otimes_\k n} \otimes_\k M$ we have
  \begin{align*}
    d \circ h &= \left( \sum_{l_1 + l_2 + l_3 = n + 2} \id^{\otimes l_1} \otimes \mu^R_{l_2} \otimes \id^{\otimes l_3 + 1} + \sum_{l_1 + l_2 = n + 1} \id^{\otimes 1 + l_1} \otimes \mu^M_{l_2} \right) \circ(\shift 1 \otimes \id^{\otimes 1 + n + 1}) \\
    &= \id^{\otimes 1 + n + 1} - \sum_{l_1 + l_2 + l_3 = n + 1} \shift 1 \otimes \id^{\otimes l_1} \otimes \mu^R_{l_2} \otimes \id^{\otimes l_3 + 1} - \sum_{l_1 + l_2 = n + 1} \shift 1 \otimes \id^{\otimes l_1} \otimes \mu^M_{l_2}
  \end{align*}
  where we used that $R$ is unital.
  Moreover 
  \[ h \after d  =  \sum_{l_1 + l_2 + l_3 = n + 1} \shift 1 \otimes \id^{\otimes l_1} \otimes \mu^R_{l_2} \otimes \id^{\otimes l_3 + 1} + \sum_{l_1 + l_2 = n} \shift 1 \otimes \id^{\otimes 1 + l_1} \otimes \mu^M_{l_2} \]
  so that
  \[ d \after h + h \after d = \id^{\otimes 1 + n + 1} - \shift 1 \otimes \mu^M_{n + 1} = \id^{\otimes 1 + n + 1} - \iota_0 \after \pi_0 \]
  which completes the proof.
\end{proof}

We note the following for later use.

\begin{lemma} \label{lemma:iota_restrict}
  Let $f \colon R \to S$ be a unital map of unital $\Ainf$-algebras over $\k$ and $M$ a left $S$-module.
  Then the following composite map of left $R$-modules, obtained using \cref{lemma:algebra_map_is_bimodule_map,obs:inftensor_restrict},
  \[ f^*(M)  \xlongto{\iota}  \shift R \inftensor_R f^*(M)  \xlongto{f'_*}  (f, f)^*(\shift S) \inftensor_R f^*(M)  \xlongto{f_*}  f^*(\shift S \inftensor_S M) \]
  is equal to $f^*(\iota)$.
\end{lemma}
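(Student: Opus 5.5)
Since all three maps are explicit, the plan is to compute the structure maps of the composite directly and compare them with those of $f^*(\iota)$. By \cref{obs:restrict_bimodule}, $f^*(\iota)$ has structure maps
\[ f^*(\iota)_l = \sum_{l_1 + \dots + l_i = l} \iota_i \after (f_{l_1} \otimes \dots \otimes f_{l_i} \otimes \id_M). \]
On $x_1 \otimes \dots \otimes x_l \otimes m$ this sends the input to $\sum \shift 1 \otimes f_{l_1}(\dots) \otimes \dots \otimes f_{l_i}(\dots) \otimes m$, landing in the summand $\shift S \otimes (\shift S)^{\otimes i} \otimes M$. The goal is to show that the composite produces exactly this sum.

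First, $\iota_l$ sends $x_1 \otimes \dots \otimes x_l \otimes m$ to $\shift 1 \otimes x_1 \otimes \dots \otimes x_l \otimes m$, in the $l$-th summand of $\shift R \inftensor_R f^*(M)$. Next comes $f'_*$, which is induced by $f'$ on the first tensor factor and the identity on $f^*(M)$. By \cref{obs:comp_description_infty_tensor}, its structure maps on the $n$-th summand are $\sum f'_{l,n_1} \otimes \id^{\otimes n_2} \otimes \id_{0,0}$, where $f'_{l,n_1} = f_{l+1+n_1}$ by \cref{lemma:algebra_map_is_bimodule_map}. Finally, $f_*$ from \cref{obs:inftensor_restrict} is strict and is given on the $n$-th summand by $\sum \id \otimes f_{n_1} \otimes \dots \otimes f_{n_i} \otimes \id$.

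The composite of module maps is computed with the composition formula for left module maps. In degree $l$, it is a sum over ways of splitting $x_1, \dots, x_l$ among the structure maps. Because $f_*$ is strict and $\iota$ only acts by prepending $\shift 1$, the only nonstrict contributions come from $f'_*$. Expanding, the components of the composite have the form
\[ \sum \; f_{a+1+b}(x_1 \otimes \dots \otimes x_a \otimes \shift 1 \otimes x_{a+1} \otimes \dots \otimes x_{a+b}) \otimes f_{n_1}(\dots) \otimes \dots \otimes f_{n_i}(\dots) \otimes m. \]
Here $x_1, \dots, x_a$ are the first $a$ inputs, which are fed into $f'_*$ as left module inputs, and $x_{a+1}, \dots, x_{a+b}$ are the first $b$ entries of the tensor chain. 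The remaining entries are distributed among the $f_{n_j}$.

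Unitality of $f$ now does the work. The term $f_{a+1+b}(\dots \shift 1 \dots)$ vanishes unless $a = b = 0$, in which case $f_1(\shift 1) = \shift 1$. The surviving terms are then exactly $\shift 1 \otimes f_{n_1} \otimes \dots \otimes f_{n_i} \otimes m$ summed over decompositions $n_1 + \dots + n_i = l$. This matches $f^*(\iota)_l$.

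The main obstacle is the bookkeeping. One has to keep careful track of which inputs enter the left-module slots and which enter the chain factors in the composition formula, and of the Koszul signs. The degree $-1$ of $\iota$ produces signs $(-1)^{\deg}$ when elements pass $\shift 1$. I expect these signs to agree on both sides because the same $\shift 1$ is moved past the same elements. Finally, the naturality of $\iota$ already proved handles the interaction with differentials, so no further identities beyond unitality are needed.
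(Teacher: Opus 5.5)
Your proposal is correct and matches the paper's proof. The paper simply records that the $l$-th structure map of the composite is $\sum_{l_1 + \dots + l_n = l} \shift 1 \otimes f_{l_1} \otimes \dots \otimes f_{l_n} \otimes \id_M$, which is exactly what your computation gives once unitality of $f$ kills the terms $f_{a+1+b}(\dots \shift 1 \dots)$ with $a + b > 0$. Two of your worries are unnecessary: since only the $a = 0$ terms survive, no Koszul sign from $\iota$ ever arises, and no appeal to naturality of $\iota$ is needed because both sides are already module maps determined by their structure maps.
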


\begin{proof}
  The $l$-th structure map of the composite is
  \[ \sum_{l_1 + \dots + l_n = l} \shift 1 \otimes f_{l_1} \otimes \dots \otimes f_{l_n} \otimes \id_M \]
  which is also the $l$-th structure map of $f^*(\iota)$.
\end{proof}

We are now ready to define the map $\nu$ and prove that it is natural in the homotopy category, as promised in the beginning of this subsection.

\begin{definition} \label{def:nu}
  Let $R$ be an $\Ainf$-algebra over $\k$ and $M$ a left $R$-module.
  Then we define the $R$-$R$-bimodule map of degree $1$
  \[ \nu  \colon  \shift R  \xlongto{\eta}  \Hom_\k ( M, \shift R \inftensor_R M )  \xlongto{\pi_*}  \Hom_\k(M, M) \]
  to be the composite of the unit of the adjunction $\blank \inftensor_R M \dashv \Hom_\k(M, \blank)$ and the map induced by the map $\pi$ of \cref{lemma:pi}.
\end{definition}

\begin{observation} \label{obs:nu}
  The structure maps of $\nu$ are the maps
  \[ \nu_{l,r} \colon (\shift R)^{\otimes_\k l} \otimes_\k \shift R \otimes_\k (\shift R)^{\otimes_\k r}  \longto  \Hom_\k(M, M) \]
  adjoint to the structure maps $\mu^M_{l + 1 + r}$ of $M$.
\end{observation}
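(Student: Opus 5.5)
The statement is \cref{obs:nu}: the structure maps $\nu_{l,r}$ of $\nu = \pi_* \after \eta$ are adjoint to $\mu^M_{l+1+r}$. The plan is to compute the composite directly from the explicit formulas already given, working throughout with adjoints $\hat{(\blank)}$. That is, we regard an $R$-$R$-bimodule map into $\Hom_\k(M, N)$ as the family of maps $(\shift R)^{\tensor l} \tensor \shift R \tensor (\shift R)^{\tensor r} \tensor M \to N$.

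First, recall from \cref{obs:hom-tensor_unit} the structure maps of the unit $\eta \colon \shift R \to \Hom_\k(M, \shift R \inftensor_R M)$. Its adjoint $\hat\eta_{l,r}$ vanishes for $l > 0$. For $l = 0$ it is the inclusion
\[ \shift R \tensor_\k (\shift R)^{\tensor_\k r} \tensor_\k M \hookrightarrow \shift R \inftensor_R M \]
onto the $r$-th summand, as described in \cref{obs:comp_description_infty_tensor}.

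Second, recall from \cref{obs:Hom_bifunctor} that $(\hat \pi_*)_{l,r}$ vanishes for $r > 0$. For $r = 0$ it is $\pi_l \after (\id^{\tensor l} \tensor \ev)$. Now compose, using the composition formula for bimodule maps:
\[ \nu_{l,r} = \sum_{l_1 + l_2 = l,\ r_1 + r_2 = r} (\pi_*)_{l_1, r_2} \after (\id^{\tensor l_1} \tensor \eta_{l_2, r_1} \tensor \id^{\tensor r_2}). \]
The vanishing conditions leave only the terms with $r_2 = 0$ and $l_2 = 0$, so $\nu_{l,r} = (\pi_*)_{l,0} \after (\id^{\tensor l} \tensor \eta_{0,r})$. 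Passing to adjoints, this map sends $x_1 \tensor \dots \tensor x_l \tensor a \tensor y_1 \tensor \dots \tensor y_r \tensor m$ to $\pi_l$ applied to $x_1 \tensor \dots \tensor x_l \tensor (a \tensor y_1 \tensor \dots \tensor y_r \tensor m)$, where the parenthesized element lies in the $r$-th summand of $\shift R \inftensor_R M$. By \cref{lemma:pi}, $\pi_l$ on the $r$-th summand is $\mu^M_{l+1+r}$, which is the claim.

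The only step needing care is bookkeeping, not mathematics. One must check that the adjunction identifications carry no extra Koszul signs. This holds because $\eta$ has degree $0$ and $\ev$ is applied after the inclusion. One must also check that evaluation commutes past $\id^{\tensor l}$ exactly as in the formula for $(\hat g_*)_{l,r}$. I would verify this by writing out the adjoint of the composite explicitly on a generator, as above. No other obstacle is expected.
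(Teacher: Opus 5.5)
Your proposal is correct and is exactly the unwinding the paper leaves implicit (it states this observation without proof): you combine the structure maps of $\eta$ from \cref{obs:hom-tensor_unit} and of $\pi_*$ from \cref{obs:Hom_bifunctor} via the composition formula, and then read off $\mu^M_{l+1+r}$ from \cref{lemma:pi}. An equivalent one-step route uses the triangle identity: $\nu = \pi_* \after \eta$ is the adjunct $\Phi(\pi)$, whose structure maps are by definition adjoint to the restrictions of $\pi_l$ to the $r$-th summand, giving the same formula.
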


\begin{lemma} \label{lemma:nu_natural}
  Let $f \colon R \to S$ be a unital map of unital $\Ainf$-algebras over $\k$, $M$ a unital left $R$-module, $N$ a unital left $S$-module, and $g \colon M \to f^*(N)$ a map of left $R$-modules.
  Assume that $M$ and $N$ are cofibrant as $\k$-modules.
  Then the following diagram (in which we omitted restrictions for readability) commutes in the homotopy category of $R$-$R$-bimodules
  \[
  \begin{tikzcd}
    \shift R \ar{rr}{f'} \dar[swap]{\nu} & & \shift S \dar{\nu} \\
    \Hom_\k(M, M) \rar{g_*} & \Hom_\k(M, N) & \lar[swap]{g^*} \Hom_\k(N, N)
  \end{tikzcd}
  \]
  where $f'$ is the map of \cref{lemma:algebra_map_is_bimodule_map}.
\end{lemma}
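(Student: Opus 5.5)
The plan is to factor $\nu$ through the map $\pi$ and use the natural quasi-inverse $\iota$ of \cref{lemma:pi_qiso} to replace $\pi_*$ by something strictly natural. By \cref{def:nu}, $\nu = \pi_* \after \eta$. Since $\pi \after \iota = \id$ and $\iota$ is a quasi-isomorphism, $\pi$ and $\iota$ are mutually inverse in the homotopy category of left modules. Because $M$ and $N$ are cofibrant as $\k$-modules, \cref{obs:Hom_bifunctor} shows that $\iota_* \colon \Hom_\k(M, M) \to \Hom_\k(M, \shift R \inftensor_R M)$ is a quasi-isomorphism. It is therefore invertible in the homotopy category of $R$-$R$-bimodules, with inverse $\pi_*$, because $\pi_* \iota_* = (\pi\iota)_* = \id$. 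The same holds for $N$ over $S$. So it suffices to prove that the two composites agree after postcomposing with a suitable quasi-isomorphism, or equivalently to rewrite everything with $\iota$ in place of $\pi^{-1}$.

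Next we restrict along $f$ and compare the units. Consider the square obtained by applying $(f,f)^*$ to $\eta_N \colon \shift S \to \Hom_\k(N, \shift S \inftensor_S N)$. By \cref{lemma:eta_restrict}, with the bimodule $\shift S$ and the module $N$, the restricted unit equals $f_* \after \eta_{(f,f)^*\shift S}$ under the identification of \cref{obs:Hom_restrict}. Naturality of $\eta$ in the bimodule variable, applied to $f' \colon \shift R \to (f,f)^*\shift S$, gives $\eta \after f' = (f'_*)_* \after \eta$. Naturality of the unit in the module variable $g$ gives the dinaturality relation $g^* \after \eta_{N} = (\id \inftensor g)_* \after \eta_{M}$, as maps into $\Hom_\k(M, \shift S \inftensor N)$. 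Combining these, $g^* \after \eta_N \after f'$ equals $\Phi_* \after \eta_M$, where $\Phi \colon \shift R \inftensor_R M \to f^*(\shift S \inftensor_S N)$ is the composite $f_* \after (f', g)_*$.

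We then compare $\pi$ with $\Phi$. It remains to check that $g \after \pi_M$ and $f^*(\pi_N) \after \Phi$ agree in the homotopy category of left $R$-modules. Since $\pi_M$ is invertible with inverse $\iota_M$, we precompose with $\iota_M$. The left side becomes $g$. By naturality of $\iota$, the right side is $f^*(\pi_N) \after f_* \after f'_* \after \iota_{f^*N} \after g$ up to sign. \Cref{lemma:iota_restrict} identifies $f_* f'_* \iota_{f^*N}$ with $f^*(\iota_N)$, so the right side equals $f^*(\pi_N \iota_N) \after g = g$. Applying $\Hom_\k(M, \blank)$, which preserves this equality in the homotopy category because it preserves quasi-isomorphisms when $M$ is $\k$-cofibrant, yields $g_* \nu_M = g^* \nu_N f'$.

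The main obstacle is bookkeeping rather than any single step. We must ensure that naturality of $\eta$ and of $\iota$ holds on the nose with the correct signs for maps of odd degree, since $\pi$ and $\iota$ have degrees $\pm1$. We must also ensure that passing to the homotopy category is legitimate, meaning that $\Hom_\k(M,\blank)$ sends quasi-isomorphisms to quasi-isomorphisms by \cref{obs:Hom_bifunctor}. Unitality of $f$ is used only through \cref{lemma:iota_restrict}.
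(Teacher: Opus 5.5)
Your argument is correct and uses exactly the ingredients of the paper's diagram chase: the definition of $\nu$, $\pi \after \iota = \id$, naturality of $\eta$ and $\iota$, bifunctoriality of $\Hom_\k$, and \cref{lemma:eta_restrict,lemma:iota_restrict}. The difference is only organizational: you prove $g \after \pi_M = f^*(\pi_N) \after \Phi$ (with $\Phi = f_* \after (f', g)_*$) in the homotopy category of left $R$-modules and transport it along $\Hom_\k(M, \blank)$, while the paper inverts the maps $\iota_*$ directly on hom-bimodules, including those of the form $\Hom_\k(N, \blank)$; as a result your route only needs $M$ to be cofibrant as a $\k$-module, since the only quasi-isomorphism you invert after applying a hom-functor is $(\iota_M)_*$, and your remark about $N$ in the first paragraph is never used.
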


\begin{proof}
  Consider the following diagram of $R$-$R$-bimodules
  \[
  \begin{tikzcd}
    {} \drar[phantom][near end]{\text{(a)}} & \Hom_\k(M, M) & {} \dlar[phantom][near end]{\text{(b)}} \\
    \shift R \rar{\eta} \ar[equal]{dd} \urar[bend left, end anchor = west]{\nu} & \Hom_\k(M, \shift R \inftensor_R M) \dar[swap]{g_*} \uar{\pi_*}[swap]{\eq} \drar[phantom]{\text{(d)}} & \lar{\iota_*}[swap]{\eq} \ular[bend right = 20, end anchor = east][swap]{\id} \Hom_\k(M, M) \dar{g_*} \\
    {} \rar[phantom]{\text{(c)}} & \Hom_\k(M, \shift R \inftensor_R N) \drar[phantom]{\text{(e)}} & \lar{\iota_*}[swap]{\eq} \Hom_\k(M, N) \\
    \shift R \rar{\eta} \dar[swap]{f'} \drar[phantom]{\text{(f)}} & \Hom_\k(N, \shift R \inftensor_R N) \uar{g^*} \dar[swap]{f'_*} & \lar{\iota_*}[swap]{\eq} \Hom_\k(N, N) \uar[swap]{g^*} \ar[equal]{dd} \\
    \shift S \rar{\eta} \dar[equal] \drar[phantom]{\text{(g)}} & \Hom_\k(N, \shift S \inftensor_R N) \dar[swap]{f_*} & {} \lar[phantom]{\text{(h)}} \\
    \shift S \rar{\eta} \drar[bend right, end anchor = west][swap]{\nu} & \Hom_\k(N, \shift S \inftensor_S N) \dar{\eq}[swap]{\pi_*} & \lar{\iota_*}[swap]{\eq} \dlar[bend left = 20, end anchor = east]{\id} \Hom_\k(N, N) \\
    {} \urar[phantom][near end]{\text{(a)}} & \Hom_\k(N, N) & {} \ular[phantom][near end]{\text{(b)}}
  \end{tikzcd}
  \]
  where the triangles labeled (a) commute by definition of $\nu$, the ones labeled (b) by \cref{lemma:pi_qiso}, (c) and (e) by \cref{obs:Hom_bifunctor}, (d) by the naturality of $\iota$, (f) by naturality of $\eta$, (g) by \cref{lemma:eta_restrict}, and (h) by \cref{lemma:iota_restrict}.
  The indicated maps are quasi-isomorphisms by \cref{lemma:pi_qiso} and our assumption that $M$ and $N$ are cofibrant as $\k$-modules.
\end{proof}

\subsection{Hochschild and cyclic homology}

In this subsection we define Hochschild homology and cyclic homology of $\Ainf$-algebras, and record various basic properties we will need.
We begin by recalling the construction of the Hochschild complex of an $\Ainf$-algebra with coefficients in a bimodule, following Getzler--Jones \cite[Definition~3.5]{GJ}; also see Mescher \cite[§2]{Mes}.

\begin{definition} \label{def:HH_Ainf}
  Let $R$ be an $\Ainf$-algebra over $\k$, and $M$ an $R$-$R$-bimodule.
  We define the \emph{Hochschild complex} of $R$ with coefficients in $M$ to be the $\k$-module
  \[ \HH_\k(R, M)  \defeq  \equalizer \bigl( \B(R, M, R) \rightrightarrows \B(R) \tensor_\k \B(R, M, R) \bigr)  \]
  where one of the maps is the left $\B(R)$-comodule structure of $\B(R, M, R)$ and the other map is the composite of the right $\B(R)$-comodule structure and the swap map.
  We furthermore define
  \[ \HH_\k(R)  \defeq  \shift[-1] \HH_\k(R, \shift R) \]
  where $\shift R$ is considered as an $R$-$R$-bimodule via \cref{obs:algebra_is_bimodule}.
\end{definition}

\begin{observation} \label{obs:HH}
  The $\k$-module $\HH_\k(R, M)$ is isomorphic to
  \[ \bigoplus_{n \ge 0} M \otimes_\k (\shift R)^{\otimes_\k n} \]
  with differential $d_{HH}$ given, on the $n$-th summand, by
  \[ \sum_{r + s + t = n} {\id^{\otimes 1 + r} \otimes \mu^R_{s} \otimes \id^{\otimes t}} + \sum_{r + s + l = n} (\mu^M_{l,r} \otimes \id^{\otimes s}) \circ \cyclic {1 + n}^{\circ l} \]
  where $\cyclic i$ is the cyclic permutation of \cref{not:cyclic}.
  We call $n$ the \emph{Hochschild degree} of an element of $\HH_\k(R, M)$.
\end{observation}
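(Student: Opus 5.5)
The plan is to identify the equalizer explicitly, using that $\B(R, M, R)$ is quasi-cofree as a $\B(R)$-$\B(R)$-bicomodule: its underlying graded object is $C \tensor_\k M \tensor_\k C$ with $C \defeq \coT_\k(\shift R)$. First I forget the differentials and work with graded comodules over the graded coalgebra $C$, since the equalizer is formed degreewise and the differential is then simply inherited.

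Concretely, I will define a $\k$-linear map
\[ \Theta \colon \bigoplus_{n \ge 0} M \tensor_\k (\shift R)^{\tensor_\k n} \longto C \tensor_\k M \tensor_\k C \]
by
\[ m \tensor x_1 \tensor \dots \tensor x_n \longmapsto \sum_{i = 0}^n \pm (x_{i+1} \tensor \dots \tensor x_n) \tensor m \tensor (x_1 \tensor \dots \tensor x_i), \]
with Koszul signs coming from the cyclic permutation $\cyclic{1+n}^{\circ (n-i)}$ of \cref{not:cyclic}. A direct check with the deconcatenation coproduct shows that $\Theta$ lands in the equalizer of the two maps in \cref{def:HH_Ainf}. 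For the inverse direction I use the map $\Pi$ that applies the counit of $C$ to the left tensor factor; this is the projection onto the summand whose left factor is empty. Clearly $\Pi \after \Theta = \id$.

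The key step for the isomorphism is to show that an element $z$ of the equalizer is determined by $\Pi(z)$. I would argue this by comparing components. Applying the counit to the middle $\B(R)$ factor in the equalizer equation $\Delta_{\mathrm{left}}(z) = \tau \after \Delta_{\mathrm{right}}(z)$ expresses the component of $z$ with left factor of length $k$ through the component with empty left factor, cycled appropriately. That yields exactly $z = \Theta(\Pi(z))$. This is the dual of the usual fact that $C \tensor V \tensor C$ cotensored cyclically with itself is $V \tensor C$, and it is where the signs must be tracked carefully. Since $C$ is counital and conilpotent and everything is a direct sum of finite tensor powers, no completion issues arise.

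Finally, I compute the differential. The differential $\mu^M$ on $\B(R, M, R)$ preserves the equalizer, because both parallel maps are chain maps. Hence the transported differential is $\Pi \after \mu^M \after \Theta$. The bicomodule coderivation $\mu^M$ has two kinds of component: $\mu^R_s$ acting inside one of the two $C$ factors, and $\mu^M_{l,r}$ absorbing the last $l$ letters of the left factor, $m$, and the first $r$ letters of the right factor.

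After applying $\Pi$, only terms whose output has empty left factor survive. A $\mu^R_s$ acting on a nonempty left factor leaves it nonempty, so only the $i = n$ summand of $\Theta$, where the left factor is empty and the right factor is $x_1 \dots x_n$, contributes via $\mu^R$. This gives the first sum $\sum \id^{\tensor 1+r} \tensor \mu^R_s \tensor \id^{\tensor t}$. The $\mu^M_{l,r}$ terms survive exactly when $l$ equals the full length $n - i$ of the left factor. Reindexing yields $(\mu^M_{l,r} \tensor \id^{\tensor s}) \after \cyclic{1+n}^{\circ l}$ with $r + s + l = n$.

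I expect the main obstacle to be bookkeeping the Koszul signs consistently in $\Theta$ and in the cyclic reindexing. I would fix them by defining $\Theta$ literally via $\cyclic{1+n}^{\circ (n-i)}$, so that the same permutations appear in both computations.
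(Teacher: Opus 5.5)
Your proof is correct and takes the same route the paper has in mind; the paper states this as an observation without proof, resting on its earlier remark that cotensor products with quasi-cofree comodules reduce to plain tensor products. You identify the equalizer with its empty-left-factor components, using the counit applied to the middle factor to show that $z = \Theta(\Pi(z))$, and then transport the bicomodule coderivation as $\Pi \circ \mu^M \circ \Theta$, which is the unwinding the paper leaves implicit and gives exactly the stated formula, including its Koszul signs.
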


Note that the construction $\HH_\k(R, \blank)$ is closely related to $\blank \inftensor_R \blank$.
For instance, given a left $R$-module $M$ and right $R$-module $N$, there is a canonical isomorphism $\HH_\k(R, M \otimes_\k N) \iso N \inftensor_{R} M$.
The following is a more general version of this observation.

\begin{observation} \label{obs:HH_cyclic}
  Let $R$ and $S$ be $A_\infty$-algebras over $\k$, and let $M$ be an $R$-$S$-bimodule and $N$ an $S$-$R$-bimodule.
  Then there is a canonical isomorphism
  \[ \HH_\k \bigl( R, M \inftensor_S N \bigr) \iso \HH_\k \bigl( S, N \inftensor_R M \bigr) \]
  given by a cyclic permutation (using \cref{obs:HH,obs:comp_description_infty_tensor} to see that this is compatible with the differentials).
\end{observation}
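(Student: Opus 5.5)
The plan is to write down the isomorphism explicitly on underlying graded $\k$-modules and then compare the two differentials summand by summand. By \cref{obs:HH} and \cref{obs:comp_description_infty_tensor}, the underlying module of $\HH_\k(R, M \inftensor_S N)$ is
\[ \bigoplus_{n, m \ge 0} M \tensor_\k (\shift S)^{\tensor_\k m} \tensor_\k N \tensor_\k (\shift R)^{\tensor_\k n}. \]
Symmetrically, the underlying module of $\HH_\k(S, N \inftensor_R M)$ is
\[ \bigoplus_{n, m \ge 0} N \tensor_\k (\shift R)^{\tensor_\k n} \tensor_\k M \tensor_\k (\shift S)^{\tensor_\k m}. \]
I define the map on each summand as the Koszul-signed cyclic permutation that moves the block $M \tensor (\shift S)^{\tensor m}$ past the block $N \tensor (\shift R)^{\tensor n}$. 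In other words, it is a suitable power of $\cyclic{}$ from \cref{not:cyclic}. This is clearly a bijection of graded modules, so the only content is that it intertwines the differentials.

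Conceptually, each side is the ``cyclic bar complex'' of the pair $(M, N)$ over $(R, S)$, a circle decorated by $M$, an arc of $S$'s, $N$, and an arc of $R$'s. Both differentials are sums over contiguous arcs of this circle. On each arc one applies either $\mu^R_s$ or $\mu^S_s$ to a block of algebra elements, or $\mu^M_{l,r}$ or $\mu^N_{l,r}$ to a module element together with $l$ algebra elements on its left and $r$ on its right. For $\HH_\k(R, M \inftensor_S N)$ the terms come from three sources:
\begin{itemize}
  \item the internal differential of $M \inftensor_S N$, which contributes the $\mu^S$-terms, the $\mu^M_{0,r}$-terms and the $\mu^N_{l,0}$-terms;
  \item the $\mu^R$-terms of $d_{HH}$;
  \item the wrap-around terms $(\mu^{M \inftensor_S N}_{l,r} \tensor \id) \after \cyclic{}^{\circ l}$.
\end{itemize}
By the formula in \cref{obs:comp_description_infty_tensor}, the wrap-around terms supply precisely $\mu^M_{l,r}$ with $l \ge 1$ (its left inputs taken cyclically from the trailing $R$'s) and $\mu^N_{l,r}$ with $r \ge 1$. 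The same bookkeeping on the other side produces the same list of arc operations with the roles of $M$ and $N$ exchanged. So there is a term-by-term bijection, and under the cyclic permutation each term maps to the corresponding term.

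I therefore proceed in order. First I enumerate the terms on both sides as above. Then I check that the bijection of terms is compatible with the cyclic rotation, meaning that rotating and then applying the arc operation agrees with applying it and then rotating. Finally I verify the signs. The main obstacle I expect is the sign verification. Both differentials carry Koszul signs from permuting the degree-$1$ maps past elements, and the cyclic rotation has its own Koszul sign. I would handle this uniformly by viewing every term as ``rotate so the arc is at the front, apply the operation tensored with identities, rotate back''. Under the Koszul convention all these rotations compose, so the signs agree automatically. One also has to double-check the special summands $n = 0$ or $m = 0$, where some of the three sources coincide, to ensure no term is double-counted.
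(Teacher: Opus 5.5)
Your proposal is correct and is exactly the argument the paper intends: the paper gives no proof beyond asserting that the Koszul-signed cyclic permutation of the blocks $M \otimes (\shift S)^{\otimes m}$ and $N \otimes (\shift R)^{\otimes n}$ intertwines the differentials, as read off from \cref{obs:HH,obs:comp_description_infty_tensor}. Your term enumeration is right. The internal differential of $M \inftensor_S N$ gives the $\mu^S$, $\mu^M_{0,r}$ and $\mu^N_{l,0}$ terms, and the $(l,r) \neq (0,0)$ terms give $\mu^M_{l,r}$ with $l \ge 1$ and $\mu^N_{l,r}$ with $r \ge 1$, so there is no double counting. Your sign argument, writing each term as a Koszul rotation bringing the arc to the front followed by the operation, also fills in the omitted details correctly.
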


We now provide a construction of cyclic homology for $\Ainf$-algebras via a generalization of the classical Connes complex for algebras (see \cref{def:HH}).
A different definition of cyclic homology for $\Ainf$-algebras has been given by Getzler--Jones \cite[Theorem~3.7]{GJ} via a generalization of the cyclic bicomplex.
It seems very likely that a generalization of the classical argument (see e.g.\ \cite[Theorem~2.1.5]{Lod}) proves that these complexes are naturally quasi-isomorphic.

\begin{definition}
  Let $R$ be an $\Ainf$-algebra over $\k$.
  We define the \emph{Connes complex} of $R$ to be the quotient
  \[ \HC_\k(R)  \defeq  \shift[-1] \bigoplus_{n \ge 0} (\shift R)^{\tensor_\k n+1}_{\Cyclic {n+1}}  \longtwoheadleftarrow  \shift[-1] \bigoplus_{n \ge 0} (\shift R)^{\tensor_\k n+1}  \iso  \HH_\k(R) \]
  where the cyclic group $\Cyclic{n+1}$ acts by cyclic permutations and the isomorphism is the one of \cref{obs:HH}
\end{definition}

Note that it is clear that the differential of $\HH_\k(R)$ indeed descends to $\HC_\k(R)$.
We now exhibit the Hochschild complex and the Connes complex as being functorial.

\begin{definition}
  We denote by $\BiMod$ the Grothendieck construction of the functor $\AinfAlg \to \opcat{\Cat}$ given on objects by $R \mapsto \BMod{R}{R}$ and on morphisms by $f \mapsto (f, f)^*$.
  An object of $\BiMod$ consists of a pair $(R, M)$ of an $\Ainf$-algebra $R$ over $\k$ and an $R$-$R$-bimodule $M$; a morphism $(R, M) \to (S, N)$ consist of a pair $(f, g)$ of a map $f \colon R \to S$ of $\Ainf$-algebras over $\k$ and a map $g \colon M \to (f,f)^*(N)$ of $R$-$R$-bimodules.
\end{definition}

Note that the assignment $R \mapsto (R, \shift R)$ extends to a functor $\AinfAlg \to \BiMod$ via the construction of \cref{lemma:algebra_map_is_bimodule_map}.

\begin{observation} \label{obs:HH_functor}
  The assignment $(R, M) \mapsto \HH_\k(R, M)$ extends to a functor from $\BiMod$ to $\k$-modules by letting a morphism $(f, g) \colon (R, M) \to (S, N)$ act by the map induced on horizontal equalizers by the commutative diagram
  \[
  \begin{tikzcd}
    \B(R, M, R) \rar[shift left] \rar[shift right] \dar[swap]{g} & \B(R) \tensor_\k \B(R, M, R) \dar{{\id} \tensor g} \\
    \B(R) \cotensor^{\B(S)} \B(S, N, S) \cotensor^{\B(S)} \B(R) \rar[shift left] \rar[shift right] \dar[swap]{f \cotensor {\id} \cotensor f} & \B(R) \tensor_\k \bigl( \B(R) \cotensor^{\B(S)} \B(S, N, S) \cotensor^{\B(S)} \B(R) \bigr) \dar{f \tensor (f \cotensor {\id} \cotensor f)} \\
    \B(S, N, S) \rar[shift left] \rar[shift right] & \B(S) \tensor_\k \B(S, N, S) 
  \end{tikzcd}
  \]
  of $\k$-modules.
  On Hochschild degree $n$, the induced map is explicitly given by
  \[ \sum_{n_1 + \dots + n_i = n} (g_{n_i,n_1} \otimes f_{n_2} \otimes \dots \otimes f_{n_{i-1}}) \circ \cyclic {1 + n}^{\circ n_i} \]
  (where each summand takes values in Hochschild degree $i - 2$).
  This furthermore makes $\HH_\k(\blank)$ and $\HC_\k(\blank)$ into functors from $\AinfAlg$ to $\k$-modules.
\end{observation}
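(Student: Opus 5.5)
The plan is to verify the statement in three layers: that the displayed diagram makes sense and commutes, that the induced maps on equalizers compose correctly, and finally that the explicit formula holds and descends to the Connes complex.

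\emph{The diagram.} The middle row is well defined because $\B(S, N, S)$ is quasi-cofree as a $\B(S)$-$\B(S)$-bicomodule. Hence, by the discussion of cotensor products in the coalgebra subsection, $\B(R) \cotensor^{\B(S)} \B(S, N, S) \cotensor^{\B(S)} \B(R)$ is a $\B(R)$-$\B(R)$-bicomodule, and it is exactly $\B(R, (f,f)^*N, R)$. Therefore the top vertical arrow $g$ is a bicomodule map by definition of a bimodule morphism.

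\emph{Commutativity.} Each of the two parallel arrows is built from the left coaction, or from the right coaction followed by the swap. So each square commutes as soon as the vertical maps are compatible with both coactions and the swap is natural. For the upper square this holds because $g$ is a bicomodule map. For the lower square it holds because $f \colon \B(R) \to \B(S)$ is a coalgebra map, which makes $f \cotensor \id \cotensor f$ a map of bicomodules along $f$. Thus each morphism $(f,g)$ induces a map of equalizers, i.e.\ of $\k$-modules, since all maps are compatible with the differentials.

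\emph{Functoriality.} Given a second morphism $(f', g')$, I would check that the vertical composite for $(f' \after f, (f,f)^*(g') \after g)$ agrees with the composite of the two vertical composites. This reduces to the associativity isomorphisms for iterated cotensor products, again valid by quasi-cofreeness, together with the fact that composition in $\BiMod$ is defined via restriction of $g'$. The identity is clearly sent to the identity.

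\emph{Explicit formula.} I would use \cref{obs:HH}, which identifies the equalizer with $\bigoplus_n M \tensor_\k (\shift R)^{\tensor_\k n}$ sitting inside $\B(R, M, R)$ as the "cyclically balanced" tensors. An element $m \tensor x_1 \tensor \dots \tensor x_n$ corresponds to the sum over all ways of splitting the $x$'s into a right part after $m$ and a left part before $m$, i.e.\ over cyclic rotations $\cyclic{1+n}^{\circ j}$.

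Next I would apply $g$ and then $f \cotensor \id \cotensor f$, whose components are given by \cref{obs:restrict_bimodule}, and project back to the equalizer summand $N \tensor (\shift S)^{\tensor i-2}$. In that summand the coefficient slot absorbs one left block of length $n_i$ and one right block of length $n_1$ through $g_{n_i,n_1}$, while the remaining blocks are sent through $f_{n_2}, \dots, f_{n_{i-1}}$. The rotation $\cyclic{1+n}^{\circ n_i}$ brings the left block into position. I expect this sign- and index-bookkeeping to be the only real obstacle. I would handle it by checking it on the lowest components ($n \le 2$) and then arguing by the general shape of the coassociative decomposition.

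\emph{Functors $\HH_\k$ and $\HC_\k$.} Composing with $R \mapsto (R, \shift R)$, $f \mapsto (f, f')$ gives functoriality of $\HH_\k$, after shifting by $\shift[-1]$. With $g = f'$ we have $g_{a,b} = f_{a+1+b}$, so the formula becomes
\[ \sum_{n_1 + \dots + n_i = n+1} (f_{n_1} \tensor \dots \tensor f_{n_i}) \after \cyclic{1+n}^{\circ k}, \]
summed over all cyclic ways of cutting a cyclic word of length $n+1$ into consecutive blocks. This expression commutes, with Koszul signs, with the cyclic group actions on source and target. Hence it descends to the coinvariants $\HC_\k$, and functoriality of $\HC_\k$ is inherited from that of $\HH_\k$.
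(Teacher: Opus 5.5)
Your proposal is correct and is essentially the paper's own (implicit) argument: the map is by definition the one induced on equalizers by the displayed diagram, which commutes because $g$ is a bicomodule map and $\B(f)$ is a coalgebra map, and the formula comes from unwinding the identification of \cref{obs:HH}. The formula also does not need a low-degree check. After applying $g$ and $f \cotensor {\id} \cotensor f$, project onto the summand with empty left tensor-coalgebra factor: every term vanishes except those where $g$ absorbs the entire left block, since $\B(f)$ sends words of positive length to words of positive length, and this gives the stated formula in all degrees at once.
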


As expected, the definition of Hochschild and cyclic homology of $\Ainf$-algebras generalizes the classical definitions of algebras, as follows.

\begin{observation} \label{obs:HH_agrees}
  Let $\k \to R$ be a map of dgas and $M$ an $(R \tensor_\k \opmod{R})$-module.
  Then there is a canonical natural isomorphism between the Hochschild complex $\HH_\k(R, M)$ as constructed in \cref{def:HH} and the construction of \cref{def:HH_Ainf}, where we consider $R$ as an $\Ainf$-algebra over $\k$ and $M$ as an $R$-$R$-bimodule via \cref{obs:dga_as_Ainf,obs:dgm_as_Ainf}.
  The analogous statement holds for $\HH_\k(R)$ and $\HC_\k(R)$.
\end{observation}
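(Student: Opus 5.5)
The plan is to compare the two complexes on underlying graded $\k$-modules, where they already agree, and then reduce everything to a sign check on the differential. By \cref{def:HH} the classical complex $\B_\k(R,R,R)\tensor_{R\tensor_\k\opmod R} M$ has underlying graded $\k$-module $\bigoplus_{n\ge0} M\tensor_\k(\shift R)^{\tensor_\k n}$. By \cref{obs:HH} the $\Ainf$-complex has the same underlying module. I will define the comparison map $\Theta$ on the $n$-th summand by $m\tensor \shift a_1\tensor\dots\tensor\shift a_n\mapsto \epsilon\cdot m\tensor\shift a_1\tensor\dots\tensor \shift a_n$. The sign $\epsilon=\epsilon(m,a_1,\dots,a_n)$ is chosen to absorb two discrepancies: the $(-1)^{\deg a}$ in the dictionary of \cref{obs:dga_as_Ainf,obs:dgm_as_Ainf}, and the Koszul sign incurred when \cref{obs:HH} cyclically rotates $\shift a_n$ past $m$ and the other tensor factors. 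My first guess is $\epsilon=(-1)^{\sum_i (n-i)(\deg a_i+1)}$, the same sign that relates $\mu_n$ to $m_n$ in the remark after \cref{def:Ainf}. I would then adjust it until the differentials match.

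The core step is to check that $\Theta$ is a chain map, term by term. For a dga, \cref{obs:dga_as_Ainf,obs:dgm_as_Ainf} give that only $\mu^R_1,\mu^R_2,\mu^M_{0,0},\mu^M_{1,0},\mu^M_{0,1}$ are nonzero. So the formula of \cref{obs:HH} reduces to four kinds of terms:
\begin{itemize}
\item the internal differentials, coming from $\mu^R_1$ and $\mu^M_{0,0}$;
\item the products $a_ia_{i+1}$, coming from $\mu^R_2$;
\item the term $m\cdot a_1$, coming from $\mu^M_{0,1}$ with $l=0$;
\item the term $\pm a_n\cdot m$, coming from $\mu^M_{1,0}$ composed with $\cyclic{1+n}$ and $l=1$.
\end{itemize}
These correspond exactly to the internal differential and the face maps $d_0,\dots,d_n$ of the classical bar-type Hochschild differential, with $R\tensor_\k\opmod R$-coequalizing identifying the outer factors with $M$. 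What remains is to verify that the signs agree after conjugating by $\epsilon$. This is the main obstacle, and it is pure bookkeeping: I would first do it for $n=1,2$ to pin down $\epsilon$, and then verify the general case by comparing how $\epsilon$ changes when two adjacent $\shift a_i$ are merged or when $a_n$ is moved to the front.

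Naturality comes next. For a map of dgas $f$ and a bimodule map $g$, both are strict by \cref{obs:dga_as_Ainf,obs:dgm_as_Ainf}. Hence in the formula of \cref{obs:HH_functor} only the summand with all $n_j=1$ for the $f$'s and $g_{0,0}$ survives, namely $g\tensor f^{\tensor n}$. This clearly commutes with $\Theta$, since $\epsilon$ depends only on degrees, which $f$ and $g$ preserve.

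For $\HH_\k(R)$, I note that under \cref{obs:algebra_is_bimodule} the bimodule $\shift R$ is the shift of the bimodule $R$ of \cref{obs:dgm_as_Ainf}. Up to the sign $(-1)^{\deg}$ on the outer factor, the map $\shift\colon R\to\shift R$ is a strict isomorphism of bimodules. This gives $\HH_\k(R,\shift R)\iso\shift\HH_\k(R,R)$, and hence $\HH_\k(R)=\shift[-1]\HH_\k(R,\shift R)\iso\HH_\k(R,R)$. Finally, for $\HC$, both Connes complexes are quotients of the same module $\shift[-1]\bigoplus(\shift R)^{\tensor n+1}$ by the $\Cyclic{n+1}$-action. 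Once the isomorphism for $\HH_\k(R)$ is written in this form, it is the identity up to the degree-dependent sign $\epsilon$; I would check that this sign is compatible with the cyclic permutations, so that the isomorphism descends to the quotients.
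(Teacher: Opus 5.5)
Your strategy is the unwinding the paper leaves implicit; it states this as an observation without proof. You identify both sides with $\bigoplus_n M\tensor_\k(\shift R)^{\tensor_\k n}$, match the internal, $\mu^R_2$, $\mu^M_{0,1}$ and rotated $\mu^M_{1,0}$ terms up to a degree-dependent sign, and get naturality from strictness.

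The catch is that the step you defer as bookkeeping cannot be completed with the conventions as printed. Take $\mu_2(\shift a\tensor\shift b)=(-1)^{\deg a}\shift(ab)$ from \cref{obs:dga_as_Ainf}. Then the right action $\mu^M_{0,1}(m\tensor\shift y)=(-1)^{\deg m}m\cdot y$ of \cref{obs:dgm_as_Ainf} violates the $(l,r)=(0,2)$ instance of \eqref{eq:bimodule}. On $m\tensor\shift y\tensor\shift z$, the two surviving terms $\mu^M_{0,1}\circ(\mu^M_{0,1}\tensor\id)$ and $\mu^M_{0,1}\circ(\id\tensor\mu^R_2)$ both equal $(-1)^{\deg y}myz$. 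Hence the differential of \cref{obs:HH} does not square to zero: the Hochschild-degree-$0$ part of $d^2(m\tensor\shift y\tensor\shift z)$ is $2(-1)^{\deg y}myz$. So no sign twist $\epsilon$ can intertwine it with the classical differential. Before searching for $\epsilon$ you must correct the convention to $\mu^M_{0,1}(m\tensor\shift y)=(-1)^{\deg m+1}m\cdot y$. This is also exactly what \cref{obs:algebra_is_bimodule} produces for $M=\shift R$, since $(-1)^{\deg a}=(-1)^{\deg(\shift a)+1}$. With that correction the term-by-term matching you outline goes through.

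Two smaller points concern the cases $\HH_\k(R)$ and $\HC_\k(R)$.

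First, a map $a\mapsto\pm\shift a$ is a strict degree $-1$ bimodule map $R\to\shift R$ only if the sign is independent of $\deg a$; compare the $(0,0)$ and $(1,0)$ components of \eqref{eq:bimodule_map}. So a $(-1)^{\deg}$ twist does not belong there, and after the correction plain $\shift$ works.

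Second, there is a more direct route. With the corrected sign, the bimodule $\shift R$ of \cref{obs:algebra_is_bimodule} is literally \cref{obs:dgm_as_Ainf} applied to the dg bimodule $\shift R$ with $x\cdot\shift a=(-1)^{\deg x}\shift(xa)$ and $\shift a\cdot y=\shift(ay)$. Moreover, \cref{def:HH} defines the classical $\HC_\k(R)$ through the classical $\shift[-1]\HH_\k(R,\shift R)$, on the same underlying module $\shift[-1]\bigoplus_n(\shift R)^{\tensor_\k n+1}$ used for the $\Ainf$ version. So both cases reduce to the bimodule case with $M=\shift R$. The only additional requirement is the one you name: the twist for $M=\shift R$ must commute with the Koszul-signed rotation of $(\shift R)^{\tensor_\k n+1}$, so that it descends to coinvariants.
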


We now observe that both Hochschild homology and cyclic homology are invariant under quasi-isomorphisms.

\begin{lemma} \label{lemma:HH_qiso}
  Let $(f, g) \colon (R, M) \to (S, N)$ be a morphism of $\BiMod$ such that $R$ and $S$ are cofibrant as $\k$-modules.
  If $f$ and $g$ are quasi-isomorphisms, then the induced maps
  \[ \HH_\k(R, M) \to \HH_\k(S, N),  \qquad  \HH_\k(R) \to \HH_\k(S),  \qquad  \HC_\k(R) \to \HC_\k(S) \]
  are all quasi-isomorphism as well.
\end{lemma}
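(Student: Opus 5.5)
We prove all three statements with the standard filtration argument: filter by Hochschild degree and compare associated graded complexes.

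For $\HH_\k(R, M)$, use the description of \cref{obs:HH}, i.e.\ $\bigoplus_{n \ge 0} M \otimes_\k (\shift R)^{\otimes_\k n}$. Let $F_p \subseteq \HH_\k(R, M)$ be the sum of the summands with $n \le p$.
\begin{itemize}
  \item Every term of $d_{HH}$ has the form $\mu^R_s$ or $\mu^M_{l,r}$ composed with a permutation. Each such term lowers the Hochschild degree by $s - 1 \ge 0$, respectively by $l + r \ge 0$, so $F_p$ is a subcomplex.
  \item The filtration is exhaustive and bounded below ($F_{-1} = 0$), and it is preserved by the map induced by $(f, g)$. By the explicit formula of \cref{obs:HH_functor}, a summand landing in Hochschild degree $i - 2$ comes from degree $n = n_1 + \dots + n_i \ge i - 2$, since the $f_{n_j}$ need $n_j \ge 1$.
  \item On the associated graded, $\mathrm{gr}_n$ is the complex $M \otimes_\k (\shift R)^{\otimes_\k n}$ with differential built from $\mu^M_{0,0}$ and $\mu^R_1$ only, i.e.\ the tensor-product differential.
  \item The induced map on $\mathrm{gr}_n$ is $g_{0,0} \otimes f_1^{\otimes n}$. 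This is the only summand preserving Hochschild degree: it requires $i = n + 2$, hence $n_1 = n_i = 0$ and all middle $n_j = 1$.
\end{itemize}

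Next we check that $g_{0,0} \otimes f_1^{\otimes n}$ is a quasi-isomorphism. By assumption $f_1$ and $g_{0,0}$ are quasi-isomorphisms. Since $R$ and $S$ are cofibrant $\k$-modules and $\Mod{\k}$ is a monoidal model category, tensoring with a cofibrant $\k$-module preserves quasi-isomorphisms; this is the flatness statement \cite[\Ilemmacofibrantflat]{I}. Write $g_{0,0} \otimes f_1^{\otimes n}$ as a composite of maps changing one factor at a time, for instance
\[ M \otimes (\shift R)^{\otimes n} \to N \otimes (\shift R)^{\otimes n} \to N \otimes (\shift S) \otimes (\shift R)^{\otimes n-1} \to \cdots. \]
In each step the other factors are tensor powers of $\shift R$ or $\shift S$, which are cofibrant, and $N$. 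If $M$ and $N$ are not assumed cofibrant, this is a genuine gap. To handle it, order the factors so that the variable factor is tensored only with the cofibrant module $(\shift R)^{\otimes a} \otimes (\shift S)^{\otimes b}$. Then flatness of this cofibrant module handles the step, regardless of $M$ and $N$.

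Hence the map is a quasi-isomorphism on each filtration quotient. By induction on $p$ and the five lemma it is a quasi-isomorphism on each $F_p$, and passing to the colimit (filtered colimits of cochain complexes are exact) gives the result for $\HH_\k(R, M)$. The case of $\HH_\k(R)$ follows by taking $M = \shift R$, $N = \shift S$, and $g = f'$ as in \cref{lemma:algebra_map_is_bimodule_map}, which is a quasi-isomorphism since $f$ is.

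For $\HC_\k(R)$ we run the same filtration on the coinvariants $\shift[-1] \bigoplus_n (\shift R)^{\otimes n+1}_{\Cyclic{n+1}}$. The associated graded pieces are $\bigl((\shift R)^{\otimes n+1}\bigr)_{\Cyclic{n+1}}$ with the tensor differential. Over $\QQ$, taking coinvariants of a finite group is exact (it is a retract via averaging), so it preserves the quasi-isomorphism $f_1^{\otimes n+1}$. The filtration argument then concludes as before.

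The main obstacle is justifying that the tensor powers of $f_1$ and $g_{0,0}$ are quasi-isomorphisms using only cofibrancy of $R$ and $S$ as $\k$-modules, which requires the careful ordering of factors described above.
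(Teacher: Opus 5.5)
Your argument follows the paper's proof. You filter by Hochschild degree, identify the map on the $n$-th graded piece as $g_{0,0} \otimes f_1^{\otimes n}$, invoke flatness of cofibrant $\k$-modules, and use exactness of finite-group coinvariants over $\QQ$ for $\HC_\k$. All of that is fine.

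The step that fails is your fix for non-cofibrant $M$ and $N$: no ordering of the factors achieves what you claim. The tensor product always contains exactly one factor equal to $M$ or $N$. So every step that applies $f_1$ to one of the $n$ positions has $M$ or $N$ among the untouched factors. Only the single step applying $g_{0,0}$ has purely cofibrant complementary factor $(\shift R)^{\otimes a} \otimes (\shift S)^{\otimes b}$. Hence ``flatness of this cofibrant module'' does not cover the $f_1$-steps.

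The missing ingredient is the standard fact that a quasi-isomorphism $h \colon P \to P'$ between cofibrant $\k$-modules stays a quasi-isomorphism after tensoring with an arbitrary $\k$-module $X$. To see it, choose a cofibrant replacement $QX \to X$. Then $QX \otimes_\k h$ is a quasi-isomorphism because $QX$ is cofibrant. The comparison maps $QX \otimes_\k P \to X \otimes_\k P$ and $QX \otimes_\k P' \to X \otimes_\k P'$ are quasi-isomorphisms because $P$ and $P'$ are cofibrant. Two-out-of-three then gives the claim. This is the form in which the paper uses \cite[\Ilemmacofibrantflat]{I}, e.g.\ in \cref{lemma:inc_qiso}, where a quasi-isomorphism of cofibrant bimodules is tensored with the non-cofibrant module $R$.

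With this fact, factor $g_{0,0} \otimes f_1^{\otimes n} = (g_{0,0} \otimes \id) \circ (\id_M \otimes f_1^{\otimes n})$. The first map is a quasi-isomorphism because $(\shift S)^{\otimes_\k n}$ is cofibrant. The second is one because $f_1^{\otimes n}$ is a quasi-isomorphism between the cofibrant modules $(\shift R)^{\otimes_\k n}$ and $(\shift S)^{\otimes_\k n}$. For that, your one-factor-at-a-time argument does work, since $M$ and $N$ do not appear. In the cases $\HH_\k(R)$ and $\HC_\k(R)$ the issue never arises, since $\shift R$ and $\shift S$ are themselves cofibrant.
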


\begin{proof}
  Filtering $\HH_\k(R, M)$ by Hochschild degree yields an exhaustive bounded-below increasing filtration whose graded pieces are $M \otimes_\k (\shift R)^{\otimes_\k n}$ equipped with the differential of the tensor product.
  The map $(f, g)_* \colon \HH_\k(R, M) \to \HH_\k(S, N)$ respects this filtration, and induces $g_{0,0} \otimes f_0^{\otimes n}$ on the $n$-th graded piece.
  Since $R$ and $S$ are cofibrant as $\k$-modules, this is a quasi-isomorphism by \cite[\Ilemmacofibrantflat]{I}, and hence $(f, g)_*$ is one as well.
  This implies the claim for $\HH_\k(\blank)$, and the same proof works for $\HC_\k(\blank)$ (using that taking coinvariants of the action of a finite group preserves quasi-isomorphisms).
\end{proof}

We now prove that cyclic homology of an $\Ainf$-algebra can be recovered as the cyclic homology of its bar construction.

\begin{lemma} \label{lemma:HC_of_bar}
  Let $R$ be an $\Ainf$-algebra over $\k$.
  Then the following map is a quasi-isomorphism of $\k$-modules
  \begin{align*}
    \HC_\k(R)  &\xlongto{\eq}  \HC_\k(\rB R) \\
    x_0 \tensor \dots \tensor x_n  &\longmapsto  \sum_{i = 0}^n (-1)^\epsilon x_i \tensor \dots \tensor x_n \tensor x_0 \tensor \dots \tensor x_{i - 1}
  \end{align*}
  where the tensor on the right-hand side is considered as an element of $\rB R$ and $\epsilon \defeq (\deg {x_0} + \dots + \deg {x_{i - 1}}) (\deg {x_i} + \dots + \deg {x_n})$.
  Here $\rB R$ denotes the non-counital bar construction of $R$ (see \cref{def:Ainf}) and $\HC_\k(\rB R)$ the Connes complex of this $\k$-coalgebra (defined exactly dually to the Connes complex of a $\k$-algebra, see \cref{def:HH}).
\end{lemma}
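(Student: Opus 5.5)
The plan is to check directly that the displayed map, call it $N$ (a cyclic norm map), is a chain map, and then to prove it is a quasi-isomorphism by a filtration argument. This reduces the statement to the classical computation of cyclic homology of a tensor (co)algebra.

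First we make the target explicit. Dually to \cref{def:HH}, the Connes complex of the non-counital coalgebra $C \defeq \rB R$ is
\[ \HC_\k(C) = \shift[-1] \bigoplus_{m \ge 0} \bigl( (\shift[-1] C)^{\tensor_\k m+1} \bigr)^{\Cyclic{m+1}} , \]
the cyclic invariants inside the co-Hochschild complex. Its differential has two kinds of components: those induced by the differential $\mu^R$ of $\rB R$, and those induced by the reduced comultiplication, which raises $m$. The element $N(x_0 \tensor \dots \tensor x_n)$ lies in the summand $m = 0$ and is visibly invariant under cyclic rotation of the letters.

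We check that $N$ is a chain map by comparing components.
\begin{itemize}
\item The $\mu^R$-part of the differential on $\rB R$, applied to a cyclically rotated word, reproduces exactly the rotations of the Hochschild differential of \cref{obs:HH}. This includes the terms in which some $\mu_s$ straddles the cut between $x_n$ and $x_0$.
\item The comultiplication part sends the norm element to a sum over all ways of cutting a rotated word into two pieces. This sum vanishes because the cuts pair off with opposite Koszul signs.
\end{itemize}
This is a routine sign computation using the sign $\epsilon$ in the statement and the fact that we start from $\Cyclic{n+1}$-coinvariants. Because $N$ factors through the norm, it is well defined on coinvariants.

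For the quasi-isomorphism we filter both sides by the total number of letters of $\shift R$, the weight. Each $\mu^R_s$ lowers weight by $s-1$, while comultiplication and $\mu^R_1$ preserve weight. So $F_p \defeq (\text{weight} \le p)$ is an increasing filtration preserved by both differentials and by $N$. It is exhaustive and bounded below, since weight is at least $1$, so a quasi-isomorphism on associated graded pieces suffices.

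On the associated graded of weight $w$, the differentials involve only $\mu_1$ and the deconcatenation coproduct:
\begin{itemize}
\item the source is $\bigl((\shift R)^{\tensor_\k w}\bigr)_{\Cyclic w}$;
\item the target is the weight-$w$ part of the Connes complex of the cofree conilpotent coalgebra $\rcoT_\k(\shift R)$, with internal differential only;
\item $N$ becomes the norm map into its $m=0$ part.
\end{itemize}
Since we are over $\QQ$, the norm $(V^{\tensor w})_{\Cyclic w} \to (V^{\tensor w})^{\Cyclic w}$ is an isomorphism. It therefore remains to show that, in weight $w$, the inclusion of the $m=0$ cyclic invariants $(V^{\tensor w})^{\Cyclic w}$ into the Connes complex of $\rcoT V$ is a quasi-isomorphism.

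This last step is the dual of the classical fact that the reduced cyclic homology of a tensor algebra $\overline{T}V$ is $\bigoplus_w (V^{\tensor w})_{\Cyclic w}$, concentrated in Hochschild degree $0$ (Loday, Kassel). I expect it to be the main obstacle, in particular the bookkeeping of signs.

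I would attempt it by writing down an explicit contracting homotopy on the weight-$w$ co-Hochschild complex, dual to the standard one for $\overline{T}V$. Loday's homotopy inserts the splitting of a tensor word at its first letter; dually, the homotopy merges the first two tensor factors when the first has length one. I would check that this homotopy commutes with $\mu_1$ and is compatible with the cyclic action after averaging over $\Cyclic{m+1}$, which is possible over $\QQ$. If the explicit homotopy becomes unwieldy, a fallback is available: weight-$w$ pieces are finite in $m$ and built from $V^{\tensor w}$, so one can argue functorially in $V$ (Schur functor decomposition) and reduce to the classical algebra statement by linear duality applied to finite-dimensional $V$.
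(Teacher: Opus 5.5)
Your proposal is correct and follows the paper's proof essentially step by step. You check compatibility with the differentials, filter both sides by the total number of letters, use that the norm $(V^{\otimes w})_{\Cyclic{w}} \to (V^{\otimes w})^{\Cyclic{w}}$ is an isomorphism over $\QQ$, and reduce to the classical computation of cyclic homology of the tensor (co)algebra; the paper does not reprove that last fact but takes it from the duals of Loday's Theorem~3.1.6, Proposition~2.2.16 and \S2.2.13. One caution on your preferred route for that final step: the co-Hochschild homology of $\rcoT_\k V$ is not concentrated in Hochschild degree $0$ (as the remark right after the lemma notes), so no homotopy can contract the whole co-Hochschild complex onto degree $0$, and your fallback of reducing to Loday's result is the sounder way to finish.
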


\begin{proof}
  Unwinding the definitions, one checks that the map is compatible with the differentials.
  Now we filter both sides by the total number of elements of $R$.
  Both filtrations are bounded below and exhaustive, and the map is filtration preserving.
  Hence it is enough to prove that the map induces a quasi-isomorphism on associated graded.
  
  The associated graded of the filtration on $\HC_\k(R)$ is the $\k$-module $\bigoplus_{n \ge 1} (R^{\tensor_k n})_{\Cyclic n}$, where the cyclic group $\Cyclic n$ acts by cyclic permutations.
  The associated graded of the filtration on $\HC_\k(\rB R)$ is the cyclic complex $\HC_\k(\rcoT_\k (\shift R))$ for the non-counital tensor coalgebra on the $\k$-module $\shift R$.
  By the duals of \cite[Theorem~3.1.6, Proposition~2.2.16, and §2.2.13]{Lod}, the composite of the inclusions
  \[ \bigoplus_{n \ge 1} (R^{\tensor_k n})^{\Cyclic n}  \longto  \rcoT_\k (\shift R)  \longto  \HC_\k \bigl( \rcoT_\k (\shift R) \bigr) \]
  is a quasi-isomorphism.
  Since we are working rationally, the norm maps $(R^{\tensor_k n})_{\Cyclic n} \to (R^{\tensor_k n})^{\Cyclic n}$ are isomorphisms, and the claim follows.
\end{proof}

\begin{remark}
  Note that the analog of \cref{lemma:HC_of_bar} for Hochschild homology is false.
  As a counterexample, let $R$ be an $\Ainf$-algebra over $\QQ$ with $\mu_n^R = 0$ for all $n$.
  Then the bar construction $\rB R$ is just the non-counital tensor coalgebra $\rcoT(\shift R)$ whose Hochschild homology is not necessarily concentrated in Hochschild degree $0$ (see e.g.\ \cite[Theorem~3.1.4]{Lod}).
\end{remark}

The following lemma will be needed later.

\begin{lemma} \label{lemma:HH_proj}
  Let $R$ be an $\Ainf$-algebra over $\k$ and $M$ a symmetric $R$-$R$-bimodule (cf.\ \cref{def:symmetric_bimod}).
  Then the projection $\epsilon \colon \HH_\k(R, M) \to M$ onto Hochschild degree $0$ is a map of $\k$-modules (in particular it is a map of cochain complexes).
  It is natural in the subcategory of $\BiMod$ with objects the pairs where the bimodule is symmetric and morphisms those where the map of bimodules is symmetric.
  When $R$ is a $\Cinf$-algebra over $\k$, the projections
  \[ \HH_\k(R) \longto \shift[-1] \shift R \iso R  \qquad \text{and} \qquad  \HC_\k(R) \longto \shift[-1] \shift R \iso R \]
  are natural maps of $\k$-modules.
\end{lemma}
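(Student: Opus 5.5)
The plan is to work directly with the explicit description of the Hochschild differential in \cref{obs:HH}. The claim that $\epsilon$ is a map of cochain complexes amounts to showing $\epsilon \after d_{HH} = \mu^M_{0,0} \after \epsilon$.

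\textbf{The differential.} On Hochschild degree $n$, the component of $d_{HH}$ landing in Hochschild degree $0$ comes from two sources.
\begin{itemize}
\item The first sum in \cref{obs:HH} contributes only via terms $\id^{\tensor 1+r} \tensor \mu^R_s \tensor \id^{\tensor t}$ with $r+t = 0$ and $s = n$. These land in Hochschild degree $1$, except for $\mu^R_0$, which is zero, so they contribute nothing.
\item The second sum contributes the terms with $s = 0$, namely $\sum_{l + r = n} \mu^M_{l,r} \after \cyclic{1+n}^{\circ l}$.
\end{itemize}
For $n = 0$ the second sum reduces to $\mu^M_{0,0}$, the differential of $M$. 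For $n \ge 1$ it is exactly the expression that vanishes by the symmetry condition of \cref{def:symmetric_bimod}. Hence $\epsilon \after d_{HH} = \mu^M_{0,0} \after \epsilon$. The map is clearly $\k$-linear, since all structure maps are.

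\textbf{Naturality.} For a morphism $(f,g)$ in the described subcategory, \cref{obs:HH_functor} gives the component of the induced map from Hochschild degree $n$ to degree $0$. The summands landing in degree $0$ are those with $i = 2$, which are
\[ \sum_{n_1 + n_2 = n} g_{n_2, n_1} \after \cyclic{1+n}^{\circ n_2}. \]
For $n = 0$ this is $g_{0,0}$. For $n \ge 1$ it vanishes because $g$ is symmetric. Therefore $\epsilon \after (f,g)_* = g_{0,0} \after \epsilon$. This is precisely the naturality square, whose right vertical map is the underlying map $g_{0,0}$ of bimodules.

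\textbf{The $\Cinf$ case.} By \cref{lemma:Cinf_is_symm}, the bimodule $\shift R$ is symmetric, and for a $\Cinf$-map $f$ the induced bimodule map $f'$ is symmetric. Since $\HH_\k(R) = \shift[-1]\HH_\k(R, \shift R)$, the first projection follows by applying the above to the pair $(f, f')$ and shifting. Note that $f'_{0,0} = f_1$ is the underlying map.

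For $\HC_\k(R)$, the projection to Hochschild degree $0$ factors through the coinvariants, because $\Cyclic 1$ is trivial. The differential descends compatibly, so the projection is a chain map. Naturality in $\HC_\k$ then follows from naturality in $\HH_\k$ together with surjectivity of $\HH_\k(R) \to \HC_\k(R)$.

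\textbf{Main obstacle.} I expect the only delicate point to be sign and indexing bookkeeping: one must match the cyclic rotations $\cyclic{1+n}^{\circ l}$ in \cref{obs:HH}, and the indices $(n_i, n_1)$ in \cref{obs:HH_functor}, with the convention in \cref{def:symmetric_bimod}. I would check this on the case $n = 1$ first, and then argue that the general case has identical form.
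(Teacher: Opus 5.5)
Your proposal is correct and is the paper's argument written out in detail: the paper notes that the claim is immediate from \cref{obs:HH,obs:HH_functor}, because the Hochschild-degree-$0$ components of the differential and of the induced maps are exactly the sums annihilated by the symmetry conditions, and it invokes \cref{lemma:Cinf_is_symm} for the $\Cinf$ case. One small cleanup: the first sum in \cref{obs:HH} always lands in Hochschild degree $r+1+t \ge 1$, so your remark about $\mu^R_0$ is unnecessary.
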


\begin{proof}
  That $\epsilon$ is a natural map of $\k$-modules is immediate from \cref{obs:HH,obs:HH_functor} and the definitions.
  The claim for $\Cinf$-algebras follows by additionally using \cref{lemma:Cinf_is_symm}.
\end{proof}

\subsubsection*{The normalized Hochschild complex}

Note that the Hochschild complex $\HH(R)$ of an $\Ainf$-algebra $R$ is generally not bounded below, even if $R$ is concentrated in non-negative degrees, since $(\shift R)^{\tensor n}$ is only concentrated in degrees $\ge -n$ (recall that $\shift(\blank)$ denotes a cohomological shift by $-1$).
To resolve this issue, we introduce a normalized version of the Hochschild complex of a unital $\Ainf$-algebra, generalizing the case of a unital algebra (see e.g.\ \cite[1.1.14]{Lod}).
Also see \cite[§5]{GJ} for a similar construction in the non-unital case.

\begin{definition} \label{def:nHH}
  Let $R$ be a unital $\Ainf$-algebra over $\k$, and $M$ a unital $R$-$R$-bimodule.
  We write $\dHH_\k(R, M) \subseteq \HH_\k(R, M)$ for the graded $\k$-submodule spanned by the elements $m \tensor x_1 \tensor \dots \tensor x_n$ such that $x_i = \shift 1$ for some $i$.
  We define the \emph{normalized Hochschild complex} $\nHH_\k(R, M)$ of $R$ with coefficients in $M$ to be the quotient of $\HH_\k(R, M)$ by $\dHH_\k(R, M)$.
  We furthermore write $\nHH_\k(R) \defeq \shift[-1] \nHH_\k(R, \shift R)$.
\end{definition}

Note that it follows from the definitions that $\dHH_\k(R, M) \subseteq \HH_\k(R, M)$ is closed under the differential, so that both $\dHH_\k(R, M)$ and $\nHH_\k(R, M)$ are cochain complexes and thus $\k$-modules.

\begin{lemma} \label{lemma:nHH_qiso}
  Let $R$ be a unital $\Ainf$-algebra over $\k$, and $M$ a unital $R$-$R$-bimodule.
  Then the quotient map $\HH_\k(R, M) \to \nHH_\k(R, M)$ is a quasi-isomorphism.
\end{lemma}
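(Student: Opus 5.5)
It suffices to show that the subcomplex $\dHH_\k(R, M)$ is acyclic, since the quotient map then has acyclic kernel and the long exact sequence gives the claim. I would imitate the classical argument for unital algebras (cf.\ \cite[1.1.14]{Lod}). The idea is to filter $\dHH_\k(R, M)$ by the position of the first occurrence of $\shift 1$ and build a contracting homotopy from insertion of units. The classical extra degeneracy argument needs strict unitality, and this is exactly what the unitality conventions for $R$ and $M$ give us. Since $\mu_n$ with $n \ge 3$ and $\mu^M_{l,r}$ with $l + r > 1$ vanish whenever $\shift 1$ is an input, only $\mu_2$, $\mu^M_{1,0}$ and $\mu^M_{0,1}$ interact with units.

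Concretely, for $p \ge 1$ let $F_p \subseteq \dHH_\k(R, M)$ be spanned by the elements $m \tensor x_1 \tensor \dots \tensor x_n$ such that $x_i = \shift 1$ for some $i \le n - p + 1$. Equivalently, a unit occurs with at least $p-1$ further tensor factors after it. I would first check, using the formula of \cref{obs:HH} together with the unital axioms, that each $F_p$ is a subcomplex modulo terms in which the Hochschild degree drops. To avoid this issue I would use a combined filtration: first by Hochschild degree $n$ (exhaustive and bounded below), and within each degree by the index of the last $\shift 1$. On the associated graded, the only surviving differential components are the internal differential $\mu_1$ and the components $\id \tensor \mu_2 \tensor \id$ applied at the slot of the distinguished unit, with its neighbours. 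By unitality these are $\pm$ the map that deletes $\shift 1$ and merges it into the neighbouring factor.

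On this associated graded I would then write down the explicit homotopy
\[ h(m \tensor x_1 \tensor \dots \tensor x_n) \defeq \pm\, m \tensor x_1 \tensor \dots \tensor x_j \tensor \shift 1 \tensor x_{j+1} \tensor \dots \tensor x_n , \]
which inserts a unit next to the distinguished one. This is the analog of the homotopy $h$ used in the proof of \cref{lemma:pi_qiso}, where $\shift 1$ was inserted and unitality gave $d h + h d = \id - (\text{correction})$. The two terms of $d$ contracting the pair of adjacent units cancel, and one obtains $dh + hd = \id$ on the graded piece. Hence each graded piece is acyclic, so $\dHH_\k(R, M)$ is acyclic by a standard convergence argument; this is also where bounded-belowness of the Hochschild degree filtration is used. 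Alternatively, one can do the whole argument cyclically via the bimodule terms $\mu^M_{1,0}(\shift 1 \tensor m) = m$ and $\mu^M_{0,1}(m \tensor \shift 1) = \pm m$, which handle a unit adjacent to $M$ across the cyclic wrap.

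The main obstacle is bookkeeping. The filtration must be chosen so that the higher operations $\mu_s$ with $s \ge 3$, and the bimodule operations $\mu^M_{l,r}$ wrapping around cyclically, strictly lower the filtration degree. The Koszul signs in $h$ must also make the two unit-contraction terms cancel. I would first try to filter by the pair (Hochschild degree, position of the last $\shift 1$), ordered lexicographically, and verify this by checking each term type of the differential in \cref{obs:HH} separately.
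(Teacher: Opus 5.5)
\textbf{There is a genuine gap, and it is in your filtration.}

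Once Hochschild degree is the leading index of your lexicographic filtration, every component of the differential that lowers Hochschild degree dies on the associated graded. Your contraction relies on exactly such components: $\id \otimes \mu_2 \otimes \id$ contracting the distinguished $\shift 1$ with a neighbour, or $\mu^M_{1,0}$, $\mu^M_{0,1}$ when the neighbour is $M$. Each of these lowers Hochschild degree by one, so none of them survives. On each graded piece only the internal differential coming from $\mu_1$ and $\mu^M_{0,0}$ remains.

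Such pieces are not acyclic in general. For $\k = R = M = \QQ$ they are nonzero with zero differential. Consistently with this, your $h$ raises Hochschild degree, so it does not preserve the filtration, and $dh + hd = \id$ cannot hold on these pieces. In an extra-degeneracy argument the identity term comes precisely from the degree-lowering unit contractions, so they must not be filtered away.

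\textbf{How to repair it.} The problem you tried to avoid comes from a filtration pointing the wrong way. Merging factors to the right of a unit decreases the number of factors after it, so ``some unit has at least $p-1$ factors after it'' is not preserved by the differential.

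Drop the Hochschild-degree filtration and filter only by where the units sit. The paper uses the position of the first unit: let $F_p \subseteq \dHH_\k(R, M)$ be spanned by the $m \otimes x_1 \otimes \dots \otimes x_n$ with $x_i = \shift 1$ for some $i \le p$. This is a subcomplex on the nose:
\begin{itemize}
\item operations to the left of the first unit, and $\mu^M_{l,r}$ absorbing factors from the front, only move that unit further left;
\item $\mu_s$ with $s \ge 3$ and $\mu^M_{l,r}$ with $l + r > 1$ vanish on it;
\item its two contractions with its neighbours cancel.
\end{itemize}
It is also exhaustive and bounded below.

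On $F_p / F_{p-1}$, inserting a second $\shift 1$ directly next to the one in position $p$ gives a contraction, namely $d s_p + s_p d = -\id$ with the paper's signs. Here the identity term does come from contracting the adjacent pair of units with each other and with their neighbours, because these terms keep the first unit in position $p$ and hence survive in the graded piece. The mirror filtration ``some unit has at most $q$ factors after it'' would work equally well.
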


\begin{proof}
  It is enough to prove that $\dHH_\k(R, M)$ is acyclic.
  To this end, we consider the ascending filtration whose $p$-th stage $F_p \subseteq \dHH_\k(R, M)$ is the graded $\k$-submodule spanned by the elements $m \tensor x_1 \tensor \dots \tensor x_n$ such that $x_i = \shift 1$ for some $i \le p$.
  Note that it follows from the definitions that $F_p$ is closed under the differential.
  Since the filtration is bounded below and exhaustive, it is enough to prove that the graded pieces $G_p \defeq \quot{F_p}{F_{p-1}}$ are acyclic.
  To this end, consider the map of graded $\k$-modules $r_p \colon F_p \to F_p$ given by
  \[ m \tensor x_1 \tensor \dots \tensor x_n  \longmapsto  (-1)^\epsilon m \tensor x_1 \tensor \dots \tensor x_{p-1} \tensor \shift 1 \tensor x_p \tensor \dots \tensor x_n \]
  where $\epsilon \defeq \deg m + \sum_{i = 1}^{p-1} \deg{x_i}$ (by convention $r_p$ vanishes on elements with $n < p - 1$).
  We claim that $r_p$ induces a contraction $s_p \colon G_p \to G_p$, i.e.\ that $d s_p + s_p d = - \id$.
  First note that it is indeed clear that $r_p(F_{p-1}) \subseteq F_{p-1}$.
  The $\k$-module $G_p$ is spanned by elements of the form
  \[ a  \defeq  m \tensor x_1 \tensor \dots \tensor x_{p-1} \tensor \shift 1 \tensor x_{p + 1} \tensor \dots \tensor x_n \]
  with $n \ge p$.
  Its image under the differential is represented by
  \begin{equation} \label{eq:nHH_d}
    \begin{aligned}
      &\mathbin{\phantom{+}} \sum_{1 \le i < p} \pm m \tensor x_1 \tensor \dots \tensor \mu^R_1(x_i) \tensor \dots \tensor x_{p-1} \tensor \shift 1 \tensor x_{p + 1} \tensor \dots \tensor x_n \\
      &+ \sum_{p \le l \le n} \pm \mu^M_{n-l,0}(x_{l+1} \tensor \dots \tensor x_n \tensor m) \tensor x_1 \tensor \dots \tensor x_{p-1} \tensor \shift 1 \tensor x_{p + 1} \tensor \dots \tensor x_l \\
      &+ \sum_{p < l \le r \le n} \pm m \tensor x_1 \tensor \dots \tensor x_{p-1} \tensor \shift 1 \tensor x_{p + 1} \tensor \dots \tensor \mu^R_{r-l+1}(x_l \tensor \dots \tensor x_r) \tensor \dots \tensor x_n
    \end{aligned}
  \end{equation}
  (the remaining terms either lie in $F_{p-1}$ or cancel out).
  On the other hand, its image $s_p(a)$ is represented by
  \[ \pm m \tensor x_1 \tensor \dots \tensor x_{p-1} \tensor \shift 1 \tensor \shift 1 \tensor x_{p + 1} \tensor \dots \tensor x_n \]
  whose differential is represented by
  \begin{equation} \label{eq:nHH_ds}
    \begin{aligned}
      &\mathbin{\phantom{+}} \sum_{1 \le i < p} \pm m \tensor x_1 \tensor \dots \tensor \mu^R_1(x_i) \tensor \dots \tensor x_{p-1} \tensor \shift 1 \tensor \shift 1 \tensor x_{p + 1} \tensor \dots \tensor x_n \\
      &+ \sum_{p \le l \le n} \pm \mu^M_{n-l,0}(x_{l+1} \tensor \dots \tensor x_n \tensor m) \tensor x_1 \tensor \dots \tensor x_{p-1} \tensor \shift 1 \tensor \shift 1 \tensor x_{p + 1} \tensor \dots \tensor x_l \\
      &+ \sum_{p < l \le r \le n} \pm m \tensor x_1 \tensor \dots \tensor x_{p-1} \tensor \shift 1 \tensor \shift 1 \tensor x_{p + 1} \tensor \dots \tensor \mu^R_{r-l+1}(x_l \tensor \dots \tensor x_r) \tensor \dots \tensor x_n \\
      &- m \tensor x_1 \tensor \dots \tensor x_{p-1} \tensor \shift 1 \tensor x_{p + 1} \tensor \dots \tensor x_n
    \end{aligned}
  \end{equation}
  (the remaining terms again either lie in $F_{p-1}$ or cancel out).
  Applying $s_p$ to \eqref{eq:nHH_d}, we observe that the signs work out precisely so that this cancels all terms of \eqref{eq:nHH_ds} except for the last one.
  Hence we have $d s_p + s_p d = - \id$ as desired, which completes the proof.
\end{proof}

\begin{lemma} \label{lemma:nHH_cofibrant}
  Let $R$ be a unital $\Ainf$-algebra over $\k$, and $M$ a unital $R$-$R$-bimodule.
  If the unit map $\eta \colon \k \to R$ is a cofibration of $\k$-modules, and $M$ is a cofibrant $\k$-module, then $\nHH_\k(R, M)$ is a cofibrant $\k$-module.
\end{lemma}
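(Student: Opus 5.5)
The plan is to exhibit $\nHH_\k(R, M)$ as a filtered colimit of cofibrant $\k$-modules along cofibrations, with successive quotients cofibrant. As a graded $\k$-module, $\nHH_\k(R, M)$ is $\bigoplus_{n \ge 0} M \tensor_\k (\shift \bar R)^{\tensor_\k n}$, where $\bar R \defeq \quot{R}{\eta(\k)}$ is the cokernel of the unit map. The quotient $\nHH_\k(R, M) = \HH_\k(R,M)/\dHH_\k(R,M)$ is computed summand-wise. Since $\eta$ is a cofibration of $\k$-modules, $\bar R$ is a cofibrant $\k$-module: it is the pushout of $\eta$ along $\k \to 0$. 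Hence each $M \tensor_\k (\shift \bar R)^{\tensor_\k n}$ is cofibrant, because $\Mod{\k}$ is a monoidal model category and $M$ is cofibrant.

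First, I would filter $\nHH_\k(R, M)$ by Hochschild degree, with $F_p \defeq \bigoplus_{n \le p} M \tensor_\k (\shift \bar R)^{\tensor n}$. By \cref{obs:HH}, the differential only decreases or preserves Hochschild degree: the terms $\mu^R_s$ and $\mu^M_{l,r}$ with $s + l + r \ge 1$ merge tensor factors. So each $F_p$ is a subcomplex, and the differential on $F_p$ is $\k$-linear. The associated graded piece $\quot{F_p}{F_{p-1}}$ is $M \tensor_\k (\shift \bar R)^{\tensor p}$ with the tensor product differential. This uses that $\mu^R_1$ and $\mu^M_{0,0}$ descend to $\bar R$, since $1$ is a cocycle.

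Next, I would show that each inclusion $F_{p-1} \to F_p$ is a cofibration of $\k$-modules. It is degreewise split, with cofibrant cokernel $\quot{F_p}{F_{p-1}}$. I would use the standard fact that in the projective model structure on modules over a cdga, an extension $0 \to A \to B \to C \to 0$ that is split as graded $\k$-modules and has $C$ cofibrant has $A \to B$ a cofibration. Concretely, $B$ is a pushout of $A$ along $\operatorname{cone}(\shift[-1]C) \leftarrow \shift[-1] C \to A$, up to shift conventions. Here the map $\shift[-1]C \to A$ is given by the off-diagonal component of the differential, and $\shift[-1]C \to \operatorname{cone}$ is a cofibration because $C$ is cofibrant. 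Then $\nHH_\k(R,M) = \colim_p F_p$ is a transfinite composite of cofibrations starting from the cofibrant object $F_0 = M$, and is therefore cofibrant.

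The main obstacle is the second step, i.e.\ justifying the extension-by-cofibrant argument cleanly. I would phrase it via the lifting property. Given a trivial fibration $X \to Y$ and a map $B \to Y$ with a lift on $A$, one lifts on $C$ using cofibrancy of $C$. This means solving a lifting problem for $\shift[-1]C \to \operatorname{cone}$ against $X \to Y$, which is possible because cofibrations are closed under the cone construction. A minor point to check is that the identification of $\nHH$ as a sum involving $\bar R$ is compatible with the $\k$-module structure. This holds because $\dHH_\k(R,M)$ is spanned $\k$-linearly and the tensor product is over $\k$.
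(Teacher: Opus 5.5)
Your proposal is correct and takes the same approach as the paper: filter $\nHH_\k(R, M)$ by Hochschild degree, observe that the graded pieces $M \tensor_\k (\shift \bar R)^{\tensor_\k p}$ are cofibrant (since $\bar R$ is), conclude that each inclusion $F_{p-1} \to F_p$ is a cofibration, and deduce that the exhaustive union is cofibrant. You spell out the step the paper leaves implicit, namely that an extension split as graded modules with cofibrant quotient is a pushout of the cone inclusion and hence a cofibration; one small slip is that it is $\mu^R_1$, not $\mu^M_{0,0}$, that must descend to $\bar R$.
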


\begin{proof}
  The underlying graded $\k$-module of $\nHH_\k(R, M)$ is $\bigoplus_{n \ge 0} M \tensor_\k (\quot R 1)^{\tensor_\k n}$.
  Since the differential does not increase the Hochschild degree $n$, this decomposition yields a canonical exhaustive increasing filtration $F_n$ on the $\k$-module $\nHH_\k(R, M)$.
  Since each graded piece is cofibrant by assumption, the inclusion of $F_n \to F_{n+1}$ is a cofibration.
  Hence $\nHH_\k(R, M)$ is cofibrant.
\end{proof}

%% file: 2transfer.tex
\section{The Hochschild homology transfer of \texorpdfstring{$\Ainf$}{A∞}-algebras} \label{sec:transfer}

In this section we first define the Hochschild homology transfer for an $\Ainf$-algebra over a cdga, and prove that it is natural.
We then state and prove our main result, which provides an explicit description for this transfer.
This generalizes a formula due to Bouc \cite{Bou}.

\subsection{Definition and naturality} \label{sec:transfer_natural}

As mentioned in the introduction, we will work in a $G$-equivariant setting to facilitate applications to models of not necessarily nilpotent spaces (see \cref{sec:eq}).
In particular we require a notion of equivariant $\Ainf$- and $\Cinf$-algebras and bimodules over them, as follows.

\begin{definition}
  Let $G$ be a group and $\k$ a $G$-equivariant cdga.
  A \emph{$G$-equivariant $\Ainf$-algebra over $\k$} is a $G$-equivariant $\k$-module $R$ equipped with an $\Ainf$-algebra structure $\mu^R$ over $\k$ such that $\mu^R \colon \coT_\k (\shift R) \to \coT_\k (\shift R)$ is $G$-equivariant.
  A \emph{map of $G$-equivariant $\Ainf$-algebras over $\k$} is a map $f \colon R \to S$ of the underlying $\Ainf$-algebras over $\k$ such that $\B(f) \colon \B(R) \to \B(S)$ is $G$-equivariant.
  A \emph{$G$-equivariant $\Cinf$-algebra over $\k$} is a $G$-equivariant $\Ainf$-algebra over $\k$ whose underlying $\Ainf$-algebra is a $\Cinf$-algebra.
  A \emph{map of $G$-equivariant $\Cinf$-algebras over $\k$} is a map of $G$-equivariant $\Ainf$-algebras over $\k$ whose underlying map of $\Ainf$-algebras is a map of $\Cinf$-algebras.
\end{definition}

Note that a $G$-equivariant $\Ainf$-algebra over $\k$ equivalently consists of a $G$-equivariant $\k$-module $R$ and $G$-equivariant structure maps $\mu^R_n \colon (\shift R)^{\tensor_\k n} \to \shift R$.
Similarly, a map $f \colon R \to S$ of $G$-equivariant $\Ainf$-algebra over $\k$ equivalently consists of $G$-equivariant structure maps $f_n \colon (\shift R)^{\tensor_\k n} \to \shift S$.

\begin{definition}
  Let $G$ be a group, $\k$ a $G$-equivariant cdga, and $R$ and $S$ two $G$-equivariant $\Ainf$-algebras over $\k$.
  A \emph{$G$-equivariant $R$-$S$-bimodule} is a $G$-equivariant $\k$-module $M$ equipped with an $R$-$S$-bimodule structure $\mu^M$ such that $\mu^M$ is $G$-equivariant as an endomorphism of $\coT_\k (\shift R) \tensor_\k M \tensor_\k \coT_\k (\shift S)$.
  A \emph{map of $G$-equivariant $R$-$S$-bimodules} is a map $f \colon M \to N$ of the underlying $R$-$S$-bimodules such that $\B(f) \colon \B(R, M, S) \to \B(R, N, S)$ is $G$-equivariant.
\end{definition}

Note that a $G$-equivariant $R$-$S$-bimodule equivalently consists of a $G$-equivariant $\k$-module $M$ and $G$-equivariant structure maps $\mu^M_{l,r} \colon (\shift R)^{\tensor_\k l} \tensor_\k M \tensor_\k (\shift S)^{\tensor_\k r} \to M$.
Similarly, a map $f \colon M \to N$ of $G$-equivariant $R$-$S$-bimodules equivalently consists of $G$-equivariant structure maps $f_{l,r} \colon (\shift R)^{\tensor_\k l} \tensor_\k M \tensor_\k (\shift S)^{\tensor_\k r} \to N$.

\subsubsection*{The Hochschild homology transfer}

We now define the Hochschild homology transfer induced by a module $M$ over an $\Ainf$-algebra $S$ over a cdga $R$.
It generalizes the classical Hochschild homology transfer in the case that $S$ is a dga (see e.g.\ Keller \cite[§5]{Kel21} or \cite[\IconHHtransfer]{I}).

\begin{construction} \label{con:HH_transfer}
  Let $G$ be a group, $\k \to R$ a map of $G$-equivariant cdgas, $S$ a $G$-equivariant $\Ainf$-algebra over $R$, and $M$ a $G$-equivariant left $S$-module.
  Assume that $R$ and $S$ are cofibrant as $\k$-modules, and that $M$ is cofibrant and homotopically finite as an $R$-module.
  Then the \emph{Hochschild homology transfer} $\transfer{M} \colon \HH_\k(S) \to \HH_\k(R)$ is the map of $\Underlying(\Modeq{G}{\k})$ defined to be zig-zag
  \[
  \begin{tikzcd}[row sep = 15]
    \shift[-1] \HH_\k(S, \shift S) \rar{\nu_*} & \HH_\k \bigl( S, \Hom_R(M, M) \bigr) & \lar[swap]{\eq}  \HH_\k \bigl( S, M \inftensor_R \dual M_R \bigr) \dar{\iso} \\
     & & \HH_\k \bigl( R, \dual M_R \inftensor_S M \bigr) \rar{\ev_*}& \HH_\k(R, R)
  \end{tikzcd}
  \]
  where $\nu$ is the map of \cref{def:nu} and $\ev$ is the counit of \cref{obs:Hom_bifunctor}.
  The quasi-isomorphism is induced by the composite
  \[ M \inftensor_R \dual M_R  \xlongto{\eq}  M \tensor_R \dual M_R  \xlongto{\eq}  \Hom_R(M, M) \]
  of the quasi-isomorphisms of \cref{obs:inftensor_dga,obs:Hom_hf}.
  The vertical isomorphism is the one of \cref{obs:HH_cyclic}, which is given by a cyclic permutation.
\end{construction}

\subsubsection*{Naturality}

We now prove that the Hochschild homology transfer $\transfer{M} \colon \HH_\k(S) \to \HH_\k(R)$ is natural in maps of $S$ and in quasi-isomorphisms of $M$ (as long as everything is unital).
In particular this implies that the Hochschild homology transfer associated to a module $M$ over an $R$-algebra $S$ can be computed by replacing $M$ by a quasi-isomorphic unital module over a unital $\Ainf$-algebra over $R$ quasi-isomorphic to $S$.

\begin{proposition} \label{prop:transfer_natural}
  Let $G$ be a group, $\k \to R$ a map of $G$-equivariant cdgas, $\phi \colon S \to S'$ a unital map of unital $G$-equivariant $\Ainf$-algebras over $R$, $M$ a unital $G$-equivariant left $S$-module, $M'$ a unital $G$-equivariant left $S'$-module, and $M \to \phi^*(M')$ a quasi-isomorphism of $G$-equivariant left $S$-modules.
  Assume that $R$, $S$, and $S'$ are cofibrant as $\k$-modules, and that $M$ and $M'$ are cofibrant and homotopically finite as $R$-modules.
  Then the composite
  \[ \HH_\k(S)  \xlongto{\phi_*}  \HH_\k(S')  \xlongto{\transfer{M'}}  \HH_\k(R) \]
  is homotopic to the transfer $\transfer{M}$ in $\Underlying(\Modeq{G}{\k})$.
\end{proposition}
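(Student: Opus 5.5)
We lay out the zig-zags defining $\transfer{M}$ and $\transfer{M'} \after \phi_*$ next to each other and connect them by a ladder of diagrams, each commuting in $\Underlying(\Modeq{G}{\k})$, whose vertical maps are induced by $\phi$ and by the given quasi-isomorphism $g \colon M \to \phi^*(M')$. Concretely, we use the functoriality of $\HH_\k$ on $\BiMod$ (\cref{obs:HH_functor}). Applying it to morphisms of the form $(\phi, h)$ and $(\id_S, h)$ gives maps between corresponding stages of the two zig-zags. By \cref{lemma:HH_qiso}, these maps are quasi-isomorphisms whenever $h$ is.

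The first square compares the $\nu$-steps. By \cref{lemma:nu_natural} (applied with $f = \phi$; cofibrancy of $M,M'$ over $\k$ follows from cofibrancy over $R$ and of $R$ over $\k$), the diagram
\[ \shift S \xlongto{\phi'} \shift S' \xlongto{\nu} \Hom_R(M',M') \xlongto{g^*} \Hom_R(M,M') \xlongleftarrow{g_*} \Hom_R(M,M) \xlongleftarrow{\nu} \shift S \]
commutes in the homotopy category of $S$-$S$-bimodules. Here we use the $R$-linear version of $\Hom$, or equivalently the $\k$-version restricted, with $R$ absorbed into $S$ as an algebra over $R$. The maps $g_*$ and $g^*$ are quasi-isomorphisms by \cref{obs:Hom_bifunctor}. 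Applying $\HH_\k(S,\blank)$ and using $(\phi,\blank)$ functoriality, we obtain
\[ \nu_* \after \phi_* \simeq (g_*)^{-1}(g^*)\,\nu_* \]
as maps into $\HH_\k(S,\Hom_R(M,M))$ after passing through $\HH_\k(S,\Hom_R(M,M'))$. One has to be careful that a homotopy-category commutativity of bimodule maps induces one on $\HH_\k$. This holds because $\HH_\k(S,\blank)$ sends quasi-isomorphisms to quasi-isomorphisms (again \cref{lemma:HH_qiso}), so it factors through the localization.

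Next, the three-step middle part
\[ M\inftensor_R\dual M_R \to \Hom_R(M,M), \qquad \text{cyclic isomorphism}, \qquad \ev \]
must be compared. We use the intermediate bimodule $M\inftensor_R \dual{M'}_R$ over $S$ (restricting $\dual{M'}_R$ along $\phi$), with maps $g\inftensor\id$ to $M'\inftensor_R\dual{M'}_R$ and $\id\inftensor g^*$ to $M\inftensor_R\dual M_R$. These are compatible with the quasi-isomorphisms to $\Hom_R(M,M')$ of \cref{obs:inftensor_dga,obs:Hom_hf} by naturality of those maps. The cyclic isomorphism of \cref{obs:HH_cyclic} is natural in both bimodule variables and in the algebra maps, by inspection of the explicit formula. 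On the $R$-side this turns the comparison into
\[ \HH_\k(R,\dual{M'}_R\inftensor_S M) \to \HH_\k(R,\dual{M'}_R\inftensor_{S'} M'), \qquad \HH_\k(R,\dual{M'}_R\inftensor_S M) \to \HH_\k(R,\dual M_R\inftensor_S M), \]
induced respectively by $g$ together with $\phi_*$ of \cref{obs:inftensor_restrict}, and by $g^*$. Composing either of these with $\ev_*$ gives the same map to $\HH_\k(R)$, by the dinaturality of the counit $\ev$ in the variable $M$ (\cref{obs:Hom_bifunctor}) combined with its compatibility with restriction along $\phi$.

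The main obstacle is sign- and restriction-bookkeeping in this last step. We must check that $\ev\after(\id\inftensor g)$ and $\ev\after(g^*\inftensor\id)$ agree strictly as maps $\dual{M'}_R\inftensor_S M\to R$, and that the map $\phi_*$ of \cref{obs:inftensor_restrict} is compatible with $\ev$ for $S'$. We would verify this via \cref{obs:hom-tensor_unit}: $\ev$ only sees Hochschild-degree-zero components, where it is the plain evaluation, so the identities reduce to $\phi\after g_{0}$-level formulas. Finally, $G$-equivariance holds throughout, since every map used is built from equivariant structure maps.
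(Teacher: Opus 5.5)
Your proposal is correct and is essentially the paper's argument. It has the same ingredients: \cref{lemma:nu_natural} for the $\nu$-step, the intermediate bimodule $M \inftensor_R \dual{(M')}_R$ with its maps to $M \inftensor_R \dual M_R$ and to $M' \inftensor_R \dual{(M')}_R$, naturality of the cyclic isomorphism, and compatibility of the evaluation maps, which is your dinaturality of the counit.

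The differences are cosmetic. The paper routes the comparison through $\Hom_R(M', M)$, a common source of $g^*$ and $g_*$, so that each square commutes separately; you instead close a hexagon in $\Hom_R(M, M')$, which is equivalent by bifunctoriality. Also, the evaluation identity you flag involves all the higher components $g_n$, not just $g_0$: on the $n$-th summand both sides give $\pm\psi(g_n(\vec s, m))$.
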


\begin{proof}
  We begin by considering the diagram of $G$-equivariant $S$-$S$-bimodules
  \[
  \begin{tikzcd}
    S \ar{dd} \rar{\nu} & \Hom_R(M, M) \dar{\eq} & \\
    & \Hom_R(M, M') & \Hom_R(M', M) \ular[bend right = 20, end anchor = east][swap]{\eq} \dlar[bend left = 20, end anchor = east]{\eq} \\
    S' \rar{\nu} & \Hom_R(M', M') \uar[swap]{\eq}
  \end{tikzcd}
  \]
  where the right-hand square commutes by \cref{obs:Hom_bifunctor} and the left-hand pentagon commutes in the homotopy category by \cref{lemma:nu_natural}.
  Thus the upper left-hand pentagon of the following diagram of $G$-equivariant $\k$-modules (where we omit shifts for readability) commutes in $\Underlying(\Modeq{G}{\k})$.
  \[
  \begin{tikzcd}[column sep = small]
    \HH_\k(S, S) \rar \ar{dd} & \HH_\k \bigl( S, \Hom_R(M, M) \bigr) & \lar[swap]{\eq} \HH_\k \bigl( S, M \inftensor_R \dual M_R \bigr) \rar{\iso} & \HH_\k \bigl( R, \dual M_R \inftensor_S M \bigr) \\
    & \HH_\k \bigl( S, \Hom_R(M', M) \bigr) \dar{\eq} \uar[swap]{\eq} & \lar[swap]{\eq} \HH_\k \bigl( S, M \inftensor_R \dual{(M')}_R \bigr) \dar{\eq} \uar[swap]{\eq} \rar{\iso} & \HH_\k \bigl( R, \dual{(M')}_R \inftensor_S M \bigr) \dar{\eq} \uar[swap]{\eq} \\
    \HH_\k(S, S') \rar \dar & \HH_\k \bigl( S, \Hom_R(M', M') \bigr) \dar & \lar[swap]{\eq} \HH_\k \bigl( S, M' \inftensor_R \dual{(M')}_R \bigr) \rar{\iso} \dar & \HH_\k \bigl( R, \dual{(M')}_R \inftensor_S M' \bigr) \dar \\
    \HH_\k(S', S') \rar & \HH_\k \bigl( S', \Hom_R(M', M') \bigr) & \lar[swap]{\eq} \HH_\k \bigl( S', M' \inftensor_R \dual{(M')}_R \bigr) \rar{\iso} & \HH_\k \bigl( R, \dual{(M')}_R \inftensor_{S'} M' \bigr)
  \end{tikzcd}
  \]
  The rest of the diagram commutes by functoriality of $\HH_\k$ (see \cref{obs:HH_functor}) and inspection (using \cref{obs:inftensor_restrict} and the naturality of \cref{obs:Hom_hf}).
  The indicated maps are quasi-isomorphisms by \cref{lemma:HH_qiso}.
  Noting that the right-hand vertical maps are compatible with the respective evaluation maps to $\HH_\k(R, R)$ completes the proof.
\end{proof}

\subsection{An explicit formula}

We now provide a formula for the Hochschild homology transfer $\transfer{M}$, assuming that $M$ is dualizable as an $R$-module.
This is the first main result of this paper.
It generalizes (and, the authors think, clarifies) a formula of Bouc \cite[§3]{Bou} for the special case that $R$ and $S$ are ordinary (non-dg) algebras (also see Loday \cite[§1.2]{Lod}).
Our formula depends on the choice of a lift of the coevaluation of $M$ as follows.
However, at the end of this subsection, we provide a simpler formula that is independent of this choice for the part of the Hochschild homology transfer that lands in Hochschild degree $0$.

\begin{notation}
  In the rest of this subsection, for a cdga $\k$ and a $\k$-algebra $R$, we abbreviate $\B_\k(R) \defeq \B_\k(R, R, R)$.
\end{notation}

\begin{definition} \label{def:derived_coev}
  Let $G$ be a group, $\k \to R$ a map of $G$-equivariant cdgas, and $M$ a dualizable $G$-equivariant $R$-module.
  A \emph{$G$-equivariant derived coevaluation} for $M$ is a map $c \colon \k \to M \tensor_R \B_\k(R) \tensor_R \dual M_R$ of $G$-equivariant $\k$-modules such that the following diagram commutes
  \[
  \begin{tikzcd}
    \k \rar{c} \dar & M \tensor_R \B_\k(R) \tensor_R \dual M_R \dar{\epsilon} \\
    R \rar{\coev_M} & M \tensor_R \dual M_R
  \end{tikzcd}
  \]
  where $\epsilon \colon \B_\k(R) \to R$ is the canonical augmentation.
\end{definition}

Note that a $G$-equivariant derived coevaluation is equivalently a cocycle of $M \tensor_R \B_\k(R) \tensor_R \dual M_R$ that is fixed by $G$ and whose image in $M \tensor_R \dual M_R$ is equal to $\coev_M(1)$.

\begin{remark} \label{rem:derived_coev}
  Note that a $G$-equivariant derived coevaluation always exists when the map
  \[ \epsilon  \colon  M \tensor_R \B_\k(R) \tensor_R \dual M_R  \longto  M \tensor_R \dual M_R \]
  is a quasi-isomorphism and its source and target are degreewise semi-simple $G$-representations.
  The former is the case when $R$ is cofibrant as a $\k$-module and $M$ is cofibrant as an $R$-module (by \cite[\Ilemmabarcomplex]{I}).
  The latter is the case when $G$ is finite, or when $G$ is a linearly reductive algebraic group and $\k$, $R$, and $M$ are degreewise finite-dimensional algebraic representations of $G$.
  Also note that, when the differentials of $R$ and $M$ are trivial, then $c$ can be chosen to land in $M \tensor_\k \dual{M}_R \subseteq M \tensor_R \B_\k(R) \tensor_R \dual M_R$.
\end{remark}

Using a derived coevaluation $c$, we now define a ``generalized trace'' from the Hochschild homology of the $\k$-algebra $\End_R(M)$ to the Hochschild homology of $R$, which is given by the usual trace $\tr_R$ in Hochschild degree $0$.
See \cite[§2.2]{Bou} and \cite[Definition~1.2.1]{Lod} for the classical definition (it depends on a choice of $c$ landing in $M \tensor_\k \dual{M}_R$, and thus only makes sense when the differentials of $R$ and $M$ are trivial).
Recall \cref{not:cyctensor}.

\begin{definition} \label{def:gtr}
  Let $\k \to R$ be a map of $G$-equivariant cdgas, $M$ a $G$-equivariant $R$-module, and $c$ a $G$-equivariant derived coevaluation for $M$.
  The \emph{generalized trace} is the map $\tr_R^c \colon \HH_\k(\End_R(M)) \to \HH_\k(R)$ given in Hochschild degree $n$ by the composite
  \[
  \begin{tikzcd}[row sep = 15]
    &[-70] \shift[-1] \bigl( \shift \End_R(M) \bigr)^{\cyctensor_\k n} \ar{rr}{({\id} \tensor c)^{\cyctensor n}} &[-60] &[25] \shift[-1] \bigl( \shift \End_R(M) \tensor_\k M \tensor_R \B_\k(R) \tensor_R \dual M_R \bigr)^{\cyctensor_\k n} \dar{\iso} \\
    \HH_\k(R) & & \ar{ll}[swap]{\pi_n} \shift[-1] \bigl( \shift \B_\k(R) \bigr)^{\cyctensor_R n} & \lar[swap]{({\id} \tensor \ev)^{\cyctensor n}} \shift[-1] \bigl( \shift \B_\k(R) \tensor_R \dual M_R \tensor_\k \End_R(M) \tensor_\k M \bigr)^{\cyctensor_R n}
  \end{tikzcd}
  \]
  where $\pi_n$ is given by
  \[ \textstyle { \shift[-1] \bigotimes_{i = 1}^n \shift \bigl( x_{i,0} \tensor \bigotimes_{j = 1}^{p_i} \shift x_{i,j} \tensor x_{i,p_i+1} \bigr)  \longmapsto  \pm \shift[-1] \bigotimes_{i = 1}^n \bigl( (x_{i-1, p_{i-1} + 1} \cdot \shift x_{i, 0}) \tensor \bigotimes_{j = 1}^{p_i} \shift x_{i,j} \bigr) } \]
  where $x_{i,j} \in R$, we define $x_{0, p_0 + 1} \defeq x_{n, p_n + 1}$, and the sign is determined by the Koszul rule.
\end{definition}

We are now ready to state and prove our formula for the Hochschild homology transfer, constituting the first main result of this paper.

\begin{theorem} \label{thm:Ainf_transfer}
  Let $G$ be a group, $\k \to R$ a map of $G$-equivariant cdgas, $S$ a $G$-equivariant $\Ainf$-algebra over $R$, and $M$ a $G$-equivariant left $S$-module.
  Assume that $R$ and $S$ are cofibrant as $\k$-modules, and that $M$ is cofibrant and dualizable as an $R$-module.
  Let $c$ be a $G$-equivariant derived coevaluation for $M$.
  Then the Hochschild homology transfer $\transfer{M} \colon \HH_\k(S) \to \HH_\k(R)$ is homotopic in $\Underlying(\Modeq{G}{\k})$ to the composite map of $G$-equivariant $\k$-modules
  \[ \HH_\k(S)  \xlongto{\upsilon_*}  \HH_\k \bigl( \End_R(M) \bigr)  \xlongto{\gtr[R]{c}}  \HH_\k(R) \]
  where $\upsilon$ is the map of \cref{lemma:upsilon_algebra_map} and $\tr^c_R$ is the generalized trace of \cref{def:gtr}.
\end{theorem}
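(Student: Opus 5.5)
The plan is to reduce to the case where $S$ is replaced by the dga $\End_R(M)$, and then compute the transfer for that dga directly. The first step is to factor $\nu$ through $\upsilon$. The bimodule map $\nu \colon \shift S \to \Hom_R(M, M)$ of \cref{def:nu} has structure maps $\nu_{l,r}$ adjoint to $\mu^M_{l+1+r}$ (\cref{obs:nu}). These are exactly the structure maps $\upsilon_{l+1+r}$. Hence $\nu$ equals the composite of the bimodule map $\upsilon' \colon \shift S \to (\upsilon,\upsilon)^*(\shift \End_R(M))$ of \cref{lemma:algebra_map_is_bimodule_map} with the identification $\shift \End_R(M) \iso \Hom_R(M,M)$ (up to the shift). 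The latter is the strict bimodule map $\nu$ for the dga $\End_R(M)$ acting on $M$.

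Consequently the whole zig-zag of \cref{con:HH_transfer} for $(S, M)$ factors as $\upsilon_*$ followed by the zig-zag for $(\End_R(M), M)$. The remaining parts of the zig-zag, namely $M \inftensor_R \dual M_R$, the cyclic isomorphism of \cref{obs:HH_cyclic}, and $\ev$, are all restricted along $\upsilon$. Compatibility with $\upsilon_*$ then follows from functoriality of $\HH_\k$ in $\BiMod$ (\cref{obs:HH_functor}), exactly as in the diagram in the proof of \cref{prop:transfer_natural}, using \cref{obs:inftensor_restrict,obs:Hom_restrict}. Unitality is not needed here, since this step uses strict equalities rather than \cref{lemma:nu_natural}. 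It therefore suffices to prove the theorem for $S = E \defeq \End_R(M)$, a dga over $R$ that is cofibrant over $\k$ by \cref{lemma:Hom_cofibrant}, with $\upsilon = \id$.

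For $S = E$ the goal is a direct map $\HH_\k(E, \shift E) \to \HH_\k(E, M \inftensor_R \dual M_R)$ that lifts $\nu_*$ through the quasi-isomorphism. The derived coevaluation $c$ is exactly what is needed. By \cref{obs:inftensor_dga}, $M \inftensor_R \dual M_R \iso M \tensor_R \B_\k(R) \tensor_R \dual M_R$, and $c$ provides an element lifting $\coev_M(1) = \id_M$. I would define the lift on Hochschild degree $n$ by inserting $c$ after the coefficient: $\phi_0 \tensor \phi_1 \tensor \dots \tensor \phi_n \mapsto (\phi_0 \cdot c) \tensor \phi_1 \tensor \dots \tensor \phi_n$. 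Here $\phi_0$ acts on the $M$-factor of $c$.

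This is not a chain map as it stands, because $c$ is only $\k$-linear and not $E$-bilinear. The failure of bilinearity is measured by the bimodule terms $\mu_{l,r}$ of $M \inftensor_R \dual M_R$. The correct lift should therefore distribute copies of $c$: one copy inserted between each pair of consecutive tensor factors $\phi_i$, matching $({\id}\tensor c)^{\cyctensor n}$ in \cref{def:gtr}. The plan is to realise the lift as the composite of $({\id} \tensor c)^{\cyctensor n}$ with the iterated tensor identifications. After the cyclic isomorphism of \cref{obs:HH_cyclic} and $\ev_*$, unwinding should produce exactly the formula $\pi_n \after ({\id}\tensor\ev)^{\cyctensor n} \after ({\id}\tensor c)^{\cyctensor n}$ of $\gtr[R]{c}$.

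An alternative I would try first, to avoid checking the chain-map property by hand, is to view $\HH_R$ of the bimodule algebra $A = M\tensor_R\B_\k(R)\tensor_R\dual M_R \tensor_\k E$-type objects as in \cref{lemma:inc_qiso}. That lemma supplies the quasi-isomorphism $B^{\cyctensor_R 1} \to \HH_R(B)$, through which both routes can be compared.

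The main obstacle is this middle step: verifying that the map built from $c$ intertwines differentials, and that it composes with the quasi-isomorphism $M\inftensor_R\dual M_R \to \Hom_R(M,M)$ to give $\nu_*$ up to homotopy rather than on the nose. I expect to exhibit an explicit homotopy. It should come from the fact that $\epsilon(c) = \coev_M(1)$ only after augmentation, together with the triangle identities for $\dual M_R$ (\cref{def:dualizable}), which reduce the composite on the $M\tensor_R\dual M_R$ side to the identity of $M$. Sign bookkeeping from the cyclic permutations in $\pi_n$ will be tedious but routine. $G$-equivariance is automatic throughout, since $c$ is $G$-fixed and all maps used are equivariant.
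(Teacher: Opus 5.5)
\textbf{There is a genuine gap: the dga case, which is the heart of the theorem, is only sketched, and the mechanism you describe for it does not work as stated.}

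Your reduction from $S$ to $E \defeq \End_R(M)$ is correct and matches how the paper finishes its proof. We have $\upsilon^*(M) = M$, the map of \cref{lemma:algebra_map_is_bimodule_map} for $\upsilon$ is $\nu$, and the comparison diagram commutes strictly, so unitality is not needed. (For cofibrancy of $E$ over $\k$ you need, besides \cref{lemma:Hom_cofibrant}, that $R$ is cofibrant over $\k$.)

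The problem is in the dga case, where your proposal only states expectations. The map $({\id} \tensor c)^{\cyctensor n}$ lands in the Hochschild complex of the non-unital algebra $E \tensor_\k N$, where $N \defeq M \tensor_R \B_\k(R) \tensor_R \dual M_R$ and the product goes through $\Phi \colon N \to E$. It does not land in $\HH_\k(E, N)$, and no ``tensor identification'' relates the two. The evident collapse keeps the copy of $c$ next to the coefficient and applies $\Phi$ to the others. Composed with $({\id} \tensor c)_*$, it is literally your first attempt $\phi_0 \tensor \dots \tensor \phi_n \mapsto (\phi_0 c) \tensor \phi_1 \tensor \dots \tensor \phi_n$, because $\Phi(c) = \id_M$. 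So distributing copies of $c$ gains nothing by itself. The face multiplying the coefficient into the next factor still fails, since $\phi c \neq c \phi$ in $N$; they only agree after $\epsilon$. The triangle identities hold in $M \tensor_R \dual M_R$ and give no homotopy in $N$. Handling this non-centrality is the whole difficulty, not routine bookkeeping.

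The paper never lifts into $\HH_\k(E, N)$. It gives $E \tensor_\k N$ and $\B_\k(R) \tensor_R \dual M_R \tensor_\k E \tensor_\k M$ non-unital products that make ${\id} \tensor c$ and ${\id} \tensor \ev$ multiplicative. It then compares both routes inside the larger complexes $\HH_E\bigl(\B_\k(E) \tensor_E N\bigr)$ and $\HH_R\bigl(\B_\k(R) \tensor_R \dual M_R \tensor_E \B_\k(E) \tensor_E M\bigr)$ of \cref{def:HH_bimodule}.
\begin{itemize}
  \item The transfer's terms enter via the maps $\inc$, which are quasi-isomorphisms by \cref{lemma:inc_qiso}.
  \item The $c$-route enters via the maps $\iota$ induced by $E \tensor_\k E \to \B_\k(E)$.
  \item The composite $P \defeq \pi \after ({\id} \tensor \Phi)_*$ is a quasi-isomorphism, shown by filtering by the number of factors $E$.
  \item $P$ satisfies $P \after \inc = \Phi_*$ and is a strict left inverse of the $c$-route.
\end{itemize}
Hence, in the homotopy category, the $c$-route equals $\inc \after \Phi_*^{-1}$. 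The mirror diagram on the $R$-side, using $\pi \after ({\id} \tensor \Psi)_*$ with $\Psi = \ev \after \epsilon$, identifies the rest of the transfer with $\gtr[R]{c}$. The same factorization also shows that $\gtr[R]{c}$ is a chain map at all.

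Your ``alternative'' paragraph points in this direction but contains none of these ingredients. If you prefer a direct lift, it would need components in every Hochschild degree, of the form $(\phi_0 c\, \phi_1 c \cdots \phi_k c) \tensor \phi_{k+1} \tensor \dots \tensor \phi_n$ with the bar words concatenated. The $\epsilon$-boundary terms of concatenation would then have to cancel the commutators $\phi c - c\phi$. Nothing of this kind appears in your sketch.
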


\begin{observation} \label{obs:Ainf_transfer_explicit}
  The composite map ${\gtr[R]{c}} \after \upsilon_* \colon \HH_\k(S) \to \HH_\k(R)$ of \cref{thm:Ainf_transfer} is given, in Hochschild degree $n$, by sending $\shift[-1] (x_0 \otimes x_1 \otimes \dots \otimes x_n)$ to
  \[ \shift[-1] \sum_{\substack{n_0 + \dots + n_p = n \\ 0 < n_i \text{ for } 0 < i < p}} (-1)^\epsilon \gtr[R]{c} \bigl( \shift \mu^M_{n_p + 1 + n_0}(\vec x_p \tensor x_0 \tensor \vec x_0 \tensor \blank) \tensor \textstyle\bigotimes_{i = 1}^{p - 1} \shift \mu^M_{n_i}(\vec x_i \tensor \blank) \bigr) \]
  where $\vec x_i \defeq x_{n_0 + \dots + n_{i-1} + 1} \tensor \dots \tensor x_{n_0 + \dots + n_{i-1} + n_i}$ and $\epsilon \defeq (\deg {x_0} + \deg {\vec x_0} + \dots + \deg {\vec x_{p-1}}) \deg {\vec x_p}$ and each summand takes values in Hochschild degree $p - 1$.
\end{observation}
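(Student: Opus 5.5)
The formula follows by unwinding the definitions: first compute $\upsilon_*$ with the explicit description of functoriality of the Hochschild complex in \cref{obs:HH_functor}, then apply the $\k$-linear map $\gtr[R]{c}$ summand by summand.

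First I would identify the map on Hochschild complexes that $\upsilon$ induces. By \cref{def:HH_Ainf}, $\HH_\k(S) = \shift[-1] \HH_\k(S, \shift S)$. The functoriality of $R \mapsto (R, \shift R)$ sends $\upsilon$ to the morphism $(\upsilon, \upsilon')$ of $\BiMod$. Here $\upsilon' \colon \shift S \to (\upsilon, \upsilon)^*(\shift \End_R(M))$ is the bimodule map of \cref{lemma:algebra_map_is_bimodule_map}, with $\upsilon'_{l,r} = \upsilon_{l+1+r}$. Via \cref{obs:HH_agrees}, the target $\shift[-1]\HH_\k(\End_R(M), \shift \End_R(M))$ is identified with the classical Hochschild complex $\HH_\k(\End_R(M))$ on which $\gtr[R]{c}$ is defined. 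On Hochschild degree $n$, \cref{obs:HH_functor} gives $\upsilon_*$ as
\[ \sum_{n_1 + \dots + n_i = n} (\upsilon'_{n_i, n_1} \tensor \upsilon_{n_2} \tensor \dots \tensor \upsilon_{n_{i-1}}) \after \cyclic{1+n}^{\circ n_i}. \]
I would then relabel the blocks as $n_0, \dots, n_p$ with $p = i - 1$, so that the first block becomes $n_0$, the last becomes $n_p$, and the middle blocks $n_2, \dots, n_{i-1}$ become $n_1, \dots, n_{p-1}$.

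Next I would insert the explicit formula of \cref{lemma:upsilon_algebra_map}, namely $\upsilon_m(\vec y) = \shift \mu^M_m(\vec y \tensor \blank)$. This is defined only for $m \ge 1$, so the middle blocks must satisfy $n_i > 0$ for $0 < i < p$. The end blocks $n_0$ and $n_p$ may vanish, because $\upsilon'_{n_p, n_0} = \upsilon_{n_p + 1 + n_0}$ always has at least the input $x_0$. This produces exactly the index set in the statement. It also yields the first tensor factor $\shift \mu^M_{n_p+1+n_0}(\vec x_p \tensor x_0 \tensor \vec x_0 \tensor \blank)$ and the remaining factors $\shift \mu^M_{n_i}(\vec x_i \tensor \blank)$. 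Each summand has $p$ tensor factors and therefore lies in Hochschild degree $p-1$. Since $\gtr[R]{c}$ is $\k$-linear and defined degreewise, applying it term by term gives the claimed expression.

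The only step needing care is the sign, which I expect to be the main (if routine) obstacle. The structure maps $\upsilon_m$ and $\upsilon'_{l,r}$ have degree $0$, so they contribute no Koszul signs, and the outer $\shift[-1]$ commutes past everything. The sole source of sign is therefore $\cyclic{1+n}^{\circ n_p}$. This permutation moves the block $\vec x_p$, with total shifted degree $\deg{\vec x_p}$, past $x_0 \tensor \vec x_0 \tensor \dots \tensor \vec x_{p-1}$. By the Koszul rule this gives $(-1)^\epsilon$ with $\epsilon = (\deg{x_0} + \deg{\vec x_0} + \dots + \deg{\vec x_{p-1}}) \deg{\vec x_p}$, where all degrees are taken in $\shift S$. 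I would finally check that the identification of \cref{obs:HH_agrees} for the dga $\End_R(M)$, using the conventions of \cref{obs:dga_as_Ainf}, introduces no additional sign. This holds because that identification is the identity on $\shift[-1](\shift \End_R(M))^{\tensor n+1}$.
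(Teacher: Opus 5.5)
Your proposal is correct and is exactly the unwinding the paper intends (the observation is stated without a separate proof): apply \cref{obs:HH_functor} to the morphism $(\upsilon, \upsilon')$ with $\upsilon'_{l,r} = \upsilon_{l+1+r}$ from \cref{lemma:algebra_map_is_bimodule_map}, substitute $\upsilon_m(\vec y) = \shift \mu^M_m(\vec y \tensor \blank)$, and then apply $\gtr[R]{c}$ term by term. The index set, the Hochschild degree $p-1$, and the sign $(-1)^\epsilon$ (coming solely from $\cyclic{1+n}^{\circ n_p}$, since all $\upsilon_m$ have degree $0$) all come out as you describe.
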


\begin{proof}[Proof of \cref{thm:Ainf_transfer}]
  First observe that $M$ is homotopically finite as an $R$-module by \cref{lemma:dualizable_finite}, so that the transfer $\transfer{M}$ indeed exists.
  
  We begin by proving that $\gtr[R]{c}$ is indeed a map of $G$-equivariant $\k$-modules.
  The map
  \[ {\id} \tensor c  \colon  \End_R(M)  \longto  \End_R(M) \tensor_\k M \tensor_R \B_\k(R) \tensor_R \dual M_R \]
  is a map of non-unital $\k$-algebras when the target is equipped with the multiplication
  \[ (\alpha \tensor m \tensor b \tensor \phi) \cdot (\alpha' \tensor m' \tensor b' \tensor \phi')  \defeq  \bigl( \alpha \after \Phi(m \tensor b \tensor \phi) \after \alpha' \bigr) \tensor m' \tensor b' \tensor \phi' \]
  where $\Phi$ is the composite map
  \[ M \tensor_R \B_\k(R) \tensor_R \dual M_R  \xlongto{\epsilon}  M \tensor_R \dual M_R  \longto  \End_R(M) \]
  (note that $(\Phi \after c)(1) = \id_M \in \End_R(M)$ by definition of $c$).
  Similarly
  \[ {\id} \tensor \ev  \colon  \B_\k(R) \tensor_R \dual M_R \tensor_\k \End_R(M) \tensor_\k M  \longto  \B_\k(R) \]
  is a map of non-unital monoids in $(R \tensor_\k \opmod{R})$-modules when the source is equipped with the multiplication
  \[ (b \tensor \phi \tensor \alpha \tensor m) \cdot (b' \tensor \phi' \tensor \alpha' \tensor m')  \defeq  b \tensor \phi \tensor \bigl( \alpha \after \Phi(m \tensor b' \tensor \phi') \after \alpha' \bigr) \tensor m' \]
  and the target with the multiplication $b \cdot b' \defeq b \epsilon(b')$.
  We thus obtain a composite map of $G$-equivariant $\k$-modules
  \[
  \begin{tikzcd}[row sep = 15]
    \HH_\k \bigl( \End_R(M) \bigr) \rar{({\id} \tensor c)_*} &[10] \HH_\k \bigl( \End_R(M) \tensor_\k M \tensor_R \B_\k(R) \tensor_R \dual M_R \bigr) \dar{\iso} \\
    \HH_R \bigl( \B_\k(R) \bigr) & \lar[swap]{({\id} \tensor \ev)_*} \HH_R \bigl( \B_\k(R) \tensor_R \dual M_R \tensor_\k \End_R(M) \tensor_\k M \bigr)
  \end{tikzcd}
  \]
  (recall the Hochschild homology of monoids in bimodules from \cref{def:HH_bimodule}).
  Hence it only remains to show that the $G$-equivariant maps $\pi_n \colon \shift[-1] ( \shift \B_\k(R) )^{\cyctensor_R n} \to  \HH_\k(R)$ assemble into a map $\pi \colon \HH_R(\B_\k(R)) \to \HH_\k(R)$ of $\k$-modules.
  It is clear that the map is $\k$-linear, and it follows from unwinding the definitions that it is a map of cochain complexes.
  (Note that the summands of the Hochschild differential of the source correspond to multiplying $x_{i,0}$ and $x_{i, p_i+1}$ when $p_i = 0$.)
  Thus $\gtr[R]{c}$ is indeed a map of $G$-equivariant $\k$-modules.
  
  Observe that $M$ is canonically a left $\End_R(M)$-module.
  We will now prove that $\gtr[R]{c}$ is homotopic to $\transfer{M} \colon \HH_\k(\End_R(M)) \to \HH_\k(R)$.
  Using the shorthand $E \defeq \End_R(M)$, we have the following commutative diagram
  \[
  \begin{tikzcd}[column sep = -35]
    & \HH_\k(E) & \lar[swap]{\id} \HH_\k (E) \dar{(\id \tensor c)_*} \\
    \HH_\k (E, E) \rar{\eq}[swap]{\inc} \urar[bend left = 15]{\id} & \HH_E \bigl( \B_\k(E) \bigr) \uar[swap]{\pi} & \HH_\k \bigl(E \tensor_\k M \tensor_R \B_\k(R) \tensor_R \dual{M}_R \bigr) \dar{({\ev} \tensor {\id} \tensor \id)_*} \\
    \HH_\k \bigl( E, M \tensor_R \B_\k(R) \tensor_R \dual{M}_R \bigr) \uar{\Phi_*}[swap]{\eq} \drar[end anchor = north west][swap]{\inc} \ar{dd}{\iso} & & \HH_\k \bigl(M \tensor_R \B_\k(R) \tensor_R \dual{M}_R \bigr) \dlar[end anchor = north east]{\iota} \ar{dd}[swap]{\iso} \\
     & \HH_{E} \bigl( \B_\k(E) \tensor_{E} M \tensor_R \B_\k(R) \tensor_R \dual{M}_R \bigr) \ar{uu}{\eq}[swap]{({\id} \tensor \Phi)_*} \ar{dd}[swap]{\iso} & \\[-10]
    \HH_\k \bigl( R, \dual{M}_R \tensor_{E} \B_\k(E) \tensor_E M \bigr) \drar[end anchor = north west][swap]{\inc} \ar{dd}[swap]{\Psi_*} & & \HH_R \bigl( \B_\k(R) \tensor_R \dual{M}_R \tensor_\k M \bigr) \dlar[end anchor = north east]{\iota} \ar{dd}{({\id} \tensor \ev)_*} \\
     & \HH_R \bigl( \B_\k(R) \tensor_R \dual{M}_R \tensor_{E} \B_\k(E) \tensor_{E} M \bigr) \dar{({\id} \tensor \Psi)_*} & \\
    \HH_\k (R, R) \rar{\eq}[swap]{\inc} \drar[bend right = 15][swap]{\id} & \HH_R \bigl( \B_\k(R) \bigr) \rar[equal] \dar{\pi} & \dlar[bend left = 15]{\pi} \HH_R \bigl( \B_\k(R) \bigr) \\
     & \HH_\k(R) &
  \end{tikzcd}
  \]
  of $G$-equivariant $\k$-modules.
  The maps labeled $\inc$ are given by the canonical inclusions of the form
  \[ \HH_\k(A, N)  \iso  \bigl( \B_\k(A) \tensor_A N \bigr)^{\cyctensor_A 1}  \longto  \HH_A \bigl( \B_\k(A) \tensor_A N \bigr) \]
  where $\B_\k(A) \tensor_A N$ is equipped with the multiplication
  \[ (b \tensor n) \cdot (b' \tensor n')  \defeq  b \kappa(n) \epsilon(b') \tensor n' \]
  for the respective obvious map $\kappa \colon N \to A$ of $(A \tensor_\k \opmod{A})$-modules.
  The map $\Psi$ is the composite
  \[ \dual{M}_R \tensor_{E} \B_\k(E) \tensor_E M  \xlongto{\epsilon}  \dual{M}_R \tensor_E M  \xlongto{\ev}  R \]
  and the maps labeled $\iota$ are induced by the canonical inclusion $E \tensor_\k E \to \B_\k(E)$.
  The diagram commutes by inspection.
  The two indicated maps labeled $\inc$ are quasi-isomorphisms by \cref{lemma:inc_qiso} (using that $E$ is cofibrant as an $R$-module by \cref{lemma:Hom_cofibrant}, and hence as a $\k$-module by \cite[\Ilemmarestrictcofibration]{I}).
  The map $\Phi$ is a quasi-isomorphism by \cite[\Ilemmabarcomplex, \Ilemmacofibrantflat, and \IlemmaHomqiso]{I} since $R$ is cofibrant as a $\k$-module and $M$ is a cofibrant $R$-module.
  Hence the map $\Phi_*$ is a quasi-isomorphism by \cref{lemma:HH_qiso}.
  We claim that $({\id} \tensor \Phi)_*$ is also a quasi-isomorphism.
  To see this, filter both source and target by the total number the tensor factor $E$ appears and note that the map is filtration preserving.
  On associated graded, the map is of the form
  \[ \bigoplus_{p \ge 0} \bigoplus_{n \ge 0} E^{\tensor_\k p} \tensor_\k \bigl( M \tensor_R \B_\k(R) \tensor_R \dual{M}_R \bigr)^{\tensor_\k n}  \xlongto{{\id}^{\tensor p} \tensor \Phi^{\tensor n}}  \bigoplus_{p \ge 0} \bigoplus_{n \ge 0} E^{\tensor_\k p} \tensor_\k E^{\tensor_\k n} \]
  which is a quasi-isomorphism since $E$ and $M \tensor_R \B_\k(R) \tensor_R \dual{M}_R$ are cofibrant as $\k$-modules (again using \cref{lemma:Hom_cofibrant} to see that $\dual{M}_R$ is cofibrant as an $R$-module).
  This implies the claim.
  
  By the diagram above, the lower right-hand hexagon of the diagram
  \[
  \begin{tikzcd}
    & \dlar[bend right = 17][swap]{\nu_*} \HH_\k(S) \dar{\upsilon_*} \drar[bend left = 17]{\upsilon_*} \\
    \HH_\k ( S, E ) \rar{\upsilon_*} & \HH_\k (E) \rar[equal] & \HH_\k(E) \ar{ddd}{\gtr[R]{c}} \\
    \HH_\k \bigl( S, M \inftensor_R \dual M_R \bigr) \uar{\Phi_*}[swap]{\eq} \dar{\iso} \rar{\upsilon_*} & \HH_\k \bigl( E, M \inftensor_R \dual M_R \bigr) \uar{\Phi_*}[swap]{\eq} \dar{\iso} \\
    \HH_\k \bigl( R, \dual M_R \inftensor_S M \bigr) \dar[swap]{\ev_*} \rar{\upsilon_*} & \HH_\k \bigl( R, \dual M_R \inftensor_E M \bigr) \dar[swap]{\ev_*} \\
    \HH_\k(R) \rar[equal] & \HH_\k(R) \rar[equal] & \HH_\k(R)
  \end{tikzcd}
  \]
  commutes in $\Underlying(\Modeq{G}{\k})$.
  To see that the rest of the diagram commutes, we observe that $\upsilon^*(M) = M$ using \cref{obs:restrict_bimodule} (where $M$ is considered as a left $E$-module on the left-hand side and as a left $S$-module on the right-hand side), and that the construction of \cref{lemma:algebra_map_is_bimodule_map} applied to the map $\upsilon$ of \cref{lemma:upsilon_algebra_map} yields the map $\nu$ of \cref{obs:nu}.
  Since the left-hand composite in the preceding diagram is the transfer $\transfer{M}$, this finishes the proof.
\end{proof}

\subsubsection*{The part landing in Hochschild degree $0$}

The part of the generalized trace $\gtr[R]{c}$ that lands in Hochschild degree $0$ is given by the usual $R$-linear trace $\tr_R$; in particular it is independent of the choice of the $G$-equivariant derived coevaluation $c$.
Hence the same is true for the formula of \cref{thm:Ainf_transfer}.
We will now prove that the resulting description for the part landing in Hochschild degree $0$ holds even if no $G$-equivariant derived coevaluation exists.

\begin{theorem} \label{thm:main}
  Let $G$ be a group, $\k \to R$ a map of $G$-equivariant cdgas, $S$ a $G$-equivariant $\Ainf$-algebra over $R$, and $M$ a $G$-equivariant left $S$-module.
  Assume that $R$ and $S$ are cofibrant as $\k$-modules, and that $M$ is cofibrant and dualizable as an $R$-module.
  Then the following composite of the Hochschild homology transfer and the projection
  \[ \HH_\k(S)  \xlongto{\transfer{M}}  \HH_\k(R)  \longto  R \]
  is homotopic in $\Underlying(\Modeq{G}{\k})$ to the map of $G$-equivariant $\k$-modules $\tr_M \colon \HH_\k(S) \to R$ given in Hochschild degree $n$ by
  \[ \shift[-1] (x_0 \otimes x_1 \otimes \dots \otimes x_n)  \longmapsto  \sum_{i = 0}^n (-1)^\epsilon \tr_R \bigl( \mu^M_{n+1}(x_i, x_{i+1}, \dots, x_n, x_0, \dots, x_{i - 1}, \blank) \bigr) \]
  where $\epsilon \defeq (\deg {x_0} + \dots + \deg {x_{i - 1}}) (\deg {x_i} + \dots + \deg {x_n})$.
\end{theorem}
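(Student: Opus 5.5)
The plan is to bypass derived coevaluations and compare the transfer's zig-zag with a strict trace map defined directly on $\HH_\k\bigl(S, \Hom_R(M,M)\bigr)$. Recall from \cref{con:HH_transfer} that $\transfer{M}$ is the zig-zag
\[ \HH_\k(S) \xlongto{\nu_*} \HH_\k\bigl(S, \Hom_R(M,M)\bigr) \xlongleftarrow{\eq} \HH_\k\bigl(S, M \inftensor_R \dual M_R\bigr) \iso \HH_\k\bigl(R, \dual M_R \inftensor_S M\bigr) \xlongto{\ev_*} \HH_\k(R), \]
followed here by the projection $\HH_\k(R) \to R$. The first step is to define the $G$-equivariant map
\[ \tau \colon \HH_\k\bigl(S, \Hom_R(M,M)\bigr) \longto R, \qquad \alpha \tensor x_1 \tensor \dots \tensor x_n \longmapsto \begin{cases} \tr_R(\alpha), & n = 0, \\ 0, & n > 0, \end{cases} \]
and to show that it is a map of cochain complexes. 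Since $M$ is dualizable over $R$, $\tr_R \colon \Hom_R(M,M) \to R$ is a $G$-equivariant chain map. By \cref{lemma:Ainf_Hom}, the bimodule structure maps $\mu_{l,r}$ of $\Hom_R(M,M)$ vanish when $l, r > 0$. For $n \ge 1$, the Hochschild-degree-$0$ component of $d(\alpha \tensor x_1 \tensor \dots \tensor x_n)$ in \cref{obs:HH} therefore consists only of the two terms $\mu_{n,0}$ and $\mu_{0,n}$ applied after cyclic rotation. These are $\pm\,\upsilon_n(\vec x) \after \alpha$ and $\pm\,\alpha \after \upsilon_n(\vec x)$, where $\upsilon_n(\vec x) = \mu^M_n(\vec x, \blank)$ is $R$-linear because $S$ is an $\Ainf$-algebra over $R$. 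The trace kills this graded commutator by cyclic invariance of the trace, so $\tau$ is a chain map.

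Next I show that the projection of the zig-zag part, i.e.\ the composite $\HH_\k(S, M \inftensor_R \dual M_R) \iso \HH_\k(R, \dual M_R \inftensor_S M) \to \HH_\k(R) \to R$, equals $\tau$ precomposed with the quasi-isomorphism $\HH_\k(S, M \inftensor_R \dual M_R) \to \HH_\k(S, \Hom_R(M,M))$ of \cref{con:HH_transfer}, strictly on the nose. Let $z = m \tensor \vec r \tensor \phi \tensor \vec x$ be an element of $\HH_\k(S, M \inftensor_R \dual M_R)$, with $\vec r$ the bar word over $R$ and $\vec x$ the $S$-word. Under the cyclic isomorphism of \cref{obs:HH_cyclic}, $z$ becomes an element of $\HH_\k(R, \dual M_R \inftensor_S M)$ whose $R$-Hochschild degree equals the length of $\vec r$. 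Hence the projection to $R$ kills $z$ unless $\vec r$ is empty. Moreover, by \cref{obs:Hom_bifunctor} the counit $\ev$ only involves its linear part, via the projection of $\dual M_R \inftensor_S M$ onto $\dual M_R \tensor_\k M$, so the Hochschild-degree-$0$ output of $\ev_*$ also vanishes unless $\vec x$ is empty. On the surviving elements $m \tensor \phi$ the composite is $\phi(m) = \tr_R(m \cdot \phi(\blank))$, which is exactly $\tau$ applied to the image of $m \tensor \phi$ in $\Hom_R(M,M)$. On all other elements both sides vanish, since the quasi-isomorphism preserves Hochschild degree and sends bar words of positive length to zero. Signs are handled by the Koszul rule. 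This is the step I expect to be the main obstacle: one must verify that no further components, for instance from $\ev_*$ or from the augmentation $\epsilon$ of \cref{obs:inftensor_dga}, leak into Hochschild degree $0$.

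It follows that $\mathrm{proj} \after \transfer{M} \simeq \tau \after \nu_*$ in $\Underlying(\Modeq{G}{\k})$, and it remains to compute $\tau \after \nu_*$ using \cref{obs:HH_functor}. The Hochschild-degree-$0$ component of $\nu_*$ applied to $\shift[-1](x_0 \tensor \dots \tensor x_n)$ is the sum over cyclic rotations placing $x_0$ at position $i$, namely $\sum_i (\nu_{n-i+1,\,i-1} \tensor \emptyset) \after \cyclic{1+n}^{\circ (n-i+1)}$ up to reindexing. By \cref{obs:nu}, $\nu_{l,r}$ is adjoint to $\mu^M_{l+1+r}$, so each rotation contributes $\mu^M_{n+1}(x_i, \dots, x_n, x_0, \dots, x_{i-1}, \blank)$ with the Koszul sign $(-1)^\epsilon$. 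Applying $\tr_R$ gives exactly the formula $\tr_M$ in the statement, which completes the argument.
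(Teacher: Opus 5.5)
Your proposal is correct and follows the paper's proof essentially step for step. The paper defines the same map $\tau$ and shows it is a chain map by the same cyclic-invariance argument. It then states that the diagram comparing $\tau$ with the projection of the zig-zag commutes on the nose by chasing definitions; your Hochschild-degree bookkeeping, using that the augmentation and the counit are strict, is exactly that chase. Finally it reads off $\tau \after \nu_*$ from \cref{obs:nu}. The only thing the paper adds is a preliminary remark via \cref{lemma:dualizable_finite}: $M$ is homotopically finite, so $\transfer{M}$ is defined in the first place.
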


\begin{proof}
  Note that $M$ is homotopically finite as an $R$-module by \cref{lemma:dualizable_finite}, so that the transfer $\transfer{M}$ indeed exists.
  Let $\tau \colon \HH_\k ( S, \Hom_R(M, M) ) \to R$ be the map that is trivial in Hochschild degree $> 0$ and given by the trace $\tr_R$ in Hochschild degree $0$.
  Note that it is a map of cochain complexes (and thus of $\k$-modules): by \cref{obs:HH} and the proof of \cref{lemma:Ainf_Hom}, the part of the differential of the source going from Hochschild degree $n$ to Hochschild degree $0$ is given by
  \[ \alpha \tensor x_1 \tensor \dots \tensor x_n  \longmapsto  - (-1)^{\deg \alpha} \alpha \after \mu^M_n(x_1, \dots, x_n, \blank)  + (-1)^{\deg \alpha \deg {\vec x}} \mu^M_n(x_1, \dots, x_n, \blank) \after \alpha \]
  where $\vec x \defeq x_1 \tensor \dots \tensor x_n$.
  This image is killed by $\tr_R$ due to its cyclic invariance.
  
  Chasing through the definitions, we see that the following diagram of $G$-equivariant $\k$-modules
  \[
  \begin{tikzcd}[column sep = 14]
    \shift[-1] \HH_\k(S, \shift S) \rar{\nu_*} & \HH_\k \bigl( S, \Hom_R(M, M) \bigr) \drar[bend right = 15][swap]{\tau} & \lar[swap]{\eq} \HH_\k \bigl( S, M \inftensor_R \dual M_R \bigr) \rar{\iso} & \HH_\k \bigl( R, \dual M_R \inftensor_S M \bigr) \dar{\ev_*} \\
    & & R & \lar \HH_\k(R, R)
  \end{tikzcd}
  \]
  commutes.
  By \cref{obs:nu}, the composite $\tau \after \nu_*$ is given by the map $\tr_M$ in the statement.
  This finishes the proof.
\end{proof}

%% file: 2model.tex
\section{Rational models for the fiberwise THH transfer}

In \cite{I}, we proved that the fiberwise THH transfer (cf.\ \cref{sec:THH}) is modeled by the Hochschild homology transfer of cdgas.
In this section, we combine this with the results of \cref{sec:transfer} to obtain an explicit rational model of the fiberwise THH transfer in terms of $\Ainf$-models.
Further combining this with forthcoming work of Berglund \cite{Ber}, who uses dg Lie models to construct $\Ainf$-models as required, we also deduce an explicit rational model of the fiberwise THH transfer in terms of dg Lie models.
We begin by deducing from a qualitative version of Berglund's result that $\Ainf$-models of the form we need always exist.

\subsection{Existence of equivariant \texorpdfstring{$\Cinf$}{C∞}-models} \label{sec:eq_models_exist}

In this subsection, we prove that maps of connected animas with compact simply connected fiber always admit equivariant $\Cinf$-algebra models that are dualizable, as required by our results in \cref{sec:model_Ainf} below.
This is an analogue of the classical fact that an anima admits a finite-dimensional $\Cinf$-model when its rational cohomology is finite-dimensional (which follows from the Homotopy Transfer Theorem).

We deduce the statement from a forthcoming result of Berglund \cite{Ber}: building on work of Berglund--Zeman \cite{BZ}, he constructs a finite-dimensional equivariant relative $\Cinf$-model for the universal fibration with fiber a compact simply connected anima.
More precisely, Berglund in particular proves the following.
(Also see \cref{lemma:Lie_Cinf_model} below for the explicit form of his construction in the non-equivariant case.)

\begin{proposition}[Berglund] \label{prop:eq_Cinf_universal}
  Let $F$ be a compact simply connected anima.
  Then there exists a quotient $\Gamma$ of the group $\pi_0(\aut(\rat F))$, a cofibration $\k \to R$ of $\Gamma$-equivariant cofibrant cdgas, and a quasi-isomorphism $R_\infty \to R$ of $\Gamma$-equivariant $\Cinf$-algebras over $\k$ such that $R_\infty$ is finite-dimensional semifree over $\k$ and the map $\k \to R$ models the universal fibration $u \colon \rat F / \aut(\rat F) \to \B\aut(\rat F)$ as fiberwise nilpotent animas over $\B\Gamma$ of fiberwise finite rational type.
\end{proposition}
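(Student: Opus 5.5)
The plan is to combine Berglund's equivariant relative Sullivan model for the universal fibration (\cite{BZ}) with the equivariant Homotopy Transfer Theorem for $\Cinf$-algebras over a base cdga.

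\textbf{Step 1: the base and total space.} By Berglund--Zeman \cite{BZ} (and its forthcoming refinement in \cite{Ber}), the universal fibration $u$ with fiber $\rat F$, regarded as fiberwise nilpotent over $\B\Gamma$ for a suitable quotient $\Gamma$ of $\pi_0(\aut(\rat F))$, admits a $\Gamma$-equivariant cdga model $\k$ of the base. Here $\k$ is built from the Chevalley--Eilenberg cochains of a dg Lie model $\mathfrak g$ of the universal cover of $\B\aut_\circ(\rat F)$ (a positive truncation of derivations of a minimal Quillen or Sullivan model of $F$). The total space is then modeled by a relative semifree extension $\k \to R = \k \tensor \SA V$, where $\SA V$ is the minimal Sullivan model of $F$ and the twisting comes from the action of $\mathfrak g$. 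This map is a cofibration of $\Gamma$-equivariant cofibrant cdgas, and $\Gamma$ acts through its action on $V$ and $\mathfrak g$. I would take this identification as the input from \cite{BZ,Ber}. The checks needed are that the fibers over $\B\Gamma$ are nilpotent of finite rational type, which holds because $\pi_1$ of the fiber of $\B\aut(\rat F) \to \B\Gamma$ acts unipotently when $\Gamma$ is the image in $\prod_n\Aut(H^n(F;\QQ))$.

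\textbf{Step 2: transfer to a finite model.} Since $F$ is compact, $H = \Coho*(F;\QQ)$ is finite-dimensional. Choose a $\Gamma$-equivariant contraction of $\SA V$ onto $H$, namely an equivariant projection $p$, inclusion $i$, and homotopy $h$ with $h$ satisfying the side conditions. Such a contraction exists equivariantly because $\Gamma$ acts on cohomology through a reductive image, so the relevant degreewise representations are semisimple. This is the step I expect to be the main obstacle. If semisimplicity fails, I would first try to realize $\Gamma$ as (a quotient of) the image of $\pi_0$ in an algebraic group acting algebraically and reductively, so that \cref{rem:derived_coev}-type arguments apply.

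Next, extend $\k$-linearly to a contraction of $R = \k\tensor\SA V$ onto $\k\tensor H$ for the part of the differential coming from $\SA V$ alone. Then treat the twisting differential as a perturbation, which is small because it raises a filtration degree coming from the augmentation ideal of $\k$, and apply the homological perturbation lemma. The $\Cinf$ version of the Homotopy Transfer Theorem (Kadeishvili) then yields a $\Gamma$-equivariant $\Cinf$-structure over $\k$ on $R_\infty = \k\tensor H$, together with a $\Cinf$ quasi-isomorphism $R_\infty \to R$ given by the tree formulas; all operations are built from equivariant maps. Convergence of the tree sums follows from the connectivity and filtration assumptions.

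\textbf{Step 3: the model statement.} Finally, $R_\infty$ is finite-dimensional semifree over $\k$, and the model statement for $u$ is inherited from Step 1.
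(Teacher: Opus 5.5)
The paper gives no argument of its own here; it simply cites \cite{Ber}. Berglund's construction, sketched non-equivariantly in the proof of \cref{lemma:Lie_Cinf_model}, reads the $\Cinf$-structure on $R_\infty = \CoCE(\lie g; H^*)$ directly off the minimal Lie model and the outer action on $L_+$. So no transfer or perturbation is needed there, and semifreeness is automatic.

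Your transfer-plus-perturbation route is a reasonable alternative, but Step~2 fails for the group you chose in Step~1. The image of $\pi_0(\aut(\rat F))$ in $\prod_n \Aut(\Coho n (F;\QQ))$ is not reductive in general. In any case, what you need is semisimplicity of the action on the model itself, i.e.\ on $(\SA V)^n$, its cocycles $Z^n$ and its coboundaries $B^n$, not on cohomology.

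A concrete counterexample is $F = \mathbb{C}\mathrm{P}^2 \vee S^2$. It is formal, so every algebra automorphism of $H = \QQ[a,b]/(ab, b^2, a^3)$ is realized, including $u \colon a \mapsto a + b$, $b \mapsto b$. The minimal model has $V^2 = \langle a, b \rangle$, $V^3 = \langle x, y \rangle$ with $dx = ab$, $dy = b^2$, and $V^4 = \langle z \rangle$ with $dz = ay - bx$. Hence $Z^4 = \SA^2 V^2 = \langle a^2, ab, b^2 \rangle$ and $B^4 = \langle ab, b^2 \rangle$, and any strict lift of $u$ acts on $\SA^2 V^2$ by $\SA^2(u)$. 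Since $u(a^2 + c_1 ab + c_2 b^2) = a^2 + (2 + c_1) ab + (1 + c_1 + c_2) b^2$, no invariant line complements $B^4$ in $Z^4$. So there is no equivariant section $H^4 \to Z^4$, and hence no equivariant contraction of $\SA V$ onto $H$.

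The missing idea is the correct choice of $\Gamma$. Write $\pi_0(\aut(\rat F)) = \mathcal{G}(\QQ)$ for a linear algebraic group with Levi decomposition $\mathcal{G} = \mathcal{U} \rtimes \mathcal{R}$. Take $\Gamma = \mathcal{G}(\QQ)/\mathcal{U}(\QQ) \iso \mathcal{R}(\QQ)$, lifted to a strict algebraic action on the minimal model and on $\lie g$. This is the kind of input \cite{BZ,Ber} supply; compare the remark at the start of \cref{sec:model_Ainf} that the actions there are through finite-dimensional algebraic representations of a reductive group. With this choice:
\begin{itemize}
\item The kernel $\mathcal{U}(\QQ)$ is unipotent, which gives fiberwise nilpotence.
\item Every degreewise representation in sight is semisimple, so your contraction, the perturbation and the $\Cinf$-transfer can all be made equivariant.
\end{itemize}

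This choice has a further consequence you have not accounted for. The fiber over $\B\Gamma$ now has fundamental group $\mathcal{U}(\QQ)$, so $\lie g$ has in general a nonzero degree-$0$ part $\mathrm{Lie}(\mathcal{U})$. It is not just a model of $\B\aut_\circ(\rat F)$, and $\k^1 \neq 0$. The transferred differential on $\k \tensor H$ then has a component $H^n \to \k^1 \tensor H^n$ encoding the $\mathrm{Lie}(\mathcal{U})$-action. Your closing claim that $R_\infty$ is semifree therefore needs an argument: you must use nilpotence of this action (Engel) to find a well-ordered basis with $d(e_\alpha) \in \k \tensor H_{<\alpha}$. This is not automatic for finite free modules. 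For example, over $\SA(t)$ with $t$ of degree $1$, the module with $de_1 = te_2$ and $de_2 = te_1$ is free of rank two but not semifree.
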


\begin{proof}
  This is contained in the forthcoming work \cite{Ber}.
\end{proof}

\begin{corollary} \label{cor:eq_Cinf_existence}
  Let $p \colon E \to B$ be a map of connected animas with compact simply connected fiber $F$.
  Assume that the universal covering of $B$ is of finite rational type.
  Then there exists a cofibration $\k \to R$ of $\pi_1(B)$-equivariant cofibrant cdgas, and a quasi-isomorphism $R_\infty \to R$ of $\pi_1(B)$-equivariant $\Cinf$-algebras over $\k$ such that $R_\infty$ is finite-dimensional semifree over $\k$ and the map $\k \to R$ models the map $p$ as fiberwise nilpotent animas over $\B \pi_1(B)$.
\end{corollary}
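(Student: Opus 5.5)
The strategy is to pull back Berglund's universal model along the classifying map of $p$, using \cref{prop:eq_Cinf_universal}. Rationalizing the fiber gives a map $E \to \rat E_B$ over $B$ with fiber $\rat F$. The fiberwise rationalization $\rat E_B \to B$ is classified by a map $\phi \colon B \to \B\aut(\rat F)$, and it is the pullback of the universal fibration $u$. Let $\Gamma$ be the quotient of $\pi_0(\aut(\rat F))$ from \cref{prop:eq_Cinf_universal}. The composite $B \to \B\aut(\rat F) \to \B\Gamma$ then factors through $\B\pi_1(B)$ via the homomorphism $\pi_1(B) \to \pi_0(\aut(\rat F)) \to \Gamma$. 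Restricting the $\Gamma$-actions along this homomorphism turns $\k_u \to R_u$ and $R_{u,\infty} \to R_u$ into $\pi_1(B)$-equivariant objects. These now model $u$ pulled back to a fibration over $\B\pi_1(B)$.

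Let $\tilde B$ be the universal covering of $B$; it has finite rational type by assumption. It is simply connected, hence nilpotent, so it has a $\pi_1(B)$-equivariant cofibrant cdga model $\k$. The map $\tilde B \to \widetilde{\B\aut(\rat F)}$, where the target is the fiber over $\B\Gamma$, is modeled by an equivariant map $\k_u \to \k$ of cofibrant $\pi_1(B)$-equivariant cdgas. I would obtain this map from the equivalence $\Real^G$ of \cref{sec:eq}, after replacing $\k$ by a cofibrant object in the injective model structure; any cofibrant replacement keeps homotopy classes of maps representable by strict maps, since everything in $\CDGAeq{G}$ is fibrant.

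Then set $R \defeq \k \tensor_{\k_u} R_u$ and $R_\infty \defeq \k \tensor_{\k_u} R_{u,\infty}$. The latter means base change of a $\Cinf$-algebra: tensor the structure maps with $\k$. It is still finite-dimensional semifree over $\k$, because semifree generators are preserved. The map $\k \to R$ is a cofibration, being a pushout of one, and $R$ is cofibrant. The base-changed map $R_\infty \to R$ is still a quasi-isomorphism. To see this, filter by the semifree filtration: both sides are semifree or cofibrant over $\k_u$, so tensoring with $\k$ preserves the quasi-isomorphism, using \cite[\Ilemmacofibrantflat]{I}. Equivariance is automatic, since every construction is functorial in $G$-equivariant data.

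The main obstacle is showing that $\k \to R$ models $p$, i.e.\ that the pushout of cdgas corresponds to the homotopy pullback of fiberwise nilpotent spaces over $\B\pi_1(B)$. This is an equivariant Eilenberg--Moore type statement: the cofibrant pushout realizes to the pullback provided the base $\tilde B$ is nilpotent and the fiber $\rat F$ is simply connected. I would use the equivariant comparison of \cite[\Iobspullbackeq]{I}, which handles pullbacks of models, together with the fact that the homotopy pullback of $\rat E_B$ along $B$ recovers $\rat E_B$ itself. It remains to check finiteness conditions. $R$ is homologically connected because $F$ is simply connected and $\tilde B$ is connected. It has finite homotopical type by a Serre spectral sequence argument, since both $F$ and $\tilde B$ have finite rational type.
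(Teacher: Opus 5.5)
Your proposal is correct and follows the paper's proof essentially step by step: restrict Berglund's universal model along $\pi_1(B) \to \Gamma$, model the map from $B$ to the pulled-back base by a map out of the cofibrant universal base cdga $\k_u$, base change $R_u$ and $R_{u,\infty}$ along it, apply the pullback observation of Part~I (using that the fiber $\rat F$ is connected), and identify $E$ with the pulled-back $\rat F$-fibration up to fiberwise rational equivalence over $\B \pi_1(B)$. One small correction: your justification ``everything in $\CDGAeq{G}$ is fibrant'' is false for the injective model structure when $G$ is infinite (non-split unipotent actions already give counterexamples). The step still goes through: fibrantly replace the target, represent the map by a strict map out of $\k_u$, and factor it as a cofibration followed by a trivial fibration, which gives the cofibration $\k_u \to \k$ with $\k$ cofibrant that the paper uses.
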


\begin{proof}
  Choose $\Gamma'$, $\k' \to R'$, and $R'_\infty \to R'$ as in \cref{prop:eq_Cinf_universal}.
  Consider the commutative diagram
  \[
  \begin{tikzcd}
    E \rar \dar[swap]{p} & E' \rar \dar{u'} & \rat F / \aut(\rat F) \dar{u} \\
    B \rar \dar & B' \rar \dar & \B \aut(\rat F) \dar \\
    \B \pi_1(B) \rar[equal] & \B \pi_1(B) \rar & \B \Gamma'
  \end{tikzcd}
  \]
  where $u'$ is defined to be the pullback of $u$ along the map $\B \pi_1(B) \to \B \Gamma'$ induced by the composite $B \to \B \aut(F) \to \B \aut(\rat F)$.
  The restriction of $\k' \to R'$ along the map $\pi_1(B) \to \Gamma'$ yields a $\pi_1(B)$-equivariant model for $u'$ as a map of fiberwise nilpotent animas over $\B \pi_1(B)$.
  Since $\k'$ is cofibrant and $B \to B'$ is a map of fiberwise nilpotent animas over $\B \pi_1(B)$ of fiberwise finite rational type, it can be modeled by a cofibration $\k' \to \k$ of $\pi_1(B)$-equivariant cdgas.
  By \cite[\Iobspullbackeq]{I}, since the fiber of $u'$ is connected, the pullback $p' \colon E'' \to B$ of $u'$ along $B \to B'$ is then modeled by the map $\k \to R \defeq R' \tensor_{\k'} \k$ of $\pi_1(B)$-equivariant cdgas.
  There is an induced map $f \colon E \to E''$, and considering the commutative diagram of horizontal fiber sequences
  \[
  \begin{tikzcd}
    F \rar \dar & E \rar \dar{f} & B \dar[equal] \\
    \rat F \rar & E'' \rar & B
  \end{tikzcd}
  \]
  we see that $f$ is a fiberwise rational equivalence of fiberwise simply connected animas over $\B \pi_1(B)$.
  Hence $p$ is also modeled by $\k \to R$.
  Lastly we note that the induced map $R_\infty \defeq R_\infty' \tensor_{\k'} \k \to R$ is a quasi-isomorphism of $\pi_1(B)$-equivariant $\Cinf$-algebras over $\k$, and that $R_\infty$ is finite-dimensional semifree over $\k$.
\end{proof}

\subsection{A rational model for the fiberwise THH transfer using \texorpdfstring{$\Ainf$}{A∞}-algebras} \label{sec:model_Ainf}

Combining the main result of \cite{I} with \cref{thm:Ainf_transfer}, we now provide an explicit model for the fiberwise THH transfer.
Also see \cref{obs:Ainf_transfer_explicit} where we spell out the resulting map more explicitly.
Note that an equivariant $\Ainf$-model as required always exists by \cref{cor:eq_Cinf_existence}, and that a $G$-equivariant derived coevaluation as required often exists by \cref{rem:derived_coev}.
For example this is the case for the equivariant model for the universal fibration constructed by Berglund \cite{Ber} (see \cref{prop:eq_Cinf_universal}), since there $G$ is an arithmetic subgroup of a reductive group and all actions are through finite-dimensional algebraic representations.

\begin{corollary}
  Let $G$ be a group, and let the following two left-hand maps be maps of fiberwise nilpotent animas over $\B G$
  \[ X \xlongto{f} Y \xlongto{p_Y} B  \qquad  S \xlongfrom{\phi} R \xlongfrom{\iota_R} \k  \qquad  \]
  that are modeled by the two right-hand cofibrations of $G$-equivariant homologically connected cofibrant cdgas of finite homotopical type.
  Assume that the fiber of $p_Y$ is simply connected and that the fiber of $f$ is simply connected and compact.
  Furthermore let $S_\infty \to S$ be a unital $G$-equivariant quasi-isomorphism of $\Ainf$-algebras over $R$ such that $S_\infty$ is cofibrant and dualizable as an $R$-module, and let $c$ be a $G$-equivariant derived coevaluation for $\shift S_\infty$ (cf.\ \cref{def:derived_coev}).
  Then the fiberwise THH transfer $f^* \colon \THH_B(Y) \to \THH_B(X)$ is modeled by $-1$ times the composite map of $G$-equivariant $\k$-modules
  \[ \HH_\k(S_\infty)  \xlongto{\upsilon_*}  \HH_\k \bigl( \End_R(\shift S_\infty) \bigr)  \xlongto{\tr^c_R}  \HH_\k(R) \]
  where $\upsilon$ is the map of \cref{lemma:upsilon_algebra_map} (for the left $S_\infty$-module $\shift S_\infty$ of \cref{obs:algebra_is_bimodule}) and $\tr^c_R$ is the generalized trace of \cref{def:gtr}.
\end{corollary}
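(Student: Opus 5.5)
The plan is to reduce the statement to three ingredients: the main theorem of Part~I, \cref{prop:transfer_natural}, and \cref{thm:Ainf_transfer}.

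First I invoke the main result of \cite{I} (\Ithmmain{}). In the present setting it says that the fiberwise THH transfer $f^* \colon \THH_B(Y) \to \THH_B(X)$ is modeled by the Hochschild homology transfer $\transfer{S} \colon \HH_\k(S) \to \HH_\k(R)$ of the cofibration $\phi \colon R \to S$. Here $S$ is regarded as a left module over itself, and the transfer is defined there as a zig-zag of $G$-equivariant $\k$-modules, as in \IconHHtransfer{}. The hypotheses of that theorem are ours: the fibers of $p_Y$ and of $f$ are simply connected, and the fiber of $f$ is compact.

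The first point to check is that \IconHHtransfer{} agrees with \cref{con:HH_transfer} once the dga $S$ is viewed as an $\Ainf$-algebra via \cref{obs:dga_as_Ainf}. The underlying left $S$-module in \cref{con:HH_transfer} is $\shift S$ from \cref{obs:algebra_is_bimodule}, not $S$. Moreover, $\HH_\k(S)$ is defined as $\shift[-1]\HH_\k(S,\shift S)$, and the identification of \cref{obs:HH_agrees} involves suspension signs. I expect this to be exactly where the factor $-1$ comes from. The comparison of $\Hom_R(\shift S,\shift S)$ with $\Hom_R(S,S)$ and of the evaluations introduces one Koszul sign, because $\nu$ has degree $1$ and $\ev$ is applied after a shift. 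So I will identify the two definitions up to $-1$ by tracking that sign through \cref{obs:nu} and \cref{obs:Hom_hf}. I will also check that $S$ is cofibrant and homotopically finite over $R$: it is cofibrant because $R \to S$ is a cofibration, and homotopically finite because the fiber of $f$ is compact.

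Second, I apply \cref{prop:transfer_natural} to the unital quasi-isomorphism $\psi \colon S_\infty \to S$, with $M = \shift S_\infty$ and $M' = \shift S$. The required quasi-isomorphism of left $S_\infty$-modules $\shift S_\infty \to \psi^*(\shift S)$ is the map $\psi'$ of \cref{lemma:algebra_map_is_bimodule_map}, restricted to a left-module map, and it is unital. The proposition gives $\transfer{\shift S}\after \psi_* \simeq \transfer{\shift S_\infty}$. Since $\psi_*$ is a quasi-isomorphism by \cref{lemma:HH_qiso}, $\HH_\k(S_\infty)$ also models $\THH_B(Y)$, and the two transfers represent the same map. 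This uses that $S_\infty$ is cofibrant as a $\k$-module, which follows because it is cofibrant over $R$ and $R$ is cofibrant over $\k$.

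Finally, I apply \cref{thm:Ainf_transfer} with $S_\infty$, $M = \shift S_\infty$ and the given $c$. This identifies $\transfer{\shift S_\infty}$ with $\tr^c_R \after \upsilon_*$.

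The main obstacle is the first step: matching the definitions and pinning down the sign exactly. Everything else is a direct citation.
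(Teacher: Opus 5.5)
Your proposal is correct and matches the paper's proof essentially step for step. The paper likewise cites the main theorem of Part~I to model $f^*$ by the Hochschild homology transfer of $S$. It then notes that this transfer is $-1$ times the transfer for the shifted module $\shift S$, which is exactly the sign you anticipate, and concludes with \cref{prop:transfer_natural} and \cref{thm:Ainf_transfer}. Your extra checks, namely the left-module quasi-isomorphism from \cref{lemma:algebra_map_is_bimodule_map} and cofibrancy of $S_\infty$ over $\k$, spell out hypotheses the paper leaves implicit.
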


\begin{proof}
  By \cite[\Ithmmain]{I}, the fiberwise THH transfer $f^*$ is modeled by the Hochschild homology transfer $\transfer{S} \colon \HH_\k(S) \to \HH_\k(R)$.
  Note that $\transfer{S} = - \transfer{\shift S}$.
  By \cref{prop:transfer_natural}, the transfer $\transfer{\shift S}$ is homotopic in $\Underlying(\Modeq{G}{\k})$ to $\transfer{\shift S_\infty}$.
  Then \cref{thm:Ainf_transfer} implies the claim.
\end{proof}

Given a map $f \colon X \to Y$ of animas over $B$, the following provides a simpler model for the fiberwise THH transfer of $f$, precomposed with the fiberwise assembly map.
When $Y = B$, the latter is an equivalence, so that we obtain a model of the fiberwise THH transfer itself.

\begin{corollary} \label{cor:main}
  Let $G$ be a group, and let the following two left-hand maps be maps of fiberwise nilpotent animas over $\B G$
  \[ X \xlongto{f} Y \xlongto{p_Y} B  \qquad  S \xlongfrom{\phi} R \xlongfrom{\iota_R} \k  \qquad  \]
  that are modeled by the two right-hand cofibrations of $G$-equivariant homologically connected cofibrant cdgas of finite homotopical type.
  Assume that the fiber of $p_Y$ is simply connected and that the fiber of $f$ is simply connected and compact.
  Furthermore let $S_\infty \to S$ be a unital $G$-equivariant quasi-isomorphism of $\Ainf$-algebras over $R$ such that $S_\infty$ is cofibrant and dualizable as an $R$-module.
  Then the following composite of the fiberwise assembly map and the fiberwise THH transfer
  \[ \SS_B[Y]  \xlongto{\alpha}  \THH_B(Y)  \xlongto{f^*}  \THH_B(X) \]
  is modeled by the map of $G$-equivariant $\k$-modules $\tr \colon \HH_\k(S_\infty) \to R$ given in Hochschild degree $n$ by
  \[ \shift[-1] (x_0 \otimes x_1 \otimes \dots \otimes x_n)  \longmapsto  \sum_{i = 0}^n (-1)^{\epsilon + 1} \tr_R \bigl( \mu^{S_\infty}_{n+2}(x_i, x_{i+1}, \dots, x_n, x_0, \dots, x_{i - 1}, \blank) \bigr) \]
  where $\epsilon \defeq (\deg {x_0} + \dots + \deg {x_{i - 1}}) (\deg {x_i} + \dots + \deg {x_n})$.
\end{corollary}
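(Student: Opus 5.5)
The plan is to reduce the statement to \cref{thm:main}, applied to $M \defeq \shift S_\infty$ as a left $S_\infty$-module, and then handle the assembly map separately.

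First, the transfer part. By \cite[\Ithmmain]{I}, the fiberwise THH transfer $f^*$ is modeled by the Hochschild homology transfer $\transfer{S} \colon \HH_\k(S) \to \HH_\k(R)$. As in the preceding corollary, we have $\transfer{S} = -\transfer{\shift S}$. By \cref{prop:transfer_natural} (applied to the unital quasi-isomorphism $S_\infty \to S$ and the induced map $\shift S_\infty \to \shift S$), this agrees, after precomposing with the quasi-isomorphism $\HH_\k(S_\infty) \to \HH_\k(S)$ of \cref{lemma:HH_qiso}, with $-\transfer{\shift S_\infty}$. \Cref{lemma:dualizable_finite} makes $\shift S_\infty$ homotopically finite, so this transfer is defined.

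Second, the assembly map. It should be modeled on the algebraic side by the projection $\HH_\k(R) \to R$ onto Hochschild degree $0$, viewed in the derived category as a map of $G$-equivariant $\k$-modules. Since $R$ is a cdga, this projection is a natural map of complexes by \cref{lemma:HH_proj}. Precomposing a map $\THH_B(Y) \to \THH_B(X)$ with $\alpha \colon \SS_B[Y] \to \THH_B(Y)$ corresponds contravariantly to postcomposing the model $\HH_\k(S) \to \HH_\k(R)$ with the model of $\alpha$ for $Y$. The key input I would try to extract from \cite{I} is therefore that, under Braunack--Mayer's equivalence $\Psi_R$, the fiberwise assembly map $\SS_B[Y] \to \THH_B(Y) \eq \SS_B[\L_B Y]$ (induced by constant loops $Y \to \L_B Y$) is modeled by $\HH_\k(R) \to R$. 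Heuristically this holds because evaluation at the basepoint $\L_B Y \to Y$ corresponds to the Hochschild--Kostant--Rosenberg inclusion $R \to \HH_\k(R)$, and dualizing the constant-loop section yields the projection.

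I expect this identification to be the main obstacle, including verifying that it is compatible with the equivariant modeling and with the identification $\THH_B \eq \SS_B[\L_B\,\cdot\,]$ used in \cite{I}. The cleanest route would be naturality: $\alpha$ is the unique natural transformation restricting to the identity at $Y = *$. So one compares two natural transformations of functors on $G$-equivariant cdgas over $\k$, namely the projection and the model of $\alpha$, and reduces to $R = \k$, where $\HH_\k(\k) \to \k$ is a quasi-isomorphism.

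Given this, the composite is modeled by
\[ -\bigl(\HH_\k(S_\infty) \xlongto{\transfer{\shift S_\infty}} \HH_\k(R) \longto R\bigr). \]
By \cref{thm:main}, which needs no derived coevaluation, this is homotopic to $-\tr_{\shift S_\infty}$. By \cref{obs:algebra_is_bimodule}, the left module structure maps are $\mu^{\shift S_\infty}_{n+1} = \mu^{S_\infty}_{n+2}$, i.e.\ with the final input slot left open. Substituting into the formula of \cref{thm:main} gives exactly the stated map, with the extra sign $-1$ absorbed as $(-1)^{\epsilon+1}$. The trace over the shifted module $\shift S_\infty$ versus $S_\infty$ may introduce a sign, and I would check that it is absorbed consistently with the convention $\transfer{S} = -\transfer{\shift S}$.
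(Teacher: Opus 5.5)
Your proposal is correct and follows the paper's proof essentially verbatim: it cites \cite[\Ithmmain]{I} for the transfer, uses $\transfer{S} = -\transfer{\shift S}$ and \cref{prop:transfer_natural} to pass to $\shift S_\infty$, and concludes with \cref{thm:main} and \cref{obs:algebra_is_bimodule}. The step you flag as the main obstacle is not one, since the model of the fiberwise assembly map by the projection $\HH_\k(R) \to R$ is exactly \cite[\Ilemmaassemblymodel]{I}, which the paper simply cites; your sign worry also disappears, because $\mu^{S_\infty}_{n+2}(x_i, \dots, x_{i-1}, \blank)$ in the statement is already an endomorphism of $\shift S_\infty$, so \cref{thm:main} with $M = \shift S_\infty$ gives the formula directly, the only extra sign being the $-1$ from $\transfer{S} = -\transfer{\shift S}$.
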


\begin{proof}
  By \cite[\Ithmmain{} and \Ilemmaassemblymodel]{I}, the composite $f^* \after \alpha$ is modeled by the composite
  \begin{equation} \label{eq:transfer_pr}
    \HH_\k(S)  \xlongto{\transfer{S}}  \HH_\k(R)  \xlongto{\pr}  R
  \end{equation}
  where $\pr$ is the projection.
  Note that $\transfer{S} = - \transfer{\shift S}$.
  Hence \eqref{eq:transfer_pr} is homotopic in $\Underlying(\Modeq{G}{\k})$ to $-1$ times the analogous composite
  \[ \HH_\k(S_\infty)  \xlongto{\transfer{\shift S_\infty}}  \HH_\k(R)  \xlongto{\pr}  R \]
  by \cref{prop:transfer_natural}.
  This composite is given by the claimed formula by \cref{thm:main} and \cref{obs:algebra_is_bimodule}.
\end{proof}

%
%

\subsubsection*{Auxiliary maps}

To conclude this subsection, we provide the corresponding models for the fiberwise assembly map $\SS_B[X] \to \THH_B(X)$ and the map in the other direction induced by evaluation of loops.

\begin{lemma} \label{lemma:eval_model_Ainf}
  Let $G$ be a group and $p \colon X \to B$ a map of fiberwise nilpotent animas over $\B G$ that is modeled by a cofibration $\k \to R$ of $G$-equivariant homologically connected cofibrant cdgas of finite homotopical type.
  Assume that the fiber of $p$ is simply connected, and let $R_\infty \to R$ be a $G$-equivariant quasi-isomorphism of $\Ainf$-algebras over $\k$ such that $R_\infty$ is cofibrant as a $\k$-module.
  Then the following map, induced by evaluation at the basepoint of $\Sphere 1$,
  \[ \THH_B(X) \eq \SS_B[\L_B X]  \xlongto{e}  \SS_B[X] \]
  is modeled by the inclusion $R_\infty \to \HH_\k(R_\infty)$.
\end{lemma}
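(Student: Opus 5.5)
The plan is to reduce to the cdga case treated in \cite{I}, and then transport along the quasi-isomorphism $R_\infty \to R$. By \cite[\Ilemmaevalmodel]{I}, the evaluation map $e \colon \THH_B(X) \to \SS_B[X]$ is modeled by the inclusion $\inc \colon R \to \HH_\k(R)$ of Hochschild degree $0$, where $R$ is the cdga model. Here we use the identifications of $\SS_B[X]$ with the $\k$-module $R$ and of $\THH_B(X)$ with $\HH_\k(R)$ fixed in \cite{I}; these are the same identifications used in \cref{cor:main}. Since both objects are modeled contravariantly, the map $e$ corresponds to a map $R \to \HH_\k(R)$.

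It therefore suffices to show that the inclusions for $R_\infty$ and $R$ agree after identifying source and target via the quasi-isomorphism $\phi \colon R_\infty \to R$. Concretely, I will check that the square
\[
\begin{tikzcd}
  R_\infty \rar{\inc} \dar[swap]{\phi_1} & \HH_\k(R_\infty) \dar{\phi_*} \\
  R \rar{\inc} & \HH_\k(R)
\end{tikzcd}
\]
commutes strictly. Here $\phi_*$ is the functoriality map of \cref{obs:HH_functor}, and $\phi_1$ is the linear part of $\phi$, shifted back. Both vertical maps are $G$-equivariant quasi-isomorphisms. For $\phi_*$ this is \cref{lemma:HH_qiso}, which applies because $R_\infty$ and $R$ are cofibrant as $\k$-modules: $R$ is cofibrant because $\k \to R$ is a cofibration, and $R_\infty$ by assumption.

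Before checking commutativity, I will verify that the top map is a map of cochain complexes. This is needed because in the $\Ainf$-case the Hochschild differential of \cref{obs:HH} could in principle send Hochschild degree $0$ to higher degrees. Inspecting the formula in \cref{obs:HH}, every term is a composite that applies some $\mu_s$ or $\mu^M_{l,r}$ and does not increase the Hochschild degree. On degree $0$ the only term is $\mu^{\shift R}_{0,0} = \mu_1$, so the inclusion of degree $0$ is a chain map.

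Commutativity is then immediate from the explicit formula in \cref{obs:HH_functor}. On Hochschild degree $0$, the only summand of $\phi_*$ is the one with $i = 1$ and $n_1 = 0$, which is $g_{0,0} = \phi'_{0,0} = \phi_1$ by \cref{lemma:algebra_map_is_bimodule_map}. Thus $\phi_* \after \inc = \inc \after \phi_1$.

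Since the vertical maps are equivalences in $\Underlying(\Modeq{G}{\k})$, the top inclusion models $e$ with respect to the identifications transported along $\phi$. This is the meaning of ``modeled by'' used in \cref{cor:main}. The main point requiring care is bookkeeping rather than mathematics: confirming that the identifications of \cite[\Ilemmaevalmodel]{I} are the same ones used for the THH transfer. This ensures that the models for $e$, for $\alpha$ and for $f^*$ are mutually compatible. The shift conventions $\HH_\k(R) = \shift[-1]\HH_\k(R,\shift R)$ introduce no sign in degree $0$, so no sign corrections are needed.
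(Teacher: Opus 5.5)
Your proposal is correct and is essentially the paper's proof: you invoke the cdga statement \cite[\Ilemmaevalmodel]{I}, observe that the inclusion of Hochschild degree $0$ is natural in maps of $\Ainf$-algebras, and conclude with \cref{lemma:HH_qiso}. Your explicit checks (that Hochschild degree $0$ is a subcomplex, and that $\phi_*$ restricts to $\phi_1$ there) simply spell out what the paper calls ``clearly natural''; the only blemish is cosmetic, namely that in the indexing of \cref{obs:HH_functor} the summand landing in Hochschild degree $0$ is the one with $i = 2$ and $n_1 = n_2 = 0$, not $i = 1$.
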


\begin{proof}
  By \cite[\Ilemmaevalmodel]{I} the map is modeled by the inclusion $R \to \HH_\k(R)$.
  Since this map is clearly natural in maps of $\Ainf$-algebras, this implies the claim by \cref{lemma:HH_qiso}.
\end{proof}

Note that in the following lemma we require a $\Cinf$-model, whereas everything else we have done so far worked for $\Ainf$-models.

\begin{lemma} \label{lemma:assembly_model_Cinf}
  Let $G$ be a group and $p \colon X \to B$ a map of fiberwise nilpotent animas over $\B G$ that is modeled by a cofibration $\k \to R$ of $G$-equivariant homologically connected cofibrant cdgas of finite homotopical type.
  Assume that the fiber of $p$ is simply connected, and let $R_\infty \to R$ be a $G$-equivariant quasi-isomorphism of $\Cinf$-algebras over $\k$ such that $R_\infty$ is cofibrant as a $\k$-module.
  Then the fiberwise assembly map $\SS_B[X] \to \THH_B(X)$ is modeled by the map of $G$-equivariant $\k$-modules $\epsilon \colon \HH_\k(R_\infty) \to R_\infty$ given by projecting to Hochschild degree $0$.
\end{lemma}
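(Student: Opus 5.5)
The plan is to reduce to the strict cdga case, treated in Part~I, and then transport the statement along the quasi-isomorphism $R_\infty \to R$, using naturality of the projection to Hochschild degree $0$ for $\Cinf$-algebras.

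\textbf{Step 1: the cdga case.} By \cite[\Ilemmaassemblymodel]{I}, the fiberwise assembly map $\SS_B[X] \to \THH_B(X)$ is modeled by the projection $\pr \colon \HH_\k(R) \to R$ onto Hochschild degree $0$. This projection is a map of $G$-equivariant $\k$-modules because $R$ is a cdga, hence symmetric as a bimodule over itself.

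\textbf{Step 2: the projection is natural.} Write $f \colon R_\infty \to R$ for the given quasi-isomorphism. Since $R_\infty$ is a $\Cinf$-algebra and $f$ is a map of $\Cinf$-algebras, \cref{lemma:HH_proj} shows two things: the projection $\epsilon \colon \HH_\k(R_\infty) \to R_\infty$ is a map of $\k$-modules, and it is natural. Concretely, the square
\[
\begin{tikzcd}
  \HH_\k(R_\infty) \rar{f_*} \dar[swap]{\epsilon} & \HH_\k(R) \dar{\pr} \\
  R_\infty \rar{f_1} & R
\end{tikzcd}
\]
commutes strictly. One can also see this directly from \cref{obs:HH_functor}. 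The component of $f_*$ landing in Hochschild degree $0$ is $\sum_{l+r=n} f_{l+1+r} \circ \cyclic{1+n}^{\circ l}$ (with $n_i=l$, $n_1=r$, $i=2$). This is a sum of $f_{n+1}$ over all cyclic rotations, which vanishes for $n \ge 1$ by \cref{lemma:cyclic_shuffle}, exactly as in \cref{lemma:Cinf_is_symm}. For $n=0$ it is $f_1$. Every map in the square is $G$-equivariant by construction.

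\textbf{Step 3: conclude.} The map $f_*$ is a quasi-isomorphism by \cref{lemma:HH_qiso}, since $R_\infty$ and $R$ are both cofibrant as $\k$-modules. Here $R$ is cofibrant as a $\k$-module because $\k \to R$ is a cofibration. The map $f_1$ is a quasi-isomorphism by assumption. Hence in $\Underlying(\Modeq{G}{\k})$ the map $\epsilon$ is identified with $\pr$ under these vertical identifications of source and target. The identification of the target $R_\infty \simeq R$ is the one implicit in saying that $R_\infty$ models $\SS_B[X]$. The identification $\HH_\k(R_\infty) \simeq \HH_\k(R)$ is compatible with the model of $\THH_B(X)$, by the same argument used in \cref{lemma:eval_model_Ainf}. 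Therefore $\epsilon$ models the fiberwise assembly map.

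I expect the main obstacle to be bookkeeping rather than substance. One must check that the identifications of $\HH_\k(R_\infty)$ with $\THH_B(X)$ and of $R_\infty$ with $\SS_B[X]$ are exactly those induced by $f$. One must also check the shift and sign conventions: $\HH_\k(R) = \shift[-1]\HH_\k(R,\shift R)$ and $\shift[-1]\shift R \iso R$, so the projection in \cref{lemma:HH_proj} agrees with the one in Part~I. If a sign discrepancy appears, it would be absorbed by the fixed isomorphism $\shift[-1]\shift R \iso R$ used on both sides.
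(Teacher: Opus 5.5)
Your proposal is correct and matches the paper's proof. The paper likewise cites the cdga case from Part~I and then transports it along $R_\infty \to R$, using the naturality of the degree-$0$ projection for $\Cinf$-algebras (\cref{lemma:HH_proj}) and the quasi-isomorphism invariance of Hochschild complexes (\cref{lemma:HH_qiso}); your explicit check via \cref{obs:HH_functor} and \cref{lemma:cyclic_shuffle} simply unpacks what \cref{lemma:HH_proj} already records.
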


\begin{proof}
  By \cite[\Ilemmaassemblymodel]{I}, the fiberwise assembly map is modeled by the projection $\HH_\k(R) \to R$.
  Then \cref{lemma:HH_proj,lemma:HH_qiso} imply the claim.
\end{proof}

\subsection{A rational model for the fiberwise THH transfer using dg Lie algebras} \label{sec:Lie_models}

In forthcoming work, Berglund \cite{Ber} constructs, given a dg Lie model for a fibration $p \colon E \to B$, a relative $C_\infty$-models for $p$ as required by \cref{cor:main}.
See \cref{prop:eq_Cinf_universal}, where we stated a qualitative version of his result.
In this subsection, we recall (the non-equivariant version of) his construction in more detail, and identify the map resulting from \cref{cor:main} in that case.
It turns out to be related to the trace map of Enomoto--Satoh \cite{ES}, see \cref{rem:ES} below.
For basic background on dg Lie models, see for example \cite[Part~IV]{FHT}.
We begin by fixing the following notation throughout this subsection.

\begin{notation} \label{not:X}
  We fix a simply connected anima $X$ of finite rational type, and introduce
  \[ H_* \defeq \Ho * (X; \QQ)  \qquad  \wt{H}_* \defeq \rHo * (X; \QQ)  \qquad  H^* \defeq \Coho * (X; \QQ)  \qquad  \wt{H}^* \defeq \rCoho * (X; \QQ) \]
  as shorthand notation.
\end{notation}

\begin{definition}
  For a dg Lie algebra $L$, we define $L_+$ to be the dg Lie algebra whose underlying graded Lie algebra is the free product $L * \LL(\eta)$ with $\eta$ in degree $-1$, and whose differential $d_+$ is determined by
  \[ d_+(\eta)  \defeq  - \frac{1}{2} [\eta, \eta]  \qquad \text{and} \qquad  d_+(x) \defeq d(x) - [\eta, x] \]
  for all $x \in L$, where $d$ is the differential of $L$.
\end{definition}

\begin{observation} \label{obs:Cinf_from_Lie}
  Let $L$ be the minimal Lie model of $X$.
  Note that $L$ is isomorphic to the free graded Lie algebra $\LL(\shift[-1]{\wt{H}_*})$ equipped with some differential.
  Then $L_+$ is isomorphic to $\LL(\shift[-1] H_*)$ equipped with some differential.
  Its (non-unital) universal enveloping algebra $\rU(L_+)$ is thus isomorphic to the (non-unital) tensor algebra $\rT(\shift[-1] H_*)$ equipped with some differential.
  The induced differential on the dual restricts to the tensor coalgebra $\rcoT(\shift H^*) \subset \dual{\rT(\shift[-1] H_*)}$.
  This encodes a unital $\Cinf$-algebra structure on $H^*$ with unit $1 \in H^0$, which is a $\Cinf$-model for $X$ (see e.g.\ \cite[§2.10]{BS23}).
\end{observation}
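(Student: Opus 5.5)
The plan is to verify the four assertions of \cref{obs:Cinf_from_Lie} in order, so that each step feeds the next, and to cite the standard comparison between Quillen and Sullivan models for the final claim that the result is a $\Cinf$-model for $X$.

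\emph{Step 1: the form of $L$ and $L_+$.} Since $X$ is simply connected of finite rational type, its minimal Quillen model $L$ is free as a graded Lie algebra, say $L = \LL(V)$. Minimality means that $d$ has no linear part. Consequently the Chevalley--Eilenberg complex $C^{CE}_*(L)$ has linear part $\shift V$ with zero differential, which forces $\shift V \iso \wt H_*$, that is, $V \iso \shift[-1] \wt H_*$. Adjoining $\eta$ in degree $-1$ then gives
\[ L * \LL(\eta) = \LL\bigl( \shift[-1] \wt H_* \oplus \QQ\eta \bigr) \iso \LL(\shift[-1] H_*), \]
where $\eta$ corresponds to $\shift[-1]$ of the class of the point in $H_0$.

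\emph{Step 2: $d_+$ is a differential.} Since $d_+$ is an odd derivation, it suffices to check $d_+^2 = 0$ on generators.
\begin{itemize}
\item On $\eta$, the computation gives $d_+^2\eta = -[d_+\eta,\eta] = \tfrac12[[\eta,\eta],\eta]$. This vanishes by the Jacobi identity, because $\eta$ is odd.
\item On $x \in L$, the terms combine as follows. The term $d^2 x$ vanishes. The cross terms $-[\eta, dx]$ and $-d[\eta,x]$ cancel against each other. What remains is $[d_+\eta, x] + [\eta,[\eta,x]]$, which vanishes by Jacobi since $[\eta,[\eta,x]] = \tfrac12[[\eta,\eta],x]$.
\end{itemize}

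\emph{Step 3: the enveloping algebra and its dual.} The universal enveloping algebra of a free Lie algebra is the tensor algebra, so as graded algebras $\rU(L_+) \iso \rT(\shift[-1] H_*)$, and $d_+$ extends to a derivation of it. A derivation of a free algebra is determined by its values on generators, i.e.\ by components $d_n \colon \shift[-1]H_* \to (\shift[-1]H_*)^{\tensor n}$. Dually, we obtain maps $\mu_n \colon (\shift H^*)^{\tensor n} \to \shift H^*$. These define a coderivation of the cofree conilpotent coalgebra $\rcoT(\shift H^*)$. Two facts show that this coderivation really is the restriction of the dual differential:
\begin{itemize}
\item a coderivation on $\rcoT(\shift H^*)$ never increases word length;
\item $H$ is of finite type, so applying it to a word of fixed length produces only a finite sum.
\end{itemize}
Hence the dual differential preserves $\rcoT(\shift H^*) \subset \dual{\rT(\shift[-1]H_*)}$, and by \cref{def:Ainf} this is an $\Ainf$-structure on $H^*$.

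\emph{Step 4: the $\Cinf$ and unital conditions.} Because $d_+$ of a generator lies in the Lie subalgebra, every $d_n(v)$ is a Lie element, hence a primitive element of the tensor Hopf algebra. By Ree's theorem, the dual of a Lie element vanishes on sums of shuffles, so the $\mu_n$ satisfy the $\Cinf$ condition. Unitality with unit $1 = \dual\eta$ is read off from the $\eta$-terms:
\begin{itemize}
\item the term $-[\eta,x]$ in $d_+x$ dualizes to $\mu_2(\shift 1, \blank)$ and $\mu_2(\blank, \shift 1)$ acting as the identity up to the required sign;
\item $d_+\eta = -\tfrac12[\eta,\eta]$ gives $\mu_2(\shift 1 \tensor \shift 1) = \shift 1$;
\item no term of $d_+$ involves $\eta$ inside a bracket of length at least $3$, so $\mu_n$ vanishes on inputs containing $\shift 1$ for $n \ge 3$.
\end{itemize}

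\emph{Step 5: it is a model of $X$.} The reduced part of the structure is the $\Cinf$-coalgebra dual to $L$, and the Harrison/Chevalley--Eilenberg comparison identifies it with a Sullivan model of $X$; this is the content of \cite[§2.10]{BS23}, which I would cite.

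I expect the main obstacle to be the sign bookkeeping in Step 4, namely matching the sign conventions of \cref{obs:dga_as_Ainf} and the unital axioms with the signs coming from dualizing $-[\eta,x]$, under the shift conventions of the paper. All the other steps are formal.
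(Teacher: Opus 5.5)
Your proposal is correct. The paper gives no argument for this observation: it states the algebraic assertions as evident and defers everything, including the modeling claim, to \cite[\S2.10]{BS23}.

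You instead verify the purely algebraic content directly and cite that reference only for the final claim, which is where the real content lies:
\begin{itemize}
\item $d_+^2=0$, via the Jacobi identity for the odd element $\eta$;
\item the restriction to $\rcoT(\shift H^*)$, by dualizing the derivation generator by generator;
\item the shuffle condition, via primitivity of Lie elements (Ree's theorem). This is the same fact the paper invokes in \cref{lemma:cyclic_shuffle}, in the form that the span of shuffle sums is the kernel of $\QQ[\Sigma_n]\to\dual{\Lie(n)}$;
\item unitality, read off from the $\eta$-terms. This also shows that $H^*=\QQ 1\oplus\widetilde{H}^*$ is just the strict unitalization of the $\Cinf$-algebra on $\widetilde{H}^*$ encoded by $L$. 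That $\Cinf$-algebra is Koszul dual to $L$; it is not a ``$\Cinf$-coalgebra'' as you wrote in Step~5.
\end{itemize}
This makes the observation self-contained as an algebraic statement, at the cost of length.

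A few points to tighten.
\begin{itemize}
\item \emph{Restriction to $\rcoT(\shift H^*)$.} The key fact is that $d_+$ never decreases word length. Hence its transpose sends $\dual{(W^{\otimes m})}$ into $\bigoplus_{k\le m}\dual{(W^{\otimes k})}$, where $W=\shift[-1]H_*$. Identifying $\dual{(W^{\otimes k})}$ with $(\dual{W})^{\otimes k}$ then needs $W$ to be bounded below as well as of finite type. Note that $\rT(W)$ itself is not of finite type, since $\eta$ sits in degree $-1$; this is exactly why one must restrict.
\item \emph{The linear part.} $\mu_1=0$, matching the zero differential of $H^*$. This uses minimality of $L$ together with the fact that $[\eta,\blank]$ is quadratic.
\item \emph{Unit signs.} The right unit law follows from the left one and the $\Cinf$ relation for $n=2$. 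Once a transpose sign convention is fixed, the only remaining sign is a global normalization of $1$ as $\pm\dual{\eta}$. The degree-dependent Koszul signs cancel, so this normalization is consistent for all $a$.
\end{itemize}
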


Recall that an action of a dg Lie algebra $\lie g$ on a dg Lie algebra $L$ is a map of dg Lie algebras $\lie g \to \Der(L)$; more generally, an outer action of $\lie g$ on $L$ is a map of dg Lie algebras
\[ (\phi, \xi)  \colon  \lie g  \longto  \Der(L) \ltimes_{\ad} \shift L \]
(see e.g.\ \cite[§3.5]{Ber20}).
Furthermore recall that, when $L$ is a dg Lie model for $X$, then outer actions of a nilpotent dg Lie algebra $\lie g$ on $L$ can be used to model fibrations $p \colon E \to B$ of nilpotent animas with fiber $X$ (or more precisely the holonomy action of $\Loops B$ on $X$), see e.g.\ \cite[Lemma~2.11.18]{BS24}.
The following appears in forthcoming work of Berglund \cite{Ber}.
We write $\CoCE(\lie g; M)$ for the Chevalley--Eilenberg cochain complex of a dg Lie algebra $\lie g$ with coefficients in a $\lie g$-module $M$.

\begin{proposition}[Berglund] \label{lemma:Lie_Cinf_model}
  Using \cref{not:X}, let $p \colon E \to B$ be a map of nilpotent animas with fiber $X$, let $L$ be the minimal dg Lie model of $X$, and let $\lie g$ be a nilpotent dg Lie algebra equipped with an outer action $(\phi, \xi)$ on $L$ that models $p$.
  Then $\CoCE(\lie g)$ is a cdga model for $B$, and the $\CoCE(\lie g)$-module $\CoCE(\lie g; H^*)$ can be equipped with the structure of a unital $\Cinf$-algebra over $\CoCE(\lie g)$ such that it models $E$.
  Here $\lie g$ acts on $H^*$ by the dual of the linear part of its action on $L$.
  The $\CoCE(\lie g)$-linear $\Ainf$-structure maps of $\CoCE(\lie g; H^*)$ are such that the restriction
  \begin{equation} \label{eq:Lie_Cinf_model_restr}
    (\shift H^*)^{\tensor n}  \longto  \CoCE \bigl( \lie g; (\shift H^*)^{\tensor n} \bigr)  \iso  \bigl( \shift \CoCE(\lie g; H^*) \bigr)^{\tensor_{\CoCE(\lie g)} n}  \xlongto{\mu_n}  \shift \CoCE(\lie g; H^*)
  \end{equation}
  is the sum $m_n + \psi_n$, where $m_n \colon (\shift H^*)^{\tensor n} \to \shift H^*$ is the corresponding structure map of the $\Cinf$-algebra of \cref{obs:Cinf_from_Lie}, and $\psi_n \colon (\shift H^*)^{\tensor n} \to \Hom_\QQ(\lie g, \shift H^*)$ is the adjoint of the map
  \[ x_1 \tensor \dots \tensor x_n \tensor \theta  \longmapsto  \sum_i \langle x_1 \otimes \dots \otimes x_n, \phi(\theta)(\dual{e_i}) \rangle \cdot e_i + (-1)^{\deg \theta} \langle x_1 \otimes \dots \otimes x_n, \shift[-1] \xi(\theta) \rangle \cdot \shift 1 \]
  where the $e_i$ are a homogeneous basis of $\shift \wt{H}^*$, the $\dual{e_i}$ are the corresponding dual basis of $\dual{(\shift \wt{H}^*)} \iso \shift[-1] \wt{H}_*$, and $\langle \blank, \blank \rangle$ denotes the canonical pairing $\coT(\shift H^*) \tensor \T(\shift[-1] H_*) \to \QQ$.
  (In the formula we consider $\dual{e_i} \in \shift[-1] \wt{H}_*$ as an element of $L$; both $\phi(\theta)(\dual{e_i})$ and $\shift[-1] \xi(\theta)$ lie in $L$, which we interpret as a subspace of $\T(\shift[-1] H_*)$.)
\end{proposition}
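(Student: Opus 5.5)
The plan is to build the relative $\Cinf$-structure by dualizing an honest action of $\lie g$ on the universal enveloping algebra $\rU(L_+)$, and then to compare the result with the standard Chevalley--Eilenberg model of $p$ by a filtration argument. I take the definitions of the outer action model and of $L_+$ as given and use \cref{obs:Cinf_from_Lie} for the fiber.

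\textbf{Step 1: from the outer action to an honest action on $L_+$.} I would first turn the outer action $(\phi,\xi)$ on $L$ into an action of $\lie g$ on $L_+$ by derivations. On generators, set
\[ D_\theta(x) \defeq \phi(\theta)(x) \quad (x \in L), \qquad D_\theta(\eta) \defeq \pm\xi(\theta). \]
The $\eta$-twisted differential $d_+ = d - [\eta,\blank]$ is designed so that an element $\xi(\theta) \in \shift L$ can act as the value on $\eta$. The conditions that $\theta \mapsto D_\theta$ commutes with $d_+$ and preserves brackets should then be exactly the two equations expressing that $(\phi,\xi)\colon \lie g \to \Der(L)\ltimes_{\ad}\shift L$ is a map of dg Lie algebras. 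I would verify this on the generators $L$ and $\eta$ only, since both sides are derivations of a free product.

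\textbf{Step 2: dualize to a $\CoCE(\lie g)$-linear coderivation.} The action extends to derivations of $\rU(L_+) \iso \rT(\shift[-1]H_*)$. Dualizing gives coderivations of $\rcoT(\shift H^*)$. Together with the Lie algebra $\lie g$, this defines a twisted differential on
\[ \CoCE\bigl(\lie g; \rcoT(\shift H^*)\bigr) \iso \rcoT_{\CoCE(\lie g)}\bigl(\shift\,\CoCE(\lie g;H^*)\bigr). \]
This is the usual Maurer--Cartan twisting of the Chevalley--Eilenberg differential, and it squares to zero because the action is a dg Lie map. It is a $\CoCE(\lie g)$-linear coderivation, so it is an $\Ainf$-structure over $\CoCE(\lie g)$.

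Restricting its components to $(\shift H^*)^{\tensor n}$ gives two pieces. The first is the untwisted part $m_n$ from \cref{obs:Cinf_from_Lie}. The second is the part linear in $\lie g$. A derivation of a free algebra is determined by its values on the generators $\dual{e_i}$ and $\eta$, so pairing with $x_1\tensor\dots\tensor x_n$ yields precisely the two sums in $\psi_n$. The generator $\eta$ is dual to $\shift 1$, which produces the term $\langle \vec x, \shift[-1]\xi(\theta)\rangle\cdot\shift 1$. The $\Cinf$ property follows because derivations of $L_+$ extend to derivations of $\rU(L_+)$ that preserve primitives, so their duals are compatible with the shuffle product. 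Unitality comes from the corresponding statement for $H^*$ together with the fact that $D_\theta$ lands in $L$, plus a check on words containing $\eta$.

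\textbf{Step 3: this models $E$.} It remains to show that the result models $E$ and that $\CoCE(\lie g)$ models $B$. The latter is standard for nilpotent $\lie g$. For $E$, the model of $p$ from the outer action is $\CoCE(\lie g) \to \CoCE(\lie g; \CoCE(L))$ with twisted differential, as in \cite[Lemma~2.11.18]{BS24}. Non-equivariantly there is a natural comparison between $\CoCE(L)$ and the $\Cinf$-algebra $H^*$, namely the quasi-isomorphism relating $\rU(L_+)$ to the cobar construction of the Chevalley--Eilenberg coalgebra of $L$. I would make this comparison $\lie g$-equivariant for the action on $L_+$, and then filter both sides by Chevalley--Eilenberg degree in $\lie g$. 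On associated graded the twisting disappears, and the comparison is the non-equivariant quasi-isomorphism.

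\textbf{Main obstacle.} I expect the hardest part to be Step 3, together with the sign and compatibility checks in Step 1. The difficulty is showing that the comparison map $\rU(L_+)$ versus $\CoCE(L)$ is strictly compatible with the extended action, where the $\eta$-direction encodes the $\xi$-part of the outer action. If strict compatibility fails, I would instead argue via $\Ainf$-maps and \cref{lemma:HH_qiso}-style filtration arguments, with the comparison constructed degree by degree.
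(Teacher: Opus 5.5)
Your proposal follows essentially the same route as the paper's sketch of Berglund's argument. In both, $\lie g$ acts on $L_+$ with $\eta \mapsto \pm\shift[-1]\xi(\theta)$, this is dualized to coderivations of $\rcoT(\shift H^*)$, and the $\Ainf$-structure is defined by requiring $\rcoT_{\CoCE(\lie g)}(\shift\CoCE(\lie g;H^*)) \iso \CoCE(\lie g;\rcoT(\shift H^*))$ to be a chain isomorphism. One then reads off $m_n + \psi_n$ from the pieces of the Chevalley--Eilenberg differential and compares with $\CoCE(\lie g) \to \CoCE(\lie g;\CoCE(L))$ via a unital $\Cinf$ quasi-isomorphism, a step the paper also defers to Berglund.

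One correction to Step~3. $\rU(L_+)$ is acyclic, being dual to the bar construction of the strictly unital $H^*$. So it is not quasi-isomorphic to the reduced cobar construction of the Chevalley--Eilenberg coalgebra of $L$, which models $UL$. What you should make $\lie g$-equivariant is a unital $\Cinf$-morphism $H^* \to \CoCE(L)$ whose linear part is a quasi-isomorphism, and your filtration by Chevalley--Eilenberg degree should then be run on that linear part.
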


\begin{proof}[Proof sketch]
  Berglund first shows that $p$ is modeled by the map of cdgas
  \[ \CoCE(\lie g)  \longto  \CoCE \bigl( \lie g; \CoCE(L) \bigr) \]
  for a specific action of $\lie g$ on $\CoCE(L)$.
  He then equips $\CoCE(\lie g; H^*)$ with the structure of a unital $\Cinf$-algebra over $\CoCE(\lie g)$, and constructs a unital quasi-isomorphism $\CoCE(\lie g; H^*) \to \CoCE(\lie g; \CoCE(L))$ of $\Cinf$-algebras over $\CoCE(\lie g)$.
  We will now explain the construction of the $\Cinf$-algebra structure.
  
  The outer action $(\phi, \xi)$ of $\lie g$ on $L$ induces an action of $\lie g$ on $L_+$ by letting $\lie g$ act on $\eta$ via the map $\xi \colon \lie g \to \shift L$.
  Following the steps of \cref{obs:Cinf_from_Lie}, we obtain a representation of $\lie g$ on $\rT(\shift[-1] H_*)$, which restricts to $\rcoT(\shift H^*) \subset \dual{\rT(\shift[-1] H_*)}$.
  The $\Ainf$-algebra structure on $\CoCE(\lie g; H^*)$ is now uniquely determined by requiring that the canonical isomorphism of graded coalgebras
  \[ \rcoT_{\CoCE(\lie g)} \bigl( \shift \CoCE(\lie g; H^*) \bigr)  \iso  \CoCE \bigl( \lie g; \rcoT(\shift H^*) \bigr) \]
  is an isomorphism of cochain complexes.
  Berglund proves that this in fact makes $\CoCE(\lie g; H^*)$ into a unital $\Cinf$-algebra over $\CoCE(\lie g)$ (with unit $1 \in H^0$).
  
  Note that the differential of $\CoCE ( \lie g; \rcoT(\shift H^*) )$ consists of three parts: one part coming from the differential of $\lie g$, one from the differential of $\rcoT(\shift H^*)$ corresponding to the $\Ainf$-algebra structure on $H^*$ of \cref{obs:Cinf_from_Lie}, and one from the action of $\lie g$ on $\rcoT(\shift H^*)$.
  This implies that the map \eqref{eq:Lie_Cinf_model_restr} lands in $\shift \CoCE[\leq 1](\lie g, H^*) \iso \shift H^* \oplus \Hom_\QQ(\lie g, \shift H^*)$.
  Chasing through the construction, one sees that the two components of this map are as claimed in the statement.
\end{proof}

Note that the action of $\lie g$ on $H^*$ induces a representation of $\lie g$ on the Hochschild homology $\HH_\QQ(H^*)$ of the $\Ainf$-algebra $H^*$ of \cref{obs:Cinf_from_Lie}.
There then is a canonical isomorphism
\[ \HH_{\CoCE(\lie g)} \bigl( \CoCE(\lie g; H^*) \bigr)  \iso  \CoCE \bigl( \lie g; \HH_\QQ(H^*) \bigr) \]
of $\CoCE(\lie g)$-modules.
We now identify the map of \cref{cor:main} in these terms.

\begin{proposition} \label{lemma:Lie_tr}
  In the situation of \cref{lemma:Lie_Cinf_model}, assume that $\Coho * (X; \QQ)$ is finite-dimensional.
  Then the following restriction of the map $\tr$ of \cref{cor:main}
  \[ \HH_\QQ(H^*)  \longto  \CoCE \bigl( \lie g; \HH_\QQ(H^*) \bigr)  \iso  \HH_{\CoCE(\lie g)} \bigl( \CoCE(\lie g; H^*) \bigr)  \xlongto{\tr}  \CoCE(\lie g) \]
  is given by the sum $\tr_0 + \tr_1$, where $\tr_0 \colon \HH_\QQ(H^*) \to \QQ \subseteq \CoCE(\lie g)$ is the following composite of the projection and multiplication by the Euler characteristic of $X$
  \[ \tr_0  \colon  \HH_\QQ(H^*)  \xlongto{\pr}  H^0  \iso  \QQ  \xlongto{\chi(X)}  \QQ \]
  and $\tr_1 \colon \HH_\QQ(H^*) \to \dual{\lie g} \subseteq \CoCE(\lie g)$ is the map whose adjoint is, for $x_0, \dots, x_n \in \shift H^*$ and $\theta \in \lie g$, given by
  \[ x_0 \tensor \dots \tensor x_n \tensor \theta  \longmapsto  \sum_{j = 0}^n \sum_i (-1)^{\epsilon + 1 + \deg \theta \deg {e_i}} \langle x_j \otimes \dots \otimes x_n \otimes x_0 \otimes \dots \otimes x_{j-1} \tensor e_i, \phi(\theta)(\dual{e_i}) \rangle \]
  where $\epsilon \defeq (\deg {x_0} + \dots + \deg {x_{i - 1}}) (\deg {x_i} + \dots + \deg {x_n})$, and $e_i$ and $\langle \blank, \blank \rangle$ are as in \cref{lemma:Lie_Cinf_model}.
\end{proposition}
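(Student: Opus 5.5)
The plan is to compute directly from the formula in \cref{cor:main}, applied with $R = \CoCE(\lie g)$ and $S_\infty = \CoCE(\lie g; H^*)$. The first step is to check that the hypotheses hold. Since $H^*$ is finite-dimensional, $S_\infty \iso \CoCE(\lie g) \tensor H^*$ is finitely generated and free over $\CoCE(\lie g)$, hence cofibrant and dualizable. The map $\tr$ then sends $\shift[-1](x_0 \tensor \dots \tensor x_n)$ with $x_j \in \shift H^*$ to
\[ \sum_j \pm \tr_{\CoCE(\lie g)} \bigl( \mu_{n+2}(x_j, \dots, x_n, x_0, \dots, x_{j-1}, \blank) \bigr). \]
The endomorphism inside the trace is $\CoCE(\lie g)$-linear. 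Hence its trace can be computed on the basis $\shift H^*$ of $\shift S_\infty$, so we only need $\mu_{n+2}$ restricted to $(\shift H^*)^{\tensor n+2}$. By \cref{lemma:Lie_Cinf_model}, this restriction is $m_{n+2} + \psi_{n+2}$, which lands in $\shift H^* \oplus \Hom_\QQ(\lie g, \shift H^*)$. The trace therefore splits into a part in $\QQ = \CoCE[0](\lie g)$ coming from $m$, and a part in $\dual{\lie g}$ coming from $\psi$.

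The second step is to identify the $m$-part with $\tr_0$. The endomorphism $\mu_{n+2}(x_j, \dots, x_{j-1}, \blank)$ of $\shift H^*$ has degree $1 + \sum \deg x_k$, and only its degree-$0$ part can contribute a nonzero trace. I would then use the unital $\Cinf$-structure of \cref{obs:Cinf_from_Lie}: it comes from a minimal model, so it has $m_1 = 0$ and strictly unital higher operations.

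For $n \ge 1$, I expect the $\Cinf$ shuffle relations to make the cyclic sum of traces vanish. The argument would mirror \cref{lemma:Cinf_is_symm}: the sum over cyclic permutations of the first $n+1$ inputs, together with the trace (which cyclically moves the last input), amounts to a sum over all cyclic permutations of $n+2$ letters. By \cref{lemma:cyclic_shuffle}, this sum lies in the span of shuffle sums, on which $m$ vanishes.

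For $n = 0$, the input $x_0$ must be $\shift 1$ to give degree $0$. Then $m_2(\shift 1, \blank) = \id$ up to sign, whose trace is the graded dimension of $\shift H^*$. This equals $-\chi(X)$, and the extra sign $(-1)^{\epsilon+1}$ gives $\chi(X)$ times the projection to $H^0$. This sign bookkeeping I would do carefully but routinely.

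The third step is the $\psi$-part, which gives $\tr_1$. The adjoint of $\psi_{n+2}(x_j, \dots, x_{j-1}, \blank)$ evaluated on $\theta$ has two pieces. The $\xi$-term lands in the span of $\shift 1$. The input $\blank = \shift 1$ is not among the $e_i$, since these form a basis of $\shift \wt H^*$. So the diagonal coefficient of the $\xi$-term is the pairing of $x_j \tensor \dots \tensor \shift 1$ with $\xi(\theta) \in L$, which lies in $\T(\shift[-1]\wt H_*)$ and therefore pairs to zero. Only the $\phi$-term contributes. Taking the diagonal coefficient by inserting $\blank = e_i$ and extracting the $e_i$-component gives
\[ \langle x_j \tensor \dots \tensor x_{j-1} \tensor e_i, \phi(\theta)(\dual{e_i}) \rangle, \]
which is the stated formula. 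Here the Koszul sign $(-1)^{\deg\theta\deg e_i}$ comes from moving $\theta$ past $e_i$ in the adjunction, and it combines with $(-1)^{\epsilon+1}$.

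I expect the main obstacle to be the vanishing of the $m$-part for $n \ge 1$. It requires correctly combining the cyclic sum in \cref{cor:main} with the cyclicity of the trace, so that the total is a full cyclic sum of $m_{n+2}$ paired with the dual basis. Only then does \cref{lemma:cyclic_shuffle} apply. The signs in this reduction are the delicate part.
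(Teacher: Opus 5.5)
Your Steps~1 and~3 agree with the paper's proof: you split $\tr$ into its $\QQ$- and $\dual{\lie g}$-components and read off $\tr_1$ from the $\psi$-part. However, the vanishing of the $m$-part in Hochschild degree $n \ge 1$ in Step~2 rests on a false identity. The trace of $m_{n+2}(y_1, \dots, y_{n+1}, \blank)$ contracts the output with the \emph{last} input slot. Nothing lets you move the contracted slot to another position; that would need an invariant inner product, which is absent here. So a cyclic sum over the $n+1$ inputs $x_j$ does not become a sum over all cyclic permutations of $n+2$ letters, and \cref{lemma:cyclic_shuffle} does not apply. In fact the proof of \cref{lemma:graph_complex_iso} shows that the traces $\tr(\mu_{n+2} \circ \sigma)$ for $\sigma \in \Sigma_{n+1}$ are linearly independent modulo the shuffle relations.

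You can also see the problem internally. Your argument uses only the shuffle relations and formal properties of the trace. It would therefore apply verbatim to the $\CoCE(\lie g)$-linear structure maps $m + \psi$ of the $\Cinf$-algebra $\CoCE(\lie g; H^*)$, and would show that $\tr$ vanishes in every Hochschild degree $\ge 1$. That would kill $\tr_1$ there too, although in Hochschild degree $n$ it sees the word-length-$(n+2)$ part of $\phi(\theta)$ and is generally nonzero. It would also make \cref{thm:trace_vanishes} vacuous.

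The fix uses ingredients you already listed, and is what the paper means by ``for degree reasons''.
\begin{itemize}
\item The $m$-part lands in $\QQ \subseteq \CoCE(\lie g)$, which is concentrated in cohomological degree~$0$. So only degree-$0$ elements of $\HH_\QQ(H^*)$ can contribute.
\item An element $\shift[-1](\shift a_0 \otimes \dots \otimes \shift a_n)$ has degree $\sum_k \deg a_k - n$.
\item Since $X$ is simply connected, $H^0 = \QQ \cdot 1$ and $H^1 = 0$. If $u$ of the basis elements $a_k$ are multiples of $1$, the degree is at least $n + 2 - 2u$. Degree $0$ therefore forces $u \ge 1$.
\item For $n \ge 1$, the operation $m_{n+2}$ then has at least three inputs, one of them $\shift 1$. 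It vanishes by strict unitality of the $\Cinf$-structure of \cref{obs:Cinf_from_Lie}.
\end{itemize}
Hence the $m$-part factors through $\HH_\QQ(H^*) \to H^* \to H^0$. Your $n = 0$ computation then completes the identification with $\tr_0$: $m_2(\shift 1, \blank) = \id$ has trace $-\chi(X)$, and the sign $(-1)^{\epsilon+1}$ gives $\chi(X)$.
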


\begin{proof}
  First note that, by the description of the structure maps of $\CoCE(\lie g; H^*)$, the map indeed lands in $\CoCE[\le 1](\lie g) \iso \QQ \oplus \dual{\lie g}$.
  We write $\tr_0$ and $\tr_1$ for the respective components.
  The map $\tr_0 \colon \HH_\QQ(H^*) \to \QQ$ is the map $\tr$ associated to the $A_\infty$-algebra $H^*$ over $\QQ$.
  For degree reasons, it factors through the projection $\HH_\QQ(H^*) \to H^* \to H^0$.
  On $1 \in H^0$ it is, by definition, given by the Euler characteristic of $H^*$.
  The description of $\tr_1$ is obtained by evaluating the formula for $\tr$ using the explicit formula for the structure maps of $\CoCE(\lie g; H^*)$.
\end{proof}

\begin{remark} \label{rem:ES}
  Note that the map $\tr_1 \colon \HH_\QQ(H^*) \to \dual{\lie g}$ factors through $\HC_\QQ(H^*)$.
  Dualizing this, we obtain a map $\lie g \to \dual{\HC_\QQ(H^*)}$.
  This map is the composite
  \[ \lie g  \longto  \Der(\LL_+)  \longto  \Hom_\QQ \bigl( \shift[-1] H_*, \rT(\shift[-1] H_*) \bigr)  \xlongto{\tau}  \T(\shift[-1] H_*)  \xlongto{\pi}  \dual{\HC_\QQ(H^*)} \]
  where, for a finite-dimensional graded vector space $M$, the map $\tau$ is given by the composite
  \[ \Hom_\QQ \bigl( M, \rT(M) \bigr)  \iso  \Hom_\QQ \bigl( M, \T(M) \tensor M \bigr)  \xlongto{\tr}  \T(M) \]
  and the map $\pi$ is given by the norm maps $M^{\tensor n} \to (M^{\tensor n})^{\Cyclic n}$ and the identification $\shift[-1] H_* \iso \dual{(\shift H^*)}$.
  
  Such a formula has appeared a number of times in the literature.
  It is (the differential graded version) of the trace of Enomoto--Satoh \cite[§7.1]{ES}\footnote{In our conventions, the trace is the sum of the maps they denote by $c_k$.} defined for a closed surface of genus $g$.
  The version for a compact surface of genus $0$ with boundary is closely related to the non-commutative divergence map of Alekseev--Torossian \cite[§3.3]{AT} (see \cite[Theorem~1.3]{AKKN17} and \cite[Proposition~9.11]{AKKN18}, where the Alekseev--Torossian divergence and the Enomoto--Satoh trace are related to the Goldman bracket on genus $0$ and genus $g$ surfaces, respectively).
  It also appears as a low order term of a differential in a certain graph complex, see \cite[Proposition~5]{SeveraWillwacher} and \cite[§7]{NW}.
\end{remark}

%% file: 2applications.tex
\section{Applications}

\subsection{The Becker--Gottlieb transfer}

Recall that the \emph{Becker--Gottlieb transfer} of a map of animas $f \colon X \to Y$ with compact fibers is a ``wrong-way'' map $f^! \colon \SS[Y] \to \SS[X]$ of the suspension spectra.
It was originally defined by Becker--Gottlieb \cite{BG76} for fibrations whose base is finite-dimensional, and extended by Clapp \cite{Cla} to the general case; also see Carmeli--Cnossen--Ramzi--Yanovski \cite[§6]{CCRY} for a treatment in higher categorical language.
By the following result of Lind--Malkiewich \cite{LM}, the Becker--Gottlieb transfer can be recovered from the THH transfer.
In this subsection, we combine this with our rational model of the THH transfer to obtain a rational model for the Becker--Gottlieb transfer.

\begin{proposition}[Lind--Malkiewich] \label{prop:LM}
  Let $f \colon X \to Y$ be a map of animas with compact fibers.
  Then the following diagram commutes
  \[
  \begin{tikzcd}
    \SS[Y] \rar{f^!} \dar[swap]{\alpha} & \SS[X] \\
    \THH(Y) \rar{f^*} & \THH(X) \uar[swap]{e}
  \end{tikzcd}
  \]
  where $\alpha$ is the assembly map, and $e$ is the map $\THH(X) \eq \SS[\L X] \to \SS[X]$ induced by evaluation at the basepoint of $\Sphere{1}$.
\end{proposition}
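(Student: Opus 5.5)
We would not reprove this from scratch: this is the main theorem of Lind--Malkiewich \cite{LM}, and an $\infty$-categorical proof is given by Carmeli--Cnossen--Ramzi--Yanovski \cite[§6.2]{CCRY}. What needs arguing is that their statement matches ours. We use the same THH as the dimension $\Dim(\Sp[X])$ in $\PrLLst$, as in \cref{sec:THH}, and the same assembly and evaluation maps. The plan therefore has two layers: a translation step, and the underlying argument, which we sketch in case the conventions need adjusting.

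\textbf{Reduction to a point.} First we reduce to the case $Y = *$. Write $\SS[Y] \eq \colim_{y \in Y} \SS$ and $\SS[X] \eq \colim_{y \in Y} \SS[X_y]$, with $X_y$ the fiber over $y$. The Becker--Gottlieb transfer is by construction the colimit over $Y$ of the pointwise transfers $\SS \to \SS[X_y]$; in CCRY it is defined via the Wirthmüller-type structure $f_! \simeq f_*$ for compact fibers. Next, the diagram of \cref{prop:LM} is natural for pullback squares in $Y$, since $\alpha$, $e$ and $f^*$ all are. Moreover $\THH(Y) \eq \SS[\L Y]$ receives the colimit over $y \in Y$ of the inclusions of constant loops. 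Together these should let us check commutativity fiberwise, that is, reduce to the maps $X_y \to *$. The point needing care is that the reduction must happen compatibly with the factorization through $\THH(Y)$, not merely after applying $e$. Concretely, we would compare the two composites as maps out of $\SS[Y] = Y_! \SS_Y$. By adjunction they are determined by maps $\SS_Y \to Y^*\SS[X]$ of $Y$-spectra, and those are computed pointwise.

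\textbf{The case $Y = *$.} Here $X$ is compact, so $\Sp[X]$ is dualizable in $\PrLLst$ and $\SS[X]$ is dualizable in $\Sp$. The THH transfer $\SS \eq \THH(*) \to \THH(X)$ is the dimension of $\Sp[X]$ with its canonical coevaluation and evaluation, and composing with $e$ gives a map $\SS \to \SS[X]$. The Becker--Gottlieb transfer of $X \to *$ is the trace of the diagonal $\SS[X] \to \SS[X] \otimes \SS[X]$, viewed as a map with an extra tensor factor $\SS[X]$, in Spanier--Whitehead duality. We would show both are ``the trace of $\id_{\Sp[X]}$ twisted by $\Delta$''. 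The key observation is that, under $\Sp[X] \otimes \Sp[X] \eq \Sp[X \times X]$, the coevaluation of $\Sp[X]$ in $\PrLLst$ is $\Delta_!\Delta^*$-like (its kernel is the diagonal). Evaluation at the basepoint of $\Sphere 1$ then corresponds to remembering only the diagonal factor, which reduces the higher trace to the ordinary trace of $\Delta$ in $\Sp$.

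\textbf{Main obstacle.} The hardest step is this identification in the case $Y = *$: matching the higher-categorical dimension formula with the classical Spanier--Whitehead trace definition of the Becker--Gottlieb transfer, including the role of $e$. This is the content of \cite{LM} and \cite[Theorem~6.12 ff.]{CCRY}, and we would cite it there rather than redo the coherence bookkeeping. The remaining work is verifying that our assembly map of \cref{def:THH_assembly} agrees with theirs; this follows from its characterization as the unique natural transformation restricting to $\SS \eq \THH(*)$.
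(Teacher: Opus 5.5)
Your proposal is essentially the paper's proof: the statement is \cite[Theorem~1.2]{LM}, with a higher categorical proof in \cite[Theorem~6.12]{CCRY}. The only thing to verify is that the assembly map $\alpha$ of \cref{def:THH_assembly} agrees with the map $\SS[Y] \to \SS[\L Y] \eq \THH(Y)$ induced by the inclusion of constant loops; the paper quotes this from Part~I, and your uniqueness argument (a natural transformation out of $\SS[\blank]$ is determined by its value at the point) also gives it.

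Your optional reduction to $Y = *$ is not needed, and it would not work as written. A map $\SS_Y \to Y^*\SS[X]$ is not determined up to homotopy by its values at the points of $Y$. Gluing the fiberwise identifications would require the case $Y = *$ to hold naturally in self-equivalences of the fiber. That is a special case of the parametrized version of \cref{prop:LM}, which the paper notes is not available in the literature.
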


\begin{proof}
  This is \cite[Theorem~1.2]{LM} (see also \cite[Theorem~6.12]{CCRY} for a higher categorical proof), noting that the assembly map $\alpha \colon \SS[Y] \to \THH(Y) \eq \SS[\L Y]$ is, by \cite[Proof of \Ilemmafibassembly]{I}, equivalent to the map induced by the inclusion of the constant loops $Y \to \L Y$
\end{proof}

\begin{corollary} \label{cor:BG}
  Let $f \colon X \to Y$ be a map of simply connected animas that is modeled by a cofibration $\phi \colon R \to S$ of homologically connected cofibrant cdgas of finite homotopical type, and let $S_\infty \to S$ be a unital quasi-isomorphism of $\Ainf$-algebras over $R$.
  Assume that the fiber of $f$ is simply connected and compact, and that $S_\infty$ is cofibrant and dualizable as an $R$-module.
  Then the Becker--Gottlieb transfer $f^! \colon \SS[Y] \to \SS[X]$ is modeled by the map
  \begin{align*}
    S_\infty &\longto R \\
    s &\longmapsto - \tr_R \bigl( \mu^{S_\infty}_{2}(s, \blank) \bigr)
  \end{align*}
  of cochain complexes.
\end{corollary}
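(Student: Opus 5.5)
The plan is to combine \cref{prop:LM} with the rational models established in \cref{sec:model_Ainf}, taking $B = *$ and $G = 1$, so that $\k = \QQ$. By \cref{prop:LM}, $f^! \simeq e \after f^* \after \alpha$. Rationally, \cref{cor:main} models the composite $f^* \after \alpha \colon \SS[Y] \to \THH(X)$ by the map $\tr \colon \HH_\QQ(S_\infty) \to R$. Since the models are contravariant, the model of $e \after (f^* \after \alpha)$ is the composite of the model of $e$, followed by $\tr$. By \cref{lemma:eval_model_Ainf}, applied to $S_\infty \to S$ over $\QQ$, the map $e \colon \THH(X) \to \SS[X]$ is modeled by the inclusion $S_\infty \to \HH_\QQ(S_\infty)$ into Hochschild degree $0$. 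This requires $S_\infty$ to be cofibrant over $\QQ$, which holds automatically since every cochain complex over a field is cofibrant.

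Hence $f^!$ is modeled by the composite
\[ S_\infty \longto \HH_\QQ(S_\infty) \xlongto{\tr} R. \]
Evaluating the formula of \cref{cor:main} in Hochschild degree $n = 0$ leaves a single summand $i = 0$ with $\epsilon = 0$. This gives $s \mapsto - \tr_R\bigl(\mu^{S_\infty}_2(s, \blank)\bigr)$, up to the shift identification $\shift[-1]\shift$, which is the claimed formula.

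The main thing to check is that the composition of models is legitimate. The model of $f^* \after \alpha$ from \cref{cor:main} and the model of $e$ from \cref{lemma:eval_model_Ainf} must be expressed compatibly: both should use the same equivalence $\Psi$ identifying $\QQ$-modules (respectively $R$-modules) with rational spectra over $Y$ and $X$. Concretely, the model of $e$ is a map of $R$-modules of the form $R_\infty$-model, while $\tr$ is a map of $\QQ$-modules landing in $R$, which is then regarded as the model of $\SS[X]$ in spectra over a point. I would therefore first pass through the pushforward along $X \to *$, so that all models live in $\QQ$-modules, and check that the identifications used in \cite[\Ithmmain{} and \Ilemmaevalmodel]{I} are compatible.

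A second point is that \cref{cor:main} is stated for fiberwise nilpotent animas over $\B G$. Here we need the nonequivariant case $G = 1$ with $B = *$, $\k = \QQ$, and the factorization $\QQ \to R \to S$. The hypotheses of \cref{cor:main} then follow from those of the present statement: $X$ and $Y$ are simply connected, hence nilpotent; the fiber of $Y \to *$ is $Y$, which is simply connected; and cofibrancy of $\QQ \to R$ is automatic since $R$ is cofibrant. I expect this bookkeeping of compatible identifications, rather than any computation, to be the main obstacle.
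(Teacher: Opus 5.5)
Your proposal is correct and follows the paper's argument: combine \cref{prop:LM} with \cref{cor:main} and \cref{lemma:eval_model_Ainf}, taking $B = *$, $G = 1$, $\k = \QQ$, and read off the Hochschild-degree-$0$ part of $\tr$. The compatibility issue you flag is lighter than you suggest. With $B = *$, both results already give maps of $\QQ$-modules modeling spectra via the same equivalence from Part~I, so no pushforward is needed (the model of $e$ is not a map of $R$-modules), and the paper simply composes the two models.
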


\begin{proof}
  This follows from \cref{prop:LM} combined with \cref{cor:main,lemma:eval_model_Ainf}.
\end{proof}


\begin{remark}
  Note that, different from everything we have done so far, we prove neither a parametrized nor an equivariant version of \cref{cor:BG}.
  The reason is that we are not aware of a parametrized version of \cref{prop:LM}; phrased differently, this would say that the homotopy $f^! \eq e \circ f^* \circ \alpha$ is natural in self-equivalences of $f$.
  However, it does seem very likely that either of the existing proofs generalizes to show this stronger version.
  This would immediately imply a parametrized version of \cref{cor:BG}, including for equivariant models of fiberwise nilpotent spaces over $\B G$.
\end{remark}

\subsection{The space of THH-simple structures}

By definition, a \emph{simple structure} on a compact anima $X$ is a lift of its A-theory Euler characteristic along the A-theory assembly map (see e.g.\ \cite[Lecture~13]{LurAKM}).
In this subsection, we provide a rational model for the anima of fiberwise \emph{THH-simple structures}, i.e.\ fiberwise lifts of the THH Euler characteristic along the THH assembly map.
See \cref{intro:sec:THH-simple} of the introduction for more background and motivation.

\begin{notation} \label{not:rTHH}
  Let $E \to B$ be a map of animas.
  We write $\rTHH_B(E)$ for the cofiber of the fiberwise assembly map $\SS_B[E] \to \THH_B(E)$ (cf.\ \cref{def:THH_assembly}).
  We furthermore write $\Simptr[B](E)$ for the pullback of the following cospan of animas over $B$
  \[ B  \xlongto{\ol{\chi}}  \Loopsinf[B] \rTHH_B(E)  \xlongfrom{0}  B \]
  where the two maps are adjoint to the maps $\Suspinf[B] B \eq \SS_B \to \rTHH_B(E)$ given by the fiberwise THH Euler characteristic and the trivial map, respectively.
\end{notation}

Note that the fiber $\Simptr(E_b)$ of $\Simptr[B](E)$ over a point $b \in B$ is the anima of null-homotopies of the reduced THH Euler characteristic $\ol{\chi} \colon \SS \to \rTHH(E_b)$; such a null-homotopy is equivalently a lift of the THH Euler characteristic $\chi \colon \SS \to \THH(E_b)$ along the assembly map $\SS[E_b] \to \THH(E_b)$.

\begin{notation} \label{not:rHH}
  Let $R$ be a $\Cinf$-algebra over a cdga $\k$.
  We write $\rHH_\k(R) \subseteq \HH_\k(R)$ for the subcomplex of elements of Hochschild degree $\ge 1$.
  If $R$ is unital, we furthermore write $\rnHH_\k(R) \subseteq \nHH_\k(R)$ for the subcomplex of elements of Hochschild degree $\ge 1$, i.e.\ the image of $\rHH_\k(R)$ in the normalized Hochschild complex (cf.\ \cref{def:nHH}).
\end{notation}

Note that $\rHH_\k(R)$ (and hence $\rnHH_\k(R)$) is indeed closed under the differential by \cref{lemma:HH_proj}.
By \cref{lemma:assembly_model_Cinf}, these $\k$-modules are models for $\rTHH_B(E)$.
In the following theorem, we use the normalized Hochschild complex $\rnHH_\k(R)$ to obtain a cdga that is concentrated in non-negative degrees.

\begin{theorem} \label{thm:trace-simple}
  Let $G$ be a group and $p \colon E \to B$ a map of fiberwise nilpotent animas over $\B G$ that is modeled by a cofibration $\k \to R$ of $G$-equivariant homologically connected cofibrant cdgas of finite homotopical type.
  Assume that the fiber of $p$ is $2$-connected and compact.
  Furthermore let $R_\infty \to R$ be a unital $G$-equivariant quasi-isomorphism of $\Cinf$-algebras over $\k$ such that $R_\infty$ is dualizable as a $\k$-module, the unit map $\eta \colon \k \to R_\infty$ is a cofibration of $\k$-modules, and its cokernel $\quot {R_\infty} 1$ is concentrated in degrees $> 1$.
  Then the map $\Simptr[B](E) \to B$ of animas over $\B G$ is modeled by the $G$-equivariant $\k$-algebra
  \[ \left( \SA_\k \rnHH_\k(R_\infty)[-1], d \defeq d_1 + d_2 \right) \]
  where $d_1$ is induced by the differentials of $\k$ and $\rnHH_\k(R_\infty)$, and $d_2$ is the unique derivation that on wedge degree $1$ is induced by the restriction $\rHH_\k(R_\infty) \to \k$ of the map of \cref{cor:main} (in the target, we identify $\k$ with the part of wedge degree $0$).
\end{theorem}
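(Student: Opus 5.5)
The plan is to identify $\Simptr[B](E)$ with a fiberwise space of null-homotopies of a map of parametrized spectra, translate it into a statement about $\k$-modules using the models already established, and then realize it by a Sullivan-type cdga. First I reformulate: by \cref{not:rTHH}, $\Simptr[B](E)$ is the fiberwise space of null-homotopies of $\ol\chi \colon \SS_B \to \rTHH_B(E)$. Equivalently, it is the fiberwise infinite loop space of $\Sigma^{-1}\rTHH_B(E)$, twisted by the "affine" translation determined by $\ol\chi$. In other words, it is a torsor over $\Loopsinf[B]\Sigma^{-1}\rTHH_B(E)$ classified by $\ol\chi$.

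On the algebraic side, \cref{lemma:assembly_model_Cinf} models the fiberwise assembly map by $\epsilon \colon \HH_\k(R_\infty)\to R_\infty$. Since the Euler characteristic corresponds to \cref{cor:main} with $Y=B$ (where $\alpha$ is an equivalence and the composite uses $\HH_\k(\k)\simeq\k$), the reduced Euler characteristic $\ol\chi$ is modeled by the restriction $\rHH_\k(R_\infty)\to\k$ of $\tr$. Here $\rHH_\k(R_\infty)$ models $\rTHH_B(E)$ as the (co)fiber of $\epsilon$, using that the model of a cofiber sequence of spectra is a fiber sequence of modules under the contravariant equivalence $\Psi$. By \cref{lemma:nHH_qiso} and the cofibrancy statement \cref{lemma:nHH_cofibrant} (applied to the subcomplex of positive Hochschild degree), I replace this by $\rnHH_\k(R_\infty)$. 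The connectivity hypothesis on $R_\infty/1$ makes $\shift(R_\infty/1)$ concentrated in degrees $\ge 1$, so $\rnHH_\k(R_\infty)$ sits in degrees $\ge 1$ and its shift $\rnHH_\k(R_\infty)[-1]$ in degrees $\ge 0$. The $2$-connectivity of the fiber makes $\Simptr$ fiberwise connected and nilpotent; this is the topological counterpart.

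The key structural input is a Braunack-Mayer-type statement, which I would take from \cite{I} or prove via the equivalence $\Psi$. For a $\k$-module $N$ concentrated in degrees $\ge 1$ modeling a connective enough $B$-spectrum $P$, the fiberwise infinite loop space $\Loopsinf[B]\Sigma P$ — or the relevant shift — is modeled by the free cdga $\SA_\k(N[-1])$ with differential induced from $N$. A map $\SS_B\to P$, modeled by $t\colon N\to\k$, twists this: the pullback of $0$ and the point corresponding to $t$ is modeled by adding the derivation $d_2$ extending $t$. Concretely, I would model the pullback square of \cref{not:rTHH} by a homotopy pushout of cdgas. The two sections $B\to\Loopsinf[B]\rTHH_B(E)$ are given by augmentations $\SA_\k(N)\to\k$ that are $0$ and $t$ respectively. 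Computing $\k\otimes^{\mathbb L}_{\SA_\k N}\k$ by the Koszul-type semifree resolution $\SA_\k(N\oplus N[-1])$ yields $\SA_\k(N[-1])$, with the differential $d_1$ plus the derivation sending $sn\mapsto t(n)-0$. This is exactly $d_1+d_2$. Checking $d^2=0$ uses that $t$ is a chain map and $\k$-linear, and $G$-equivariance is automatic since all maps are.

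The main obstacle is the step identifying $\Loopsinf[B]$ of a modeled parametrized spectrum with the free cdga, together with the connectivity and finiteness bookkeeping. Specifically, one needs $\rnHH_\k(R_\infty)$ to be of finite homotopical type and bounded so that the Sullivan correspondence over $\B G$ applies, and one needs the rationalization of $\Loopsinf$ to commute appropriately for fiberwise nilpotent objects. Here I would use the $2$-connectivity, which forces $\rTHH$ fiberwise $1$-connected rationally, and the fiberwise finiteness of $R_\infty$ over $\k$. I would deduce the statement by reduction to fibers over $\B G$: the equivariant statement follows from the nonequivariant fiberwise one, plus naturality in $G$-actions of all the constructions used.
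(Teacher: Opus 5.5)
Your proposal is correct and follows essentially the same route as the paper's proof. Both arguments go as follows:
\begin{itemize}
\item model $\ol{\chi}$ by the restriction of $\tr$, using \cref{lemma:assembly_model_Cinf} and \cref{cor:main};
\item pass to $\rnHH_\k(R_\infty)$;
\item model $\Loopsinf[B] \rTHH_B(E)$ by $\SA_\k \rnHH_\k(R_\infty)$, via the loops--suspension model of \cite{I};
\item use that the fiber of the zero section is connected, which comes from the $2$-connected fiber, to model the pullback as a homotopy pushout of cdgas;
\item compute that pushout with the cone of the identity, which is exactly your Koszul-type resolution.
\end{itemize}
The one check you leave implicit is that $\tr$ descends to $\rnHH_\k(R_\infty)$, which is needed for $d_2$ to be well defined. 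It holds because in Hochschild degree $n \ge 1$ the operation $\mu_{n+2}$ vanishes, by unitality, on any input containing $\shift 1$.
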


\begin{proof}
  By \cref{lemma:assembly_model_Cinf} the maps of $B$-spectra
  \[ \SS_B[E]  \longto  \THH_B(E)  \longto  \rTHH_B(E) \]
  are modeled by the upper horizontal maps in the commutative diagram
  \[
  \begin{tikzcd}
    \rHH_\k(R_\infty) \rar \dar[swap]{\eq} & \HH_\k(R_\infty) \rar \dar{\eq} & R_\infty \dar[equal] \\
    \rnHH_\k(R_\infty) \rar & \nHH_\k(R_\infty) \rar & R_\infty
  \end{tikzcd}
  \]
  of $G$-equivariant $\k$-modules (note that the projection $\HH_\k(R_\infty) \to R_\infty$ factors through the quotient map $\HH_\k(R_\infty) \to \nHH_\k(R_\infty)$).
  By \cref{lemma:nHH_qiso} the middle vertical map is a quasi-isomorphism, and since both rows are short exact sequences, the left-hand vertical map is a quasi-isomorphism as well.
  By \cref{cor:main}, the fiberwise THH Euler characteristic $\chi \colon \SS_B \to \THH_B(E)$ is modeled by the map $\tr \colon \HH_\k(R_\infty) \to \k$, which factors over $\nHH_\k(R_\infty)$ by inspection.
  By \cite[\Ilemmaloopssuspadjmodel]{I}, the following left-hand maps of animas over $B$
  \[ B  \xlongto{\chi}  \Loopsinf[B] \rTHH_B(E)  \xlongfrom{0}  B  \qquad  \k  \longfrom  \SA_\k \rnHH_\k(R_\infty)  \longto  \k\]
  are thus modeled by the right-hand maps of cdgas over $\k$ induced by $\tr \colon \rnHH_\k(R_\infty) \to \k$ and the trivial map, respectively.
  Here we use that $\rnHH_\k(R_\infty)$ is cofibrant by (the proof of) \cref{lemma:nHH_cofibrant}, and concentrated in positive degrees since $\quot {R_\infty} 1$ is concentrated in degrees $> 1$.
  
  Note that the fiber of $0 \colon B \to \Loopsinf[B] \rTHH_B(E)$ is equivalent to $\Loops^{\infty + 1} \rTHH(F)$, where $F$ is the fiber of $p$; since $F$ is $2$-connected by assumption, the spectrum $\rTHH(F) \eq \cofib (\SS[F] \to \SS[\L F])$ is $1$-connected, and hence $\Loops^{\infty + 1} \rTHH(F)$ is connected.
  By \cite[\Iobspullbackeq]{I}, a model for $\Simptr[B](E)$ is thus given by the homotopy pushout of $\k \from \SA_\k \rnHH_\k(R_\infty) \to \k$ in $\CDGAeq{G}$, with the two maps respectively given by $\tr$ and $0$ on the generators.
  To compute this homotopy pushout, we factor the map $\rnHH_\k(R_\infty) \to 0$ as
  \[ \rnHH_\k(R_\infty)  \xlongto{\inc}  \hcofib \bigl( \id \colon \rnHH_\k(R_\infty) \to \rnHH_\k(R_\infty) \bigr)  \xlongto{\eq}  0 \]
  where $\hcofib(f)$ denotes the mapping cone of $f$.
  Since $\rnHH_\k(R_\infty)$ is a cofibrant $\k$-module, the map $\inc$ is a cofibration.
  Hence $\SA_\k \inc$ is a cofibration of cdgas, so the corresponding homotopy pushout is given by the strict pushout (this follows for example from \cite[Lemma~14.2]{FHT}).
  This yields the desired description.
\end{proof}

\begin{remark} \label{rem:Simptr_Lie}
  The map $\tr_1 \colon \HH_\QQ(H^*) \to \dual{\lie g}$ of \cref{lemma:Lie_tr} factors through $\nHH_\QQ(H^*)$, and thus its restriction dualizes to a map $\dual{\tr_1} \colon \lie g \to \dual{\rnHH_\QQ(H^*)}$.
  Similarly, the action of $\lie g$ on $\HH_\QQ(H^*)$ induces a representation on $\dual{\rnHH_\QQ(H^*)}$.
  When the Euler characteristic of $X$ is zero, together this defines an outer action of $\lie g$ on $\dual{\rnHH_\QQ(H^*)}$, considered as an abelian dg Lie algebra.
  In this situation, we can thus form the twisted semi-direct product
  \begin{equation} \label{eq:Simptr_Lie}
    \lie g \ltimes_{\dual{\tr_1}} \dual{\rnHH_\QQ(H^*)}
  \end{equation}
  (see e.g.\ \cite[§3.5]{Ber20}).
  This is a dg Lie model for $\Simptr[B](E)$: by construction and \cref{lemma:Lie_tr}, its Chevalley--Eilenberg cochains are isomorphic to the model for $\Simptr[B](E)$ of \cref{thm:trace-simple} when applied to the $\Cinf$-model of \cref{lemma:Lie_Cinf_model}.
  
  The dg Lie algebra \eqref{eq:Simptr_Lie} is closely related to the dg Lie algebra $\mathrm{GC}^{\mathrm{e}x}_{\bar{H},n}$ defined by Willwacher \cite[§13.5]{W23} as a model for the monoid of self-equivalences of the rationalized right $\mathrm{E}_d$-module $\mathcal{F}_M$ of configurations in a $d$-dimensional parallelized manifold $M$.
  More precisely $\mathrm{GC}^{\mathrm{e}x}_{\bar{H},n}$ is defined as a certain graph complex, and its part of loop order $\leq 1$ is essentially the same as \eqref{eq:Simptr_Lie} with Hochschild homology replaced by dihedral homology.
\end{remark}

\subsection{Manifold bundles}

We recall a result of Dwyer--Weiss--Williams \cite{DWW} that implies that the fiberwise THH Euler characteristic of a fiber bundle with fiber a compact topological manifold $M$ factors through the fiberwise THH assembly map.
(The manifold is allowed to have boundary, and the boundary fiber bundle does \emph{not} need to be trivial.)
As we explain below, this has consequences for certain characteristic classes of $M$-fibrations when evaluated on fiber bundles.

\begin{proposition}[Dwyer--Weiss--Williams] \label{prop:DWW}
  Let $M$ be a compact topological manifold, $\xi \colon B \to \B {\Homeo(M)}$ a map of animas, and $E \to B$ the $M$-bundle classified by $\xi$.
  Then the following composite map of $B$-spectra is null-homotopic
  \[ \SS_B \xlongto{\chi} \THH_B(E) \longto \rTHH_B(E) \]
  where $\chi$ is the fiberwise THH Euler characteristic, and $\rTHH_B(E)$ denotes the cofiber of the fiberwise assembly map (cf.\ \cref{not:rTHH}).
\end{proposition}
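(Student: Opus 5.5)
The plan is to deduce the statement from the topological Riemann--Roch theorem of Dwyer--Weiss--Williams \cite{DWW} for A-theory, and then pass to THH via the Dennis trace. First I reduce to the universal case. The map of $B$-spectra in question is pulled back along $\xi$ from the analogous composite over $\B\Homeo(M)$ for the universal bundle $E^{\mathrm{u}} \to \B\Homeo(M)$. Indeed, $\THH_B$, the fiberwise assembly map, its cofiber $\rTHH_B$, and the fiberwise Euler characteristic are all defined pointwise on functors $B \to \An$, so they are compatible with restriction along maps of bases. It therefore suffices to show that $\SS \to \rTHH(M)$ is null-homotopic $\Homeo(M)$-equivariantly, i.e.\ as a map of functors $\B\Homeo(M) \to \Sp$.

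Next I invoke \cite{DWW}. For bundles with compact topological manifold fibers (boundary allowed, with no triviality condition on the boundary bundle), Dwyer--Weiss--Williams construct a lift of the fiberwise A-theory Euler characteristic $\SS_B \to \A_B(E)$ along the fiberwise A-theory assembly map $\SS_B[E] \tensor \A(*) \to \A_B(E)$. Composing with the unit $\SS \to \A(*)$, this gives a fiberwise null-homotopy of the reduced A-theory Euler characteristic $\SS_B \to \cofib(\text{assembly})$, natural in the base. This is the "topological" half of their Riemann--Roch theorem, which uses that the fibers are Euclidean neighbourhood retracts together with the controlled/bounded A-theory argument. I take it as given.

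Finally I transport the lift along the Dennis trace $\mathrm{tr} \colon \A(\blank) \to \THH(\blank)$, a natural transformation of functors $\An \to \Sp$. Two compatibilities are needed.
\begin{itemize}
\item The Dennis trace intertwines the assembly maps. Assembly is the universal natural transformation from an excisive functor, so naturality reduces this to the case $X = *$, where $\mathrm{tr}_* \colon \A(*) \to \THH(*) \eq \SS$ is compatible with the units.
\item The Dennis trace sends the A-theory Euler characteristic to the THH Euler characteristic.
\end{itemize}
For the second point I view both invariants as obtained by applying a localizing invariant (K-theory, respectively $\Dim$) to the pullback functor $p^*$ on parametrized spectra, evaluated on the unit. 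The Dennis trace is a natural transformation $\mathrm{K} \to \THH$ of such invariants, compatible with the functoriality in strongly left adjoint functors, as in \cite{HSS,CCRY}. Applying this fiberwise over $\B\Homeo(M)$ gives the required equivariant comparison. The DWW lift then maps to a lift of $\chi$ along $\SS_B[E] \to \THH_B(E)$, which is exactly a null-homotopy of the displayed composite.

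I expect the main obstacle to be the second compatibility in its parametrized form: that the Dennis trace identifies the two Euler characteristics naturally in self-equivalences of the fiber, not merely pointwise. This is also where the A-theory Euler characteristic of \cite{DWW} has to be matched with the one defined via transfers. I would try to handle it by working throughout with the symmetric-monoidal, functorial description of the trace (the $\Dim$ functor and its K-theoretic analogue on $\PrLLst$, as in \cite[§4]{CCRY}). In that setting the Euler characteristic is by definition the image of the unit under the transfer, so naturality of the Dennis trace as a transformation of functors on $\PrLLst$ makes the compatibility formal.
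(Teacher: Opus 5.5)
Your proposal is correct and follows essentially the same route as the paper: use \cite[Theorem~8.4]{DWW} to lift the fiberwise A-theory Euler characteristic along the fiberwise A-theory assembly map, then apply the Dennis trace fiberwise to obtain a lift of the THH Euler characteristic along the THH assembly map. The paper leaves implicit the details you spell out: the reduction to the universal bundle, which is harmless but not needed, and the two compatibilities of the trace, with assembly and with the Euler characteristics. It handles these with a single citation of \cite[Lemma~10.5]{BGT} for the Dennis trace.
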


\begin{proof}
  By \cite[Theorem~8.4]{DWW}, there exists a dashed lift in the following diagram
  \[
  \begin{tikzcd}[column sep = 40]
    & \A(*) \tensor \SS_B[E] \dar{\alpha} \\
    \SS_B \rar{\chi} \urar[dashed, bend left = 15] & \A^B(E)
  \end{tikzcd}
  \]
  where $\chi$ is the fiberwise $\A$-theory Euler characteristic (i.e.\ the fiberwise $\A$-theory transfer of the map $E \to B$ of animas over $B$, precomposed with the unit of $\A(*)$) and $\alpha$ is the fiberwise $\A$-theory assembly map.
  (A higher categorical proof of this statement will appear in forthcoming work of Ramzi--Volpe--Wolf \cite{RVW}.)
  Fiberwise applying the Dennis trace map (see e.g.\ \cite[Lemma~10.5]{BGT}) yields a lift in the analogous diagram for $\THH$; taking the cofiber of the assembly map implies the claim.
\end{proof}

We will now explain how this result can be used to deduce relations between certain characteristic classes of fibrations when evaluated on fiber bundles.
Let $X$ be a compact anima, and $X/{\aut(X)} \to \B \aut(X)$ the universal fibration with fiber $X$.
Its (reduced) fiberwise THH Euler characteristic, pushed forward to the point, yields a map
\[ \xi  \colon  \SS[\B \aut(X)]  \longto  \bigl( \B \aut(X) \bigr)_! \rTHH_{\B \aut(X)} \bigl( X/{\aut(X)} \bigr) \]
and thus a cohomology class $\alpha$ of the target yields a cohomology class $\xi^*(\alpha)$ of $\B \aut(X)$, i.e.\ a characteristic class for fibrations with fiber $X$.
Now assume that $X = M$ is a compact topological manifold and let $B \to \B \Homeo(M)$ be a map of animas classifying an $M$-bundle $E \to B$.
We obtain a commutative diagram
\[
\begin{tikzcd}
  \SS[B] \rar{\xi} \dar & B_! \rTHH_B (E) \dar \\
  \SS[\B \aut(M)] \rar{\xi} & \bigl( \B \aut(M) \bigr)_! \rTHH_{\B \aut(M)} \bigl( M/{\aut(M)} \bigr)
\end{tikzcd}
\]
and note that the upper horizontal map is null-homotopic by \cref{prop:DWW}.
Hence, all characteristic classes of the form $\xi^*(\alpha)$ as above vanish on topological fiber bundles with compact manifold fibers.

Our \cref{cor:main} provides an explicit rational description of the fiberwise THH Euler characteristic and hence an explicit description of the image of $\xi^*$ on rational cohomology (also see \cref{sec:graph_classes} below).
Combining it with the result of Dwyer--Weiss--Williams, we obtain the following.
Note that \cref{intro:conj:HC} would imply an analog of this theorem for $\rHC$.

\begin{theorem} \label{thm:trace_vanishes}
  Let $G$ be a group, and $\k \to R$ a cofibration of $G$-equivariant homologically connected cofibrant cdgas of finite homotopical type that models a map $p \colon E \to B$ of fiberwise nilpotent animas over $\B G$.
  Furthermore, let $R_\infty \to R$ be a unital $G$-equivariant quasi-isomorphism of $\Cinf$-algebras over $\k$ such that $R_\infty$ is cofibrant and dualizable as a $\k$-module.
  If $p$ is classified by a map $B \to \B \Homeo(M)$ for a compact simply connected topological manifold $M$, then the following composite is trivial in the derived category $\Underlying(\Modeq{G}{\k})$
  \[ \rHH_\k(R_\infty)  \longto  \HH_\k(R_\infty)  \xlongto{\tr}  \k \]
  where $\tr$ is the map of \cref{cor:main}.
\end{theorem}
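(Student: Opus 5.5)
The plan is to translate \cref{prop:DWW} through the rational model. By \cref{cor:main}, applied with $Y = B$ (so that $\phi = \iota_R$ and the assembly map $\SS_B[B] \eq \SS_B \to \THH_B(B)$ is an equivalence), the fiberwise THH Euler characteristic $\chi \colon \SS_B \to \THH_B(E)$ is modeled by $\tr \colon \HH_\k(R_\infty) \to \k$. Here we take the roles in \cref{cor:main} to be $f = p$, base cdga $\k$ modeling $B$, $R = \k$, and $S_\infty = R_\infty$. The fiber $M$ is compact and simply connected, so the hypotheses hold. By \cref{lemma:assembly_model_Cinf}, the fiberwise assembly map $\SS_B[E] \to \THH_B(E)$ is modeled by the projection $\epsilon \colon \HH_\k(R_\infty) \to R_\infty$. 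This projection is a map of $\k$-modules because $R_\infty$ is $\Cinf$ (\cref{lemma:HH_proj}). Its kernel $\rHH_\k(R_\infty)$, together with the short exact sequence $\rHH_\k(R_\infty) \to \HH_\k(R_\infty) \to R_\infty$, therefore models the cofiber sequence $\SS_B[E] \to \THH_B(E) \to \rTHH_B(E)$. The variance is contravariant: the model equivalence $\Psi$ is defined on the opposite category. Thus $\rHH_\k(R_\infty)$ models $\rTHH_B(E)$, and the inclusion $\rHH_\k(R_\infty) \to \HH_\k(R_\infty)$ models the map $\THH_B(E) \to \rTHH_B(E)$.

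Next, the composite $\SS_B \xrightarrow{\chi} \THH_B(E) \to \rTHH_B(E)$ is modeled by $\rHH_\k(R_\infty) \to \HH_\k(R_\infty) \xrightarrow{\tr} \k$. For this we use that the identification of models is functorial, i.e.\ an equivalence of $\infty$-categories $\Psi$ compatible with the diagram. By \cref{prop:DWW}, the composite of $B$-spectra is null-homotopic. To apply this, observe that $p$ classified by $B \to \B\Homeo(M)$ means $E \to B$ is an $M$-bundle. Rationalization preserves null-homotopies, so $\rat{(\blank)}$ of the composite is null. Since $\Psi_\k$ is an equivalence of $\infty$-categories, its inverse sends this null map to a null map in $\Underlying(\Modeq{G}{\k})$. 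This gives the claim.

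The main obstacle is checking that $\rTHH_B(E)$ and $\SS_B$ lie in the subcategory $\Sp[B]^{G}_{\nil,\ft,\bbl}$ where $\Psi_\k$ is an equivalence, so that a null map on the topological side really pulls back to a null map of models. The relevant spectra are $\SS_B$, $\SS_B[E]$, and $\THH_B(E)\eq \SS_B[\L_B E]$. Their fiberwise rational homotopy is the rational homology of $M$, respectively of $\L M$. These are of finite type and bounded below because $M$ is simply connected and compact. Nilpotence of the action of the fundamental group of the fiber of $B \to \B G$ follows from $E$ being fiberwise nilpotent over $\B G$, as in \cite{I}. The cofiber inherits all three properties. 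Everything else consists of identifications already made in \cite{I}, which we only need to cite.
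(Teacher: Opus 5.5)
Your proposal is correct and follows the paper's argument. The paper uses the same three ingredients: \cref{lemma:assembly_model_Cinf} shows that the inclusion $\rHH_\k(R_\infty) \to \HH_\k(R_\infty)$ models $\THH_B(E) \to \rTHH_B(E)$, \cref{cor:main} with $Y = B$ shows that $\tr$ models $\chi$, and \cref{prop:DWW} then yields the null-homotopy.

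The paper leaves implicit two checks that you spell out: that $\rTHH_B(E)$ lies in the subcategory where $\Psi_\k$ is an equivalence, and that the identifications of $\HH_\k(R_\infty)$ with $\THH_B(E)$ in the two cited results are compatible. Your extra detail on these points is harmless.
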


\begin{proof}
  By \cref{lemma:assembly_model_Cinf}, the map $\THH_B(E) \to \rTHH_B(E)$ is modeled by the inclusion $\rHH_\k(R_\infty) \to \HH_\k(R_\infty)$.
  Then the claim follows from \cref{cor:main,prop:DWW}.
\end{proof}

\subsection{Graph characteristic classes of loop order one} \label{sec:graph_classes}

In forthcoming work, Berglund \cite{Ber} constructs, for a Frobenius $\Cinf$-algebra over a cdga $\k$ (i.e.\ a $\Cinf$-algebra over $\k$ compatibly equipped with a self-duality), a map from a certain graph complex to $\k$ (also see Kontsevich \cite{Kon} and Matsuyuki \cite{Mat} for precursors).
In this subsection, we adapt his work to construct, for a $\Cinf$-algebra $R$ over $\k$ that is only assumed to be dualizable, a map from a certain ``complex of graphs of loop order one'' to $\k$.
We furthermore use \cref{thm:trace_vanishes} to prove that this map vanishes on all ``interesting'' homology classes when $\k \to R$ is a $\Cinf$-model for a fiber bundle $E \to B$ whose fibers are compact manifolds.
To this end, we show that (a variant of) our construction agrees up to quasi-isomorphism with the map $\tr \colon \HC_\k(R) \to \k$ induced by the map of \cref{cor:main}.

\subsubsection*{Wheeled operads}

To formulate these results, we will use (a non-unital version of) the notion of a \emph{wheeled operad} of Markl--Merkulov--Shadrin \cite[§5.1]{MMS}.
Just as operads are modeled on directed trees where every vertex has exactly one outgoing edge, wheeled operads are modeled on ``wheeled'' versions of these as follows.

\begin{definition}
  A \emph{wheeled tree of biarity $(m, n)$} is a connected finite directed graph $T$ with $m$ distinguished \emph{source} vertices labeled from $1$ to $m$, and $n$ distinguished \emph{sink} vertices labeled from $1$ to $n$, such that every vertex has at most one outgoing edge, a source has exactly one outgoing edge and no incoming edges, and a sink has exactly one incoming edge and no outgoing edge.
  An \emph{internal} vertex is a vertex that is neither a sink nor a source; we assume that a wheeled tree has at least one internal vertex.
  We write $V_T$ for the set of internal vertices and $E_T$ for the set of \emph{internal} edges, i.e.\ those between internal vertices.
  For an internal vertex $v$ we write $\inedges(v)$ for the number of incoming edges and $\outedges(v)$ for the number of outgoing edges.
\end{definition}

Note that a wheeled tree of biarity $(m, 0)$ either has exactly one cycle and no vertices without outgoing edges or no cycles and exactly one vertex without outgoing edges, that a wheeled tree of biarity $(m, 1)$ has no cycle and no vertices without outgoing edges, and that there are no wheeled trees of biarity $(m, n)$ with $n > 1$.
By definition, a \emph{non-unital wheeled operad} $\operad P$ in a symmetric monoidal category $\cat C$ consists of two symmetric sequences (as in the following definition) in $\cat C$, equipped with structure maps allowing to compose them along wheeled trees.

\begin{definition}
  Let $\cat C$ be a category.
  A \emph{symmetric sequence} $S$ in $\cat C$ is a sequence $S(m)$ of $\Sigma_m$-modules in $\cat C$ for $m \in \NN$.
  A \emph{symmetric bisequence} $T$ in $\cat C$ is a family $T(m, n)$ of $\Sigma_m$-modules in $\cat C$ for $m \in \NN$ and $n \in \set{0, 1}$.
  If $\cat C$ has an initial object $\emptyset$ and $S$ is a symmetric sequence in $\cat C$, we write $S^\Y$ for the symmetric bisequence given by $S^\Y(m, 1) \defeq S(m)$ and $S^{\mathsf{Y}}(m, 0) \defeq \emptyset$.
\end{definition}

\begin{notation} \label{not:wheeled_trace}
  For a non-unital wheeled operad $\operad P$ in a symmetric monoidal category $\cat C$ and a natural number $m \ge 1$, we write
  \[ \tr \colon \operad P(m, 1) \longto \operad P(m - 1, 0) \]
  for composition along the wheeled tree with one internal vertex with $m$ incoming edges whose outgoing edge is connected to its $m$-th incoming edge.
\end{notation}

Note that a non-unital wheeled operad has an underlying non-unital operad by restricting to $\operad P(m, 1)$ and compositions along trees.
The forgetful functors to both non-unital operads and symmetric bisequences admit left adjoints, so we have notions of free non-unital wheeled operads as follows.

\begin{notation}
  Let $\cat C$ be a cocomplete symmetric monoidal category such that the tensor product preserves colimits in both variables.
  For a symmetric sequence $S$ in $\cat C$, we write $\Fop(S)$ for the non-unital operad freely generated by $S$.
  For a non-unital operad $\operad P$ in $\cat C$, we write $\wheeled{\operad P}$ for its \emph{wheeled envelope}, i.e.\ the non-unital wheeled operad freely generated by $\operad P$.
  For a symmetric bisequence $T$ in $\cat C$, we write $\wFop(T)$ for the non-unital wheeled operad freely generated by $T$.
  Note that $\wFop(T)(m, n)$ consists of wheeled trees of biarity $(m, n)$, with internal vertices labeled by $T$; we write $\cwFop(T)(m, n)$ for the similarly defined $\Sigma_m$-module where we only use \emph{cycle trees}, i.e.\ wheeled trees with a cycle where every internal vertex is part of the cycle.
\end{notation}

Note that there is a canonical isomorphism $\wheeled{\Fop(S)} \iso \wFop(S^\Y)$.
There is also a canonical map $\cwFop(T) \to \wFop(T)$ of symmetric bisequences by including cycle trees into all wheeled trees.

\begin{definition}
  Let $\cat V$ be a symmetric monoidal category and $D$ a dualizable object (cf.\ \cref{def:dualizable}) of a symmetric monoidal $\cat V$-enriched category $\cat C$.
  We define the \emph{wheeled endomorphism operad} $\wEndop(D)$ in $\cat V$ via
  \[ \wEndop(D)(m, n) \defeq \Hom(D^{\tensor m}, D^{\tensor n}) \in \cat V \]
  with the composite along a labeled wheeled tree $T$ of biarity $(m, n)$ defined to be the map $D^{\tensor m} \to D^{\tensor n}$ given by
  \[ \textstyle { \bigl( \bigotimes_{E_T} \ev \otimes \bigotimes_n \id_D \bigr) \circ \omega \circ \bigl( \bigotimes_{v \in V_T} \alpha_v \otimes \bigotimes_{E_T} \id_{\dual D} \bigr) \circ \sigma \circ \bigl( \bigotimes_{E_T} \coev \otimes \bigotimes_m \id_D \bigr) } \]
  where $\alpha_v \in \Hom(D^{\tensor {\inedges(v)}}, D^{\tensor {\outedges(v)}})$ is the label of an internal vertex $v$, the maps $\ev \colon \dual D \tensor D \to \unit$ and $\coev \colon \unit \to D \tensor \dual D$ are the evaluation and coevaluation, and $\omega$ and $\sigma$ are the obvious permutations.
  For a non-unital wheeled operad $\operad P$ in $\cat V$, a \emph{$\operad P$-algebra in $\cat C$} is a dualizable object $A$ of $\cat C$ equipped with a map $\operad P \to \wEndop(A)$ of non-unital wheeled operads in $\cat V$.
\end{definition}

If $\cat C$ is additionally tensored over $\cat V$ and cocomplete, then a $\operad P$-algebra $A$ in $\cat C$ is equivalently described by structure maps $\operad P(m, n) \tensor_{\Sigma_m} A^{\tensor m} \to A^{\tensor n}$.
Also note that the underlying operad of $\wEndop(D)$ is the usual endomorphism operad of $D$ (by one of the triangle identities relating $\ev$ and $\coev$).
We furthermore observe that, on the level of a $\operad P$-algebra $A$, the map $\tr$ of \cref{not:wheeled_trace} corresponds to partially evaluating an operation $\alpha \in \operad P(m, 1)$ on elements $a_1, \dots, a_{m-1} \in A$ and taking the trace of the resulting endomorphism $\alpha(a_1, \dots, a_{m-1}, \blank)$.

We now observe that the structure maps of a $\operad P$-algebra can be bundled into a single map of symmetric sequences, as follows.
For convenience, we restrict to the case where $\cat C = \Mod{\k}$ for some cdga $\k$.

\begin{notation}
  For a symmetric bisequence $\operad P$ in cochain complexes and a module $V$ over a cdga $\k$, we write
  \[ \Schur[\k] {\operad P} {V, n}  \defeq  \bigoplus_{m \ge 0} \operad P(m, n) \tensor_{\Sigma_m} V^{\tensor_\k m} \]
  for $n \in \set{0,1}$.
  If $\operad P$ is equipped with the structure of a non-unital wheeled operad and $V$ with the structure of an algebra in $\k$-modules over the underlying non-unital operad of $\operad P$, we write
  \[ \qSchur[\k] {\operad P} {V, n}  \defeq  \coequalizer \bigl( \begin{tikzcd}[cramped] \Schur[\k] {\operad P} {\Schur[\k] {\operad P} {V, 1}, n} \rar[yshift = 3] \rar[yshift = -3] & \Schur[\k] {\operad P} {V, n} \end{tikzcd} \bigr) \]
  where the two maps respectively use the structure maps of $V$ and the structure maps of $\operad P$.
  When $\k = \QQ$, we omit the subscript.
\end{notation}

Note that, for a non-unital wheeled operad $\operad P$ in cochain complexes, a $\operad P$-algebra $A$ in $\Mod{\k}$ is equivalently described by ``evaluation maps'' $\Schur[\k] {\operad P} {A, n} \to A^{\tensor n}$ (for $n = 0, 1$).
These furthermore induce maps $\qSchur[\k] {\operad P} {A, n} \to A^{\tensor_\k n}$ by construction.
The following gives a more explicit description of $\qSchur[\k] {\wheeled{\operad P}} {A, 0}$ for a free operad $\operad P$.

\begin{lemma} \label{lemma:qSchur}
  Let $S$ be a symmetric sequence in cochain complexes, $\k$ a cdga, and $A$ an algebra in $\k$-modules over the non-unital operad $\Fop(S)$.
  Then there is an isomorphism
  \[ \Schur[\k] {\cwFop(S^\Y)} {A, 0}  \xlongto{\iso}  \qSchur[\k] {\wFop(S^\Y)} {A, 0} \]
  induced by the canonical map $\cwFop(S^\Y) \to \wFop(S^\Y)$.
\end{lemma}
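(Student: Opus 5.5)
The plan is to describe both sides combinatorially in terms of labeled wheeled trees, and then show that every wheeled tree of biarity $(m,0)$ can be reduced to a unique cycle tree.

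\textbf{Description of the target.} Unwinding the coequalizer, $\qSchur[\k] {\wFop(S^\Y)} {A, 0}$ is the quotient of
\[ \Schur[\k] {\wFop(S^\Y)} {A, 0} = \bigoplus_T (\text{labels}) \tensor_{\Sigma_m} A^{\tensor_\k m} \]
(sum over isomorphism classes of wheeled trees $T$ of biarity $(m,0)$ with internal vertices labeled by $S$) by the following relation. Whenever a source input of $T$ is itself fed by a tree-shaped operation $\beta \in \Fop(S)(k)$ applied to elements $a_1, \dots, a_k$, we may either graft $\beta$ into $T$, or first evaluate $\beta(a_1,\dots,a_k) \in A$ using the $\Fop(S)$-algebra structure of $A$. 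Since $\Fop(S)$ is free, it suffices to impose this for the generators of $S$, i.e.\ for grafting a single vertex.

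\textbf{Case analysis on $T$.} By the remark after the definition of wheeled trees, a tree of biarity $(m,0)$ either has exactly one cycle or has exactly one vertex with no outgoing edge. I will first argue that the second kind contributes nothing to $\Schur[\k]{\wFop(S^\Y)}{A,0}$: $S^\Y(m,0)=\emptyset=0$, so no vertex may be labeled by an operation without outputs. Hence such trees do not occur, and every tree has a unique cycle. Such a tree is a cycle tree $C$ with rooted trees (elements of $\Fop(S)$) attached to the inputs of the cycle vertices.

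\textbf{Construction of the inverse.} I will define an inverse map $\qSchur[\k] {\wFop(S^\Y)} {A, 0} \to \Schur[\k] {\cwFop(S^\Y)} {A, 0}$. On a labeled tree with inputs $a_i$, it evaluates every attached rooted tree on its inputs using the $\Fop(S)$-algebra structure of $A$, leaving a labeled cycle tree with inputs in $A$. Three things then need checking.
\begin{itemize}
\item The map is well defined on the $\Sigma_m$-coinvariants, by equivariance of the evaluation.
\item It respects the coequalizer relation. Grafting at a non-cycle input, or evaluating first, give the same result by associativity of the algebra action. Grafting $\beta$ onto an input of a cycle vertex just creates a new attached tree, which is again evaluated.
\item It is inverse to the canonical map. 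One composite is visibly the identity on cycle trees. For the other, the relation itself identifies any tree with its evaluated cycle tree, by repeatedly applying the coequalizer relation to the attached subtrees, starting at the outermost vertices.
\end{itemize}

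\textbf{Main obstacle.} The main difficulty is bookkeeping rather than conceptual. One must check that the coequalizer relation is really generated by single-vertex graftings and can be applied iteratively. Along the way, the Koszul signs and the symmetric-group coinvariants must be matched, in particular the cyclic symmetries of cycle trees, which are preserved since evaluation never touches the cycle. I would handle this by inducting on the number of non-cycle internal vertices.
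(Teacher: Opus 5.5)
Your proposal is correct and follows the paper's proof. The paper also identifies $\Schur[\k]{\wFop(S^\Y)}{A,0}$ with $\Schur[\k]{\cwFop(S^\Y)}{A \oplus \Schur{\Fop(S)}{A,1},0}$: every wheeled tree of biarity $(m,0)$ is a cycle tree whose free inputs carry either a bare source or a rooted tree. It then evaluates the attached rooted trees using the algebra structure of $A$, and notes that the induced map out of the coequalizer is inverse to the canonical one. Your induction on non-cycle vertices and the reduction to single-vertex graftings are unnecessary: one application of the coequalizer relation, grafting all attached trees at once, already identifies a tree with its evaluated cycle tree.
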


\begin{proof}
  Consider the zig-zag
  \[ \Schur[\k] {\wFop(S^\Y)} {A, 0}  \xlongfrom{\iso}  \Schur[\k] {\cwFop(S^\Y)} {A \oplus \Schur {\Fop(S)} {A, 1}, 0}  \longto  \Schur[\k] {\cwFop(S^\Y)} {A, 0} \]
  where the left-hand map is induced by the structure maps of $\wFop(S^\Y)$ and the right-hand map by the structure maps of $A$.
  The left-hand map is an isomorphism, and we obtain an induced map $\qSchur[\k] {\wFop(S^\Y)} {A, 0} \to \Schur[\k] {\cwFop(S^\Y)} {A, 0}$, which is inverse to the map from the statement by construction.
\end{proof}

\subsubsection*{The loop-order-one graph complex}

We first recall the operads representing $\Ainf$- and $\Cinf$-algebras, using our grading conventions.

\begin{definition} \label{def:Cinf_operad}
  We define $\sAinf$ to be the non-unital operad in cochain complexes whose underlying non-unital operad in graded vector spaces is freely generated by the symmetric sequence $A(n) \defeq \QQ \langle \mu_n \rangle \tensor \QQ[\Sigma_n]$ for $n \ge 2$ with $\mu_n$ of degree $1$ and whose differential is given by
  \[ d(\mu_n)  \defeq  - \sum_{\substack{r+s+t = n \\ 1 < s < n}} \mu_{r+1+t} \circ_{r+1} \mu_s \]
  where $\circ_i$ denotes composition along the $i$-th input.
  We define $\sCinf$ analogously, except that it is generated by the symmetric sequence $C(n) \defeq \QQ \langle \mu_n \rangle \tensor \quot{\QQ[\Sigma_n]}{S_n}$ for $n \ge 2$, where $S_n \subseteq \QQ[\Sigma_n]$ denotes the $\Sigma_n$-subrepresentation generated by the elements $\sum_{\sigma \in \Sh p q} \sigma$ for all $p, q \ge 1$ with $p + q = n$ (cf.\ \cref{def:shuffle}).
  We write $\Ainf$ and $\Cinf$ for the non-unital operads obtained as the operadic desuspensions (see e.g.\ \cite[§7.2.2]{LV}) of $\sAinf$ and $\sCinf$, respectively.
\end{definition}

By definition, an $\Ainf$-algebra $R$ over $\k$ is the same thing as the structure of an algebra over the non-unital operad $\Ainf$ on the $\k$-module $R$, which is equivalent to an $\sAinf$-algebra structure on $\shift R$ (and analogously for $\sCinf$).
Note however that maps of algebras over the non-unital operad $\Ainf$ correspond only to \emph{strict} maps of $\Ainf$-algebras.
Note that the degree of $\mu_n$ in $\Ainf$ and $\Cinf$ is $2 - n$, and recall that the maps $\Ainf \to \Ass$ and $\Cinf \to \Com$, given by sending $\mu_2$ to the multiplication and $\mu_n$ to $0$ for $n > 2$, are quasi-isomorphisms (see e.g.\ \cite[Corollary~9.2.5 and §13.1.8]{LV}).

In particular we can now form the wheeled envelopes $\wheeled{\sAinf}$, $\wheeled{\sCinf}$, $\wheeled{\Ainf}$, and $\wheeled{\Cinf}$.
They are spanned by wheeled trees whose vertices are labeled by the $\mu_n$, and their differentials are sums of edge expansions: replacing a vertex with two vertices connected by an edge.
Hence they can be considered to be ``complexes of hairy directed graphs of loop order at most one''.

\begin{remark}
  The (unitalizations of the) wheeled envelopes $\wheeled{\Ainf}$ and $\wheeled{\Cinf}$ are \emph{not} equivalent to the wheeled operads $\mathrm{Ass}^\circlearrowright_\infty$ and $\mathrm{Com}^\circlearrowright_\infty$ of \cite[Theorems~A and B]{MMS}; the latter are resolutions of the wheeled envelopes $\wheeled{\Ass}$ and $\wheeled{\Com}$.
  
  Also note that the map $\tr$ of \cref{cor:main} looks like a commutative analog of the ``cyclic characteristic class'' of \cite[§6.7]{MMS}.
  In particular it seems plausible that an analog of \cite[Theorem~6.7.3]{MMS} proves that, for a finite-dimensional $\Cinf$-algebra $R$ over $\QQ$, the cochain map $\tr \colon \HH_\QQ(R) \to \QQ$ is null-homotopic if and only if $R$ can be extended to an algebra over $\mathrm{Com}^\circlearrowleft_\infty$.
\end{remark}

\begin{remark} \label{rem:Berglund_map}
  Given a $\Cinf$-algebra $R$ over $\k$ such that $R$ is dualizable as a $\k$-module, we obtain an evaluation map $\Schur[\k] {\wheeled{\Cinf}} {R, 0} \to \k$.
  This is an analog for dualizable $\Cinf$-algebras of a map constructed by Berglund \cite{Ber} for Frobenius $\Cinf$-algebras (which are in particular dualizable).
  Instead of the wheeled envelope $\wheeled{\Cinf}$, he uses the modular envelope $\mathbb{M}_{\mathrm{det}^d}(\Cinf)$ of Getzler--Kapranov \cite{GK} for a certain ``hyperoperad'' $\det^d$ (here $d$ is the degree of the Frobenius $\Cinf$-algebra) and obtains a map $\Schur[\k] {\mathbb{M}_{\mathrm{det}^d}(\Cinf)} {R} \to \k$.
  One can think of $\mathbb{M}_{\mathrm{det}^d}(\Cinf)$ as a complex of hairy graphs whose vertices are labeled by the operations $\mu_n$; in fact it is isomorphic (up to shifts) to the even or the odd Lie graph complex, depending on the parity of $d$.
  There is a map $\wheeled{\Cinf}(\blank, 0) \to \mathbb{M}_{\mathrm{det}^d}(\Cinf)$ of symmetric sequences for any $d$, given by forgetting the direction of an edge (or rather, remembering it only as an orientation), and the resulting diagram
  \[
  \begin{tikzcd}
    \Schur[\k] {\wheeled{\Cinf}} {R, 0} \dar \drar[bend left] & \\
    \Schur[\k] {\mathbb{M}_{\mathrm{det}^d}(\Cinf)} {R} \rar & \k
  \end{tikzcd}
  \]
  commutes for a Frobenius $\Cinf$-algebra $R$ over $\k$.
  The map $\wheeled{\Cinf}(\blank, 0) \to \mathbb{M}_{\mathrm{det}^d}(\Cinf)$ is surjective on homology onto the part consisting of graphs of loop order one (see \cref{rem:comparison_gc} below); in particular this shows that the loop order one part of Berglund's construction can be defined more generally for dualizable $\Cinf$-algebras.
\end{remark}

We will now prove that $\qSchur[\k] {\wheeled{\Cinf}} {R, 0} \iso \qSchur[\k] {\wheeled{\sCinf}} {\shift R, 0}$ is quasi-isomorphic to the Connes complex of the $\Cinf$-algebra $R$.
This uses similar arguments to work of Conant--Kassabov--Vogtmann \cite[§6]{CKV}, who related the loop-order one part of the (undirected) hairy Lie graph complex to dihedral homology of free symmetric algebras.

\begin{lemma} \label{lemma:graph_complex_iso}
  Let $\k$ be a cdga and $R$ a $\Cinf$-algebra over $\k$.
  Then the following map is an isomorphism of $\k$-modules
  \begin{align*}
    \HC_\k(\rB R)  &\xlongto{\iso}  \qSchur[\k] {\wheeled{\sCinf}} {\shift R, 0} \\
    \vec x_0 \tensor \dots \tensor \vec x_n  &\longmapsto  \tr(\mu_{k_0 + 1} \circ_{k_0 + 1} \dots \circ_{k_{n-1} + 1} \mu_{k_n + 1}) \tensor (\vec x_0 \tensor \dots \tensor \vec x_n)
  \end{align*}
  where $\vec x_i = x_{i,1} \tensor \dots \tensor x_{i,k_i} \in \rB R$ for $0 \le i \le n$.
  Here $\rB R$ denotes the non-counital bar construction of $R$ (see \cref{def:Ainf}) and $\HC_\k(\rB R)$ the cyclic homology of this $\k$-coalgebra (defined exactly dually to cyclic homology of a $\k$-algebra, see \cref{def:HH}).
\end{lemma}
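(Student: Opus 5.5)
The plan is to compute both sides as graded $\k$-modules, identify them summand by summand, and then check that the differentials agree.

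For the right-hand side, the operad $\sCinf$ is, as a graded operad, free on the symmetric sequence $C(n) = \QQ\langle\mu_n\rangle \tensor \quot{\QQ[\Sigma_n]}{S_n}$. So \cref{lemma:qSchur} applies with $S = C$ and gives an isomorphism of graded $\k$-modules
\[ \Schur[\k] {\cwFop(C^\Y)} {\shift R, 0} \iso \qSchur[\k] {\wheeled{\sCinf}} {\shift R, 0}. \]
Here a cycle tree consists of $n+1$ internal vertices arranged in a directed cycle. The vertex labelled $\mu_{k_i+1}$ has its $k_i$ remaining inputs being hairs. After taking coinvariants for the relabelling of hairs, an element is a cyclically ordered list of pairs (vertex label, ordered hair inputs), modulo the relations $S_{k_i+1}$ at each vertex.

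Next I would use \cref{lemma:cyclic_shuffle}-type reasoning at each vertex. Modulo shuffles, an element of $\QQ[\Sigma_{k+1}]$ acting on the inputs of $\mu_{k+1}$ is determined by where the cycle edge enters, together with the ordering of the hairs. More precisely, $\quot{\QQ[\Sigma_{k+1}]}{S_{k+1}} \iso \dual{\Lie(k+1)}$. Fixing the distinguished input (the one fed by the cycle) to be last, the quotient is spanned freely by orderings of the remaining $k$ inputs. This uses the standard fact that $\Lie(k+1)$ is free of rank $k!$, with basis the right-normed brackets having a fixed letter in a fixed position.

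With this in hand, each vertex contributes exactly one element $\vec x_i \in (\shift R)^{\tensor k_i}$, i.e.\ an element of $\rB R$. The cycle contributes the cyclic coinvariants over $\Cyclic{n+1}$. This yields precisely the graded module $\shift[-1]\bigoplus_n (\rB R)^{\tensor_\k n+1}_{\Cyclic{n+1}}$, which is the underlying module of $\HC_\k(\rB R)$. Tracing through the identifications shows that the bijection is exactly the stated map $\vec x_0\tensor\dots\tensor\vec x_n \mapsto \tr(\mu_{k_0+1}\circ\dots)\tensor(\dots)$. The sign and shift bookkeeping comes from $\tr$ raising nothing and the $\mu$'s having degree $1$.

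Finally I would check compatibility with the differentials. The differential on $\HC_\k(\rB R)$ has three parts:
\begin{itemize}
\item the internal differential of $\rB R$, built from the $\mu^R$ acting on hair strings;
\item the coproduct part, which deconcatenates some $\vec x_i$ into two consecutive entries;
\item the differentials of $\k$ and $R$.
\end{itemize}
On the graph side, the differential of $\wheeled{\sCinf}$ expands vertices.
\begin{itemize}
\item Expanding a vertex so that the new edge leaves along the cycle corresponds to deconcatenation, splitting one vertex into two consecutive cycle vertices.
\item Expanding so that a new vertex hangs off as a tree with only hair inputs becomes, after the coequalizer, an application of $\mu^R$ to a block of consecutive hairs. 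This is the bar differential.
\end{itemize}
I expect the main obstacle to be this last step. One must ensure that the identification at each vertex via the normal form (cycle input last, hairs ordered) is compatible with edge expansion. Expansions producing non-consecutive hair blocks must reduce, modulo $S$ and the coequalizer relation, to consecutive ones. Signs from the degree-$1$ generators and cyclic permutations must also match. I would verify this by computing on basis elements with one vertex and then using that both differentials are derivations with respect to the cyclic concatenation.
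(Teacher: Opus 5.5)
Your proposal is correct and follows essentially the same route as the paper. You reduce to cycle trees via \cref{lemma:qSchur}, identify $\quot{\QQ[\Sigma_{k+1}]}{S_{k+1}}$ with $\QQ[\Sigma_k]$ by putting the cycle input last, and check the differentials directly. The one difference is how the per-vertex identification is obtained: the paper uses the shuffle relations for spanning and the count $k!$, whereas you use the pairing with the right-normed Lie basis, where you should note that each such bracket contains exactly one word ending in the fixed letter. The obstacle you anticipate does not arise, since $d(\mu_{k+1})$ only produces order-preserving composites $\mu_{r+1+t} \circ_{r+1} \mu_s$, which automatically keep the cycle input last and the hair blocks consecutive.
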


\begin{proof}
  Unwinding the definitions, we see that the map in the statement is compatible with the differentials; in particular it is a map of $\k$-modules.
  Now let $\cwsCinf \defeq \cwFop(C)$, where $C$ is the symmetric sequence of graded vector spaces of \cref{def:Cinf_operad}.
  By \cref{lemma:qSchur}, the inclusion induces an isomorphism $\Schur[\k] \cwsCinf {R, 0} \iso \qSchur[\k] {\wheeled{\sCinf}} {R, 0}$ of graded $\k$-modules.
  Hence it is enough to prove that the map $\HC_\k(\rB R) \to \qSchur[\k] \cwsCinf {R, 0}$, defined as in the statement, is an isomorphism of graded $\k$-modules.
  
  We first note that, using the shuffle relations, an element of $\quot{\QQ[\Sigma_n]}{S_n}$ represented by $\sigma \in \Sigma_n$ such that $\sigma(n) < n$ is equal to a linear combination of $\omega \in \Sigma_n$ such that $\omega(n) > \sigma(n)$.
  By iterating this procedure, we see that $\sigma$ is equivalent to a linear combination of $\omega \in \Sigma_n$ such that $\omega(n) = n$.
  It is furthermore well-known that $\quot{\QQ[\Sigma_n]}{S_n} \iso \dual{\Lie(n)}$ and that this vector space is $(n-1)!$-dimensional (see e.g.\ \cite[Theorem~1.3.6 and §13.2.3]{LV}).
  Hence the inclusion $\Sigma_{n-1} \to \Sigma_n$ induces an isomorphism $\QQ[\Sigma_{n-1}] \iso \quot{\QQ[\Sigma_n]}{S_n}$.
  
  Thus a basis of $\cwsCinf(m, 0)$ is given by cycle trees up to rotation whose internal vertices are labeled by $\mu_n \tensor \sigma$ such that $\sigma(n) = n$ and such that the incoming edge belonging to the cycle is the $n$-th incoming edge.
  Hence the map $\HC_\k(\rB R) \to \qSchur[\k] \cwsCinf {R, 0}$ is an isomorphism.
\end{proof}

\begin{proposition} \label{prop:HC_qSchur}
  Let $\k$ be a cdga and $R$ a $\Cinf$-algebra over $\k$.
  Then the following map is a quasi-isomorphism of $\k$-modules
  \begin{align*}
    \HC_\k(R)  &\xlongto{\eq}  \qSchur[\k] {\wheeled{\sCinf}} {\shift R, 0} \\
    x_0 \tensor \dots \tensor x_n  &\longmapsto  \sum_{i = 0}^n (-1)^\epsilon \tr(\mu_{n+2}) \tensor x_i \tensor \dots \tensor x_n \tensor x_0 \tensor \dots \tensor x_{i - 1}
  \end{align*}
  where $\epsilon \defeq (\deg {x_0} + \dots + \deg {x_{i - 1}}) (\deg {x_i} + \dots + \deg {x_n})$.
  It is natural in strict maps of $\Cinf$-algebras over $\k$.
  Moreover, if $R$ is dualizable as a $\k$-module, then the following composite with the evaluation map
  \[ \HC_\k(R)  \xlongto{\eq}  \qSchur[\k] {\wheeled{\sCinf}} {\shift R, 0}  \longto  \k \]
  is $-1$ times the map induced by the map $\tr$ of \cref{cor:main}.
\end{proposition}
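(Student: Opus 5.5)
The plan is to factor the map in the statement as the composite
\[ \HC_\k(R) \xlongto{\eq} \HC_\k(\rB R) \xlongto{\iso} \qSchur[\k] {\wheeled{\sCinf}} {\shift R, 0}, \]
where the first map is the quasi-isomorphism of \cref{lemma:HC_of_bar} and the second is the isomorphism of \cref{lemma:graph_complex_iso}.

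First I would check that this composite is given by the stated formula. An element $x_0 \tensor \dots \tensor x_n$ is sent by \cref{lemma:HC_of_bar} to the signed sum of its cyclic rotations. Each rotation is regarded as a single element of $\rB R$, so it has Hochschild degree $0$ in $\HC_\k(\rB R)$, with $k_0 = n+1$. \cref{lemma:graph_complex_iso} then sends such an element $\vec x_0$ to $\tr(\mu_{k_0+1}) \tensor \vec x_0 = \tr(\mu_{n+2}) \tensor \vec x_0$. The Koszul signs $\epsilon$ are the ones in \cref{lemma:HC_of_bar}, so the composite agrees with the stated map. It is a quasi-isomorphism because it is the composite of a quasi-isomorphism and an isomorphism.

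Next I would check naturality in strict maps $f$ of $\Cinf$-algebras. A strict map has only $f_1$ nonzero, so it acts on $\HC_\k(R)$ and on $\qSchur[\k]{\wheeled{\sCinf}}{\shift R,0}$ factorwise by $f_1$. The formula in the statement only permutes tensor factors and inserts the fixed operation $\tr(\mu_{n+2})$, so it commutes with applying $f_1$ factorwise.

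For the last claim, assume $R$ is dualizable as a $\k$-module. By the observation after the definition of $\wEndop$, the evaluation map sends $\tr(\mu_{n+2}) \tensor y_0 \tensor \dots \tensor y_n$ to the $\k$-linear trace of $\mu^R_{n+2}(y_0, \dots, y_n, \blank)$. Here the $\sCinf$-structure on $\shift R$ is $\mu^R$, and the trace of an endomorphism of $\shift R$ equals minus the trace of the corresponding endomorphism of $R$, since the shift is odd. Combining this with the formula of \cref{cor:main}, which already carries the sign $(-1)^{\epsilon+1}$, each summand agrees up to one global sign, giving $-1$ times $\tr$. I also need that $\tr$ descends to $\HC_\k(R)$, which holds because its formula is cyclically symmetric.

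The main obstacle is the sign bookkeeping in this last step. Three sign conventions enter: the one relating $\Cinf$ to its operadic suspension $\sCinf$ (the operadic desuspension of \cref{def:Cinf_operad}), the supertrace sign on $\shift R$ versus $R$, and the signs $\epsilon$. I would settle the global sign by evaluating on the lowest case $n=0$, where the stated map sends $x_0 \mapsto \tr(\mu_2) \tensor x_0$, and comparing directly with \cref{cor:main}. I would then argue that the remaining signs are uniform in $n$ because both sides use identical Koszul conventions for the cyclic rotations.
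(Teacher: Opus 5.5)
Your proposal is the paper's proof: the map is the composite of the quasi-isomorphism of \cref{lemma:HC_of_bar} with the isomorphism of \cref{lemma:graph_complex_iso}. Your check of the formula (each rotation lands in Hochschild degree $0$ of $\HC_\k(\rB R)$ with $k_0 = n+1$) is exactly what is left implicit there.

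One correction on the sign in the last claim. The trace in \cref{cor:main} is already the trace of an endomorphism of $\shift S_\infty$, since it comes from \cref{thm:main} with $M = \shift S_\infty$, where $\mu^M_{n+1} = \mu^{S_\infty}_{n+2}$. So no conversion between traces on $\shift R$ and on $R$ is needed, and the factor $-1$ is just $(-1)^{\epsilon}$ versus $(-1)^{\epsilon+1}$. Inserting the additional supertrace sign you suggest would flip the result to $+1$.
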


\begin{proof}
  This follows by combining \cref{lemma:graph_complex_iso,lemma:HC_of_bar}.
\end{proof}

Note that $\wheeled{\Cinf}(n, 1) \iso \Cinf(n) \eq \Com(n)$, so that $\Coho * (\wheeled{\Cinf})(n, 1) \iso \QQ$ (spanned by any wheeled tree of biarity $(n, 1)$ such that every internal vertex has exactly two incoming edges).
The homology of $\wheeled{\Cinf}(n, 0)$ is interesting however, and we will now compute it as a symmetric sequence using \cref{prop:HC_qSchur}.
Forgetting the $\Sigma_n$-actions, this recovers a result of Markl--Merkulov--Shadrin \cite[Theorem~7.1.1]{MMS}, who proved that the dimension of $\Coho k (\wheeled{\Cinf})(n, 0)$ is $\binom{n-1}{k}$.
In particular $\Coho 0 (\wheeled{\Cinf})(n, 0)$ is one-dimensional for $n \ge 1$, and is thus spanned by any wheeled tree of biarity $(n, 0)$ such that every internal vertex has exactly two incoming edges (this can again be seen using the map $\wheeled{\Cinf} \to \wheeled{\Com}$).
Our argument is similar to work of Conant--Kassabov--Vogtmann \cite[§6.3]{CKV}, who computed the loop-order one part of the (undirected) hairy Lie graph complex.

\begin{proposition} \label{prop:wheeled_Cinf_homology}
  For $n \ge 1$, there is an isomorphism of graded $\Sigma_n$-representations
  \[ \Coho* ( \wheeled{\Cinf} ) (n, 0)  \iso  \SA \shift (\quot {\QQ^n} \Delta) \]
  where $\Delta \defeq (1, \dots, 1) \in \QQ^n$.
  That is, the $k$-th homology of $\wheeled{\Cinf}(n, 0)$ is isomorphic to the $k$-th exterior power of the standard representation of $\Sigma_n$.
\end{proposition}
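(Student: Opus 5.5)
The plan is to compute $\Coho*(\wheeled{\Cinf})(n,0)$ as the multilinear part of the Connes complex of a free $\Cinf$-algebra, using \cref{prop:HC_qSchur}. Take $\k = \QQ$ and let $V = \QQ\langle v_1, \dots, v_n\rangle$, with the $v_i$ placed in a degree chosen so that the Koszul signs in $(\shift V)^{\tensor n}$ are trivial. Let $R = \Cinf(V)$ be the free $\Cinf$-algebra on $V$, viewed as an algebra over the operad $\Cinf$. Then $\qSchur{\wheeled{\Cinf}}{R,0}$ is the free construction $\Schur{\wheeled{\Cinf}}{V,0}$, and its part that is multilinear in $v_1, \dots, v_n$ is $\wheeled{\Cinf}(n,0)$. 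This identification is $\Sigma_n$-equivariant, where $\Sigma_n$ acts by permuting the $v_i$; after degree shifts, $\wheeled{\Cinf}$ and $\wheeled{\sCinf}$ differ only by a sign twist, which I will track. \Cref{prop:HC_qSchur} gives a quasi-isomorphism $\HC(R) \to \qSchur{\wheeled{\sCinf}}{\shift R, 0}$ that is natural in strict maps. The torus $(\QQ^\times)^n$ rescaling the generators acts by strict maps, so the quasi-isomorphism respects the multigrading. Hence $\wheeled{\Cinf}(n,0)$ is quasi-isomorphic, $\Sigma_n$-equivariantly, to the multidegree-$(1,\dots,1)$ part of $\HC(\Cinf(V))$.

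Next I will replace $\Cinf(V)$ by the free graded commutative non-unital algebra $\rSA V$. The quasi-isomorphism of operads $\Cinf \to \Com$ induces a strict quasi-isomorphism $\Cinf(V) \to \rSA V$ of $\Cinf$-algebras. This map is natural and preserves the multigrading, so \cref{lemma:HH_qiso} reduces the problem to computing the $(1,\dots,1)$ multigraded piece of the cyclic homology of the polynomial algebra $\rSA V$ in the variables $v_i$, with the grading chosen as above. For this I will use the Hochschild--Kostant--Rosenberg theorem: $\HH(\SA V) \iso \SA V \tensor \SA(\shift V)$, that is, Kähler forms. Connes' $B$ is the de Rham differential, so the reduced cyclic homology of the unital algebra is $\Omega^{\bullet}/d\Omega^{\bullet-1}$ in the appropriate degrees, which is exact de Rham forms up to shift. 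The non-unital $\HC(\rSA V)$ differs from this only by the contribution of $\HC(\QQ)$, and that lives in multidegree $0$.

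The multilinear part of $\Omega^k(\SA V)$ is spanned by forms $v_{i_1}\cdots v_{i_{n-k}}\, dv_{j_1}\cdots dv_{j_k}$ with $\{i\}\sqcup\{j\} = \{1,\dots,n\}$; as a representation it is $\Lambda^k(\QQ^n)$ with an appropriate sign twist. The de Rham differential on these multilinear pieces is exact, because the Euler vector field acts by $n \neq 0$ in this multidegree. Hence the multilinear part of $\Omega^k/d\Omega^{k-1}$ is isomorphic to $d\Omega^k$, which in turn is isomorphic to $\Omega^k/\mathrm{im}(d)$. Concretely, as representations $\Lambda^k(\QQ^n) \iso \Lambda^k(\quot{\QQ^n}{\Delta}) \oplus \Lambda^{k-1}(\quot{\QQ^n}{\Delta})$, and $d$ is the isomorphism matching the second summand in degree $k$ with the first summand in degree $k-1$. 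The quotient $\Omega^k/d\Omega^{k-1}$ therefore leaves $\Lambda^k(\quot{\QQ^n}{\Delta})$. Assembling over $k$ gives $\SA\shift(\quot{\QQ^n}{\Delta})$, with the degree shift producing exterior powers.

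The main obstacle I expect is bookkeeping rather than any single conceptual step. First, the signs have to work out so that the final answer is the exterior powers of the standard representation, not a twist by the sign representation; this depends on choosing the parity of the generators $v_i$ correctly relative to the operadic desuspension between $\Cinf$ and $\sCinf$. Second, the homological degrees have to line up so that $\Lambda^k$ lands in degree $k$. As a sanity check, I will verify that $k=0$ gives the trivial one-dimensional representation, matching the trivalent class, and that the dimensions are $\binom{n-1}{k}$, matching Markl--Merkulov--Shadrin.
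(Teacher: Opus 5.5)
Your proposal is correct and follows essentially the same route as the paper: identify the Connes complex of the free $\Cinf$-algebra with $\Schur{\wheeled{\Cinf}}{V,0}$ via \cref{prop:HC_qSchur}, then pass to the free non-unital commutative algebra. From there you use Loday's description of its cyclic homology as $\Omega^*/d\Omega^{*-1}$, and identify the relevant de Rham piece with $\SA\shift\QQ^n$ carrying the differential $\shift\Delta\wedge(\blank)$. The only real difference is how the $\Sigma_n$-representation is extracted: you take the multilinear part for $n$ generators, using the torus action to split the multigrading, whereas the paper keeps $V$ arbitrary and invokes Macdonald's full faithfulness of $P\mapsto P\tensor_{\Sigma_n}(\blank)^{\tensor n}$.

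One small correction: take the $v_i$ in degree $0$, as the paper does. Then the multilinear part of $\Schur{\wheeled{\Cinf}}{V,0}$ is $\wheeled{\Cinf}(n,0)$ with no sign twist, and $\SA V$ is an honest polynomial algebra to which Loday's results apply directly. Your stated choice, making $(\shift V)^{\tensor n}$ sign-free, makes the $v_i$ odd, which conflicts with your later use of a polynomial algebra.
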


\begin{proof}
  In the following, we write $\Cinf(V) \iso V \oplus \Schur {\Cinf} {V}$ for the $\Cinf$-algebra freely generated by a vector space $V$.
  Then we have
  \[ \HC_\QQ \bigl( \Cinf(V) \bigr)  \eq  \qSchur {\wheeled{\sCinf}} {\shift \Cinf(V), 0}  \iso  \qSchur {\wheeled{\Cinf}} {\Cinf(V), 0}  \iso  \Schur {\wheeled{\Cinf}} {V, 0} \]
  by \cref{prop:HC_qSchur}.
  Moreover the map $\Cinf(V) \to \Com(V) \iso \SA^+ V$ is a quasi-isomorphism, where $\SA^+ V \defeq \bigoplus_{n \ge 1} (V^{\tensor n})_{\Sigma_n}$ denotes the free commutative non-unital $\QQ$-algebra on $V$.
  Hence we have $\Schur {\wheeled{\Cinf}} {V, 0} \eq \HC_\QQ ( \SA^+ V )$ by (the proof of) \cref{lemma:HH_qiso}.
  
  By \cite[2.2.13 and 2.2.16]{Lod}, the canonical map $\HC_\QQ(\SA^+ V) \oplus \HC_\QQ(\QQ) \to \HC_\QQ(\SA V)$ is a quasi-isomorphism.
  By \cite[Theorem~3.2.5]{Lod}, the homology of $\HC_\QQ(\SA^+ V)$ is thus isomorphic to $\quot {\Omega^*_S} {d \Omega^{*-1}_S}$ where $\Omega^*_S$ denotes the de Rham complex of $S \defeq \SA^+ V$, i.e.\ the non-unital cdga $\SA^+ (V \oplus \shift V)$ with differential determined by $d(v) \defeq \shift v$ for $v \in V$.
  This is isomorphic to $\bigoplus_{n \ge 1} D^{\tensor n} \tensor_{\Sigma_n} V^{\tensor n}$ where $D \defeq \QQ \oplus \shift \QQ$ with differential determined by $d(1) \defeq \shift 1$.
  The cochain complex with $\Sigma_n$-action $D^{\tensor n}$ is furthermore isomorphic to $\SA \shift \QQ^n$ with differential $d(x) \defeq \shift \Delta \wedge x$.
  Thus $\quot {D^{\tensor n}} {d(D^{\tensor n})}$ is $\Sigma_n$-equivariantly isomorphic to $\SA \shift (\quot {\QQ^n} \Delta)$.
  Hence there is a natural isomorphism
  \[ \Schur {\Coho* (\wheeled{\Cinf})} {V, 0}  \iso  \Coho* \bigl( \Schur {\wheeled{\Cinf}} {V, 0} \bigr)  \iso  \bigoplus_{n \ge 1} \SA \shift (\quot {\QQ^n} \Delta) \tensor_{\Sigma_n} V^{\tensor n} \]
  of graded vector spaces.
  Chasing through the argument, we see that it restricts to natural isomorphisms
  \[ \Coho* (\wheeled{\Cinf})(n, 0) \tensor_{\Sigma_n} V^{\tensor n}  \iso  \SA \shift (\quot {\QQ^n} \Delta) \tensor_{\Sigma_n} V^{\tensor n} \]
  for $n \ge 1$.
  This implies the claim since the functor $P \mapsto P \tensor_{\Sigma_n} (\blank)^{\tensor n}$ from finite-dimensional $\Sigma_n$-representations to endofunctors of the category of finite-dimensional vector spaces is fully faithful (see Macdonald \cite[Ch.~I, Appendix A, 5.2]{Mac95}).
\end{proof}

\begin{remark} \label{rem:comparison_gc}
  As mentioned in \cref{rem:Berglund_map}, forgetting the direction of an edge (or rather, remembering it only as an orientation) yields a map of symmetric sequences from $\wheeled{\Cinf}(\blank, 0)$ to both the loop-order-one part $\mathrm{GC}_1^+(\Lie)$ of the even hairy Lie graph complex and the corresponding odd version $\mathrm{GC}_1^-(\Lie)$ (i.e.\ the genus one parts of the modular envelopes of the cyclic operad $\Cinf$, with respect to the trivial and the ungraded determinant hyperoperad; see Getzler--Kapranov \cite[§4]{GK}).
  In fact, flipping the direction of the cycle is an involution on $\wheeled{\Cinf}(\blank, 0)$, and its $+1$ and $-1$ eigenspaces recover $\mathrm{GC}_1^+(\Lie)$ and $\mathrm{GC}_1^-(\Lie)$, respectively.
  In particular the homology of $\wheeled{\Cinf}(\blank, 0)$ splits as the sum of the homologies of $\mathrm{GC}_1^+(\Lie)$ and $\mathrm{GC}_1^-(\Lie)$.
  On degree $k$ the involution acts by $(-1)^k$, so that $\Coho * (\mathrm{GC}_1^+(\Lie))$ consists of the even-dimensional and $\Coho * (\mathrm{GC}_1^-(\Lie))$ of the odd-dimensional classes of $\Coho * (\wheeled{\Cinf})(\blank, 0)$.
  As mentioned above, a computation of $\Coho * (\mathrm{GC}_1^+(\Lie))$ along the same lines has been carried out in \cite[§6.3]{CKV}\footnote{Note that, in their notation, the involution on cyclic homology acts by $(-1)^n$ and not $(-1)^{k + 1}$ as they claim, which changes the result slightly.}; a different proof of this can be obtained by combining \cite[Theorem~11.1]{CKV12} and \cite[Proposition~3.7]{CHKV}.
\end{remark}

\subsubsection*{Vanishing of graph classes for fiber bundles}

We will now deduce from \cref{thm:trace_vanishes} that, for a topological fiber bundle modeled by a dualizable $\Cinf$-algebra $R_\infty$ over a cdga $\k$, the evaluation map $\Schur {\wheeled{\Cinf}} {R_\infty, 0} \to \k$ vanishes on all homology classes coming from positive degrees of $\wheeled{\Cinf}$.
By \cref{prop:wheeled_Cinf_homology}, this covers all ``interesting'' homology classes.
More precisely $\Coho 0 (\wheeled{\Cinf})(n, 0)$ has dimension one for each arity $n \ge 1$ (spanned by any wheeled tree of biarity $(n, 0)$ such that every internal vertex has exactly two incoming edges), but there are a lot of classes in positive degrees.
By \cref{rem:Berglund_map}, this result implies the analogous statement for the loop-order-one part of the analogous map of Berglund \cite{Ber}.
In fact, following his approach, we do everything $G$-equivariantly, i.e.\ we consider the following.

\begin{construction} \label{con:equv_eval}
  Let $G$ be a group and $R_\infty$ a $G$-equivariant $\Cinf$-algebra over a $G$-equivariant cdga $\k$.
  If $R_\infty$ is dualizable as a $\k$-module, then we obtain a composite map
  \[ \kappa_\wheel(R_\infty)  \colon  \Schur {\wheeled{\Cinf}} { \Cochains{*}(G; R_\infty), 0}  \longto  \Cochains{*} \bigl( G; \Schur {\wheeled{\Cinf}} {R_\infty, 0} \bigr)  \longto  \Cochains{*} \bigl( G; \Schur[\k] {\wheeled{\Cinf}} {R_\infty, 0} \bigr)  \longto  \Cochains{*}(G; \k) \]
  where $\Cochains{*}(G; M)$ denotes the cochain complex defining group cohomology of $G$ with coefficients in $M$ (note that $\Cochains{*}(G; \blank)$ is lax symmetric monoidal).
\end{construction}

\begin{theorem} \label{cor:graph_vanishing}
  In the situation of \cref{thm:trace_vanishes}, the map induced on homology by $\kappa_\wheel(R_\infty)$ vanishes on
  \[ \Schur {\Coho {> 0} (\wheeled{\Cinf})} {\Coho * (G; R_\infty), 0}  \subseteq  \Schur {\Coho * (\wheeled{\Cinf})} {\Coho * (G; R_\infty), 0}  \iso  \Coho * \bigl( \Schur {\wheeled{\Cinf}} {\Cochains * (G; R_\infty), 0} \bigr) \]
  where $\Coho {> 0}$ denotes homology classes of positive degree.
\end{theorem}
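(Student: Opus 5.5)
The plan is to reduce the statement to \cref{thm:trace_vanishes} by passing through the free $\Cinf$-algebra and \cref{prop:HC_qSchur}. I first treat the non-equivariant core. For a $G$-equivariant $\k$-module $V$, let $\Cinf(V)$ be the free $\Cinf$-algebra over $\k$ on $V$. The counit $e \colon \Cinf(R_\infty) \to R_\infty$ is a strict map of $G$-equivariant $\Cinf$-algebras over $\k$. \Cref{lemma:qSchur} and the free structure identify
\[ \Schur[\k] {\wheeled{\Cinf}} {R_\infty, 0} \iso \qSchur[\k] {\wheeled{\Cinf}} {\Cinf(R_\infty), 0}, \]
and under this identification the evaluation map to $\k$ is the evaluation for $R_\infty$ precomposed with $e$. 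By the naturality in strict maps stated in \cref{prop:HC_qSchur}, I then obtain a commutative square of $G$-equivariant $\k$-modules. Its top row is $\HH_\k(\Cinf(R_\infty)) \to \HH_\k(R_\infty)$, induced by $e$. Its left vertical map is $\HH_\k(\Cinf(R_\infty)) \to \HC_\k(\Cinf(R_\infty)) \simeq \Schur[\k] {\wheeled{\Cinf}} {R_\infty, 0}$, and its bottom and right-hand maps are the evaluation and $-\tr$ respectively. So the evaluation, restricted along the left vertical map, factors through $\tr \colon \HH_\k(R_\infty) \to \k$.

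Next I would locate the positive-degree classes. Following the proof of \cref{prop:wheeled_Cinf_homology}, replace $\Cinf(V)$ by $\SA^+ V$. The classes in $\Coho{k}(\wheeled{\Cinf})(n,0)$ correspond to $k$-forms in $\quot{\Omega^*_S}{d\Omega^{*-1}_S}$. Via the Hochschild--Kostant--Rosenberg map, $k$-forms come from Hochschild degree $k$, so every class with $k > 0$ lifts along $\HH \to \HC$ to a class in the reduced part $\rHH_\k(\Cinf(R_\infty))$, i.e.\ Hochschild degree $\ge 1$.

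The top map of the square respects Hochschild degree filtrations, and by \cref{lemma:HH_proj} it preserves the reduced subcomplexes $\rHH$. Hence positive-degree graph classes factor through $\rHH_\k(R_\infty) \to \HH_\k(R_\infty) \xrightarrow{\tr} \k$. By \cref{thm:trace_vanishes} this composite is zero in $\Underlying(\Modeq{G}{\k})$.

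For the equivariant version I apply $\Cochains{*}(G;\blank)$, which preserves quasi-isomorphisms and sends maps that are null in the derived category of $G$-equivariant modules to maps that are null on homology. I then check that \cref{con:equv_eval} is compatible with this: the lax monoidal map $\Schur{\wheeled{\Cinf}}{\Cochains{*}(G;R_\infty),0} \to \Cochains{*}(G;\Schur{\wheeled{\Cinf}}{R_\infty,0})$ sends a class $\gamma \otimes a_1 \otimes \dots \otimes a_n$, with $\gamma$ of positive degree, into the image of $\Cochains{*}(G;\rHH)$.

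The main obstacle is the lifting step. It has to be done naturally and at chain level, so that the image in $\Schur{\wheeled{\Cinf}}{R_\infty,0}$ of the (quasi-free) subcomplex spanned by positive-degree graph cycles is hit by a chain map out of $\rHH_\k(\Cinf(R_\infty))$ up to homotopy. I would try first to write an explicit HKR-type chain map $\Omega^{\ge 1} \to \rHH$ for the free algebra and compose it with the quasi-isomorphisms of \cref{lemma:HH_qiso,prop:HC_qSchur}. The remaining points, the exactness of $\rHH \to \HH \to R$ and the behaviour of group cochains, are routine.
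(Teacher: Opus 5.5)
Your overall architecture matches the paper's. It uses the free $\Cinf$-algebra with its strict counit, naturality of \cref{prop:HC_qSchur} to identify the evaluation map with $-\tr$ along $\HH \to \HC$, and \cref{thm:trace_vanishes}. It also uses an HKR argument for free commutative algebras to see that positive-degree graph classes come from Hochschild degree $\ge 1$. The gap is where you locate it, but it is not routine: the behaviour of group cochains is exactly where it sits. Your lifting step is a statement about the homology of $\HC_\k(\Cinf(R_\infty))$. The theorem, however, is about classes $\gamma \tensor a_1 \tensor \dots \tensor a_n$ with $a_i \in \Coho*(G; R_\infty)$. A lift that exists only on the homology of $\Schur[\k]{\wheeled{\Cinf}}{R_\infty, 0}$ says nothing about the images of such classes in $\Coho*\bigl(\Cochains{*}(G; \Schur[\k]{\wheeled{\Cinf}}{R_\infty, 0})\bigr)$, because $\Cochains{*}(G; \blank)$ does not commute with taking homology. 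So the claim in your last paragraph does not follow from what precedes it. Your proposed remedy is a chain-level HKR map, natural enough to commute with the lax monoidal maps of $\Cochains{*}(G; \blank)$ and with the zig-zag $\Cinf(V) \to \Com(V) \iso \SA^+ V \subset \SA V$. It is left unexecuted; it can plausibly be done at the level of symmetric sequences, but it is the whole difficulty.

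The paper avoids any chain-level lift by applying the free construction over $\QQ$ to the plain cochain complex $V \defeq \Cochains{*}(G; R_\infty)$ itself. Lax monoidality of $\Cochains{*}(G; \blank)$ gives a strictly commutative square. Its top row is $\rHH_\QQ(\Cinf(V)) \to \Schur{\wheeled{\Cinf}}{V, 0}$ and its bottom row is $\Cochains{*}(G; \rHH_\QQ(\Cinf(R_\infty))) \to \Cochains{*}(G; \Schur{\wheeled{\Cinf}}{R_\infty, 0})$. By your comparison square, pushed along $\Cinf(R_\infty) \to \Cinf(R_\infty)_\k \to R_\infty$, the bottom row followed by the evaluation factors through $\Cochains{*}(G; \rHH_\k(R_\infty)) \to \Cochains{*}(G; \k)$. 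That map vanishes on homology by \cref{thm:trace_vanishes}, since $\Cochains{*}(G;\blank)$ preserves quasi-isomorphisms.

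It therefore suffices to show, for an arbitrary rational cochain complex $V$, that the image of $\Coho*(\rHH_\QQ(\Cinf(V))) \to \Coho*(\Schur{\wheeled{\Cinf}}{V,0})$ contains $\Schur{\Coho{>0}(\wheeled{\Cinf})}{\Coho*(V), 0}$. This is a purely homological statement over $\QQ$, and both sides preserve quasi-isomorphisms in $V$, so one may assume $d_V = 0$. The paper proves it in two steps:
\begin{itemize}
\item $\rHC_\QQ(\Cinf(V)) \to \ker(\pi)$ is a quasi-isomorphism, where $\pi$ is the projection to $\Schur{\wheeled{\Com}}{V,0} \iso \SA^+ V$. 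That target is concentrated in graph degree $0$, so positive-degree classes lift to $\ker \pi$.
\item $\rHH_\QQ(\Cinf(V)) \to \rHC_\QQ(\Cinf(V))$ is surjective on homology, by comparison with $\SA^+ V \subset \SA V$ and Loday's HKR results.
\end{itemize}
Your HKR intuition is thus needed only on homology, not at chain level.
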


\begin{proof}
  In the following, we write $\Cinf(N)_\k \iso N \oplus \Schur[\k] {\Cinf} {N}$ for the $\Cinf$-algebra over $\k$ freely generated by a $\k$-module $N$; when $\k = \QQ$, we omit the subscript.
  Note that there is a canonical strict map $\Cinf(R_\infty)_\k \to R_\infty$ of $G$-equivariant $\Cinf$-algebras over $\k$.
  We obtain a commutative diagram of $G$-equivariant cochain complexes
  \[
  \begin{tikzcd}
    \rHH_\QQ \bigl( \Cinf(R_\infty) \bigr) \rar \dar & \HC_\QQ \bigl( \Cinf(R_\infty) \bigr) \rar{\eq} \dar & \qSchur {\wheeled{\sCinf}} {\shift \Cinf(R_\infty), 0} \dar & \lar[swap]{\iso} \Schur {\wheeled{\sCinf}} {\shift R_\infty, 0} \dar \\
    \rHH_\k \bigl( \Cinf(R_\infty)_\k \bigr) \rar \dar & \HC_\k \bigl( \Cinf(R_\infty)_\k \bigr) \rar{\eq} \dar & \qSchur[\k] {\wheeled{\sCinf}} {\shift \Cinf(R_\infty)_\k, 0} \dar & \lar[swap]{\iso} \Schur[\k] {\wheeled{\sCinf}} {\shift R_\infty, 0} \dar \\
    \rHH_\k ( R_\infty ) \rar & \HC_\k ( R_\infty ) \rar{\eq} & \qSchur[\k] {\wheeled{\sCinf}} {\shift R_\infty, 0} \rar & \k
  \end{tikzcd}
  \]
  where the middle quasi-isomorphisms are the maps of \cref{prop:HC_qSchur}.
  The composite of the bottom row is trivial in $\Underlying(\Modeq{G}{\k})$ by \cref{thm:trace_vanishes}.
  Applying $\Cochains * (G; \blank)$ and identifying $\Schur {\wheeled{\sCinf}} {\shift R_\infty, 0} \iso \Schur {\wheeled{\Cinf}} {R_\infty, 0}$, we obtain the commutative diagram
  \[
  \begin{tikzcd}
    \rHH_\QQ \bigl( \Cinf( C^*(G; R_\infty) ) \bigr) \rar \dar & \Schur {\wheeled{\Cinf}} {C^*(G; R_\infty), 0} \dar \\
    \Cochains * \bigl( G; \rHH_\QQ ( \Cinf(R_\infty) ) \bigr) \rar \dar & \Cochains * \bigl( G; \Schur {\wheeled{\Cinf}} {R_\infty, 0} \bigr) \dar \\
    \Cochains * \bigl( G; \rHH_\k ( R_\infty ) \bigr) \rar & \Cochains * (G; \k)
  \end{tikzcd}
  \]
  where the bottom horizontal map is trivial in $\Underlying(\Mod{\k})$ since $\Cochains * (G; \blank)$ preserves quasi-isomorphisms.
  In particular this map is trivial on homology.
  It is thus enough to prove that, for a cochain complex $V$, the image of the map induced on homology by $\rHH_\QQ ( \Cinf(V) ) \to \Schur {\wheeled{\Cinf}} {V, 0}$ contains $\Schur {\Coho {> 0} (\wheeled{\Cinf})} {\Coho * (V), 0}$.
  
  We first prove this for the analogous map from $\rHC_\QQ ( \Cinf(V) )$.
  To this end, we consider the commutative diagram of cochain complexes
  \[
  \begin{tikzcd}
    \HC_\QQ \bigl( \Cinf(V) \bigr) \ar{rr}{\eq} \dar[two heads] & &[-40] \Schur {\wheeled{\sCinf}} {\shift V, 0} \ar{rr}{\iso} &[-40] & \Schur {\wheeled{\Cinf}} {V, 0} \dar[two heads]{\pi} \\
    \Cinf(V) \rar{\eq} & \Com(V) \ar{rr}{\iso} & & \SA^+ V & \lar[swap]{\iso} \Schur {\wheeled{\Com}} {V, 0}
  \end{tikzcd}
  \]
  where the left-hand vertical map is projection onto Hochschild degree $0$ (see \cref{lemma:HH_proj}), and $\SA^+ V \defeq \bigoplus_{n \ge 1} (V^{\tensor n})_{\Sigma_n}$ denotes the free commutative non-unital $\QQ$-algebra on $V$.
  Since both vertical maps are surjective, we obtain a quasi-isomorphism $\rHC_\QQ ( \Cinf(V) ) \to \ker(\pi)$.
  Since $\wheeled{\Com}$ is concentrated in degree $0$, this implies the claim.
  
  We will now complete the proof by showing that the map $\rHH_\QQ ( \Cinf(V) ) \to \rHC_\QQ ( \Cinf(V) )$ is surjective on homology.
  Note that both $\rHH_\QQ ( \Cinf(V) )$ and $\rHC_\QQ ( \Cinf(V) )$ preserve quasi-isomorphisms in $V$ by (the proof of) \cref{lemma:HH_qiso}.
  Choosing a quasi-isomorphism $V \to \Coho * (V)$, we can thus assume that the differential of $V$ is trivial.
  Now, the strict quasi-isomorphism of $\Cinf$-algebras $\Cinf(V) \to \Com(V)$ induces the left-hand square of the following commutative diagram of cochain complexes
  \[
  \begin{tikzcd}
    \rHH_\QQ \bigl( \Cinf(V) \bigr) \rar{\eq} \dar & \rHH_\QQ \bigl( \Com(V) \bigr) \rar \dar & \rHH_\QQ (\SA V) \dar \\
    \rHC_\QQ \bigl( \Cinf(V) \bigr) \rar{\eq} & \rHC_\QQ \bigl( \Com(V) \bigr) \rar & \rHC_\QQ (\SA V)
  \end{tikzcd}
  \]
  where the right-hand square is induced by the inclusion $\Com(V) \iso \SA^+ V \subset \SA V$.
  The upper right-hand map is a quasi-isomorphism by \cref{lemma:nHH_qiso} since the map $\rnHH_\QQ(\SA^+ V) \to \rnHH_\QQ(\SA V)$ is an isomorphism.
  The canonical map $\rHC_\QQ(\SA^+ V) \oplus \rHC_\QQ(\QQ) \to \rHC_\QQ(\SA V)$ is a quasi-isomorphism by \cite[2.2.13 and 2.2.16]{Lod}\footnote{Note that the construction Loday denotes by $\ol{HC}$ is different from our $\rHC$.}.
  Using this, the results of \cite[§3.2]{Lod} imply that the map $\rHH_\QQ (\SA V) \to \rHC_\QQ (\SA V)$ is surjective on homology onto the image of $\rHC_\QQ(\SA^+ V)$.
  This completes the proof.
\end{proof}

\subsubsection*{Model-independent formulation}

To finish this section, we provide a reformulation of \cref{cor:graph_vanishing}, analogous to how Berglund \cite{Ber} phrases his results.

\begin{construction} \label{con:kappa}
  Let $p \colon E \to B$ be a map of connected animas with compact simply connected fiber.
  Assume that the universal covering of $B$ is of finite rational type.
  Then \cref{cor:eq_Cinf_existence} provides a group $G$ and a $G$-equivariant $\Cinf$-algebra $R_\infty$ over a $G$-equivariant cdga $\k$ such that $R_\infty$ is dualizable as a $\k$-module and such that $\k \to R_\infty$ models $p$.
  Using the quasi-isomorphisms $\Cochains{*}(G; R_\infty) \eq \Cochains{*}(E; \QQ)$ and $\Cochains{*}(G; \k) \eq \Cochains{*}(B; \QQ)$ (see e.g.\ \cite[Lemmas~2.8.6, 2.8.7, and 2.8.14]{BS24}), the map $\kappa_\wheel(R_\infty)$ of \cref{con:equv_eval} thus yields a map
  \[ \kappa_\wheel(p) \colon \Schur {\wheeled{\Cinf}} { \Cochains{*}(E; \QQ), 0}  \longto  \Cochains{*}(B; \QQ) \]
  in $\Underlying(\Mod{\QQ})$.
\end{construction}

\begin{remark} \label{rem:Berglund_kappa}
  Our construction of the map $\kappa_\wheel(p)$ is analogous to a construction of Berglund \cite{Ber} for maps $p \colon E \to B$ whose fiber is a Poincaré duality complex of dimension $d$ (and thus in particular compact).
  In that case, his map $\Schur[\k] {\mathbb{M}_{\mathrm{det}^d}(\Cinf)} {R_\infty} \to \k$ (see \cref{rem:Berglund_map}) induces a map
  \[ \kappa_d(p) \colon \Schur {\mathbb{M}_{\mathrm{det}^d}(\Cinf)} { \Cochains{*}(E; \QQ) }  \longto  \Cochains{*}(B; \QQ) \]
  which yields $\kappa_\wheel(p)$ when restricted along the map $\wheeled{\Cinf}(\blank, 0) \to \mathbb{M}_{\mathrm{det}^d}(\Cinf)$.
  In particular, using that the latter map is surjective on homology onto the loop-order-one part by \cref{rem:comparison_gc}, our construction generalizes the loop-order-one part of Berglund's to maps whose fiber is merely compact.
\end{remark}

\begin{remark}
  A priori, our map $\kappa_\wheel(p)$ depends on the choice of the model $R_\infty$.
  However, Berglund \cite{Ber} shows that his map $\kappa_d(p)$ is independent of this choice, and we expect that his arguments can be generalized to our situation.
\end{remark}

Our \cref{cor:graph_vanishing} implies that the map $\kappa_\wheel(p)$ is almost trivial when $p$ is a topological fiber bundle, as in the following corollary.
By \cref{rem:Berglund_kappa}, this implies the analogous statement for the loop-order-one part of Berglund's map $\kappa_d(p)$.

\begin{corollary} \label{cor:kappa_vanishing}
  Let $M$ be a compact simply connected topological manifold, and $p \colon E \to B$ the fiber bundle classified by a map $B \to \B \Homeo(M)$.
  Assume that $B$ is connected and that its universal covering is of finite rational type.
  Then the map induced on homology by the map $\kappa_\wheel(p)$ of \cref{con:kappa} vanishes on
  \[ \Schur {\Coho {> 0} (\wheeled{\Cinf})} {\Coho * (E; \QQ), 0}  \subseteq  \Schur {\Coho * (\wheeled{\Cinf})} {\Coho * (E; \QQ), 0}  \iso  \Coho * \bigl( \Schur {\wheeled{\Cinf}} {\Cochains{*}(E; \QQ), 0} \bigr) \]
  where $\Coho {> 0}$ denotes homology classes of positive degree.
\end{corollary}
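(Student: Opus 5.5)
The plan is to reduce the statement to \cref{cor:graph_vanishing} by unwinding \cref{con:kappa}. First I apply \cref{cor:eq_Cinf_existence} to $p$. This is allowed because $B$ is connected with universal covering of finite rational type, and the fiber $M$ is compact (a compact manifold is a finite CW complex up to homotopy) and simply connected. It gives $G = \pi_1(B)$, a cofibration $\k \to R$ of $G$-equivariant cofibrant cdgas modeling $p$ as fiberwise nilpotent animas over $\B G$, and a $G$-equivariant quasi-isomorphism $R_\infty \to R$ of $\Cinf$-algebras with $R_\infty$ finite-dimensional semifree over $\k$. In particular $R_\infty$ is cofibrant and dualizable as a $\k$-module. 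I would also check that $\k$ and $R$ are homologically connected and of finite homotopical type: this holds because the fibers of $B \to \B G$ and $E \to \B G$ are the universal covers, which are simply connected and of finite rational type (the latter for $E$ because the fiber $M$ has finite rational cohomology). Since $p$ is classified by a map $B \to \B\Homeo(M)$, all hypotheses of \cref{thm:trace_vanishes} hold, and therefore so do those of \cref{cor:graph_vanishing}.

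Next I identify the map $\kappa_\wheel(p)$ with $\kappa_\wheel(R_\infty)$ under the quasi-isomorphisms $\Cochains*(G;R_\infty)\eq\Cochains*(E;\QQ)$ and $\Cochains*(G;\k)\eq\Cochains*(B;\QQ)$. By \cref{con:kappa} this holds by definition, provided $\kappa_\wheel(p)$ is built from the same choice of model. The homology isomorphism $\Coho*(\Schur{\wheeled{\Cinf}}{C,0}) \iso \Schur{\Coho*(\wheeled{\Cinf})}{\Coho*(C),0}$ holds over $\QQ$ by the Künneth theorem together with exactness of coinvariants for finite groups. It is natural in $C$, so it carries the subspace $\Schur{\Coho{>0}(\wheeled{\Cinf})}{\Coho*(E;\QQ),0}$ to $\Schur{\Coho{>0}(\wheeled{\Cinf})}{\Coho*(G;R_\infty),0}$. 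Applying \cref{cor:graph_vanishing} then gives the vanishing, and transporting it back along the identifications above proves the corollary.

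I expect the main obstacle to be bookkeeping rather than new mathematics. One point is that the group $G$ must be the one supplied by \cref{cor:eq_Cinf_existence}. The other is that the equivalence $\Cochains*(G;R_\infty)\eq\Cochains*(E;\QQ)$ has to be a map compatible with the Schur functor construction, or at least a zig-zag of quasi-isomorphisms, so that it induces an isomorphism on homology of $\Schur{\wheeled{\Cinf}}{\blank,0}$ preserving the $\Coho{>0}$-filtration piece. Both follow from the functoriality of $\Schur{\wheeled{\Cinf}}{\blank,0}$ in cochain complexes and from the fact that over $\QQ$ it preserves quasi-isomorphisms.
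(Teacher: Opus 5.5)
Your proposal is correct and follows the paper's route. The paper's proof is just the reduction to \cref{cor:graph_vanishing}; it leaves implicit the hypothesis-checking you spell out (via \cref{cor:eq_Cinf_existence} and \cref{thm:trace_vanishes}) and the K\"unneth/naturality transport along the quasi-isomorphisms of \cref{con:kappa}. The only point you pass over silently is that \cref{thm:trace_vanishes} requires $R_\infty \to R$ to be unital, which \cref{cor:eq_Cinf_existence} does not state explicitly but Berglund's construction supplies; the paper is equally silent on this.
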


\begin{proof}
  This follows from \cref{cor:graph_vanishing}.
\end{proof}